\DeclareMathAlphabet{\mathpzc}{OT1}{pzc}{m}{it}
\newtheorem{theorem}{Theorem}[section]
\newtheorem*{claim*}{Claim}
\newtheorem{lemma}[theorem]{Lemma}
\newtheorem{corollary}[theorem]{Corollary}
\newtheorem{proposition}[theorem]{Proposition}
\theoremstyle{definition}
\newtheorem{definition}[theorem]{Definition}
\newtheorem{Def}[theorem]{Definition}
\theoremstyle{remark}
\newtheorem{remark}[theorem]{Remark}
\newtheorem{Rmk}[theorem]{Remark}
\numberwithin{equation}{section}
\newcommand{\op}{\operatorname}
\newcommand{\be}{\begin{equation}}
\newcommand{\ee}{\end{equation}}
\newcommand{\Ga}{\Gamma}
\newcommand{\R}{\mathbb R}
\newcommand{\N}{\mathbb N}
\newcommand{\ga}{\gamma}
\newcommand{\la}{\lambda}
\newcommand{\La}{\Lambda}
\newcommand{\ba}{\backslash}
\newcommand{\cal}{\mathcal}
\newcommand{\br}{\mathbb R}
\newcommand{\SO}{\op{SO}}
\newcommand{\Isom}{\op{Isom}}
\newcommand{\F}{\cal F}
\newcommand{\bH}{\mathbb H}
\newcommand{\stab}{\op{Stab}}
\newcommand{\G}{\Gamma}
\newcommand{\m}{\mathsf{m}^{\BMS}}
\newcommand{\T}{\op{T}}
\renewcommand{\frak}{\mathfrak}
\newcommand{\e}{\varepsilon}
\newcommand{\BMS}{\op{BMS}}
\newcommand{\fa}{\mathfrak a}
\renewcommand{\i}{\op{i}}
\newcommand{\fg}{\frak g}
\newcommand{\Leb}{\op{Leb}}
\newcommand{\lat}{\La_{\theta}}
\newcommand{\ft}{\F_\theta}
\renewcommand{\epsilon}{\e}
\renewcommand{\d}{\mathsf{d}}
\newcommand{\SL}{\op{SL}}
 \title[Finiteness, Measure of maximal entropy and reparameterization]{Relatively Anosov groups: finiteness,\\measure of maximal entropy, and reparameterization}
\author{Dongryul M. Kim}
\address{Department of Mathematics, Yale University, New Haven, CT 06511}
\email{dongryul.kim@yale.edu}
\author{Hee Oh}
\address{Department of Mathematics, Yale University, New Haven, CT 06511}
\email{hee.oh@yale.edu}
\thanks{Oh is partially supported by the NSF grant No. DMS-1900101.}
\begin{document}

\begin{abstract} For a geometrically finite Kleinian group $\Gamma$, the Bowen-Margulis-Sullivan measure is finite  and is the unique measure of maximal entropy for the geodesic flow, as shown by Sullivan and Otal-Peign\'e respectively. Moreover, it is strongly mixing by a result of Babillot.
We obtain a higher-rank analogue of this theorem.
Given a relatively Anosov subgroup $\Gamma$ of a semisimple real algebraic group,
there is a family of flow spaces parameterized by linear forms tangent to the growth indicator. 
We construct a reparameterization of each flow space by the geodesic flow on the Groves-Manning space of $\Gamma$ which exhibits exponential expansion along unstable foliations.
Using this reparameterization, we prove that the Bowen-Margulis-Sullivan measure of each flow space is finite and is the unique measure of maximal entropy. Moreover, it is strongly mixing.

\end{abstract}

\maketitle

\tableofcontents

\section{Introduction}
For a geometrically finite Kleinian group $\Ga$ of $\SO^\circ(n,1)=\Isom^+(\bH^n)$, $n\ge 2$, it is a classical result of Sullivan  (\cite{Sullivan1984entropy}, see also \cite{Corlette1999limit}) that
the associated Bowen-Margulis-Sullivan measure $m^{\BMS}$ on the unit tangent bundle $\T^1(\Ga\ba \bH^n)$ is finite, and the measure-theoretic entropy of the geodesic flow with respect to $m^{\BMS}$ equals the topological entropy. Hence the Bowen-Margulis-Sullivan measure is the measure of maximal entropy. Moreover, Otal-Peign\'e \cite{OP_variational} showed that this measure is the unique measure of maximal entropy. It is also strongly mixing by
a theorem of Babillot \cite{Babillot_mixing}.

In this paper, we obtain higher-rank analogues of these theorems. Let $G$ be a connected semisimple real algebraic group. Anosov subgroups and relatively Anosov subgroups of $G$ are regarded as higher-rank generalizations of convex cocompact and geometrically finite rank-one groups, respectively. There is an even broader class of discrete subgroups called transverse subgroups, which are viewed as generalizations of rank-one discrete subgroups.
For a transverse subgroup $\Ga$, we have a family of Bowen-Margulis-Sullivan measures $\m_{\psi}$ parameterized by a distinguished collection of linear forms $\psi$. Each such measure $\m_\psi$ lives on a fibered dynamical system over a canonical one-dimensional base flow space $(\Omega_\psi, m_\psi, \phi_t)$ where the fiber is the
kernel of $\psi$ and
$\m_\psi$ is equal to 
 the product measure $m_\psi \otimes \op{Leb}_{\ker\psi}$. We refer to $m_{\psi}$ as the base Bowen-Margulis-Sullivan measure on $\Omega_{\psi}$.

We prove that if $\Ga$ is a relatively Anosov subgroup, then
the base BMS measure $m_{\psi}$ is finite and  is the unique measure of maximal entropy for the flow $\{\phi_t\}$. Moreover, we show that
for any transverse subgroup for which $m_\psi$ is finite, the dynamical system $(\Omega_{\psi}, m_{\psi}, \phi_t)$ is strongly mixing. In particular, both entropy-maximization and strong mixing holds for 
 $(\Omega_{\psi}, m_{\psi}, \phi_t)$ associated with relatively Anosov subgroups.

To formulate these results precisely, we fix a Cartan decomposition $G=KA^+K$, where
$K$ is a maximal compact subgroup of $G$ and $A^+=\exp \fa^+$ is a positive Weyl chamber of a maximal split torus $A$ of $G$.  We denote by $\mu : G \to \fa^+$ the Cartan projection defined by the condition $g\in K\exp \mu(g) K$ for $g \in G$. 
Let $\Pi$ be the set of all simple roots for $(\op{Lie} G, \frak a^+)$.  Given a non-empty subset $\theta\subset \Pi$, 
there is the notion of relatively Anosov and transverse subgroup. 
Let $\F_\theta=G/P_\theta$ where
$P_\theta$ is the standard parabolic subgroup associated with $\theta$. 
Let $\G<G$ be a discrete subgroup and let $\La_\theta$ denote the limit set of $\Ga$ in $\F_{\theta}$ as defined in \eqref{def.limitset}, which we assume contains at least $3$ points, that is, $\Ga$ is non-elementary.
In the rest of the introduction, we assume that $\Ga$ is
a {\it $\theta$-transverse} (or simply, transverse) subgroup. This means that $\Ga$ satisfies
\begin{itemize}
    \item {\it regularity}:
$ \liminf_{\ga\in \Ga} \alpha(\mu({\ga}))=\infty $ for all $\alpha\in \theta$; 
\item {\it antipodality}:
any $\xi\ne  \eta\in \La_{\theta\cup \i(\theta)}$ 
are in general position (see \eqref{gp}).   
\end{itemize}
Here $\i = -\op{Ad}_{w_0}:\Pi \to \Pi $ denotes the opposition involution
where $w_0$ is the longest Weyl element.

\subsection*{Fibered dynamical systems}
Let $\fa_\theta= \bigcap_{\alpha \in \Pi - \theta} \ker \alpha$ and $A_\theta=\exp \fa_\theta$. The centralizer of $A_\theta$ is a Levi subgroup of $P_\theta$ which is a direct product $A_\theta S_\theta$ where $S_\theta$ is a compact central extension of a semisimple algebraic subgroup.
The right translation action of $A_\theta$ on the
quotient space $G/S_\theta$ is equivariantly conjugate to the $\fa_\theta$-translation action on $\F_\theta^{(2)}\times \fa_\theta$ where $\F_\theta^{(2)}$ consists of all pairs
$(\xi, \eta)\in \F_\theta\times \F_{\i(\theta)}$ in general position.
The left $\Ga$-action on $G/S_{\theta}$ is not properly discontinuous in general.
On the other hand, if we set $\La_\theta^{(2)}=(\La_\theta\times \La_{\i(\theta)})\cap \F_\theta^{(2)}$, then  it is shown in \cite[Theorem 9.1]{KOW_indicators} that $\Ga$ acts  properly discontinuously on the following
space:
$$\tilde{\Omega}_{\Ga}:= \La_\theta^{(2)}\times \fa_\theta \simeq  \{ g S_{\theta} \in G/S_\theta: gP_\theta\in \La_\theta, g w_0 P_{\i(\theta)}\in \La_{\i(\theta)}\}.$$ 
Hence $$\Omega_{\Ga}:=\Ga\ba \tilde \Omega_{\Ga}.$$
is a  second countable locally compact Hausdorff space on which $\fa_\theta$ acts by translations.
Moreover, for each  $(\Ga, \theta)$-proper\footnote{
$\psi$ is called $(\Ga, \theta)$-proper if $\psi \circ \mu : \Ga \to [-\varepsilon, \infty)$ is a proper map for some $\e>0$.} linear form $\psi \in \fa_{\theta}^*$, the space ${\Omega}_{\Ga}$ fibers over a one-dimensional flow space
$\Omega_\psi := \Ga\ba (\La_\theta^{(2)}\times \br)$.

More precisely,
via the projection $(\xi, \eta, v) \mapsto (\xi, \eta, \psi(v))$, the $\Ga$-action on $\tilde \Omega_{\Ga}$ descends to a proper discontinuous action on $ \tilde \Omega_{\psi} := \La_\theta^{(2)}\times \br$ \cite[Theorem 9.2]{KOW_indicators}. 
Therefore $\Omega_{\psi} := \Ga \ba \tilde \Omega_{\psi} $ is a second countable locally compact Hausdorff space over which $\Omega_{\Ga}$ is a trivial $\ker\psi$-bundle:  $$\begin{tikzcd}[column sep = tiny]
    {\color{white} \simeq \Omega_{\psi} \times \ker \psi} \hspace{-1.3em}  & (\Omega_{\Ga},\fa_\theta) \arrow[d]  & \hspace{-1.3em} \simeq \Omega_{\psi} \times \ker \psi \\
    & (\Omega_{\psi}, \br) &
\end{tikzcd}$$
The translation flow  $\phi_t(\xi, \eta, s) = (\xi, \eta, s + t)$ on $ \tilde \Omega_{\psi} = \La_\theta^{(2)}\times \br $ descends to a translation flow  on $\Omega_{\psi}$ which we also denote by $\{\phi_t\}$ by abuse of notation.
 The $(\Ga, \theta)$-properness of $\psi \in \fa_{\theta}^*$ is crucial for the proper discontinuity of the $\Ga$-action on $\tilde \Omega_{\psi}$.  See Remark \ref{rmk.exampleproper} for examples.

 For a pair of a $(\Ga, \psi)$-Patterson-Sullivan measure $\nu$ on $\La_{\theta}$ and a $(\Ga, \psi \circ \i)$-Patterson-Sullivan measure $\nu_{\i}$ on $\La_{\i(\theta)}$, 
 we denote by $\m_\psi={\mathsf m}^{\BMS}_{\nu,\nu_{\i}}$ the associated $A_\theta$-invariant Bowen-Margulis-Sullivan measure on $\Omega_{\Ga}$, 
 locally equivalent to the product $\nu\otimes\nu_{\i}\otimes \op{Leb}_{\fa_\theta}$.
Similarly, we denote by $m_\psi$ the associated $\{\phi_t\}$-invariant {\it Bowen-Margulis-Sullivan measure} on $\Omega_\psi$, 
 locally equivalent to the product $\nu\otimes\nu_{\i}\otimes \op{Leb}_{\br}$.
Then $\m_\psi=m_\psi \otimes \op{Leb}_{\ker\psi} .$
As we are not assuming the uniqueness of $\nu$ and $\nu_{\i}$ for a given $\psi$, $\m_\psi$ and $m_\psi$ are not necessarily determined by $\psi$. Nevertheless, it is convenient to refer
to them as BMS measures associated to $\psi$.

\subsection*{Relatively Anosov groups}  
A transverse subgroup $\Ga<G$ is called {\it relatively Anosov} (more precisely relatively $\theta$-Anosov) if $\Ga$ is a relatively hyperbolic group and there exists a $\Ga$-equivariant homeomorphism between the Bowditch boundary of $\Ga$ and the limit set $\La_{\theta}$. When $\Ga$ is hyperbolic, its Bowditch boundary is the Gromov boundary of $\Ga$, and in this case, the relatively Anosov subgroup $\Ga$ is simply an Anosov subgroup.
When $G$ has rank-one, relatively Anosov subgroups coincide with geometrically finite Kleinian groups.
Recall that for a geometrically finite Kleinian group $\Ga$, there exists a unique Patterson-Sullivan measure of dimension equal to the critical exponent $\delta_\Ga$.
In higher-rank, we consider the growth indicator $\psi_\Ga^\theta$ of $\Ga$, a generalization of the critical exponent (see \eqref{eqn.defofGI} for the definition). A linear form $\psi$ is said to be {\it tangent} to $\psi_\Ga$ if $\psi \ge \psi_{\Ga}^{\theta}$ and equality holds at some non-zero $u \in \fa_{\theta} $.
For a relatively Anosov subgroup $\Ga$ and a $(\Ga, \theta)$-proper linear form $\psi\in \fa_\theta^*$ tangent to $\psi_\Ga^\theta$, there exists a unique $(\Ga, \psi)$-Patterson-Sullivan measure on $\La_\theta$, and hence a unique
BMS measure $m_\psi$ associated with $\psi$ (see \cite{LO_invariant}, \cite{sambarino2022report} for Anosov groups and \cite{CZZ_relative} for relatively Anosov groups). 

For Anosov subgroups, the associated base space $\Omega_\psi$ is known to be homeomorphic to the Gromov geodesic flow space and is compact (\cite{CS_local}, \cite{BCLS_gafa}, \cite{sambarino2022report}). In fact, for a transverse subgroup, $\Ga$ is Anosov if and only if
 $\Omega_\psi$ is compact \cite{KOW_indicators}.
In particular, $\Omega_\psi$ is non-compact for relatively Anosov subgroups that are not Anosov. 
Analogous to the classical result on the finitness of the Bowen-Margulis-Sullivan measure for a geometrically finite Kleinian group, we prove the following: 

\begin{theorem}[Finiteness and mixing]  \label{thm.bmsfinite}
    Let $\Ga$ be a relatively Anosov subgroup of $G$.
    For any $(\Ga, \theta)$-proper linear form $\psi \in \fa_{\theta}^*$  tangent to the growth indicator of $\Ga$,  the BMS measure $m_\psi$ is finite:
  $$  
    |m_\psi|<\infty. $$
Moreover, the system $(\Omega_\psi, m_\psi, \phi_t)$ is strongly mixing.
\end{theorem}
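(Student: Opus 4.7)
The plan is to follow Sullivan's rank-one template, passing through the reparameterization of $(\Omega_\psi,\phi_t)$ by the Groves-Manning geodesic flow advertised in the abstract. Since $\Ga$ is relatively $\theta$-Anosov, the Bowditch boundary of $\Ga$ is $\Ga$-equivariantly homeomorphic to $\La_\theta$, so the space of bi-infinite geodesics in the Groves-Manning cusped space $X$ is identified with $\La_\theta^{(2)}$, and the Mineyev-type Gromov geodesic flow on $\Ga\ba\mathcal{G}X$ is H\"older orbit equivalent to $(\Omega_\psi,\phi_t)$, with time change given, up to coboundary, by the $\psi$-Busemann cocycle on $\La_\theta^{(2)}$. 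The exponential expansion along unstable foliations of this reparameterization, flagged in the abstract as its defining feature, is what lets rank-one shadow estimates be imported into the present setting.

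For finiteness, I would decompose $\Ga\ba X$ into a compact thick piece together with a finite disjoint union of horocusps, one for each conjugacy class of peripheral parabolic subgroup $P<\Ga$. Pulling back via the reparameterization yields $\Omega_\psi=\Omega_\psi^{\mathrm{thick}}\cup\bigcup_P\Omega_\psi^{\mathrm{cusp}}(P)$. Finiteness on the thick part follows from relative cocompactness together with a shadow lemma for the unique $(\Ga,\psi)$-Patterson-Sullivan measure $\nu$. For each cusp, the horoball Poincar\'e-section picture combined with the same shadow lemma reduces the $m_\psi$-mass to a series essentially of the form $\sum_{p\in P}e^{-\psi(\mu(p))}$ weighted by a transverse PS factor; the $(\Ga,\theta)$-properness of $\psi$, together with the tangency of $\psi$ to $\psi_\Ga^\theta$, which forces a critical-exponent gap between $P$ and $\Ga$ along $\psi$, should make this series converge.

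Once $|m_\psi|<\infty$ is in hand, strong mixing of $(\Omega_\psi,m_\psi,\phi_t)$ is immediate from the general statement, quoted in the introduction, that any transverse subgroup with finite BMS measure has a strongly mixing BMS flow.

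The main obstacle will be the cusp estimate. In rank one this is Sullivan's global measure formula, proved using the fact that a single $\delta_\Ga$-dimensional conformal density decouples radial and transverse directions in a horoball. Here $\fa_\theta$ is higher-dimensional, the Cartan projections of peripheral elements drift in every direction of $\fa_\theta$, and only their $\psi$-projections enter $m_\psi$. Making the reduction to a convergent series rigorous therefore requires the exponential-expansion property of the reparameterization to hold with uniform constants close to the cusps, together with a sharp upper bound on $\nu$-measures of shadows of parabolic orbits in $\La_\theta$. I expect the bulk of the technical work to consist of proving such a uniform horoball shadow lemma in the relatively Anosov setting.
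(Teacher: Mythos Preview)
Your proposal is correct and matches the paper's approach: reparameterize $(\Omega_\psi,\phi_t)$ by the Groves--Manning geodesic flow, use the thick--thin decomposition so the thick part has finite mass by cocompactness, reduce each cusp to a series over the peripheral $P$ controlled by the shadow lemma, and invoke the strict exponent gap $\delta_\psi(P)<1=\delta_\psi(\Ga)$ (Canary--Zhang--Zimmer) for convergence; mixing then follows from the general transverse-subgroup statement. Two small corrections: the reparameterization the paper builds is only a coarse one (continuous proper surjection with two-sided linear bounds on the time-change cocycle, not a H\"older orbit equivalence), and the cusp series carries an extra linear weight $\psi(\mu_\theta(p))$ from the flow-time spent in the horoball, so the bulk of the technical work is establishing those uniform cocycle bounds (your ``exponential-expansion property with uniform constants near the cusps'') rather than a new shadow lemma---the shadow input is a compatibility statement between Groves--Manning shadows and symmetric-space shadows together with the standard $\theta$-shadow lemma.
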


In fact, we establish strong mixing in a broader setting of transverse subgroups, which can be regarded as a higher-rank analogue of Babillot's mixing theorem (see Theorem \ref{thm.mixing}).

Given the finiteness of $m_{\psi}$, the metric entropy $h_{m_\psi}(\{\phi_t\})$ of the normalized measure  $m_{\psi}/|m_{\psi}|$  is well-defined.  For a $(\Ga, \theta)$-proper linear form $\psi \in \fa_{\theta}^*$, the associated $\psi$-critical exponent 
 is given by $$\delta_\psi=\limsup_{T\to \infty}\frac{\log \#\{\ga\in \G: \psi(\mu(\ga))<T\}}{T}\in (0, \infty)$$
and one has $\delta_{\psi} = 1$ if and only if $\psi$ is tangent to $\psi_{\Ga}^{\theta}$ (\cite[Theorem 10.1]{CZZ_relative}, \cite[Theorem 4.5]{KOW_indicators}).

\begin{theorem}[Unique measure of maximal entropy] \label{thm.mmemain}
Let $\Ga$ be a relatively Anosov subgroup of $G$.
    For any $(\Ga, \theta)$-proper linear form $\psi \in \fa_{\theta}^*$  tangent to the growth indicator of $\Ga$, 
    $$m_{\psi} \text{ is the unique measure of maximal entropy for $(\Omega_\psi, \{\phi_t\})$}$$
  and the entropy $h_{m_\psi}(\{\phi_t\})$ is equal to  $\delta_{\psi} = 1$.
\end{theorem}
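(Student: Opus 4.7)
The plan is to pass the problem to the Groves--Manning geodesic flow via the reparameterization $\pi : (\widehat{\Omega}_\Ga, \widehat\phi_t) \to (\Omega_\psi, \phi_t)$ constructed earlier in the paper, with positive H\"older roof function $\ka_\psi$. Theorem \ref{thm.bmsfinite} ensures that the measure induced from $m_\psi$ on $\widehat{\Omega}_\Ga$ is finite, so after normalization it is a $\widehat\phi_t$-invariant probability $\widehat m_\psi$. Abramov's formula
$$h_{\mu}(\phi_1) \cdot \int \ka_\psi \, d\widehat\mu = h_{\widehat\mu}(\widehat\phi_1)$$
sets up a bijective correspondence between $\phi_t$- and $\widehat\phi_t$-invariant probability measures which scales entropy by the $\ka_\psi$-average. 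Thus both the entropy identity and the uniqueness of the measure of maximal entropy for $(\Omega_\psi, \phi_t)$ reduce to the corresponding statements on the Groves--Manning side.

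To compute $h_{m_\psi}(\{\phi_t\}) = \delta_\psi$, I would establish two matching inequalities. First, a Patterson--Sullivan shadow lemma for $\Ga$ (with the cusp corrections appropriate to the relatively Anosov setting) combined with the Shannon--McMillan--Breiman theorem applied to a Bowen-ball partition for $\widehat\phi_t$ yields the lower bound $h_{m_\psi}(\{\phi_t\}) \geq \delta_\psi$. Second, the exponential expansion along unstable foliations, which is the core technical input of the paper, yields a Ruelle--Margulis type inequality $h_m(\{\phi_t\}) \leq \delta_\psi$ for \emph{every} $\phi_t$-invariant probability measure $m$. Since $\psi$ is tangent to $\psi_\Ga^\theta$, we have $\delta_\psi = 1$, so this both pins down the entropy of $m_\psi$ and proves the variational inequality.

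For uniqueness, suppose $m$ is any $\phi_t$-invariant probability with $h_m(\{\phi_t\}) = \delta_\psi$. Then $m$ saturates the Ruelle inequality, and a Ledrappier-type characterization of equality forces the conditional measures of $m$ along unstable leaves to be proportional to a $(\Ga,\psi)$-conformal density on $\La_\theta$. The uniqueness of $(\Ga,\psi)$-Patterson--Sullivan measures for relatively Anosov subgroups tangent to the growth indicator, recalled in the introduction, identifies this density with $\nu$ up to scale; then reconstructing the measure from the local product structure $\nu \otimes \nu_{\i} \otimes \op{Leb}_{\br}$ yields $m = m_\psi/|m_\psi|$.

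The main obstacle is the non-compactness of $\Omega_\psi$ and the cusps of $\widehat{\Omega}_\Ga$: entropy can \emph{a priori} escape into cusps, and the Ruelle upper bound requires uniform unstable expansion, which degenerates during cusp excursions. The paper's exponentially expanding reparameterization is precisely designed to overcome this, but converting the estimate into a workable Ruelle inequality on a non-compact space requires combining it with the finiteness of $m_\psi$ from Theorem \ref{thm.bmsfinite} to control cusp contributions on average, and constructing a countable measurable generator of finite $m_\psi$-entropy adapted to the Groves--Manning cusp geometry. Both are technical complications absent in the Anosov (compact) case, and I expect them to absorb most of the work.
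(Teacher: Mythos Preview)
Your approach is genuinely different from the paper's, and it has a real gap at the very first step. The reparameterization $\Psi:\Ga\ba\cal G\to\Omega_\psi$ in Theorem \ref{thm.repar0} is only a \emph{coarse} reparameterization: the map is continuous, proper, and surjective, but it is \emph{not} a homeomorphism (bi-infinite geodesics in the Groves--Manning graph are not unique between endpoints, so $\Psi$ is not injective), and the time-change cocycle $\mathsf t(\sigma,s)$ satisfies only the quasi-linear bounds $a|s|-B\le \mathsf t(\sigma,|s|)\le a'|s|+B$. There is no positive H\"older roof function $\kappa_\psi$ in the paper; the cocycle is not obtained by integrating a continuous function along orbits. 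Consequently there is no clean bijection between $\phi_t$-invariant and $\widehat\phi_t$-invariant probability measures, and Abramov's formula in the form you invoke does not apply. This is not a minor technicality: it is exactly why Remark \ref{rmk.anosov} says the present proof avoids thermodynamic formalism and differs from Sambarino's even in the Anosov case.

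The paper's actual proof follows Otal--Peign\'e directly on $\Omega_\psi$, never transferring to a Groves--Manning flow space. The exponential expansion estimate of Theorem \ref{thm.nicemetrics} is used to construct a $\phi_\tau$-decreasing generating partition $\zeta$ subordinated to $\tilde W^+$ (Proposition \ref{prop.prop1}), and entropy is computed via conditional measures on unstable leaves. For the normalized $m^{pr}=m_\psi/|m_\psi|$ one gets $h_{m^{pr}}(\phi_\tau)=\tau$ directly from the conformality relation \eqref{eqn.leafrn}. For a general invariant $m$, comparing its leaf conditionals to those of $m^{pr}$ via the ratio $F(x)=m^{pr}_{\zeta(x)}((\phi_\tau^{-1}\zeta)(x))/m_{\zeta(x)}((\phi_\tau^{-1}\zeta)(x))$ and applying Jensen's inequality yields $h_m\le 1$, with equality forcing $F\equiv 1$; then a Hopf argument gives $m=m^{pr}$. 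So while your intuition that the key is the unstable conditional measures and exponential expansion is correct, the paper realizes this through a direct partition-and-Jensen argument rather than through a reparameterization-plus-Ruelle-inequality scheme.
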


For Anosov subgroups, this theorem is due to Sambarino (\cite{Sambarino_quant}, \cite{sambarino2022report}), as a consequence
of thermodynamic formalism. Our proof, by constrast, does not use the thermodynamic formalism and thus provides an alternative argument even in the Anosov case.

\begin{remark}
    The identity $\delta_{\psi} = 1$ follows from the normalization that $\psi$ is tangent to $\psi_{\Ga}^{\theta}$.  In rank-one, $\phi_t$
    corresponds to the time-changed geodesic flow $g_{t/\delta_\Ga}$ 
    and $m_{\delta_\Ga}$ is the unique measure of maximal entropy for $g_t$, satisfying
      $h_{m_{\delta_\Ga}} (\{g_t\})=\delta_\Ga$. Hence
    $h_{m_{\delta_\Ga}} (\{\phi_t \}) = h_{m_{\delta_\Ga}} (\{g_t\})/\delta_\Ga =1 $.
\end{remark}

A key technical ingredient of Theorems \ref{thm.bmsfinite} and \ref{thm.mmemain} is  the following coarse reparameterization theorem, which is also of independent interest.
 Let $(X_{GM}, d_{GM})$ denote the Groves-Manning cusp space of $\Ga$ and let $\cal G$ denote the space of all parameterized bi-infinite geodesics in the Groves-Manning cusp space \cite{GM_relhyp}. Define the geodesic flow
 $\varphi_s : \cal G \to \cal G$ by $(\varphi_s \sigma)(\cdot) = \sigma(\cdot + s)$.

\begin{theorem}[Reparameterization] \label{thm.repar0} \label{thm.reparflow0}
  There exists a continuous, surjective, proper $\Ga$-equivariant map 
    $$\tilde \Psi : \cal G \to \tilde \Omega_{\psi}$$
together with  a continuous cocycle $\tilde{\mathsf t}:\cal G\times
    \br\to \br$  such that  for all $\sigma\in \cal G$ and $ s\in \br$, 
    \begin{enumerate}
        \item 
   $\tilde \Psi(\varphi_s \sigma) = \phi_{\tilde{\mathsf t}(\sigma, s)} \tilde \Psi(\sigma)$;
\item      $\tilde{\mathsf t}(\sigma, s)= - \tilde{\mathsf t}(\varphi_s \sigma, -s)$;
 \item  there exists an absolute constant  $B > 0$ such that  $$a|s| - B \le \tilde{\mathsf t} (\sigma, |s|)  \le  a'|s| + B$$
    where 
  $$0< a:= \liminf_{\ga \in \Ga} \frac{\psi(\mu(\ga))}{d_{GM}(e, \ga)}\;\; \text{ and } \;\; a' := 3\limsup_{\ga \in \Ga} \frac{\psi(\mu(\ga))}{d_{GM}(e, \ga)}<\infty ;$$
  \item  all fibers $\{\sigma(0)\in X_{GM}: \sigma\in \tilde \Psi^{-1}(x)\} $, $x \in \tilde \Omega_{\psi}$, have uniformly bounded diameter.
      \end{enumerate}
     Moreover, the flow $\phi_t$ is  exponentially expanding along unstable foliations
 of $\tilde \Omega_{\psi} =\La_\theta^{(2)}\times \br $, as described in Theorem \ref{thm.nicemetrics}.
 \end{theorem}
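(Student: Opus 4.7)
The plan is to construct $\Psi$ in two stages: first map endpoints of GM-geodesics to $\La_\theta^{(2)}$, then define a time coordinate via a $\psi$-Busemann height. The relatively $\theta$-Anosov hypothesis supplies $\Ga$-equivariant homeomorphisms $\zeta_\theta : \partial_B \Ga \xrightarrow{\sim} \La_\theta$ and $\zeta_{\i(\theta)} : \partial_B \Ga \xrightarrow{\sim} \La_{\i(\theta)}$. For $\sigma \in \cal G$ with distinct endpoints $\sigma(\pm\infty) \in \partial_B \Ga$, antipodality places the pair $(\xi_\sigma^+, \xi_\sigma^-) := (\zeta_\theta(\sigma(+\infty)), \zeta_{\i(\theta)}(\sigma(-\infty)))$ in $\La_\theta^{(2)}$. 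For the $\br$-coordinate I use a $\psi$-Busemann cocycle: fix a reference lift of $\Ga\cdot e$ into the $G/S_\theta$ model of $\til\Omega_\Ga$, evaluate the $\fa_\theta$-valued Iwasawa/Busemann cocycle along $(\xi_\sigma^+, \xi_\sigma^-)$ at a canonical lift of $\sigma(0)$, and apply $\psi$. The resulting $\Ga$-equivariant continuous map $\til\Psi : \cal G \to \La_\theta^{(2)} \times \br$ descends to $\Psi : \Ga \ba \cal G \to \Omega_\psi$.

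Setting $\mathsf t(\sigma, s) := r(\varphi_s \sigma) - r(\sigma)$, where $r$ is the height component of $\til\Psi$, gives a continuous cocycle. Property (1) is immediate since $\varphi_s$ preserves endpoints and only the height coordinate changes, while (2) is the coboundary antisymmetry $\mathsf t(\sigma, s) = r(\varphi_s\sigma) - r(\sigma) = -\bigl(r(\varphi_{-s}(\varphi_s\sigma)) - r(\varphi_s\sigma)\bigr) = -\mathsf t(\varphi_s\sigma, -s)$. Surjectivity of $\Psi$ follows because every $(\xi,\eta) \in \La_\theta^{(2)}$ arises as the endpoint pair of some GM-geodesic via $\zeta_\theta^{-1}, \zeta_{\i(\theta)}^{-1}$ and any $\br$-value can be realized by translating along that $\sigma$. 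Properness follows from continuity of the boundary identification on the thick part of the GM space and from the fact that $r$ diverges when $\sigma$ descends deeper into a combinatorial horoball.

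The core estimate is the two-sided bound (3). The strategy is shadowing: for $\sigma \in \cal G$ and $s > 0$, produce $\ga = \ga(\sigma, s) \in \Ga$ whose $\Ga$-orbit tracks $\sigma|_{[0, s]}$ at bounded distance in the GM space, so that $d_{GM}(e, \ga) = s + O(1)$ and $\mathsf t(\sigma, s) = \psi(\mu(\ga)) + O(1)$; the latter holds because $\ga$ translates $\sigma(0)$ to a point near $\sigma(s)$ both combinatorially and in the $G/K$ picture, so the $\psi$-Busemann increment is approximated by $\psi(\mu(\ga))$. The lower bound $\mathsf t(\sigma, s) \ge a|s| - B$ then follows directly from $a = \liminf_{\ga \in \Ga} \psi(\mu(\ga))/d_{GM}(e, \ga)$. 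The factor $3$ in the upper bound reflects that $\sigma|_{[0, s]}$ might not be directly shadowable by a single group element; one instead decomposes it into at most three subsegments (typically a horoball excursion flanked by two thick-part segments), shadows each by a separate $\ga_i \in \Ga$, and sums $\psi(\mu(\ga_i))$ by the triangle inequality against $\limsup$.

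The main obstacle is the behavior inside combinatorial horoballs of the Groves-Manning space: there GM-distance grows logarithmically in the word metric on the peripheral subgroup, while in higher rank $\psi \circ \mu$ on a parabolic subgroup can have a nontrivial $\fa_\theta$-profile unrelated to that distance. Handling this uses the bounded-parabolic structure of cusp points in $\La_\theta$ (supplied by relative $\theta$-Anosovness), the properness of $\psi \circ \mu$ on $\Ga$ guaranteed by $(\Ga, \theta)$-properness of $\psi$, and the exponential expansion of $\phi_t$ along unstable foliations from Theorem \ref{thm.nicemetrics}, which together render the $O(1)$ shadowing errors uniform in $\sigma$ and $s$ and thereby yield a single absolute constant $B$ in (3).
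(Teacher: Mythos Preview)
Your proposal has two genuine gaps that prevent it from going through.

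\textbf{The height coordinate is not well-defined.} You propose to define $r(\sigma)$ by evaluating the $\fa_\theta$-valued Busemann cocycle ``at a canonical lift of $\sigma(0)$'' into $G$ or $G/S_\theta$. But $\sigma(0)$ is a vertex of the Groves--Manning cusp space, a purely combinatorial graph. When $\sigma(0)$ lies deep inside a combinatorial horoball---at a vertex $(p,n)$ with $n$ large---there is no $\Ga$-orbit point nearby in $X_{GM}$, and no canonical element of $G$ to associate to it. The paper's entire construction is designed to circumvent exactly this: rather than lift $\sigma(0)$, it builds a $\Ga$-equivariant family of norms $\|\cdot\|_\sigma$ on the fiber $\R_+$, first on $\cal G_{thick}$ (where the $\Ga$-action is cocompact, so averaging works) and then by explicit interpolation on each horoball excursion. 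The height is then $\log v_\sigma$ where $v_\sigma$ is the unit vector for $\|\cdot\|_\sigma$. This is what makes the map both continuous and $\Ga$-equivariant across the thick--thin decomposition.

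\textbf{The shadowing argument for (3) fails in the thin part, and the origin of the factor $3$ is misidentified.} If $\sigma|_{[0,s]}$ lies entirely in a horoball $H_P$, there is no $\ga\in\Ga$ with $d_{GM}(e,\ga)\approx s$ that tracks it at bounded distance: the $\Ga$-orbit of $e$ stays at level $1$ in the Cayley graph, whereas $\sigma$ is at depth roughly $s/2$. So the identity $\mathsf t(\sigma,s)=\psi(\mu(\ga))+O(1)$ is unavailable there, and decomposing into ``a horoball excursion flanked by two thick pieces'' does not help when the whole segment is the excursion. In the paper, the bounds in (3) come from the contraction estimate $b^{-1}e^{-a't}\le \kappa_t(\sigma)\le b e^{-at}$ for the norms (their Theorem~\ref{thm.expdecay}), and the factor $3$ in $a'=3\limsup\psi(\mu(\ga))/d_{GM}(e,\ga)$ arises from the interpolation in the \emph{middle third} of a horoball traversal: in that regime one gets $\kappa_t(\sigma)=e^{2ct}\kappa_T(\sigma_0)^{3t/T}$, which after inserting the entrance--exit bound produces the exponent $3c'-2c$. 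It is an artefact of the norm construction, not of triangle-inequality bookkeeping over three subsegments.

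Finally, invoking Theorem~\ref{thm.nicemetrics} to secure the uniform $O(1)$ errors is circular: in the paper, Theorem~\ref{thm.nicemetrics} is \emph{deduced from} the reparameterization (via the $\kappa_t$ estimates), not the other way around.
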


The map $\Psi : \Ga \ba \cal G \to \Omega_{\psi}$, induced from $\tilde \Psi$,
provides a thick-thin decomposition of $\Omega_{\psi}$ that plays a crucial role
in the proof of the finiteness
of $m_\psi$ (Theorem \ref{thm.bmsfinite}). This decomposition is used in conjunction with the work of Canary-Zhang-Zimmer \cite{CZZ_relative}, which analyzes
the critical exponents of peripheral subgroups of $\Ga$.  The exponentially expanding property of $\phi_t$
is essential in constructing a measurable partition of $\tilde \Omega_{\psi}$ subordinated to unstable foliations (Proposition \ref{prop.prop1}), a key step
in the proof of Theorem \ref{thm.mmemain} concerning the uniqueness of the measure of maximal entropy.

\begin{Rmk} 
Recently, Blayac-Canary-Zhu-Zimmer \cite{BCZZ} showed that for $\theta$-transverse $\Ga$ and $\psi \in \fa_{\theta}^*$, if there exists a $(\Ga, \theta)$-Patterson-Sullivan measure on $\La_{\theta}$, then $\psi$ must be $(\Ga, \theta)$-proper. This result implies that
the $(\Ga, \theta)$-properness condition is not a genuinely restrictive assumption when studying dynamics associated to Bowen-Margulis-Sullivan measures.
\end{Rmk}

\subsection*{Acknowledgements}
We were informed that the ongoing work of Blayac, Canary, Zhu and Zimmer \cite{BCZZ} contains a different proof of Theorem \ref{thm.bmsfinite}.

\section{Preliminaries}
We review some basic facts about Lie groups, following \cite[Section. 2] {KOW_indicators} which we refer for more details.
Throughout the paper, let $G$ be a connected semisimple real algebraic group. 
 Let $P<G$ be a minimal parabolic subgroup with a fixed Langlands decomposition $P=MAN$ where $A$ is a maximal real split torus of $G$, $M$ is the maximal compact subgroup of $P$ commuting with $A$ and $N$ is the unipotent radical of $P$.
Let $\fg$ and $\fa$ respectively denote the Lie algebras of $G$
and $A$. Fix a positive Weyl chamber $\fa^+<\fa$ so that
$\log N$ consists of positive root subspaces and
set $A^+=\exp \fa^+$. We fix a maximal compact subgroup $K< G$ such that the Cartan decomposition $G=K A^+ K$ holds. We denote by 
$\mu : G \to \fa^+$ the Cartan projection defined by the condition $g\in K\exp \mu(g) K$ for $g \in G$. Let $X = G/K$ be the associated Riemannian symmetric space and $o=[K]\in X$.  Fix a $K$-invariant norm $\| \cdot \|$ on $\fg$. This induces the left $G$-invariant Riemannian metric $d$ on $X$.

Let $\Phi=\Phi(\fg, \fa)$ denote the set of all roots,  $\Phi^{+}\subset \Phi$
the set of all positive roots, and $\Pi \subset \Phi^+$  the set of all simple roots.
Fix a Weyl element $w_0\in K$ of order $2$ in the normalizer of $A$ representing the longest Weyl element so that $\op{Ad}_{w_0}\mathfrak a^+= -\mathfrak a^+$. 
The map $$\i= -\op{Ad}_{w_0}:\fa\to \fa$$ is called the opposition involution.
It induces an involution $\Phi\to \Phi$ preserving $\Pi$, for which we use the same notation $\i$, such that $\i (\alpha ) \circ  \op{Ad}_{w_0}  =-\alpha $ for all $\alpha\in \Phi$.

Henceforth, we fix a  non-empty subset $\theta\subset \Pi$. 
Let $ P_\theta$ denote a standard parabolic subgroup of $G$ corresponding to $\theta$; that is,
$P_{\theta}$ is generated by $MA$ and all root subgroups $U_\alpha$,
where $\alpha$ ranges over all positive roots which are not $\mathbb Z$-linear combinations of $\Pi-\theta$. Hence $P_\Pi=P$. Let 
\begin{equation*}
\mathfrak{a}_\theta =\bigcap_{\alpha \in \Pi-\theta} \ker \alpha, \quad  \quad \fa_\theta^+  =\fa_\theta\cap \fa^+, \end{equation*}
\begin{equation*} A_{\theta}  = \exp \fa_{\theta}, \quad \text{and} \quad     A_{\theta}^+  = \exp \fa_{\theta}^+. \end{equation*}
Let $ p_\theta:\mathfrak{a}\to\mathfrak{a}_\theta$ denote  the projection invariant under all Weyl elements fixing $\fa_\theta$ pointwise. We write $\mu_{\theta} := p_{\theta} \circ \mu :G\to \fa_\theta^+.$
The space $\fa_\theta^*=\op{Hom}(\fa_\theta, \br)$ can be identified with the subspace of $\fa^*$ which is $p_\theta$-invariant: $\fa_\theta^*=\{\psi\in \fa^*: \psi\circ p_\theta=\psi\}$.
We have the Levi-decomposition
 $P_\theta=L_\theta N_\theta$ where  $L_\theta$ is the centralizer of $A_{\theta}$ and $N_\theta=R_u(P_\theta)$ is the unipotent radical of $P_\theta$. We set $M_{\theta} = K \cap P_{\theta}\subset L_\theta$.

\subsection*{Limit set $\La_\theta$} 
We set $$\F_\theta=G/P_{\theta} .$$
The subgroup $K$ acts transitively on $\F_\theta$, and hence
 $\F_\theta\simeq K/ M_\theta.$ 
\begin{definition} \label{fc} For a sequence $g_i\in G$  and $\xi\in \ft$, we write $\lim_{i\to \infty} g_i =\xi$ and
 say $g_i $ {\it converges} to $\xi$ if \begin{itemize}
     \item for each $\alpha\in \theta$, $\alpha(\mu(g_i)) \to \infty$ as $g_i\to \infty$; 
\item $\lim_{i\to\infty} \kappa_{i}\xi_\theta= \xi$ in $\F_\theta$ for some $\kappa_{i}\in K$ such that $g_i\in \kappa_{i} A^+ K$.
 \end{itemize}         
\end{definition}

 The {\it $\theta$-limit set} of a discrete subgroup $\Ga$ can be defined as follows:
\be \label{def.limitset} \lat=\lat(\Ga):=\{\lim {\ga}_i\in \F_\theta: {\ga}_i\in \Ga\}\ee  where $\lim \ga_i$ is defined as in Definition \ref{fc}.
If $\Ga$ is Zariski dense, this is the unique $\Ga$-minimal subset of $\F_{\theta}$ (\cite{Benoist1997proprietes}, \cite{Quint2002Mesures}).

\subsection*{Jordan projections} 

	Any $g\in G$ can be written as the commuting product $g=g_hg_e g_u$ where $g_h$ is hyperbolic, $g_e$ is elliptic and $g_u$ is unipotent. 
	The hyperbolic component $g_h$ is conjugate to a unique element $\exp \lambda(g) \in A^+$ and $\lambda(g)$ is called 
	the {\it Jordan projection} of $g$. We write $\la_{\theta} := p_{\theta} \circ \la$.

\begin{theorem}\cite{Benoist_properties2} \label{thm.Benoistdense}
   For any Zariski dense subgroup $\Ga<G$, the subgroup generated by
   $\{\lambda(\ga)\in \fa^+:\ga\in \Ga\}$ is dense in $\fa$.
   \end{theorem}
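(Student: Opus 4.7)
The plan is to deduce the density statement from two ingredients: (i) a spanning statement, that $\{\lambda(\ga):\ga\in\G\}$ spans $\fa$ as an $\br$-vector space, and (ii) a multiplicative approximation statement, that Jordan projections of long ping-pong words approximate integer linear combinations of Jordan projections of the generators up to a uniformly bounded error. Together with Kronecker-type density for closed subgroups of $\fa$, these yield the claim.

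First, I would establish (i). By Zariski density of $\G$, one can produce $G$-loxodromic (in fact $A$-regular, with attracting fixed point in an open Schubert cell) elements $\ga\in\G$ whose Jordan projections land in the interior $\inte\fa^+$; this is classical and uses that the locus of loxodromic matrices is Zariski open and nonempty in $G$. Using Zariski density together with the fact that $\lambda$ is a regular map in the appropriate sense on the loxodromic locus, one then shows that such $\lambda(\ga)$ cannot all lie in any proper linear subspace of $\fa$: otherwise the algebraic subset $\{g\in G:\lambda(g)\in H\}$ would be a proper Zariski closed subset containing $\G$, contradicting Zariski density. Hence $\mathrm{span}_\br\{\lambda(\ga):\ga\in\G\}=\fa$.

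Next, (ii) is the core of the argument, and I expect it to be the main obstacle. The approach is a ping-pong argument on $\F_\Pi\times\F_{\i(\Pi)}$: given two loxodromic elements $\ga,\eta\in\G$ whose attracting and repelling flags are in sufficiently general position (Zariski density guarantees the existence of many such pairs by, say, Benoist's lemma on transverse loxodromic pairs), for all large enough $n,m\ge 1$ the product $\ga^n\eta^m$ is again loxodromic and satisfies
\[
\big\|\lambda(\ga^n\eta^m)-\bigl(n\lambda(\ga)+m\lambda(\eta)\bigr)\big\|\le C,
\]
where $C>0$ depends only on the chosen ping-pong pair. This follows by comparing the Cartan decomposition of $\ga^n\eta^m$ with its action on the flag variety near the attracting fixed points and quantifying the error using contraction rates of $\ga$ and $\eta$. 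Iterating, one gets the analogous statement for words in finitely many loxodromic ping-pong generators $\ga_1,\dots,\ga_k$: linear combinations $\sum n_i\lambda(\ga_i)$ with $n_i\ge N_0$ are within $O(1)$ of Jordan projections of elements of $\G$.

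Finally, I combine (i) and (ii). Choose ping-pong loxodromic generators $\ga_1,\dots,\ga_k\in\G$ with $\lambda(\ga_1),\dots,\lambda(\ga_k)$ spanning $\fa$ over $\br$, which is possible by (i) together with a perturbation inside the Zariski-dense loxodromic locus. Let $H\le\fa$ be the closed subgroup generated by all Jordan projections of elements of $\G$. By (ii), $H$ contains every point within distance $C$ of the $\Z_{\ge N_0}$-cone on $\lambda(\ga_1),\dots,\lambda(\ga_k)$; by symmetrizing (replacing $\ga_i$ by conjugates and also by using that $H$ is a group, so sign changes are permitted once the cone is large enough to reach both sides), one shows that every element of $\fa$ lies within $C$ of some point of $H$. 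A closed subgroup of $\fa=\br^r$ that is $C$-dense must be all of $\fa$ (otherwise it lies in a proper affine subspace on its connected component, contradicting $C$-density transverse to it), so $H=\fa$, proving density. The main technical obstacle is the bounded-error estimate in (ii), which requires careful uniform control of contraction on flag varieties for ping-pong pairs of loxodromic elements; this is the heart of Benoist's asymptotic property results and is what pushes the argument beyond elementary linear algebra.
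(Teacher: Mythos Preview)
The paper does not prove this theorem; it is quoted as a result of Benoist \cite{Benoist_properties2} and used as a black box (specifically in the proof of Proposition~\ref{prop.densetranse}). So there is no ``paper's own proof'' to compare against.

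That said, your proposal has a genuine gap in the final step. You assert that a closed subgroup $H\le\fa\simeq\br^r$ which is $C$-dense must equal $\fa$, reasoning that otherwise $H$ ``lies in a proper affine subspace on its connected component.'' This is false: the structure theorem for closed subgroups of $\br^r$ gives $H=V\oplus L$ with $V$ a vector subspace and $L$ a lattice in a complement, and a full lattice $\Z^r\subset\br^r$ is $C$-dense for $C=\sqrt{r}/2$ without being all of $\br^r$. Your ingredients (i) and (ii) do yield that $H$ is cobounded in $\fa$, but this only forces $V\oplus L$ to span $\fa$, not that $V=\fa$. The lattice case is exactly what must be ruled out, and your argument does not address it.

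Benoist's actual proof supplies the missing piece via a perturbation argument: if $H\ne\fa$, choose a linear form $\chi$ with $\chi(H)\subset\Z$; then the bounded error $\lambda(\ga^n\eta)-n\lambda(\ga)-\lambda(\eta)$ from your step (ii), being both bounded and integer-valued under $\chi$, takes finitely many values. Benoist then shows that as one varies $\eta$ within a Zariski-dense family of loxodromic elements, this error varies \emph{continuously} (indeed it is expressed via cross-ratio type quantities on the flag variety), forcing it to be constant, and derives a contradiction by exhibiting variations. The continuity of the error term under perturbation of the ping-pong data is the decisive ingredient you are missing; without it the argument cannot exclude the possibility that $\lambda(\Ga)$ generates a full-rank lattice rather than a dense subgroup.
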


\subsection*{Busemann map and Gromov product}
The {\it $\frak a$-valued Busemann map} $\beta: \cal F_\Pi \times G \times G \to\frak a $ is defined as follows: for $\xi\in \cal F$ and $g, h\in G$,
$$  \beta_\xi ( g, h):=\sigma (g^{-1}, \xi)-\sigma(h^{-1}, \xi)$$
where  $\sigma(g^{-1},\xi)\in \fa$ 
is the unique element such that $g^{-1}k \in K \exp (\sigma(g^{-1}, \xi)) N$ for any $k\in K$ with $\xi=kP$.
For $(\xi,g,h)\in \cal F_\theta\times G\times G$, we define
 \be\label{Bu} \beta_{\xi}^\theta (g, h): = 
p_\theta ( \beta_{\xi_0} (g, h)) \ee 
for any $\xi_0\in \F_\Pi$ projecting to $\xi$.
This is well-defined independent of the choice of $\xi_0$ 
\cite[Lemma 6.1]{Quint2002Mesures}. Moreover, since product map $ K\times A \times N\to G$ is a diffeomorphism, Busemann maps are continuous.

Two points $\xi \in \F_{\theta}$ and $\eta \in \F_{\i(\theta)}$ are said to be {\it in general position} if 
\be \label{gp} \xi=gP_{\theta}\text{ and } \eta = gw_0 P_{\i(\theta)} \text{ for some $g\in G$}.\ee 
We set
\be\label{fgp} \F_\theta^{(2)}=\{(\xi,\eta)\in \F_\theta\times \F_{\i(\theta)}:  \text{$\xi, \eta$ are in general position} \}\ee 
which is the unique open $G$-orbit in $\F_\theta\times \F_{\i(\theta)}$ under the diagonal $G$-action.

For $(\xi, \eta) \in \F_{\theta}^{(2)}$, we define the {\it $\fa_{\theta}$-valued Gromov product } as 
\be \label{eqn.gromovproductdef}
\langle \xi, \eta \rangle = \beta_{\xi}^{\theta}(e, g) + \i (\beta_{\eta}^{\i(\theta)}(e, g))
\ee
where $g \in G$ satisfies $(gP_\theta, gw_0P_{\i(\theta)}) = (\xi, \eta)$. This does not depend on the choice of $g$ \cite[Lemma 9.13]{KOW_indicators}.

\subsection*{Patterson-Sullivan measures}
For $\psi\in \fa_\theta^*$, a {\it $(\Gamma, \psi)$-conformal measure} is a Borel probability measure on $\F_\theta$ such that 
\be \label{eqn.psmeas}
\frac{d \gamma_*\nu}{d\nu}(\xi)=e^{\psi(\beta_\xi^\theta(e,\gamma))} \quad \text{for all $\gamma \in \Gamma$ and $ \xi \in \F_\theta$}
\ee
 where ${\ga}_* \nu(D) = \nu(\ga^{-1}D)$ for any Borel subset $D\subset \F_\theta$ and $\beta_\xi^\theta$ denotes the $\fa_\theta$-valued Busemann map defined in \eqref{Bu}. A $(\Ga, \psi)$-conformal measure supported on $\La_\theta$ is called a {\it $(\Ga, \psi)$-Patterson Sullivan measure}.

\subsection*{Growth indicator} Let $\Ga<G$ be a $\theta$-discrete subgroup, that is,  $\mu_\theta|_\Ga$ is a proper map.
The {\it $\theta$-growth indicator} $\psi_\Ga^{\theta}:\fa_\theta\to [-\infty, \infty) $ is a higher-rank version of the critical exponent, which is defined as follows: If $u \in \fa_\theta$ is non-zero,
\be \label{eqn.defofGI}
\psi_\Ga^{\theta}(u)=\|u\| \inf_{u\in \cal C}
\tau^\theta_{\mathcal C}\ee 
where $ \tau^{\theta}_{\cal C}$ is the abscissa of convergence of the series $\sum_{\ga\in \Ga, \mu_\theta(\ga)\in \mathcal C} e^{-s\|\mu_\theta(\ga)\|}$ and $\cal C\subset \fa_\theta$ ranges over all open cones containing $u$. Set $\psi_{\Ga}^{\theta}(0) = 0$. This definition was given in \cite{KOW_indicators}, extending Quint's growth indicator  \cite{Quint2002divergence}  to a general $\theta$. 

For $\Ga$ transverse and $\psi$ $(\Ga, \theta)$-proper, it is proved in \cite{KOW_indicators} that if there exists a $(\Ga, \psi)$-conformal measure on $\F_\theta$, then $$\psi \ge \psi_{\Ga}^{\theta}.$$
We say that $\psi\in \fa_\theta^*$  is {\it tangent} to $\psi_{\Ga}^{\theta}$ if $\psi\ge \psi_\Ga^\theta$ and $\psi(u) =\psi_{\Ga}^{\theta}(u)$ for some $u \in \fa_{\theta} - \{0\}$. 
In the rank-one case, if 
$\delta_\Ga$ is the critical exponent of the Poincar\'e series $\sum_{\ga\in \Ga } e^{-s d(o, \ga o)}$ and 
$v\in \fa^+$
is the unique vector with $d(o, \exp v o)=1$, then $\psi_\Ga^{\Pi}$ on $\fa^+=\br_+v$ is given by $\psi_\Ga^{\Pi} (tv)=\delta_\Ga t$. As $\psi_{\Ga}^{\Pi}$ itself is the restriction of a linear form to $\fa^+$, it is the unique linear form tangent to itself.
In higher-rank, $\psi_\Ga^{\theta}$ is typically non-linear but concave and
there are abundant tangent linear forms in general.
As in the rank-one setting, interesting geometry and dynamics occur for tangent linear forms.

\section{Vector bundle structure of the non-wandering set $\Omega_\Ga$} \label{sec.bundle}
We fix a non-empty subset $\theta$ of $\Pi$.
In this section, we assume that $\Ga<G$ is a non-elementary {\it $\theta$-transverse} subgroup, that is, $\Ga$ satisfies
\begin{itemize}
\item (non-elementary): $\#\La_\theta \ge 3$;
    \item  (regularity):
$ \liminf_{\ga\in \Ga} \alpha(\mu({\ga}))=\infty $ for all $\alpha\in \theta$; and
\item  (antipodality): any two distinct $\xi, \eta\in \La_{\theta\cup \i(\theta)}$ 
are in general position as in \eqref{gp}.
\end{itemize}

We will define a locally compact Hausdorff space $\Omega_\Ga$ which is the non-wandering set for the action of $A_\theta$. Recall that the centralizer of $A_\theta$ is the direct product $A_\theta S_\theta$ where $S_\theta$ is a compact central extension of a connected semisimple real algebraic subgroup. Note that
$S_\theta$ is compact if and only if $\theta=\Pi$.

The homogeneous space $G/S_{\theta}$ can be identified with the space $\F_{\theta}^{(2)} \times \fa_{\theta}$ via the map $$g S_{\theta} \mapsto (gP_\theta, gw_0P_{\i(\theta)}, \beta_{gP_{\theta}}^{\theta}(e, g)),$$
recalling that $w_0 \in K$ is the longest Weyl element,
    and the left $G$-action on $\F_{\theta}^{(2)} \times \fa_{\theta}$ given by $$g(\xi, \eta, v) = (g \xi, g \eta, v + \beta_{\xi}^{\theta}(g^{-1}, e))$$ makes the above identification $G$-equivariant. Since $S_{\theta}$ commutes with $A_{\theta}$, the diagonal subgroup $A_{\theta}$ acts on $G/S_{\theta}$ on the right, and this action is conjugate to the action of $\fa_{\theta}$ on $\F_{\theta}^{(2)} \times \fa_{\theta}$ by the translation on the last component.
Since $S_\theta$ is not compact in general, the action of $\Ga$ on $\F_\theta^{(2)}\times \fa_\theta$ is not properly discontinuous. However for $\Ga$ transverse,
the $\Ga$-action restricted to the subspace $\tilde{\Omega}_{\Ga}:= \La_\theta^{(2)}\times \fa_\theta $ turns out to be properly discontinuous where $\La_\theta^{(2)}=\F_\theta^{(2)}\cap (\La_\theta\times \La_{\i(\theta)})$
\cite[Theorem 9.1]{KOW_indicators}.
Hence we obtain the locally compact second countable Hausdorff space $$\Omega_{\Ga}:=\Ga\ba \tilde \Omega_{\Ga},$$
which is the non-wandering set for the right $A_\theta$-action.
  
For each $(\Ga, \theta)$-proper form $\psi \in \fa_{\theta}^*$, $\Omega_\Ga$ admits a $\ker\psi$-bundle structure over a non-wandering set $\Omega_\psi$ for a one-dimensional flow. More precisely, 
\begin{theorem}\cite[Theorem 9.2]{KOW_indicators}\label{op2}
The $\Ga$-action on the space $\tilde \Omega_{\psi}:=\La_\theta^{(2)}\times \br$ given by $$\ga (\xi, \eta, s) = (\ga \xi, \ga \eta, s + \psi(\beta_{\xi}^{\theta}(\ga^{-1}, e)))$$
     is properly discontinuous.
     Thus the space \be\label{op} \Omega_{\psi} := \Ga \ba \tilde \Omega_{\psi} =\Ga\ba (\La_\theta^{(2)}\times \br) \ee  is a locally compact second countable Hausdorff space equipped with the translation flow 
     $\{\phi_t\}$ on the $\br$-component.
     \end{theorem}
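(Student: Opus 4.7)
The plan is to reduce proper discontinuity on $\tilde\Omega_\psi=\La_\theta^{(2)}\times\br$ to that already established on $\tilde\Omega_\Ga=\La_\theta^{(2)}\times\fa_\theta$ in \cite[Theorem 9.1]{KOW_indicators} (cited above), via the $\Ga$-equivariant linear projection
$$\pi:\tilde\Omega_\Ga\to\tilde\Omega_\psi,\qquad (\xi,\eta,v)\mapsto(\xi,\eta,\psi(v)).$$
Linearity of $\psi$ makes this equivariant, since the $\Ga$-translation $v\mapsto v+\beta_\xi^\theta(\ga^{-1},e)$ on $\fa_\theta$ descends to $s\mapsto s+\psi(\beta_\xi^\theta(\ga^{-1},e))$ on $\br$.

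First I would check that the formula defines a continuous $\Ga$-action on $\tilde\Omega_\psi$, using the cocycle identity for $\beta^\theta$ and continuity of the Busemann map in $\xi\in\F_\theta$. For proper discontinuity, given a compact $K\subset\tilde\Omega_\psi$, I argue by contradiction: assume there are distinct $\ga_n\in\Ga$ and points $p_n=(\xi_n,\eta_n,s_n)$, $q_n=(\xi_n',\eta_n',s_n')\in K$ with $\ga_n p_n=q_n$. After passing to subsequences, $p_n\to p=(\xi,\eta,s)$ and $q_n\to q=(\xi',\eta',s')$ in $K$, and comparing third coordinates yields that $\psi(\beta_{\xi_n}^\theta(\ga_n^{-1},e))=s_n'-s_n$ stays bounded.

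The hard part is to derive a contradiction from this boundedness together with $\ga_n\to\infty$ in $\Ga$ (which follows from regularity, since $\mu_\theta|_\Ga$ is proper). The same machinery that underlies the proof of \cite[Theorem 9.1]{KOW_indicators} applies here: after a further subsequence, the Cartan attractors $\xi_n^+\in\F_\theta$ of $\ga_n$ and $\xi_n^-\in\F_{\i(\theta)}$ of $\ga_n^{-1}$ converge to limits $\xi_\infty^+$ and $\xi_\infty^-$. The hypotheses $\ga_n\xi_n\to\xi'$ and $\ga_n\eta_n\to\eta'$ combined with $\xi_n\to\xi$ and $\eta_n\to\eta$ force $\xi_\infty^+=\xi'$ and $\xi_\infty^-=\eta$. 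By antipodality, $\xi$ is in general position with $\xi_\infty^-$, so a uniform contraction estimate on $\F_\theta$ gives
$$\beta_{\xi_n}^\theta(\ga_n^{-1},e)=\mu_\theta(\ga_n)+O(1).$$
Applying $\psi$ and using that $\psi$ is $(\Ga,\theta)$-proper, $\psi(\mu_\theta(\ga_n))\to\infty$, contradicting the boundedness established above. The technical subtlety here is the uniform nature of the contraction estimate, which is where antipodality enters in an essential way.

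Once proper discontinuity is established, $\Omega_\psi=\Ga\backslash\tilde\Omega_\psi$ is automatically locally compact, second countable and Hausdorff, as $\tilde\Omega_\psi$ has these properties. Finally, the translation flow $\phi_t(\xi,\eta,s)=(\xi,\eta,s+t)$ on the $\br$-component commutes with the $\Ga$-action, since the cocycle $\psi(\beta_\xi^\theta(\ga^{-1},e))$ depends only on $\ga$ and $\xi$ and not on $s$; it therefore descends to a continuous $\br$-action on $\Omega_\psi$, completing the statement.
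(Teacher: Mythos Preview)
The paper does not give its own proof of this statement; it is quoted verbatim from \cite[Theorem 9.2]{KOW_indicators}, so there is no in-paper argument to compare against.

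Your direct contradiction argument is the right one and presumably matches the proof in the cited reference: the compactness of $K$ bounds $\psi(\beta^\theta_{\xi_n}(\ga_n^{-1},e))$, while the shadow inclusion (in this paper, Lemma~\ref{lem.eventualshadow}) together with the Busemann--Cartan comparison (Lemma~\ref{lem.buseandcartan}) gives $\psi(\beta^\theta_{\xi_n}(\ga_n^{-1},e))=\psi(\mu_\theta(\ga_n))+O(1)$, and $(\Ga,\theta)$-properness forces $\psi(\mu_\theta(\ga_n))\to\infty$. One expository point worth fixing: your opening sentence frames the plan as a ``reduction'' to the known proper discontinuity on $\tilde\Omega_\Ga$ via the projection $\pi$, but proper discontinuity does not in general descend along an equivariant quotient map with noncompact fibers (here $\ker\psi$), so that formal reduction is not available. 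You never actually use it---you argue directly on $\tilde\Omega_\psi$ using the $(\Ga,\theta)$-proper hypothesis, which is exactly the extra input beyond Theorem~9.1---so the argument is fine, but the framing should be adjusted to reflect what you actually do.
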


\begin{Rmk} \label{rmk.exampleproper} Any linear form which is positive on $\fa^+\cap \fa_\theta-\{0\}$, e.g., any non-negative linear combination of the fundamental weights $\omega_\alpha$, $\alpha\in \theta$, is $(\Ga, \theta)$-proper. On the other hand, a linear form which takes negative values on some part of the $\theta$-limit cone is never $(\Ga, \theta)$-proper (see \cite{KOW_indicators}).
\end{Rmk}
     Explicitly, the {\it translation flow} $\{\phi_t\}$ is defined as follows: for $t \in \R$ and $(\xi, \eta, s) \in \tilde{\Omega}_{\psi}$, $$\phi_t(\xi, \eta, s) = (\xi, \eta, s + t).$$
     This flow $\{\phi_t\}$ on $\tilde \Omega_{\psi}$ commutes with the $\Ga$-action, and hence induces the one-dimensional flow on $\Omega_{\psi}$ which we also denote by $\phi_t$ by abusing notations.

Consider the projection $\Omega_{\Ga} \to \Omega_{\psi}$  induced by the $\Ga$-equivariant projection
$\tilde{\Omega}_{\Ga} \to \tilde \Omega_{\psi}$ given by $(\xi, \eta, v)\mapsto (\xi, \eta, \psi(v))$. This is a principal $\ker\psi$-bundle, which
is trivial since $\ker\psi$ is a vector space. It follows that there exists a $\ker\psi$-equivariant homeomorphism
between $\Omega_\Ga$ and $\Omega_\psi \times \ker \psi$.
    $$\begin{tikzcd}[column sep = tiny]
    {\color{white} \simeq \Omega_{\psi} \times \ker \psi} \hspace{-1.3em}  & \Omega_{\Ga} \arrow[d]  & \hspace{-1.3em} \simeq \Omega_{\psi} \times \ker \psi \\
    & \Omega_{\psi} &
\end{tikzcd}$$

Let $\nu$ and $\nu_{\i}$ be a pair of $(\Ga, \psi)$ and $(\Ga, \psi \circ \i)$-Patterson-Sullivan measures on $\La_{\theta}$ and $\La_{\i(\theta)}$ respectively.
The Bowen-Margulis-Sullivan measure $\m_{\psi}$ on $\Omega_{\Ga}$ associated with the pair $(\nu, \nu_{\i})$ is the $A_\theta$-invariant measure induced by the $\Ga$-invariant measure  $d \tilde{\mathsf{m}}^{\BMS}_{\psi}(\xi, \eta, v) := e^{\psi(\langle \xi, \eta \rangle)} d \nu(\xi) d \nu_{\i}(\eta) d \Leb_{\fa_\theta}$
on $\tilde{\Omega}_{\Ga}$, where $\langle \cdot, \cdot \rangle$ denotes the Gromov product \eqref{eqn.gromovproductdef} and $d \Leb_{\fa_\theta}$ denotes the Lebesgue measure on $\fa_{\theta}$. 

We also have a $\{\phi_t\}$-invariant Radon measure $m_\psi$
on $\Omega_\psi$ induced by the $\Ga$-invariant measure \be\label{mp} d \tilde m_{\psi}(\xi, \eta, s) := e^{\psi(\langle \xi, \eta \rangle)} d \nu(\xi) d \nu_{\i}(\eta) ds\ee 
on $\tilde \Omega_{\psi}$ where $ds$ denotes the Lebesgue measure on $\br$. The measure $m_{\psi}$ is also referred to as {\it Bowen-Margulis-Sullivan measure} on $\Omega_{\psi}$ associated with the pair $(\nu, \nu_{\i})$. By the $\ker\psi$-equivariant homeomorphism
 $\Omega_\Ga\simeq \Omega_\psi \times \ker \psi$, $\m_\psi$ disintegrates over the measure $m_\psi$ with conditional measure being the Lebesgue measure $\Leb_{\ker \psi}$ so that
$$\m_{\psi} = m_{\psi} \otimes \Leb_{\ker \psi} .$$

\section{Strong mixing for transverse groups with finite BMS measure}

Let $\Ga < G$ be a non-elementary $\theta$-transverse subgroup.
Fix a $(\Ga, \theta)$-proper form $\psi \in \fa_{\theta}^*$ and a pair $(\nu, \nu_{\i})$ of $(\Ga, \psi)$ and $(\Ga, \psi \circ \i)$-Patterson-Sullivan measures on $\La_{\theta}$ and $\La_{\i(\theta)}$ respectively. Let $\Omega_\psi$ be as in Theorem \ref{op2} and
  $m_\psi=m_\psi(\nu, \nu_{\i})$ denote a BMS measure on $\Omega_\psi$ associated to a pair $(\nu, \nu_{\i})$.

  This section is devoted to the proof of the following:
\begin{theorem} \label{thm.mixing}
     If $|m_\psi|<\infty$, then $(\Omega_{\psi}, m_{\psi}, \phi_t)$ is strongly mixing. That is, for any $f_1, f_2\in L^2(\Omega_\psi, m_{\psi})$,
   $$\lim_{|t|\to \infty}
   \int f_1 ( \phi_t (x)) f_2 (x) \ dm_{\psi} (x) = \frac{1}{|m_{\psi}|} \int f_1 \ d m_{\psi} \int f_2 \ d m_{\psi} .$$
\end{theorem}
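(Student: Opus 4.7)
The plan is to adapt Babillot's classical mixing argument for the geodesic flow on a negatively curved manifold to the higher-rank setting, with Benoist's density theorem (Theorem~\ref{thm.Benoistdense}) replacing the non-arithmeticity of the length spectrum used in rank one. The argument runs by contradiction: assuming $(\Omega_\psi, m_\psi, \phi_t)$ is not strongly mixing, I would produce a measurable eigenfunction of the flow with nonzero frequency, and then use $\Gamma$-equivariance along periodic orbits together with Benoist density to obtain a contradiction. The first input is ergodicity of $(\Omega_\psi, m_\psi, \phi_t)$: finiteness of $m_\psi$ together with the local product form \eqref{mp} and the $(\Gamma, \psi)$- and $(\Gamma, \psi\circ \i)$-conformality \eqref{eqn.psmeas} of $\nu, \nu_{\i}$ allows a standard Hopf-type argument on $\Lambda_\theta^{(2)}$, yielding ergodicity of $\Gamma$ on $(\Lambda_\theta^{(2)}, \nu \otimes \nu_{\i})$ and hence of $\phi_t$ on $\Omega_\psi$.

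Next, suppose strong mixing fails. By Babillot's spectral dichotomy, which only requires a local product structure of the invariant measure and ergodicity of the flow (not the CAT($-1$) origin of the dynamics), there exist $F \in L^2(\Omega_\psi, m_\psi)$, not $m_\psi$-a.e.\ zero, and $\xi \in \mathbb{R} \setminus \{0\}$ such that
$$F \circ \phi_t = e^{i \xi t} F \quad \text{for all } t \in \mathbb{R}, \text{ $m_\psi$-almost everywhere.}$$
Lift $F$ to a $\Gamma$-invariant measurable function $\tilde F$ on $\tilde \Omega_\psi = \Lambda_\theta^{(2)} \times \mathbb{R}$. The product structure of $\tilde m_\psi$ together with ergodicity of the conditional measures along the stable and unstable horocyclic foliations forces $\tilde F$ to be constant along strong-stable and strong-unstable leaves; hence, after modification on an $m_\psi$-null set, one can write $\tilde F(\eta_+, \eta_-, s) = h(\eta_+, \eta_-) e^{i \xi s}$ for some measurable $h$ on $\Lambda_\theta^{(2)}$.

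I would then evaluate this identity along periodic orbits. Non-elementarity and antipodality ensure that $\Gamma$ contains loxodromic elements in abundance; each loxodromic $\gamma \in \Gamma$ with attracting and repelling fixed points $(y_\gamma^+, y_\gamma^-) \in \Lambda_\theta^{(2)}$ gives a closed $\phi_t$-orbit on $\Omega_\psi$ of period $\psi(\lambda(\gamma))$. Combining $\Gamma$-invariance of $\tilde F$ with the eigenfunction relation forces
$$e^{i \xi \psi(\lambda(\gamma))} = 1, \qquad \text{i.e.,} \qquad \xi \cdot \psi(\lambda(\gamma)) \in 2\pi \mathbb{Z} \quad \text{for every loxodromic } \gamma \in \Gamma.$$
Applying Theorem~\ref{thm.Benoistdense} to the Zariski closure of $\Gamma$, the subgroup generated by the Jordan projections $\{\lambda(\gamma)\}$ is dense in the corresponding Cartan subalgebra; since $\psi$ is $(\Gamma, \theta)$-proper and hence non-trivial on this subalgebra, the set $\{\psi(\lambda(\gamma)) : \gamma \in \Gamma\}$ is dense in $\mathbb{R}$, forcing $\xi = 0$, a contradiction.

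The main obstacle I anticipate is evaluating the measurable eigenfunction $\tilde F$ along individual periodic orbits, since such orbits are $m_\psi$-null. The remedy, following Babillot, is to first promote $\tilde F$ to a continuous representative on $\mathrm{supp}\, m_\psi$ via a convolution argument that exploits the local product structure, or alternatively to use density of periodic orbits in $\mathrm{supp}\, m_\psi$ together with a Lusin-type approximation. A secondary complication is that $\Gamma$ need not be Zariski dense, so Theorem~\ref{thm.Benoistdense} must be applied to the Zariski closure of $\Gamma$, after verifying that $\psi$ restricts non-trivially to its Cartan subalgebra; properness of $\psi \circ \mu|_\Gamma$ guarantees this.
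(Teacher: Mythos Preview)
Your overall strategy---Babillot's argument plus Benoist density, with care about the Zariski closure---is exactly what the paper does. But there is a genuine gap in your reduction step. The statement you call ``Babillot's spectral dichotomy'' is not a standalone lemma: failure of \emph{strong} mixing does not, in general, produce a nontrivial $L^2$-eigenfunction of the flow (that would follow from failure of \emph{weak} mixing, and there exist weakly mixing systems that are not strongly mixing). What Babillot's Lemma~1 actually gives (this is the paper's Lemma~\ref{lem.babillot1}) is much weaker: if $f$ has mean zero and $f\circ\phi_{t_i}\not\to 0$ weakly along some $t_i\to\infty$, then after passing to a subsequence there is a \emph{non-constant} $F$ with $f\circ\phi_{t_i}\to F$ and $f\circ\phi_{-t_i}\to F$ weakly. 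No eigenvalue is produced; the conclusion is only a two-sided weak limit.

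The paper then runs the argument in the order you have essentially reversed. Using Ces\`aro averages to upgrade to a.e.\ convergence (Lemma~\ref{lem.babillot2}) and the contraction/expansion of $W^\pm$ under $\phi_{\mp t}$ (Proposition~\ref{prop.admissiblemetric}), the two-sided convergence forces $F$ to be constant along both $W^+$ and $W^-$ leaves on a full-measure set. One then introduces the $\theta$-transitivity subgroup $\mathcal H_\Gamma^\theta(g)\subset A_\theta$, built from finite $N_\theta$/$N_\theta^+$ zig-zags landing back on the same $\Gamma$-orbit; its logarithm contains the Jordan projections $\lambda_\theta(\Gamma_0)$ of a Schottky subgroup, so $\psi(\log\mathcal H_\Gamma^\theta(g))$ is dense in $\mathbb R$ by Benoist applied inside the Zariski closure (Proposition~\ref{prop.densetranse}; your remark about passing to the Zariski closure is correct and is exactly what is done there). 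An averaging trick $F_\varepsilon(x)=\tfrac1\varepsilon\int_{-\varepsilon}^\varepsilon \tilde F(\phi_s x)\,ds$ gives continuity along flow orbits, and an approximation of the zig-zag by nearby ones landing in the full-measure set yields $F_\varepsilon\circ\phi_t=F_\varepsilon$ for a dense set of $t$, hence for all $t$. Thus $F$ is $\phi_t$-invariant, contradicting ergodicity (Theorem~\ref{thm.ergodic}). Your worry about evaluating an a.e.\ function on null periodic orbits is exactly the issue this transitivity/approximation machinery is designed to avoid; once you replace the eigenfunction step by Babillot's actual weak-limit lemma and target flow-invariance rather than an eigenvalue relation, the rest of your outline matches the paper.
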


We begin by observing the ergodicity of $m_\psi$:

\begin{theorem} \label{thm.ergodic}
   If $|m_\psi|<\infty$, then $(\Omega_{\psi}, m_{\psi}, \phi_t)$ is ergodic.
\end{theorem}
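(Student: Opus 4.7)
The plan is to adapt the classical Hopf argument, reducing ergodicity of the one-dimensional flow $\{\phi_t\}$ on $(\Omega_\psi, m_\psi)$ to ergodicity of the diagonal $\Ga$-action on $(\La_\theta^{(2)},\nu\otimes\nu_{\i})$. First, since $\phi_t$ acts on $\tilde\Omega_\psi=\La_\theta^{(2)}\times\br$ by translation in the $\br$-coordinate alone, any $\phi_t$-invariant Borel set $A\subset\Omega_\psi$ pulls back to a $\Ga$- and translation-invariant subset of $\tilde\Omega_\psi$, which by Fubini has the form $B\times\br$ for some $\Ga$-invariant Borel $B\subset\La_\theta^{(2)}$. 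Using the expression \eqref{mp} for $\tilde m_\psi$ together with a fundamental domain for $\Ga$, the required dichotomy $m_\psi(A)\in\{0,|m_\psi|\}$ reduces to showing that $\Ga$ acts ergodically on $(\La_\theta^{(2)},\nu\otimes\nu_{\i})$.

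To this end I would invoke the Birkhoff ergodic theorem. Since $|m_\psi|<\infty$, Poincar\'e recurrence gives conservativity of $\{\phi_t\}$, and for every $f\in C_c(\Omega_\psi)$ the forward and backward time averages
\[
\bar f^{\pm}(x)=\lim_{T\to\infty}\frac{1}{T}\int_0^T f(\phi_{\pm t}x)\,dt
\]
exist $m_\psi$-a.e., are $\phi_t$-invariant, and satisfy $\bar f^+=\bar f^-$ a.e. The heart of the Hopf argument is to show that the $\Ga$-periodic lift $\tilde{\bar f}^+$ depends only on the forward coordinate $\xi\in\La_\theta$, while $\tilde{\bar f}^-$ depends only on the backward coordinate $\eta\in\La_{\i(\theta)}$. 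Combined with $\tilde{\bar f}^+=\tilde{\bar f}^-$, this forces both averages to be $(\nu\otimes\nu_{\i})$-a.e.\ constant; applying this to a countable uniformly dense family in $C_c(\Omega_\psi)$ and approximating $\mathbf{1}_A$ then finishes the proof.

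To establish the key invariance claim I would exploit the local product structure $d\tilde m_\psi=e^{\psi(\langle\xi,\eta\rangle)}\,d\nu(\xi)\,d\nu_{\i}(\eta)\,ds$ together with the conformality \eqref{eqn.psmeas} of the Patterson--Sullivan measures. For two lifts $(\xi,\eta_1,s_1)$ and $(\xi,\eta_2,s_2)$ sharing the forward direction $\xi$, one should produce a sequence $\ga_n\in\Ga$ whose action drives the two forward orbit segments into a common compact set of $\Omega_\psi$ at comparable times, with the Busemann discrepancy $\psi(\beta_\xi^\theta(\ga_n^{-1},e))$ cancelled against the $s$-coordinate shift; uniform continuity of $f$ then equates the two Birkhoff averages, and a symmetric argument handles the backward direction. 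I expect this shadowing step to be the main obstacle, since unlike the rank-one geodesic flow the reparameterized $\phi_t$ need not uniformly contract strong stable leaves, so the argument has to proceed measure-theoretically via the product decomposition of $\tilde m_\psi$ rather than by pointwise contraction. The transversality hypotheses on $\Ga$ (regularity and antipodality) combined with conformality are exactly what is needed to realize this shadowing and push the classical Babillot--Roblin strategy through in the present higher-rank setting.
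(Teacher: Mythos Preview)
The paper's proof is two lines: since $|m_\psi|<\infty$, Poincar\'e recurrence gives conservativity, and then the higher-rank Hopf--Tsuji--Sullivan dichotomy \cite[Theorem 10.2]{KOW_indicators} immediately yields ergodicity. Your proposal amounts to reproving (one direction of) that dichotomy from scratch via a Hopf argument, which is a legitimate route but a much longer one.

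More to the point, your proposal is not a complete proof: you explicitly flag the ``shadowing step'' as the main obstacle and leave it open, speculating that a measure-theoretic workaround might be needed because the reparameterized flow need not contract stable leaves pointwise. That concern is misplaced in this setting. The paper's Proposition~\ref{prop.admissiblemetric} shows that for any $x,y$ in the same leaf $W^-(z)$ one has $\d(\phi_t x,\phi_t y)\to 0$ as $t\to+\infty$, uniformly on compacta, where $\d$ is any metric on $\Omega_\psi$ coming from a one-point compactification. This is exactly the pointwise contraction you need: combined with uniform continuity of $f\in C_c(\Omega_\psi)$, it forces the forward Birkhoff average to be constant along $W^-$-leaves. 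By Lemma~\ref{lem.transbyhoro}, the leaf $\tilde W^-([g])$ consists of points $(\xi,\eta',s)$ with $\xi$ and $s$ fixed; coupled with $\phi_t$-invariance of the average, this gives the dependence on $\xi$ alone that you are after, and the symmetric statement for $W^+$-leaves handles $\eta$. So the Hopf argument does go through pointwise, without the measure-theoretic detour you anticipate, but you have not supplied this ingredient and your proposal as written remains a sketch with an unfilled gap.
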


\begin{proof}
    By the Poincar\'e recurrence theorem, the dynamical system $(\Omega_{\psi}, m_{\psi}, \phi_t)$ is conservative. Hence it follows from the higher-rank Hopf-Tsuji-Sullivan dichotomy \cite[Theorem 10.2]{KOW_indicators} that $(\Omega_{\psi}, m_{\psi}, \phi_t)$ is ergodic.
\end{proof}
Although the flow space $\Omega_\psi$ was not considered,
Theorem \ref{thm.ergodic} can also be deduced from \cite{CZZ_transverse} once $\Omega_{\psi}$ is shown to make sense.
See also \cite{LO1} and \cite{sambarino2022report} for Anosov cases.
\subsection*{$\theta$-transitivity subgroups}

For $g\in G$, we set
$g^+ := gP_{\theta} \in \F_{\theta}$ and $g^- := g w_0 P_{\i(\theta)} \in \F_{\i(\theta)}$.
Set $N_\theta^+=w_0 N_{\i(\theta)} w_0^{-1}$.
We use the following notion of $\theta$-transitivity subgroup:
\begin{definition}
    For $g \in G$ with  $(g^+, g^-) \in \La_{\theta}^{(2)}$, we define the subset $\cal H_{\Ga}^{\theta}(g) $ of $ A_{\theta}$ as follows: for $a\in A_\theta$,
    $a \in \cal H_{\Ga}^{\theta}(g)$ if and only if there exist $\ga \in \Ga$, $s \in S_{\theta}$ and a sequence $n_1, \cdots, n_k \in N_{\theta} \cup N_{\theta}^+$, such that

    \begin{enumerate}
    \item $((gn_1 \cdots n_r)^+ , (gn_1 \cdots n_r)^-) \in \La_{\theta}^{(2)}$ for all $1 \le r \le k$; and
    \item $g n_1 \cdots n_k = \ga gas$.
    \end{enumerate}
    It is not hard to see that $\cal H_{\Ga}^{\theta}(g)$ is a subgroup (cf. \cite[Lemma 3.1]{Winter_mixing}). We call $\cal H_{\Ga}^{\theta}$ the {\it $\theta$-transitivity subgroup} for $\Ga$.
   
\end{definition}

In the following, we prove that the $\theta$-transitivity subgroup $\cal H_{\Ga}^{\theta}$ contains  $\exp \la_{\theta}(\Ga_0)$ for some Zariski dense Schottky subgroup $\Ga_0 < \Ga$.

\begin{proposition} \label{prop.densetranse}
    For any $g \in G$ such that $(g^+, g^-) \in \La_{\theta}^{(2)}$, the subgroup $\psi(\log \cal H_{\Ga}^{\theta}(g))$ is dense in $\R$.
\end{proposition}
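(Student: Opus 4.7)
The plan is to use a Gauss-decomposition ``closing'' construction to show that $\cal H_\Ga^\theta(g)$ contains elements realizing (up to bounded error) the Jordan projections $\la_\theta(\ga)$ of loxodromic $\ga\in\Ga$, and then invoke Benoist's density theorem (Theorem \ref{thm.Benoistdense}) to conclude that these values, viewed through the nonzero form $\psi$, are already dense in $\R$.

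First, for any $\ga\in\Ga$ satisfying $g^+\ne \ga g^-$ (which excludes at most one element of $\Ga$), antipodality of $\La_{\theta\cup\i(\theta)}$ forces $g^{-1}\ga g$ into the open Bruhat big cell $N_\theta\, L_\theta\, N_\theta^+$ of $G$. I would Gauss-decompose $g^{-1}\ga g = n_1\,\ell\, n_2$ with $n_1\in N_\theta$, $\ell = a_\ga s_\ga \in A_\theta S_\theta = L_\theta$, and $n_2\in N_\theta^+$. Since $L_\theta$ normalizes $N_\theta^+$, setting $n_2':=\ell n_2 \ell^{-1}\in N_\theta^+$ yields
\[
g\, n_1\, n_2' \;=\; \ga\, g\, a_\ga^{-1}\, s_\ga^{-1},
\]
which matches the defining equation for $\cal H_\Ga^\theta(g)$ (with $k=2$, $a=a_\ga^{-1}$, $s=s_\ga^{-1}$). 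Verifying the intermediate condition is immediate: since $n_1\in N_\theta$ stabilizes $g^+$ and $L_\theta$ stabilizes $w_0P_{\i(\theta)}\in\F_{\i(\theta)}$, we get $(gn_1)^+ = g^+\in\La_\theta$, $(gn_1)^-=\ga g^-\in\La_{\i(\theta)}$, and $(gn_1n_2')^\pm = \ga g^\pm\in\La_{\theta,\i(\theta)}$, all in general position by antipodality. Hence $a_\ga^{-1}\in\cal H_\Ga^\theta(g)$ for all but at most one $\ga\in\Ga$.

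Second, I would extract the asymptotics of $\log a_\ga$. For a loxodromic $\ga\in\Ga$ with attracting/repelling fixed points $\ga_+\in\La_\theta$ and $\ga_-\in\La_{\i(\theta)}$, the Gauss-decomposition of $g^{-1}\ga^n g$ for large $n$ is dominated by the Jordan decomposition of $\ga^n$; the contracting dynamics on $\F_\theta$ near $\ga_+$ and on $\F_{\i(\theta)}$ near $\ga_-$ (tracked via the Busemann cocycle $\beta^\theta$) gives
\[
\log a_{\ga^n} \;=\; n\,\la_\theta(\ga) \;+\; O(1) \qquad \text{as } n\to\infty,
\]
with the error depending on the positions of $g^\pm$ relative to $\ga_\mp$. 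Since $\cal H_\Ga^\theta(g)$ is a subgroup of $A_\theta$, its $\psi\circ\log$-image contains, for every loxodromic $\ga\in\Ga$, values asymptotic to $-n\,\psi(\la_\theta(\ga))$ as $n\to\infty$.

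Finally, I would invoke Benoist's theorem applied to the Zariski closure $H$ of $\Ga$ (a reductive subgroup of $G$ in which $\Ga$ is Zariski dense) to conclude that $\{\la(\ga):\ga\in\Ga\}$ generates a dense subgroup of the Cartan subalgebra of $H$. Composing with $p_\theta$ and then with the nonzero (since $\psi$ is $(\Ga,\theta)$-proper) linear form $\psi$, the set $\{\psi(\la_\theta(\ga)):\ga\in\Ga\}$ is dense in $\R$. Combined with the closing construction and the group property of $\cal H_\Ga^\theta(g)$, this forces $\psi(\log\cal H_\Ga^\theta(g))$ to contain a dense subset of $\R$, and a subgroup of $\R$ containing a dense subset is automatically dense. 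The main technical obstacle I anticipate is the asymptotic estimate $\log a_{\ga^n} = n\,\la_\theta(\ga) + O(1)$: this requires uniform control of the Bruhat coordinates of $\ga^n$ in terms of the contracting/expanding behavior of $\ga$ on the flag varieties, most cleanly expressed via the $\fa$-valued Busemann cocycle. A secondary subtlety is ensuring that Benoist's density is preserved under the projection $p_\theta$, which uses that $\Ga$ being non-elementary $\theta$-transverse guarantees the $p_\theta$-image of the Cartan of $H$ surjects onto $\fa_\theta$.
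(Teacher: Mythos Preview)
Your strategy is close in spirit to the paper's but takes a different technical route. The paper invokes \cite[Proposition~7.3]{KOW_ergodic}, which shows directly that for a suitable Schottky subgroup $\Ga_0<\Ga$ one has the \emph{exact} inclusion $\la_\theta(\Ga_0)\subset\log\cal H_\Ga^\theta(g)$ (via a multi-step closing that passes through the axis of each loxodromic), and then applies Benoist's density theorem. Your two-step Bruhat decomposition instead produces $a_\ga^{-1}\in\cal H_\Ga^\theta(g)$ with $\log a_\ga$ only \emph{asymptotic} to the Jordan projection; this is a more self-contained construction but trades the closing lemma in \cite{KOW_ergodic} for the asymptotic estimate you flag.

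There is a gap in the passage from your asymptotic to density. The bound $\log a_{\ga^n}=n\la_\theta(\ga)+O(1)$ is not by itself enough to force $\psi(\la_\theta(\ga))$ into the closure of $\psi(\log\cal H_\Ga^\theta(g))$: if that closure were $c\Z$, a bounded error term is compatible with $n\psi(\la_\theta(\ga))+b_n\in c\Z$ for all $n$. What you actually need is that the error term \emph{converges}, so that the consecutive differences $\psi(\log a_{\ga^{n+1}})-\psi(\log a_{\ga^n})$, which lie in $\psi(\log\cal H_\Ga^\theta(g))$ by the group property, tend to $\psi(\la_\theta(\ga))$. This sharper asymptotic does hold (it follows from the contracting dynamics of a $\theta$-loxodromic on the big cell), but it is exactly the technical point that the exact-inclusion argument of \cite{KOW_ergodic} sidesteps. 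Separately, the Zariski closure $H$ of $\Ga$ need not be reductive; the paper handles this by passing to the Levi factor $L$ in a decomposition $H=LU$, applying Benoist to the Zariski-dense image $\pi(\Ga)<L$, and using only that $\psi\circ p_\theta$ is nonzero on $\fa\cap\op{Lie}L$ (which follows from $(\Ga,\theta)$-properness) rather than the stronger surjectivity onto $\fa_\theta$ you invoke.
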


\begin{proof}
It was shown in \cite[Proposition 8.3]{KOW_ergodic} that if $\Ga$ is a Zariski dense $\theta$-transverse subgroup and if $g \in G$ is such that $(g^+, g^-) \in \La_{\theta}^{(2)}$, then the subgroup $\cal H_{\Ga}^{\theta}(g)$ is dense in $A_{\theta}$, by proving that for a Schottky subgroup $\Ga_0 < \Ga$, the set of Jordan projections $\la_{\theta}(\Ga_0)$ is contained in $\log \cal H_{\Ga}^{\theta}(g)$. The Zariski dense hypothesis was used to guarantee that $\Ga_0$ can be taken to be Zariski dense, and hence $\la_{\theta}(\Ga_0)$ generates a dense subgroup in $\fa_{\theta}$ (\cite{Benoist_properties2}, Theorem \ref{thm.Benoistdense}).

In general, let $H$ be the Zariski closure of $\Ga$ and
consider the Levi decomposition of $H$: $H=LU$ where $L$ is a reductive algebraic subgroup and $U$ the unipotent radical of $H$. Moreover, we have a Cartan decomposition $G=KA^+K$ so that $L=(K\cap L) (A^+\cap L) (K\cap L)$ 
by \cite{Mostow_reductive}.
If $\pi:H\to L$ denotes the projection, then $\pi(\Ga)$ is Zariski dense in $L$ and hence
its Jordan projection generates a dense subgroup of $\fa\cap \op{Lie} L$.
 This allows the same proof of \cite[Proposition 8.3]{KOW_ergodic} to work within $L$, and hence the claim follows.
\end{proof}

\subsection*{Contractions by flow on $\Omega_{\psi}$}
For $g \in G$, we write
$$[g] := (g^+, g^-, \psi( \beta_{g^+}^{\theta}(e, g))) \in \F_{\theta}^{(2)} \times \R.$$
We mainly consider the case when $[g] \in \tilde{\Omega}_{\psi} = \La_{\theta}^{(2)} \times \R$, that is, when $(g^+, g^-) \in \La_{\theta}^{(2)}$. For $[g] \in \tilde{\Omega}_{\psi}$, we denote by $\Ga[g] \in \Omega_{\psi}$ the element of $\Omega_{\psi}$ obtained as the projection of $[g]$ by $\tilde{\Omega}_{\psi} \to \Omega_{\psi}$.

We set for $g \in G$ such that $[g] \in \tilde{\Omega}_{\psi}$, 
\be \label{ww}
\begin{aligned}
    \tilde W^+ ([g]) & := \{ [g n] \in \tilde \Omega_{\psi} : n \in N_{\theta}^+ \}; \\
    \tilde W^- ([g]) & := \{ [g n] \in \tilde \Omega_{\psi} : n \in N_{\theta} \}.
\end{aligned}
\ee
The elements of $\tilde{W}^{\pm}([g])$ can be described as follows:
\begin{lemma}\cite[Lemma 8.4]{KOW_ergodic} \label{lem.transbyhoro}
    Let $g \in G$, $n \in N_{\theta}^+$, and  $n' \in N_{\theta}$. Then $$\begin{aligned}
        [g n] & = \left((gn)^+, g^-, \psi\left(\beta_{g^+}^{\theta}(e, g) + \langle(g n)^+, g^-\rangle - \langle(g^+, g^-)\rangle\right)\right); \\
        [gn'] & = \left(g^+, (gn')^-, \psi\left(\beta_{g^+}^{\theta}(e, g)\right)\right). \\
    \end{aligned}$$
\end{lemma}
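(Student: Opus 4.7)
The plan is to verify both identities by direct computation, exploiting (i) the cocycle property of the $\fa$-valued Busemann function, (ii) the defining relation $N_\theta^+ = w_0 N_{\i(\theta)} w_0^{-1}$, and (iii) the vanishing of certain Busemann cocycles along unipotent radicals. Throughout, I will freely use that the $\fa$-valued map $\beta_\xi(\cdot,\cdot)$ is a cocycle in its last two arguments and satisfies the $G$-equivariance $\beta_\xi(gh_1,gh_2)=\beta_{g^{-1}\xi}(h_1,h_2)$, together with its $p_\theta$-projection $\beta^\theta$.

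First, for $n'\in N_\theta$ the identity for $[gn']$ is essentially immediate. Since $N_\theta\subset P_\theta$ we have $(gn')^+=g^+$, so only the third coordinate needs attention. By the cocycle property and equivariance,
\[
\beta_{g^+}^\theta(e,gn') \;=\; \beta_{g^+}^\theta(e,g)+\beta_{eP_\theta}^\theta(e,n').
\]
Since $N_\theta\subset N$, any $n'\in N_\theta$ has Iwasawa decomposition $n'=e\cdot e\cdot n'$, so $\sigma(n'^{-1},eP)=0$ and hence $\beta_{eP_\theta}^\theta(e,n')=0$. Applying $\psi$ gives the second formula. Note that $(gn')^-=gn'w_0P_{\i(\theta)}$ need not equal $g^-$, but only the first and third coordinates enter the formula.

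For the first identity with $n\in N_\theta^+$, I would first show $(gn)^-=g^-$. Writing $n=w_0 m w_0^{-1}$ with $m\in N_{\i(\theta)}\subset P_{\i(\theta)}$, we get $nw_0 P_{\i(\theta)}=w_0 m P_{\i(\theta)}=w_0 P_{\i(\theta)}$, hence $(gn)^-=gw_0P_{\i(\theta)}=g^-$. For the third coordinate, the natural move is to pass through the Gromov product: rewriting the definition \eqref{eqn.gromovproductdef} as
\[
\beta_{h^+}^\theta(e,h) \;=\; \langle h^+,h^-\rangle-\i\!\left(\beta_{h^-}^{\i(\theta)}(e,h)\right)
\]
and applying it to both $h=gn$ and $h=g$ (and using $(gn)^-=g^-$), the target formula reduces to the identity
\[
\beta_{g^-}^{\i(\theta)}(e,gn)=\beta_{g^-}^{\i(\theta)}(e,g),
\]
i.e.\ the vanishing of $\beta_{g^-}^{\i(\theta)}(g,gn)=\beta^{\i(\theta)}_{w_0 P_{\i(\theta)}}(e,n)$.

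The main technical point, and the only place where genuine computation is needed, is this last vanishing: $\beta^{\i(\theta)}_{w_0P_{\i(\theta)}}(e,n)=0$ for all $n\in N_\theta^+$. I would verify it via the Iwasawa decomposition of $n^{-1}w_0$: writing $n=w_0 m w_0^{-1}$ with $m\in N_{\i(\theta)}\subset N$ gives $n^{-1}w_0=w_0 m^{-1}$, which already sits in the form $K\cdot A\cdot N$ with trivial $A$-component, so $\sigma(n^{-1},w_0 P)=0$ and the claim follows after applying $p_{\i(\theta)}$. The hard part is merely keeping track of conventions: the description of $N_\theta^+$ as $w_0 N_{\i(\theta)} w_0^{-1}$, the inclusion $N_{\i(\theta)}\subset N$, and the definition of $\beta^{\i(\theta)}$ in terms of a representative $\xi_0\in\F_\Pi$ lying over $w_0P_{\i(\theta)}$ (for which $\xi_0=w_0 P$ is the natural choice). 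Once these are aligned, combining the two computed pieces and applying $\psi$ yields the first formula.
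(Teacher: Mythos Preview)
Your proof is correct. The paper itself does not prove this lemma; it simply cites it from \cite[Lemma 7.4]{KOW_ergodic}, so there is no in-paper argument to compare against. Your direct computation via the Busemann cocycle and $G$-equivariance, reducing the first identity to the vanishing $\beta^{\i(\theta)}_{w_0P_{\i(\theta)}}(e,n)=0$ for $n\in N_\theta^+$ (checked via $n^{-1}w_0=w_0 m^{-1}\in KN$ with $m\in N_{\i(\theta)}$), is exactly the expected verification and is sound. One minor cosmetic point: in the second identity you describe the Iwasawa decomposition of $n'$ rather than of $n'^{-1}$, but since $N_\theta$ is a group this makes no difference.
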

These are leaves of foliations $\tilde W^{\pm} := \{ \tilde W^+([g]) : [g] \in \tilde \Omega_{\psi}\}$.
For $z\in \Omega_\psi$,
we set 
\be \label{ww2} W^+(z) := \Ga \ba \tilde W^+([g]), \quad \text{and} \quad W^-(z) :=  \Ga \ba \tilde W^-([g])\ee 
where $g\in G$ is such that $\Gamma [g]=z$.
The following proposition says that we may consider
$W^+ := \{ W^+(z): z\in \Omega_\psi\}$ and $W^- := \{W^-(z):z\in \Omega_\psi\}
$ as {\it unstable and stable foliations } for the flow $\phi_t$ in $\Omega_\psi$: note that since $\Omega_\psi$ is a locally compact second countable Hausdorff space by Theorem \ref{op2}, so is its one-point compactification $\Omega_\psi^*$. Hence $\Omega_\psi^*$ is metrizable. Therefore, we can choose a metric $\d$ on $\Omega_{\psi}$ which is a restriction of a metric on $\Omega_\psi^*$. That we can use this kind of metric $\d$ to prove the following proposition was first observed in \cite{BCZZ}.

\begin{proposition} \cite[Proposition 8.6]{KOW_ergodic} \label{prop.admissiblemetric} 
    Let $ z \in \Omega_{\psi}$. We have
    \begin{enumerate}
     \item if $x,y\in  W^+ (z)$, then $$\d(\phi_{-t}  (x), \phi_{-t} ( y) )\to 0 \quad \text{as } t \to +\infty.$$
     
        \item if $x,y\in    W^- (z)$, then
        $$\d(\phi_t (x ), \phi_t (y)) \to 0 \quad \text{as } t \to +\infty.$$
    \end{enumerate}
    Moreover, the convergence is uniform on compact subsets.
\end{proposition}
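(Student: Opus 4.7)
The plan is to lift the claim to the cover $\tilde\Omega_\psi\subset G/S_\theta$ and identify the flow $\phi_t$ with right translation by a one-parameter subgroup $a_t=\exp(tv_\psi)\subset A_\theta$, where $v_\psi$ is chosen in the interior of $\fa_\theta^+$ with $\psi(v_\psi)=1$; this is possible since $\psi$ is $(\Ga,\theta)$-proper and $\Ga$ is transverse. Lifting $x,y\in W^+(z)$ to $[g],[gn]\in\tilde\Omega_\psi$ with $n\in N_\theta^+$, the basic identity
\[
gn\cdot a_{-t}\;=\;ga_{-t}\cdot(a_tna_{-t})
\]
together with the fact that $\op{Ad}(a_{-t})$ strictly contracts $N_\theta^+=w_0N_{\i(\theta)}w_0^{-1}$ (whose roots are negative on the interior of $\fa_\theta^+$) yields $a_tna_{-t}\to e$ as $t\to+\infty$. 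So in $G/S_\theta$, the lifts of $\phi_{-t}x$ and $\phi_{-t}y$ differ on the right by a factor tending to the identity.

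I would then argue by contradiction using the compact space $\Omega_\psi^*$. Assume $\d(\phi_{-t_i}x,\phi_{-t_i}y)\ge\e$ along some $t_i\to+\infty$, and pass to a subsequence so that $\phi_{-t_i}x\to x_\infty$ and $\phi_{-t_i}y\to y_\infty$ in $\Omega_\psi^*$ with $x_\infty\ne y_\infty$. Split on whether $x_\infty$ is the added point at infinity or an interior point of $\Omega_\psi$. In the interior case, pick $\ga_i\in\Ga$ so that $\ga_i\phi_{-t_i}[g]$ is a convergent lift of $\phi_{-t_i}x$ in $\tilde\Omega_\psi$; equivalently $\ga_iga_{-t_i}$ converges in $G/S_\theta$. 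Since $a_{t_i}na_{-t_i}\to e$, the product $\ga_ign\,a_{-t_i}=(\ga_iga_{-t_i})(a_{t_i}na_{-t_i})$ converges to the same point, forcing $\phi_{-t_i}y\to x_\infty$, a contradiction. In the escape case, suppose toward contradiction that $y_\infty\ne\infty$; running the identity backwards (multiplying by $(a_{t_i}na_{-t_i})^{-1}\to e$) shows that if $\ga_ign\,a_{-t_i}$ stays bounded in $G/S_\theta$ then so does $\ga_iga_{-t_i}$, contradicting $x_\infty=\infty$. Hence $y_\infty=\infty=x_\infty$, again a contradiction.

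The main obstacle is transferring the group-theoretic contraction in $G/S_\theta$ through to the abstract metric $\d$ on $\Omega_\psi$, which is only defined via the one-point compactification and carries no intrinsic geometric meaning. The dichotomy through $\Omega_\psi^*$ is precisely what handles interior recurrent behavior and escape to infinity uniformly, so that one need not separately verify recurrence of $[g]$. For the final uniform convergence on compact subsets of $W^+$-pairs, the same contradiction argument applies verbatim to any sequence $(x_i,y_i)\to(x_0,y_0)$ in a compact set, since the choice of contracting factor $a_{t_i}na_{-t_i}\to e$ depends continuously on $n$ and the relevant lifts in $G/S_\theta$ can be extracted through compactness.
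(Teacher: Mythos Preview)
The paper does not give its own proof of this proposition; it is quoted from \cite[Proposition~7.6]{KOW_ergodic}. Your overall strategy---lift to $\tilde\Omega_\psi$, exploit that conjugation by $a_t=\exp(tv_\psi)$ contracts $N_\theta^+$, and run a contradiction through the one-point compactification $\Omega_\psi^*$---is the right shape and is presumably close in spirit to the cited argument.

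There is, however, a genuine gap. You write ``$\tilde\Omega_\psi\subset G/S_\theta$'' and then claim that convergence of $\gamma_i\phi_{-t_i}[g]$ in $\tilde\Omega_\psi$ is \emph{equivalent} to convergence of $\gamma_i g a_{-t_i}$ in $G/S_\theta$. This is false whenever $\dim\fa_\theta\ge 2$: the space $\tilde\Omega_\psi=\La_\theta^{(2)}\times\br$ is a quotient of $\tilde\Omega_\Ga=\La_\theta^{(2)}\times\fa_\theta\subset G/S_\theta$ by the non-compact group $\ker\psi$, so convergence downstairs gives no control on the $\ker\psi$-component upstairs. Concretely, you only know $\psi(\beta^\theta_{h_i^+}(e,h_i))$ converges for $h_i=\gamma_i g a_{-t_i}$, not that the full $\fa_\theta$-valued Busemann vector does. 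If you try to repair this by right-multiplying by correction factors $b_i\in\exp(\ker\psi)\subset A_\theta$ to force convergence in $G/S_\theta$, the identity becomes
\[
\gamma_i g n a_{-t_i}b_i=(\gamma_i g a_{-t_i}b_i)\cdot\bigl(b_i^{-1}(a_{t_i}na_{-t_i})b_i\bigr),
\]
and the unbounded $b_i$-conjugation can destroy the contraction $a_{t_i}na_{-t_i}\to e$, since $\op{Ad}(b_i^{-1})$ acts on the root spaces of $N_\theta^+$ by possibly exploding eigenvalues.

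A correct argument must avoid lifting to $G/S_\theta$. One route is to work directly in $\tilde\Omega_\psi$ with the explicit description of $[gn]$ from Lemma~\ref{lem.transbyhoro}: the $\eta$-coordinates of $\gamma_i\phi_{-t_i}[g]$ and $\gamma_i\phi_{-t_i}[gn]$ agree, and one must show $\gamma_i(gn)^+\to\xi_\infty$ and that the $\br$-coordinates coalesce. This requires the convergence-group dynamics of $\Ga$ on $\La_\theta$ (available from $\theta$-regularity and antipodality) together with continuity of the Gromov product on $\F_\theta^{(2)}$, rather than the group-theoretic contraction in $G$ that your write-up invokes.
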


\subsection*{Proof of Theorem \ref{thm.mixing}}

We are now ready to prove the strong mixing. We recall the following lemma proved by Babillot:

\begin{lemma} \cite[Lemma 1]{Babillot_mixing} \label{lem.babillot1}
    Let $(\cal X, m, \{T_t\}_{t \in \R})$ be a probability measure-preserving system. Let $f \in L^2(\cal X, m)$ be such that $\int f dm = 0$. Suppose that $f \circ T_{t_i} \not\to 0$ weakly\footnote{$f_n \to 0$ weakly if and only if $\int f_n g \,dm \to 0$ for all $g \in L^2(\cal X, m)$} for some $t_i \to \infty$. Then there exists a non-constant function $F$ such that by passing to a subsequence, $$f \circ T_{t_i} \to F \quad \mbox{and} \quad f \circ T_{-t_i} \to F \quad \mbox{weakly} \quad \text{as } i \to \infty.$$
\end{lemma}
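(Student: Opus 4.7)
The plan is to combine the weak sequential compactness of norm-bounded sets in $L^{2}(\cal X, m)$ with the spectral theorem for the Koopman one-parameter group $U_{t} g := g \circ T_{t}$. Since each $U_{t}$ is an $L^{2}$-isometry, both sequences $\{f \circ T_{t_{i}}\}$ and $\{f \circ T_{-t_{i}}\}$ are bounded in $L^{2}(\cal X, m)$, so by Banach--Alaoglu and a diagonal extraction we may pass to a subsequence along which $f \circ T_{t_{i}} \rightharpoonup F$ and $f \circ T_{-t_{i}} \rightharpoonup F'$ weakly. Testing against the constant function $\mathbf{1} \in L^{2}$ (available because $m$ is a probability measure) propagates the zero-mean hypothesis to the limits, giving $\int F \, dm = \int F' \, dm = 0$. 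Since $F \ne 0$ by hypothesis and $\int F \, dm = 0$, $F$ must be non-constant.

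The substantive content is the identification $F = F'$. By Stone's theorem, $U_{t} = e^{itH}$ for a self-adjoint generator $H$, and by Bochner the positive-definite correlation function $\pi_{f}(t) := \langle U_{t} f, f \rangle$ is the Fourier transform of a finite positive measure $\sigma_{f}$ on $\R$ with $\pi_{f}(t) = \int_{\R} e^{it\lambda} \, d\sigma_{f}(\lambda)$. The cyclic subspace $\cal H_{f} := \overline{\op{span}\{U_{s} f : s \in \R\}}$ is unitarily isomorphic to $L^{2}(\R, \sigma_{f})$ with $f \leftrightarrow \mathbf{1}$ and $U_{t}$ acting as multiplication by $e^{it\lambda}$; under this identification, $U_{\pm t_{i}} f$ corresponds to $\lambda \mapsto e^{\pm i t_{i} \lambda}$. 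A diagonal extraction along the (at most countable) pure-point spectrum synchronizes the phases $e^{i t_{i} \lambda_{k}}$ at every atom $\lambda_{k}$ of $\sigma_{f}$, while a Wiener/Riemann--Lebesgue-type vanishing handles the continuous part; dominated convergence then upgrades $\sigma_{f}$-a.e.\ convergence to weak $L^{2}(\sigma_{f})$-convergence. Exploiting the hermitian-symmetry $\overline{\pi_{f}(t)} = \pi_{f}(-t)$ to align the limits of $e^{\pm i t_{i} \lambda}$, one obtains $F = F'$ inside $\cal H_{f}$, and hence in $L^{2}(\cal X, m)$.

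The hard part is the spectral matching in the second step: the atomic and continuous components of $\sigma_{f}$ must be treated in parallel, the former by a diagonal extraction that makes every relevant phase converge and the latter by a Riemann--Lebesgue argument on a finite measure. A clean organization performs the entire analysis inside $L^{2}(\R, \sigma_{f})$, where the problem reduces to classical Fourier analysis and the measure-theoretic intricacies of $\cal X$ play no role; it also justifies restricting the weak limits to $\cal H_{f}$, since this subspace is weakly closed and contains every $U_{t} f$.
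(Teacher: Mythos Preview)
The paper does not give its own proof of this lemma; it is simply quoted from \cite{Babillot_mixing}. So there is nothing in the paper to compare your proposal against, and I will just assess your argument on its merits.

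Your identification $F=F'$ has a genuine gap, in both the continuous and the atomic parts of the spectral decomposition.

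For the continuous part, your ``Wiener/Riemann--Lebesgue-type vanishing'' does not hold in the generality you need. Riemann--Lebesgue gives $\widehat{\mu}(t)\to 0$ only for absolutely continuous $\mu$; for a general continuous (e.g.\ singular continuous) spectral measure $\sigma_f^c$, the Fourier transform need not vanish at infinity. Weak $L^2(\sigma_f^c)$-convergence of $e^{it_i\lambda}$ to $0$ would in particular force $\widehat{\sigma_f^c}(t_i)\to 0$, which fails along suitable sequences for any non-Rajchman measure; Wiener's theorem gives only Ces\`aro vanishing, which says nothing along a prescribed sequence $t_i$.

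For the atomic part, after your diagonal extraction one has $e^{it_i\lambda_k}\to c_k$ and hence $e^{-it_i\lambda_k}\to \overline{c_k}$, so in the spectral model $F$ and $F'$ restrict to $(c_k)$ and $(\overline{c_k})$ on the atoms. The identity $\overline{\pi_f(t)}=\pi_f(-t)$ is automatic for every $f$ and does not make the $c_k$ real. What is actually relevant is that $f$ is \emph{real}, hence $\sigma_f$ is symmetric and $\pi_f$ is even; but even this does not force a subsequential phase limit to be real. Concretely, take $(\cal X,m,T_t)=(\R/2\pi\Z,\Leb,x\mapsto x+t)$, $f=\cos$, and $t_i=\tfrac{\pi}{2}+2\pi i$. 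Then $f\circ T_{t_i}\equiv -\sin$ and $f\circ T_{-t_i}\equiv \sin$ for every $i$, so \emph{no} subsequence of this particular $(t_i)$ produces a common weak limit. Thus the version of the lemma you are trying to prove --- a subsequence of the \emph{given} $t_i$ --- is false, and your scheme cannot succeed as written.

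What Babillot actually establishes (and what the application in the paper needs) is that one may choose a \emph{new} sequence $t_i\to\infty$, not merely a subsequence of the witnessing one, along which both limits coincide. Her mechanism is a compactness/recurrence argument on the hull of the even positive-definite correlation function $\pi_f$, not a Riemann--Lebesgue dichotomy on $\sigma_f$. Your outline does not supply that mechanism.
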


The following is an easy observation in measure theory:

\begin{lemma} \label{lem.babillot2}
  Let $(\cal X, m)$ be a probability measure space.  If $f_i \to F$ weakly in $L^2(\cal X, m)$, then there exists a subsequence $f_{i_j}$ such that the Cesaro average converges: $${1 \over \ell^2} \sum_{j = 1}^{\ell^2} f_{i_j} \to F \quad m\text{-a.e.}$$
\end{lemma}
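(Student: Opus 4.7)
The plan is to follow a standard Banach--Saks-type argument. First I would reduce to the case $F = 0$ by setting $g_i := f_i - F$, so that $g_i \to 0$ weakly in $L^2(\cal X, m)$. By the uniform boundedness principle, there exists $C>0$ with $\sup_i \|g_i\|_2 \le C$. It then suffices to find a subsequence $(g_{i_j})$ with $\ell^{-2}\sum_{j=1}^{\ell^2} g_{i_j} \to 0$ $m$-a.e.

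Next I would extract $(g_{i_j})$ inductively so that the cross inner products decay geometrically. Having chosen $g_{i_1}, \ldots, g_{i_{j-1}}$, weak convergence $g_i \to 0$ yields $\langle g_i, g_{i_k}\rangle \to 0$ as $i \to \infty$ for each fixed $k < j$, so I can pick $i_j > i_{j-1}$ with $|\langle g_{i_j}, g_{i_k}\rangle| \le 2^{-j}$ for every $k < j$.

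Then I would expand the $L^2$-norm of the Cesaro average:
$$
\left\| \frac{1}{N} \sum_{j=1}^N g_{i_j} \right\|_2^2
= \frac{1}{N^2} \sum_{j=1}^N \|g_{i_j}\|_2^2
+ \frac{2}{N^2} \sum_{1 \le k < j \le N} \langle g_{i_j}, g_{i_k}\rangle .
$$
The diagonal contribution is bounded by $C^2/N$, while the off-diagonal one is bounded by $2N^{-2}\sum_{j \ge 2} j\cdot 2^{-j} = O(N^{-2})$. Hence $\left\|N^{-1}\sum_{j=1}^N g_{i_j}\right\|_2^2 = O(1/N)$.

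Finally, specializing to $N = \ell^2$ gives $\left\|\ell^{-2}\sum_{j=1}^{\ell^2} g_{i_j}\right\|_2^2 = O(\ell^{-2})$, which is summable in $\ell$. For each $\e > 0$, Chebyshev's inequality yields
$$
m\!\left( \left| \ell^{-2} \sum_{j=1}^{\ell^2} g_{i_j} \right| > \e \right)
\le \e^{-2} \left\|\ell^{-2} \textstyle\sum_{j=1}^{\ell^2} g_{i_j}\right\|_2^2 = O(\e^{-2}\ell^{-2}),
$$
and the Borel--Cantelli lemma produces the desired a.e.\ convergence. There is no real obstacle; the only point worth emphasizing is that the choice of index $\ell^2$ (rather than $\ell$) is exactly what makes the tail bounds summable so that Borel--Cantelli applies.
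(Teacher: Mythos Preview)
Your proof is correct and follows the standard Banach--Saks argument. The paper does not supply a proof for this lemma, describing it only as ``an easy observation in measure theory,'' so there is nothing substantive to compare against; your argument fills in exactly the kind of details one would expect.
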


Now going back to our setting, let $f_1, f_2 \in L^2(\Omega_{\psi}, m_{\psi})$. We may assume that $m_{\psi}$ is a probability measure. By replacing $f_1$ with $f_1 - \int f_1 d m_{\psi}$, it suffices to show that for any $f \in L^2(\Omega_{\psi}, m_{\psi})$ with $\int f dm_{\psi} = 0$, we have $f \circ \phi_t \to 0$ weakly as $|t|\to \infty$.
Since $C_c(\Omega_{\psi}) $ is dense in $ L^2(\Omega_{\psi}, m_{\psi})$, we may assume without loss of generality that $f$ is a  continuous function  with compact support on $\Omega_{\psi}$. Suppose that $f \circ \phi_t \not\to 0$ weakly as $t \to \infty$. By Lemma \ref{lem.babillot1} and Lemma \ref{lem.babillot2}, there exists a non-constant function $F : \Omega_{\psi} \to \R$ and a subsequence $t_i \to \infty$ such that \be \label{eqn.provemixing}
    {1 \over \ell^2} \sum_{i = 1}^{\ell^2} f \circ \phi_{t_i} \to F \quad \mbox{and} \quad {1 \over \ell^2} \sum_{i = 1}^{\ell^2} f \circ \phi_{-t_i} \to F \quad m_{\psi}\text{-a.e. as } \ell \to \infty.
    \ee We claim that $F$ is invariant under the flow $\phi_t$; this  yields a contradiction to the ergodicity of $(\Omega_{\psi}, m_{\psi}, \phi_t)$ obtained in Theorem \ref{thm.ergodic}.

Let $W_0 = \{x \in \Omega_{\psi} : \eqref{eqn.provemixing} \mbox{ holds}\}$, which is $m_{\psi}$-conull. Since $f$ is uniformly continuous, it follows from Proposition \ref{prop.admissiblemetric} that if 
$g \in G$ and $n \in N_{\theta} \cup N_{\theta}^+$ are such that $[g], [gn] \in \tilde{\Omega}_{\psi}$ and $\Ga[g], \Ga[gn] \in W_0$, then $$F(\Ga [g]) = F(\Ga [gn]).$$ 
Denote by $\tilde{W}_0$ and $\tilde{F}$ the $\Ga$-invariant lifts of $W_0$ and $F$ to $\tilde{\Omega}_{\psi}$ respectively.  We set $$W_1 := \{(\xi, \eta) : (\xi, \eta, t) \in \tilde{W}_0 \text{ for } \Leb\text{-a.e. } t\}.$$ We also set $$W = \{(\xi, \eta) \in W_1 : (\xi, \eta'), (\xi', \eta) \in W_1 \text{ for } \nu\text{-a.e. } \xi' \text{ and } \nu_{\i}\text{-a.e. } \eta'\}.$$ 

Recall that we also denote by $\{\phi_t\}$ the translation flow on $\tilde \Omega_{\psi}$.
For any $\varepsilon  > 0$ and $x \in \tilde{\Omega}_{\psi}$, let $$F_{\varepsilon}(x) := {1 \over \varepsilon} \int_{-\varepsilon}^{\varepsilon} \tilde{F}( \phi_s(x)) \ ds.$$ Then $F_{\varepsilon}$ is continuous on each $\{\phi_t\}$-orbit and as $\varepsilon \to 0$, we have the convergence $F_{\varepsilon} \to \tilde{F}$ $m_{\psi}$-a.e. Hence it suffices to show that $F_{\varepsilon}$ is invariant under the flow $ \phi_t$.

By the definition of $W$ and the observation on $W_0$ made above, we have that if $g \in G$ and $n \in N_{\theta} \cup N_{\theta}^+$ are such that $[g], [gn] \in W \times \R \subset \tilde{\Omega}_{\psi}$, then $F_{\varepsilon}([g]) = F_{\varepsilon}([gn])$. Fix $g \in G$ such that $[g] \in W \times \R$ and let $t_0 \in \psi(\log \cal H_{\Ga}^{\theta}(g))$ and $a \in \cal H_{\Ga}^{\theta}(g)$ such that $\psi(\log a) = t_0$. We then have $\phi_{t_0}([g]) = [ga]$. By the definition of the $\theta$-transitivity subgroup, there exist $\ga \in \Ga$, $s \in S_{\theta}$, and a sequence $n_1, \cdots, n_k \in N_{\theta} \cup N_{\theta}^+$, such that
    \begin{enumerate}
    \item $((gn_1 \cdots n_r)^+ , (gn_1 \cdots n_r)^-) \in \La_{\theta}^{(2)}$ for all $1 \le r \le k$;
    \item $g n_1 \cdots n_k = \ga gas$.
    \end{enumerate}

    As in the proof of \cite[Proposition 8.8]{KOW_ergodic}, there exist a sequence $a_j \in A_{\theta}$ and a sequence of $k$-tuples $(n_{1, j}, \cdots, n_{k, j}) \in \prod_{i = 1}^k N_{\theta} \cup N_{\theta}^+$ converging to $a$ and $(n_1, \cdots, n_k)$ respectively as $j \to \infty$, and such that for each $j \ge 1$, we have $$[gn_{1, j} \cdots n_{r, j}] \in W \times \R \quad \text{for all } 1 \le r \le k \quad \text{and} \quad [g n_{1, j} \cdots n_{k, j}] = [\ga ga_j].$$
    Therefore, we have for each $j \ge 1$ that 
    $$\begin{aligned}
    F_{\varepsilon}([g]) & = F_{\varepsilon}([gn_{1, j}]) = \cdots = F_{\varepsilon}([gn_{1, j} \cdots n_{k-1, j}]) = F_{\varepsilon}([g n_{1, j} \cdots n_{k, j}]) \\
    & = F_{\varepsilon}([\ga g a_j]) = F_{\varepsilon}([ga_j]).
    \end{aligned}$$
    Taking the limit $j \to \infty$, it follows from the continuity of $F_{\varepsilon}$ on each $\{\phi_t\}$-orbit that $$F_{\varepsilon}([g]) = F_{\varepsilon}([ga]) = (F_{\varepsilon} \circ  \phi_{t_0})([g]).$$
    Since $\psi( \log \cal H_{\Ga}^{\theta}(g))$ is dense in $\R$ by Proposition \ref{prop.densetranse}, this implies that $$F_{\varepsilon}([g]) = (F_{\varepsilon} \circ \phi_t)([g]) \quad \text{for all } t \in \R.$$
    Since  $[g] \in W \times \R$ is arbitrary and $(\nu \otimes \nu_{\i})(W) = 1$, this completes the proof.
    \qed

\section{Relatively Anosov groups}

Relatively Anosov groups are relatively hyperbolic groups as abstract groups, which we now define. Let $\Ga$ be a countable group acting on a compact metrizable space $\cal X$ by homeomorphisms. 
This action is called a {\it convergence group action} if for any sequence of distinct elements $\ga_n \in \Ga$, there exist  a subsequence $\ga_{n_k}$ and $a, b \in \cal X$ such that as $k \to \infty$, 
$\ga_{n_k}(x) $ converges to $ a $ for all $x\in \cal X-\{b\}$, uniformly on compact subsets. 
An element $\ga \in \Ga$ of infinite order fixes either exactly two points in $\cal X$ or exactly one point in $\cal X$. In the former case, we call $\ga$ {\it loxodromic}, and  {\it parabolic} otherwise. An infinite subgroup $P < \Ga$ is called {\it parabolic} if $P$ fixes some point in $\cal X$ and every infinite order element of $P$ is parabolic. 

A point $\xi \in \cal X$ is called a {\it conical limit point} if there exist a sequence of distinct elements $\ga_n \in \Ga$ and distinct points $a, b \in \cal X$ such that as $n\to \infty$, $\ga_n \xi \to a$ and $\ga_n^{-1} \eta \to b$ for all $\eta \in \cal X - \{\xi\}$. A point $\xi \in \cal X$ is called a {\it parabolic limit point} if $\xi$ is fixed by a parabolic subgroup of $\Ga$. We say that a parabolic limit point $\xi \in \cal X$ is bounded if $\stab_{\Ga}(x) \ba (\cal X - \{\xi\})$ is compact. The action of $\Ga$ on $\cal X$ is called a {\it geometrically finite convergence group action} if every point of $\cal X$ is either conical or bounded parabolic limit point.
A typical example of geometrically finite convergence group action is the action of a geometrically finite Kleinian group on its limit set.

Let $\Ga$ be a finitely generated group and $\cal P$ a finite collection of finitely generated infinite subgroups of $\Ga$. We say that $\Ga$ is {\it hyperbolic relative to $\cal P$} (or that $(\Ga, \cal P)$ is {\it relatively hyperbolic}), if $\Ga$ admits a geometrically finite convergence group action on some compact perfect metrizable space $\cal X$ and the collection of maximal parabolic subgroups is $$\cal P^{\Ga} := \{ \ga P \ga^{-1} : P \in \cal P, \ga \in \Ga \}.$$
Bowditch \cite{Bowditch_relhyp} showed that for $\Ga$ hyperbolic relative to $\cal P$, the space $\cal X$ satisfying the above hypothesis is unique up to a $\Ga$-equivariant homeomorphism. Hence this space is called {\it Bowditch boundary} and denoted by $\partial (\Ga, \cal P)$.

\subsection*{The Groves-Manning cusp space}

Let $\Ga$ be a hyperbolic group relative to $\cal P$.
The {\it Groves-Manning cusp space} for $(\Ga, \cal P)$ is a proper geodesic Gromov hyperbolic space constructed by Groves-Manning \cite{GM_relhyp} on which $\Ga$ acts properly discontinuously and by isometries. 
We briefly review the construction of the Groves-Manning cusp space. We first need a notion of combinatorial horoballs: for a graph $Y$ equipped with a simplicial distance $d_Y$, the combinatorial horoball $\cal H(Y)$ is the graph 
with the vertex set $Y^{(0)} \times \N$ and two types of edges: vertical edges between vertices $(y, n)$ and $(y, n+1)$ for $y \in Y$ and $n \in \N$, and horizontal edges between vertices $(y_1, n)$ and $(y_2, n)$ for $y_1, y_2 \in Y$ and $n \in \N$ if $d_Y(y_1, y_2) \le 2^{n-1}$. We also equip $\cal H(Y)$ with the simplicial distance.

Now fix a finite generating set $S$ of $\Ga$ such that for each $P \in \cal P$, $S \cap P$ generates $P$. We denote by $\cal C(\Ga, S)$ and $\cal C(P, S \cap P)$ the Cayley graphs of $\Ga$ and $P$ with respect to $S$ and $S \cap P$ respectively. For each $\ga \in \Ga$ and $P \in \cal P$, we glue the horoball $\cal H(\ga \cal C(P, S \cap P))$ to $\cal C(\Ga, S)$, by identifying $\ga \cal C (P, S \cap P) \subset \cal C(\Ga, S)$ with $\ga \cal C (P, S \cap P) \times \{1\} \subset \cal H (\ga \cal C (P, S \cap P))$. The resulting graph equipped with the simplicial distance is called the Groves-Manning cusp space for $(\Ga, \cal P)$ and $S$, which we denote by $X_{GM}(\Ga, \cal P, S)$.

\begin{theorem} \cite[Theorem 3.25]{GM_relhyp} \label{thm.GM_relhyp}
    The space $X_{GM}(\Ga, \cal P, S)$ is a proper geodesic Gromov hyperbolic space.
\end{theorem}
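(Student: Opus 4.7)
The plan is to separately verify the three properties (properness, being geodesic, and Gromov hyperbolicity). For properness and geodesicity, I would observe that $X_{GM}(\Ga, \cal P, S)$ is a connected locally finite simplicial graph, so its simplicial metric is automatically proper and geodesic. Local finiteness holds because the Cayley graph $\cal C(\Ga, S)$ is locally finite since $S$ is finite; in each combinatorial horoball $\cal H(Y)$ the horizontal edges at a vertex $(y, n)$ go only to vertices of the $Y$-ball of radius $2^{n-1}$ around $y$, a finite set as $Y$ is locally finite; and any $\ga \in \Ga$ lies in exactly one coset of each $P \in \cal P$, so at most $|\cal P|$ horoballs are attached at $\ga$.

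The real content is Gromov hyperbolicity. The first step I would carry out is to show that every combinatorial horoball $\cal H(Y)$ is $\delta_0$-hyperbolic with a universal constant $\delta_0 > 0$, independent of $Y$. The key computation is that horizontal edges at depth $n$ connect $Y$-points within distance $2^{n-1}$; thus any two vertices $(y_1, 1)$ and $(y_2, 1)$ are joined by a ``tent'' path that ascends to depth $\approx \lceil \log_2 d_Y(y_1, y_2) \rceil$, crosses a single horizontal edge, and descends, so that $d_{\cal H(Y)}((y_1,1), (y_2,1)) = 2\lceil \log_2 d_Y(y_1, y_2) \rceil + O(1)$. Classifying geodesics up to this shape, together with exponential compression of horizontal distance as one moves up, yields a direct verification of uniform thinness of triangles in $\cal H(Y)$.

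The second step transfers hyperbolicity to the whole cusped space, and this is where the main obstacle lies. I would invoke the equivalence between Bowditch's convergence-group definition of relative hyperbolicity (our standing hypothesis) and Farb's definition, namely that the coned-off Cayley graph $\hat{\cal C}(\Ga, S)$ (adjoining a cone point to each peripheral coset) is Gromov hyperbolic and enjoys the bounded coset penetration property. Any geodesic in $X_{GM}$ alternates between arcs in $\cal C(\Ga, S)$ and excursions into horoballs; collapsing each excursion to a single traversal of the associated cone yields a quasi-geodesic in $\hat{\cal C}(\Ga, S)$, and conversely a geodesic of $\hat{\cal C}(\Ga, S)$ can be lifted to a quasi-geodesic of $X_{GM}$ by replacing cone traversals with tent paths in the appropriate horoball. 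To combine the two hyperbolicity statements into one for $X_{GM}$, one must show that two geodesic sides of a triangle entering the same horoball enter and exit within a uniformly bounded horoball-distance of each other; this is exactly where bounded coset penetration is used, together with stability of quasi-geodesics inside each $\cal H(Y)$ from Step 1.

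The delicate part of the argument, which I expect to be the main technical obstacle, is the bookkeeping of horoball excursions across all three sides of a geodesic triangle: one has to control not only the intrinsic geometry within each $\cal H(Y)$ but also how successive excursions nest and align via their attaching cosets. Once a uniform bound is established on the matching of entry and exit points, uniform thinness of triangles in $\hat{\cal C}(\Ga, S)$ combined with uniform thinness inside each $\cal H(Y)$ gives a uniform $\delta$-thinness for triangles in $X_{GM}$, completing the proof.
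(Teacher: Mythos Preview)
The paper does not give its own proof of this statement; it is quoted directly from Groves--Manning \cite[Theorem 3.25]{GM_relhyp} and used as a black box. There is therefore nothing in the paper to compare your proposal against.

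That said, your outline is a reasonable sketch of a proof, and is broadly in the spirit of how such results are established. A few remarks. Your treatment of properness and geodesicity is fine. Your Step~1 (uniform hyperbolicity of each combinatorial horoball, independent of $Y$) is correct and is indeed proved in \cite{GM_relhyp}. Your Step~2, however, relies on the equivalence between the Bowditch convergence-group definition (the one adopted in this paper) and Farb's coned-off definition with bounded coset penetration. That equivalence is a substantial theorem in its own right, and invoking it here is somewhat circular in spirit: in the literature the hyperbolicity of the Groves--Manning space is often used as one node in the web of equivalent definitions, so you should be explicit about which implication you are taking as input and from where. Groves--Manning's own argument does not proceed via Farb's definition; they work from a linear relative isoperimetric inequality and establish hyperbolicity of the cusped space directly by a combinatorial thin-triangles argument, with the horoball geometry from your Step~1 as one ingredient. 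Your route through the coned-off graph is valid provided you are careful about which equivalence you are assuming, but it is not the route taken in \cite{GM_relhyp}.
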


From the construction, the natural action of $\Ga$ on the Cayley graph $\cal C(\Ga, S)$ induces the isometric action of $\Ga$ on  $X_{GM}(\Ga, \cal P, S)$ which is properly discontinuous. Hence the induced $\Ga$-action on the Gromov boundary $\partial X_{GM}(\Ga, \cal P, S)$ is a convergence group action \cite[Lemma 2.11]{Bowditch1999convergence}, and moreover is a geometrically finite convergence group action by the construction of $X_{GM}(\Ga, \cal P, S)$. Therefore the Gromov boundary of $X_{GM}(\Ga, \cal P, S)$ is the Bowditch boundary: $$\partial X_{GM}(\Ga, \cal P, S) = \partial (\Ga, \cal P).$$

\subsection*{Relatively Anosov subgroups}

Let $\Ga < G$ be a finitely generated non-elementary $\theta$-transverse subgroup with the limit set $\La_{\theta}$ and $\cal P$ a finite collection of finitely generated infinite subgroups of $\Ga$.

\begin{Def} 
We say that $\Ga$ is {\it $\theta$-Anosov relative to $\cal P$} if $\Ga$ is hyperbolic relative to $\cal P$ and there exists a $\Ga$-equivariant homeomorphism $\partial (\Ga, \cal P) \to \La_{\theta}$.
\end{Def}
Let $\Ga$ be a $\theta$-Anosov relative to $\cal P$ in the rest of the section. We denote by $X_{GM} := X_{GM}(\Ga, \cal P, S)$ the associated Groves-Manning cusp space for some fixed generating set $S$. We then have the $\Ga$-equivariant homeomorphism $$f : \partial X_{GM} \to \La_{\theta},$$
which has the following property:
Noting that the action of $\Ga$ is faithful on $X_{GM}$, we have a well-defined map $\Ga x\to \Ga o$ given by $\ga x \mapsto \ga o$
for any $x\in X_{GM}$.

\begin{proposition} \cite[Proposition 4.3]{CZZ_relative}  \label{prop.dynamicspreserve} Let $x \in X_{GM}$. Then the map $\Ga x \to \Ga o$ extends continuously to a unique $\Ga$-equivariant homeomorphism $f : \partial X_{GM} \to \La_{\theta}$.
\end{proposition}

By the antipodality of $\Ga$, the canonical projections $\pi_{\theta} : \La_{\theta \cup \i(\theta)} \to \La_{\theta}$ and $ \pi_{\i(\theta)}: \La_{\theta \cup \i(\theta)} \to \La_{\i(\theta)}$ are $\Ga$-equivariant homeomorphisms. This implies that being relatively $\theta$-Anosov implies being relatively $\theta \cup \i(\theta)$-Anosov as well as relatively $\i(\theta)$-Anosov.
In particular, setting the composition $f_{\i} := \pi_{\i(\theta)} \circ \pi_{\theta}^{-1} \circ f$, two maps $$f : \partial X_{GM} \to \La_{\theta} \quad \text{and} \quad f_{\i} : \partial X_{GM} \to \La_{\i(\theta)}$$ have the property that if $\xi, \eta \in \partial X_{GM}$ are distinct, then $(f(\xi), f_{\i}(\eta)) \in \F_{\theta}^{(2)}$.

\subsection*{Compatibility of shadows}

We first define the shadows in the symmetric space $X$:
for $p \in X$ and $R > 0$, let $B(p, R) $ denote the metric ball $ \{ x \in X : d(x, p) < R\}$. For $q \in X$, the {\it $\theta$-shadow} $O_R^{\theta}(q, p) \subset \F_{\theta}$ of $B(p, R)$ viewed from $q$ is defined as
$$
    O_R^{\theta}(q, p)   = \{ gP_{\theta} \in \F_{\theta} : g \in G, \ go = q, \ gA^+o \cap B(p, R) \neq \emptyset \}.
    $$
The following two lemmas will be useful:

\begin{lemma} \cite[Lemma 5.7]{LO_invariant} \label{lem.buseandcartan}
        There exists $\kappa > 0$ such that for any $g, h \in G$ and $R>0$, we have $$\sup_{\xi \in O^\theta_R(go, ho)}  \| \beta_{\xi}^{\theta}(g, h) - \mu_{\theta}(g^{-1}h) \| \le \kappa R.$$
    \end{lemma}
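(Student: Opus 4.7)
My plan is to reduce to the case $g=e$ via the $G$-invariance $\beta_{g_0\xi}(g_0 g_1, g_0 g_2)=\beta_\xi(g_1,g_2)$, the equivariance $g_0^{-1}O^\theta_R(g_0 o, ho)=O^\theta_R(o, g_0^{-1}ho)$, and the $K$-invariance of $\mu_\theta$, and then to compute $\beta_\xi^\theta(e,h)$ directly via an Iwasawa decomposition. Unpacking the shadow condition, any $\xi\in O^\theta_R(o, ho)$ admits a representative $k\in K$ with $\xi = kP_\theta$ and an $a\in A^+$ with $d(ao, k^{-1}ho)\le R$. Setting $\epsilon := a^{-1}k^{-1}h$, we obtain $h = ka\epsilon$ and $\|\mu(\epsilon)\|=d(o,\epsilon o)\le R$, so $\epsilon$ is a ``small'' element.

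Next I will pick the lift $\tilde\xi := kP\in \cal F_\Pi$, which is legitimate because $p_\theta\circ\beta_{\tilde\xi}$ is independent of the choice of lift; then $\sigma(e,\tilde\xi)=0$, so only $\sigma(h^{-1},\tilde\xi)$ matters. Since $h^{-1}k=\epsilon^{-1}a^{-1}$ and $A$ normalizes $N$, an Iwasawa decomposition $\epsilon^{-1}=k_\epsilon\exp(H_\epsilon)n_\epsilon$ rearranges as
$$\epsilon^{-1}a^{-1}=k_\epsilon\exp(H_\epsilon-\log a)(an_\epsilon a^{-1})\in KAN,$$
giving $\sigma(h^{-1},\tilde\xi)=H_\epsilon-\log a$ and hence
$$\beta_\xi^\theta(e,h)=-p_\theta(\sigma(h^{-1},\tilde\xi))=p_\theta(\log a - H_\epsilon).$$

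To conclude, I will combine two classical estimates whose constants can be absorbed into $\kappa$. First, Kostant's convexity theorem, applied with a $K$-invariant (hence Weyl-invariant on $\fa$) norm, yields the Iwasawa--Cartan comparison $\|\sigma(g,\eta)\|\le \|\mu(g)\|$; specialized to $(g,\eta)=(\epsilon^{-1}, P)$ this gives $\|H_\epsilon\|\le \|\mu(\epsilon)\|\le R$. Second, the Lipschitz property $\|\mu(g_1g_2)-\mu(g_1)\|\le\|\mu(g_2)\|$ of the Cartan projection (a standard consequence of the triangle inequality in $X=G/K$ combined with Kostant convexity) gives
$$\|\mu(h)-\log a\|=\|\mu(a\epsilon)-\mu(a)\|\le\|\mu(\epsilon)\|\le R,$$
using $\mu(h)=\mu(a\epsilon)$ (since $k\in K$) and $\mu(a)=\log a$ (since $a\in A^+$). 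Combining,
$$\|\beta_\xi^\theta(e,h)-\mu_\theta(h)\|=\|p_\theta(\log a-\mu(h)-H_\epsilon)\|\le 2\|p_\theta\|\cdot R,$$
so the statement holds with $\kappa := 2\|p_\theta\|$ (absorbing any universal factor). I expect the main technical point to be the Iwasawa bookkeeping in the middle step, specifically the sign/normalization convention for $\sigma$ and the use of $aNa^{-1}=N$; beyond that, the proof uses only standard consequences of the Cartan decomposition and requires no additional input.
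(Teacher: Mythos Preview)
The paper does not supply its own proof of this lemma; it is quoted directly from \cite[Lemma 5.7]{LO_invariant} without argument. Your proof is correct and follows the standard route: reduce to $g=e$ by equivariance, unwind the shadow condition to write $h=ka\epsilon$ with $a\in A^+$ and $\|\mu(\epsilon)\|\le R$, compute $\beta_\xi^\theta(e,h)=p_\theta(\log a-H_\epsilon)$ via Iwasawa, and then bound the two error terms by Kostant convexity (for $\|H_\epsilon\|\le R$) and the Lipschitz property $\|\mu(g_1g_2g_3)-\mu(g_2)\|\le\|\mu(g_1)\|+\|\mu(g_3)\|$ of the Cartan projection (for $\|\mu(h)-\log a\|\le R$). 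This is exactly the argument one finds in the cited source, so there is nothing substantively different to compare.

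One minor remark: the estimate $\|\mu(a\epsilon)-\mu(a)\|\le\|\mu(\epsilon)\|$ is indeed standard (it is the Benoist--Kassel sublinearity lemma), but calling it a direct consequence of the triangle inequality in $X$ is a slight understatement; its usual proof does go through Kostant convexity as you indicate, just with a short additional manipulation. This does not affect the validity of your argument.
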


\begin{lemma} \cite[Lemma 9.9]{KOW_indicators} \label{lem.eventualshadow}
    Let $g_n \in G$ and $\xi_n\in \F_\theta$  be sequences both  converging to some $\xi \in \F_{\theta}$. Suppose that 
    there exists a sequence $\eta_n\in \F_{\i(\theta)}$ converging to some $\eta\in \F_{\i(\theta)}$  such that $(\xi, \eta) \in \F_{\theta}^{(2)}$ and the sequence $g_n^{-1}(\xi_n, \eta_n)$ is precompact in $\F_{\theta}^{(2)}$. Then there exists $R > 0$ such that $$\xi_n \in O_R^{\theta}(o, g_n o) \quad \text{for all } n \ge 1.$$
\end{lemma}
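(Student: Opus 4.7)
My plan has three main steps. First, use the precompactness of $g_n^{-1}(\xi_n,\eta_n)$ in $\F_\theta^{(2)}$ to produce a uniformly bounded sequence $h_n \in G$ with $g_n h_n P_\theta = \xi_n$. Second, Cartan-decompose $g_n h_n$ and note that its Cartan $K$-part $k_n^\star$ already yields a sector from $o$ passing within bounded distance of $g_n o$. Third, upgrade $k_n^\star$ to an exact witness $k_n \in K$ with $k_n P_\theta = \xi_n$, using a scale-matching between the smallness of $(k_n^\star)^{-1}\xi_n$ in $\F_\theta$ and the expansion of the adjoint action of $A^+$ on the relevant root spaces.

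For Step 1, identify $\F_\theta^{(2)} \cong G/L_\theta$, where $L_\theta = P_\theta \cap w_0 P_{\i(\theta)} w_0^{-1}$ is the Levi stabilizer of $(P_\theta, w_0 P_{\i(\theta)})$. The local triviality of the principal $L_\theta$-bundle $G \to G/L_\theta$ lets me lift the compact set $\{g_n^{-1}(\xi_n,\eta_n)\}_n$ to a bounded subset of $G$ by a finite cover by local sections with relatively compact image, producing $h_n$ with $d(o, h_n o) \le D_0$ uniformly. For Step 2, take the Cartan decomposition $g_n h_n = k_n^\star \exp(\mu(g_n h_n))\,\ell_n^\star$. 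Boundedness of $h_n$ together with $\alpha(\mu(g_n)) \to \infty$ for every $\alpha \in \theta$ (built into Definition \ref{fc}) forces $\alpha(\mu(g_n h_n)) \to \infty$, so $g_n h_n$ is strictly $\theta$-regular for all large $n$, and the sector $k_n^\star A^+ o$ passes through $k_n^\star \exp(\mu(g_n h_n)) o = g_n h_n o$, which satisfies $d(g_n h_n o, g_n o) = d(h_n o, o) \le D_0$.

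Step 3 is the main technical obstacle: $k_n^\star P_\theta$ is generally not equal to $\xi_n$. However, by strict $\theta$-regularity of $g_n h_n$, the element $(k_n^\star)^{-1} \xi_n = \exp(\mu(g_n h_n))\,\ell_n^\star P_\theta$ converges to $P_\theta$ at rate $e^{-\alpha(\mu(g_n h_n))}$ for $\alpha \in \theta$, eventually lying in the open Bruhat cell $N_\theta^- P_\theta$ with an $N_\theta^-$-representative $w_n$ of the same order. Iwasawa-decomposing $w_n = k'_n a'_n n'_n$ gives a small $k'_n \in K$ with $k'_n P_\theta = w_n P_\theta = (k_n^\star)^{-1}\xi_n$; setting $k_n := k_n^\star k'_n$ and $b_n := \exp(\mu(g_n h_n)) \in A^+$, one computes
$$d(k_n b_n o,\, g_n o) \;\le\; d\bigl(b_n^{-1} k'_n b_n \cdot o,\, o\bigr) + d(h_n o, o).$$
The conjugation $b_n^{-1} k'_n b_n$ expands the $\fg_{-\alpha}$-components of $\log k'_n$ by $e^{\alpha(\mu(g_n h_n))}$ while the $\fg_{\alpha}$-components remain contracting; combined with the $e^{-\alpha(\mu(g_n h_n))}$-smallness of $k'_n$, the two exponential scales cancel and yield $d(b_n^{-1} k'_n b_n o, o) = O(1)$. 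Therefore $\xi_n \in O_R^\theta(o, g_n o)$ for a uniform $R$, enlarging $R$ to absorb the finitely many small $n$ for which $g_n h_n$ is not yet $\theta$-regular. The underlying geometric content is that precompactness of $g_n^{-1}\xi_n$ places $\xi_n$ at exactly the $\F_\theta$-scale of the shadow of $g_n o$ viewed from $o$.
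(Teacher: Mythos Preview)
The paper does not give a proof of this lemma; it simply cites \cite[Lemma 9.9]{KOW_indicators}. So there is no argument in the present paper to compare against, and I can only assess your sketch on its own merits.

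Steps 1 and 2 are fine: lifting the precompact sequence $g_n^{-1}(\xi_n,\eta_n)$ through local sections of $G\to G/L_\theta$ gives bounded $h_n$, and boundedness of $h_n$ together with $\alpha(\mu(g_n))\to\infty$ for $\alpha\in\theta$ forces $\theta$-regularity of $g_n h_n$ and puts $g_n h_n o$ within $d(h_n o,o)$ of the Weyl sector $k_n^\star A^+ o$.

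Step 3 has a real gap. Your scale-matching argument implicitly assumes that the $N_\theta^-$-coordinate $m_n^-$ of $\ell_n^\star P_\theta$ (that is, $\ell_n^\star P_\theta = m_n^- P_\theta$) stays in a fixed compact subset of $N_\theta^-$. Without this, the claim that $w_n = b_n m_n^- b_n^{-1}$ has $\fg_{-\beta}$-component of order $e^{-\beta(\mu(g_n h_n))}$ fails: you only get $e^{-\beta(\mu(g_n h_n))}\cdot\|(\log m_n^-)_{-\beta}\|$, and if $\|m_n^-\|\to\infty$ (which can happen when $\ell_n^\star P_\theta$ drifts toward the boundary of the open cell), the cancellation breaks down. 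Concretely, after Iwasawa $w_n = k'_n a'_n n'_n$, the $\fg_{-\beta}$-component of $\log k'_n$ is of order $e^{-\beta(\mu(g_n h_n))}\|m_n^-\|$; conjugating by $b_n^{-1}$ then produces a term of order $\|m_n^-\|$, which is not controlled. Knowing only that $w_n\to e$ (equivalently $e^{-\beta(\mu(g_n h_n))}\|m_n^-\|\to 0$) does not prevent $\|m_n^-\|\to\infty$.

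To close the gap you need to use the pair $(\xi_n,\eta_n)$ more directly in Step 3 rather than only in Step 1. One route: observe that $(\xi_n,\eta_n)\to(\xi,\eta)\in\F_\theta^{(2)}$ gives a second bounded lift $f_n\in G$ with $f_n(P_\theta,w_0 P_{\i(\theta)})=(\xi_n,\eta_n)$, so $f_n^{-1} g_n h_n\in L_\theta = A_\theta S_\theta$, and work with this factorization (where the $A_\theta$-part is the only unbounded piece and lies in $A$) instead of the Cartan decomposition. This avoids the uncontrolled $\ell_n^\star$ altogether.
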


We also consider shadows in Groves-Manning cusp space.
Let $d_{GM}$ be the simplicial distance on $X_{GM}$. 

The following theorem is obtained in  \cite[Theorem 10.1]{CZZ_relative}; although it stated only the lower bound, the upper bound also follows from its proof: \begin{theorem} \label{thm.czzcartan}
    For any $(\Ga,\theta)$-proper linear form
$\psi \in \fa_{\theta}^*$,
there exists positive constants $c, {c}'$ and $C $ such that 
     for all $\ga \in \Ga$,
    $$  c \, d_{GM}(e, \ga) - C \le \psi(\mu_{\theta}(\ga)) \le {c'} \, d_{GM}(e, \ga) + C. $$
\end{theorem}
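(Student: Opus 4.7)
The plan is to establish both bounds by decomposing a $d_{GM}$-geodesic from $e$ to $\ga$ into pieces and bounding the Cartan projection piece by piece. The lower bound is precisely \cite[Theorem 10.1]{CZZ_relative}, so I focus on the upper bound, which emerges from the same strategy and justifies the parenthetical remark preceding the theorem.

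Given $\ga \in \Ga$, I fix a geodesic $\sigma : [0, n] \to X_{GM}$ from $e$ to $\ga$ of length $n = d_{GM}(e, \ga)$, and decompose it into maximal consecutive sub-segments that either lie entirely in the Cayley graph $\cal C(\Ga, S)$ or lie entirely in a single combinatorial horoball $\cal H(\eta\, \cal C(P, S \cap P))$ for some $\eta \in \Ga$ and $P \in \cal P$. This yields a factorization $\ga = \delta_1 \delta_2 \cdots \delta_k$, where each $\delta_i$ is the displacement of $\sigma$ along the $i$-th sub-segment of length $\ell_i$. Since $\|\mu_\theta(gh)\| \le \|\mu_\theta(g)\| + \|\mu_\theta(h)\|$ (inherited from the triangle inequality on the symmetric space $X$) and $\psi$ is a linear functional on $\fa_\theta$, it suffices to bound $\|\mu_\theta(\delta_i)\|$ linearly in $\ell_i$ and then sum.

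For a Cayley-graph sub-segment, $\delta_i$ is a word of length $\ell_i$ in the finite generating set $S$, so directly $\|\mu_\theta(\delta_i)\| \le \ell_i \max_{s \in S} \|\mu_\theta(s)\|$. For a horoball sub-segment of length $\ell_i$, the geometry of combinatorial horoballs provides an element $p \in P$ (up to conjugation by $\eta$, whose Cartan projection is absorbed into the neighboring Cayley-graph estimates and an additive constant) with $d_P(e, p) \le C_0 \cdot 2^{\ell_i / 2}$. The relatively $\theta$-Anosov hypothesis forces peripheral elements to fix points in $\La_\theta$ and thus to act ``unipotently'' in the $\theta$-directions; following \cite{CZZ_relative}, each $P \in \cal P$ satisfies a uniform logarithmic bound of the form $\|\mu_\theta(p)\| \le C_P \log(1 + d_P(e, p)) + C_P'$ for all $p \in P$. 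This translates the doubly-exponential horoball depth into a bound $\|\mu_\theta(\delta_i)\| \le C \ell_i + C'$.

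Summing these estimates over all $k$ sub-segments and using the finiteness of $\cal P$ up to $\Ga$-conjugacy to obtain uniform constants, I conclude $\psi(\mu_\theta(\ga)) \le c' \, d_{GM}(e, \ga) + C$, as required. The main obstacle is the logarithmic bound on $\psi \circ \mu_\theta$ restricted to each peripheral subgroup, which rests on the fine structure of peripheral subgroups in a relatively $\theta$-Anosov group (they project into bounded neighborhoods of unipotent subgroups fixing the peripheral limit points); this is exactly the content already worked out by Canary--Zhang--Zimmer, and modulo this input the rest of the argument is a straightforward accounting of contributions along the Groves--Manning geodesic.
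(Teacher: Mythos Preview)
The paper does not give an independent proof of this statement: it records that both inequalities follow from the proof of \cite[Theorem~10.1]{CZZ_relative}, the lower bound being stated there explicitly and the upper bound implicit. Your sketch is a faithful reconstruction of that strategy and is correct in outline---decompose a $d_{GM}$-geodesic into Cayley-graph and horoball pieces, bound the contribution of each, and invoke the logarithmic growth of the $\theta$-Cartan projection on peripheral subgroups.

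There is, however, a gap in one step. The claimed subadditivity $\|\mu_\theta(gh)\| \le \|\mu_\theta(g)\| + \|\mu_\theta(h)\|$ is not ``inherited from the triangle inequality on $X$'': that inequality gives $\|\mu(gh)\| \le \|\mu(g)\| + \|\mu(h)\|$ for the full Cartan projection, and applying $p_\theta$ only yields $\|\mu_\theta(gh) - \mu_\theta(g)\| \le \|\mu(h)\|$, which involves the full $\|\mu(h)\|$ rather than $\|\mu_\theta(h)\|$. You cannot simply retreat to $\|\mu\|$ either: take $G = G_1 \times G_2$ with $\theta \subset \Pi_1$ and embed a relatively $\theta$-Anosov $\Ga_1 < G_1$ into $G$ as the graph of any homomorphism $\rho : \Ga_1 \to G_2$; then $\Ga$ is still relatively $\theta$-Anosov in $G$, but if $\rho|_P$ is undistorted then $\|\mu(p)\|$ grows linearly in $d_P(e,p)$, so the piecewise estimate via $\|\mu\|$ fails even though the theorem holds. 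The standard fix is to work with fundamental weights $\omega_\alpha$ for $\alpha \in \theta$: each satisfies $\omega_\alpha(\mu(gh)) \le \omega_\alpha(\mu(g)) + \omega_\alpha(\mu(h))$ via submultiplicativity of the operator norm in the associated irreducible representation, and since any $\psi \in \fa_\theta^*$ is bounded above on $\fa_\theta^+$ by a non-negative combination of the $\omega_\alpha$, the upper bound for each $\omega_\alpha$ suffices. With this correction your decomposition argument goes through, and the peripheral logarithmic bound you cite is indeed the essential input.
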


For $y \in X_{GM}$ and $R > 0$, we denote the $R$-ball centered at $y$ by
$$B_{GM}(y, R) := \{z \in X_{GM} : d_{GM}(y, z) < R\}.$$ 
For $x, y \in X_{GM}$ and $R > 0$, we define the shadow of $B_{GM}(y, R)$ viewed from $x$ as follows:
$$O_R^{GM}(x, y) := \left\{ \xi \in \partial X_{GM} : \begin{matrix}
    \text{there exists a geodesic ray from } x \text{ to } \xi\\
    \text{passing through } B_{GM}(y, R)
\end{matrix}\right\}.$$
Note that $\xi \in \partial X_{GM}$ is a conical limit point if and only if there exists $R > 0$ such that $\xi \in O_R^{GM}(o, \ga_n o)$ for an infinite sequence $\ga_n \in \Ga$. 

We prove the following compatibility of shadows under $f : \partial X_{GM} \to \La_{\theta}$:

\begin{proposition} \label{prop.shadowcompare}
    Let $x \in X_{GM}$ and $o \in X$. For all sufficiently large $R > 1$, there exist $r_1=r_1(R), r_2 = r_2(R) > 0$ such that for any $\ga \in \Ga$, we have $$ O_{r_1}^{\theta}(o, \ga o)  \cap \La_{\theta} \subset f(O_R^{GM}(x, \ga x)) \subset O_{r_2}^{\theta}(o, \ga o) \cap \La_{\theta} .$$
   Moreover, we can take $r_1(R)\to \infty $ as $R\to \infty$.
   
\end{proposition}

We begin with some lemmas:

\begin{lemma} \label{lem.nonemptyshadow}
    For any $x \in X_{GM}$, there exists $R_0 > 0$ such that $O_{R_0}^{GM}(x, \ga x) \neq \emptyset$ for any $\ga \in \Ga$.
\end{lemma}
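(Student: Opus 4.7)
The plan is to exploit the $\delta$-hyperbolicity of the Groves-Manning cusp space $X_{GM}$ (Theorem \ref{thm.GM_relhyp}) via the thin-triangle property for ideal geodesic triangles. The underlying geometric idea: if I can find a bi-infinite geodesic that stays uniformly near $\ga x$, then $\ga x$ must lie near one of the two geodesic rays from $x$ to its endpoints at infinity.

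First, since $\Ga$ is non-elementary relatively hyperbolic, the Bowditch boundary $\partial (\Ga, \cal P) = \partial X_{GM}$ is a perfect compact metrizable space and in particular contains at least two distinct points $\xi^+, \xi^- \in \partial X_{GM}$. I would fix such a pair and, using visibility in the proper Gromov hyperbolic space $X_{GM}$, choose a bi-infinite geodesic $\sigma : \R \to X_{GM}$ with $\sigma(\pm\infty) = \xi^{\pm}$. Set $D := d_{GM}(x, \sigma) < \infty$; this depends on $x$ but not on $\ga$.

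Next, for an arbitrary $\ga \in \Ga$, the translate $\ga \sigma$ is a bi-infinite geodesic from $\ga \xi^-$ to $\ga \xi^+$ and satisfies $d_{GM}(\ga x, \ga \sigma) = D$ by isometry of the $\Ga$-action. I would form the ideal geodesic triangle $\Delta$ with vertices $x, \ga \xi^+, \ga \xi^-$, taking its bi-infinite side to be (a geodesic within bounded Hausdorff distance of) $\ga \sigma$. Invoking the standard fact that ideal triangles in a proper $\delta$-hyperbolic space are $C$-thin for some constant $C = C(\delta)$, the nearest-point projection $y \in \ga \sigma$ of $\ga x$ lies within distance $C$ of $[x, \ga \xi^+) \cup [x, \ga \xi^-)$. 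Hence $\ga x$ lies within distance $D + C$ of one of the two geodesic rays $[x, \ga \xi^{\pm})$, and setting $R_0 := D + C + 1$ places the corresponding boundary point into $O_{R_0}^{GM}(x, \ga x)$, showing the shadow is non-empty.

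The only minor subtlety I anticipate is that bi-infinite geodesics with prescribed endpoints in $X_{GM}$ need not be unique; however, any two such geodesics are within uniformly bounded Hausdorff distance by $\delta$-hyperbolicity, and this mismatch is absorbed into $R_0$. The thin-triangle bound for ideal triangles itself follows from the standard bound for finite triangles by a routine limiting argument in a proper Gromov hyperbolic space, so I do not foresee any serious obstacle.
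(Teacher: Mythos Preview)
Your argument is correct. The underlying geometry is the same as the paper's---both ultimately rest on the existence of two distinct boundary points and the thinness of ideal triangles in a proper $\delta$-hyperbolic space---but the organization differs. The paper argues by contradiction: assuming no uniform $R_0$ exists, it produces a sequence $\ga_n$ with $O_n^{GM}(\ga_n^{-1}x, x)=\emptyset$ and then asserts in one line that this forces $\partial X_{GM}$ to be a singleton, contradicting perfectness. Your proof is instead direct and constructive: you fix one bi-infinite geodesic $\sigma$ once and for all, translate it by $\ga$, and use the thin-triangle property explicitly to land a point $\ga\xi^{\pm}$ in the shadow, yielding the explicit bound $R_0 = D + C(\delta) + 1$. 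Your version buys a concrete constant and a self-contained argument; the paper's version is terser but leaves the key geometric step (``this forces $\partial X_{GM}$ to be a singleton'') to the reader, who would in any case unpack it via the same thin-triangle reasoning you wrote out.
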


\begin{proof}
    Suppose not. Then there exists an infinite sequence $\ga_n \in \Ga$ so that $O_n^{GM}(x, \ga_n x) = \emptyset$, and hence $O_n^{GM}(\ga_n^{-1}x, x) = \emptyset$ for all $n \ge 1$. This forces $\partial X_{GM}$ to be a singleton, which contradicts the perfectness of $\partial X_{GM}$.
\end{proof}

\begin{lemma} \label{lem.shadowgiveslimit}
    Let $x \in X_{GM}$ and $R > 0$. Let $\ga_n \in \Ga$ and $\xi_n \in \partial X_{GM}$ be sequences such that $\xi_n \in O_R^{GM}(x, \ga_n x)$  for all $n \ge 1$.
    If  $\ga_n x\to \xi \in \partial X_{GM}$ as $n \to \infty$, then $\xi_n \to \xi$ as $n \to \infty$.
\end{lemma}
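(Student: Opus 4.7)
The plan is to reduce the claim to the standard characterization of convergence in the Gromov boundary: if $X_{GM}$ is $\delta$-hyperbolic (Theorem \ref{thm.GM_relhyp}) and we fix a basepoint $x$, then a sequence $\xi_n \in \partial X_{GM}$ converges to $\xi \in \partial X_{GM}$ if and only if the Gromov product $(\xi_n, \xi)_x \to \infty$ as $n \to \infty$. So it suffices to show that $(\xi_n, \xi)_x \to \infty$.

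First I would record two consequences of the hypothesis that $\ga_n x \to \xi$: the Gromov product $(\ga_n x, \xi)_x \to \infty$ by definition of boundary convergence, and also $d_{GM}(x, \ga_n x) \to \infty$ since $\xi \in \partial X_{GM}$ and the $\Ga$-action on $X_{GM}$ is properly discontinuous. Next, I would use the shadow hypothesis $\xi_n \in O_R^{GM}(x, \ga_n x)$ to control $(\ga_n x, \xi_n)_x$: by assumption there is a geodesic ray $\sigma_n$ from $x$ to $\xi_n$ with some $\sigma_n(t_n)$ within $d_{GM}$-distance $R$ of $\ga_n x$. A standard fact in $\delta$-hyperbolic spaces says that the Gromov product $(y, \eta)_x$ (for a point $y$ and a boundary point $\eta$) is, up to an additive error $O(\delta)$, the distance from $x$ to the nearest point of any geodesic from $y$ to $\eta$; applying this to $y=\ga_n x$ and $\eta=\xi_n$ and using that $\sigma_n(t_n)$ lies within $R$ of $\ga_n x$ on the geodesic from $x$ to $\xi_n$, one obtains
\[
(\ga_n x, \xi_n)_x \;\ge\; d_{GM}(x, \ga_n x) - R - O(\delta).
\]
Since $d_{GM}(x, \ga_n x)\to\infty$, this gives $(\ga_n x, \xi_n)_x \to \infty$.

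Finally I would combine the two estimates via the $\delta$-hyperbolic inequality
\[
(\xi_n, \xi)_x \;\ge\; \min\bigl\{(\xi_n, \ga_n x)_x,\ (\ga_n x, \xi)_x\bigr\} - 2\delta,
\]
(interpreted in the appropriate limiting sense for boundary points). Both terms on the right tend to infinity, hence so does $(\xi_n, \xi)_x$, and thus $\xi_n \to \xi$ in $\partial X_{GM}$.

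The only nonroutine step is the Gromov product estimate in Step 2. It is entirely standard in the theory of hyperbolic spaces, but requires a little care because one of the endpoints is a boundary point rather than an interior point; the correct way to handle this is either to take a point far along $\sigma_n$ in place of $\xi_n$ and let it tend to $\xi_n$, or to invoke the standard lemma that the Gromov product of two (possibly ideal) points equals, up to $O(\delta)$, the distance from $x$ to any geodesic joining them. Everything else is bookkeeping with the triangle-like inequality for Gromov products.
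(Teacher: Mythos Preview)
Your proof is correct and takes a different route from the paper's. The paper argues by contradiction: assuming after a subsequence that $\xi_n \to \xi' \neq \xi$, it notes that any geodesic ray $[\ga_n x, \xi_n]$ must then pass within a uniformly bounded distance of $x$ (since its endpoints converge to the distinct pair $(\xi, \xi')$). Combining this with the shadow condition, which places a point $c_n$ on $[x, \xi_n]$ within $R$ of $\ga_n x$, and the fact that $[\ga_n x, \xi_n]$ and the subray $[c_n, \xi_n]$ are uniformly Hausdorff-close, one deduces that $d_{GM}(x, c_n)$ and hence $d_{GM}(x, \ga_n x)$ stays bounded, contradicting $\ga_n x \to \xi$. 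Your approach is more direct and quantitative: you establish $(\ga_n x, \xi_n)_x \to \infty$ from the shadow hypothesis and $(\ga_n x, \xi)_x \to \infty$ from boundary convergence, then combine them via the $\delta$-hyperbolic inequality for Gromov products. Both arguments ultimately rest on the same thin-triangle geometry, but yours packages it as a forward Gromov-product computation and avoids the contradiction, while the paper's version is more pictorial and closer to how one might first discover the statement.
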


\begin{proof}
    Suppose to the contrary that the sequence $\xi_n$, after passing to a subsequence, converges to $\xi' \in \partial X_{GM}$ distinct from $\xi$.
    Since $\ga_n x \to \xi$ as $n \to \infty$ and $X_{GM}$ is Gromov hyperbolic (Theorem \ref{thm.GM_relhyp}), this implies that there exist a constant $R' > 0$ and a sequence of geodesic rays $[\ga_n x, \xi_n]$ from $\ga_n x$ to $\xi_n$ such that $d_{GM}(x, [\ga_n x, \xi_n]) < R'$ for all $n \ge 1$. On the other hand, since $\xi_n \in O_R^{GM}(x, \ga_n x)$, there exists a geodesic ray $[x, \xi_n]$ from $x$ to $\xi_n$ and a point $c_n \in [x, \xi_n]$ such that $d_{GM}(c_n, \ga_n x) < R$ for all $n \ge 1$. Since the distance between $\ga_n x$ and $c_n$ is uniformly bounded, the Hausdorff distance between two geodesic rays $[\ga_n x, \xi_n]$ and $[c_n, \xi_n] \subset [x, \xi_n]$ is uniformly bounded, by the Gromov hyperbolicity of $X_{GM}$ (Theorem \ref{thm.GM_relhyp}).  Since the distance $d_{GM}(x, [\ga_n x, \xi_n]) $ is uniformly bounded, this implies that the distance $d_{GM}(x, [c_n, \xi_n])$ is uniformly bounded as well. Since $[c_n, \xi_n]$ is the geodesic ray contained in the geodesic ray $[x, \xi_n]$, we have that $d_{GM}(x, c_n)= d_{GM}(x, [c_n, \xi_n])$ is uniformly bounded. Therefore, it follows from  the uniform boundedness of $d_{GM}(c_n, \ga_n x)$ that $d_{GM}(x, \ga_n x)$ is uniformly bounded, which contradicts the hypothesis that $\ga_n x \to \xi$ as $n \to \infty$. This finishes the proof.
\end{proof}

\subsection*{Proof of Proposition \ref{prop.shadowcompare}}

Note that the first inclusion and the last claim follow once we show that for any $c > 0$, there exists $C > 0$ such that $O_c^{\theta}(o, \ga o) \subset f(O_C^{GM}(x, \ga x))$ for all $\ga \in \Ga$.
Suppose not.  Then there exist sequences $\ga_n \in \Ga$ and $\xi_n \in \partial X_{GM} - O_n^{GM}(x, \ga_n x)$ such that $f(\xi_n) \in O_c^{\theta}(o, \ga_n o)$ for all $n \ge 1$. After passing to a subsequence, we may assume that the sequence $\ga_n^{-1} x$ converges to some  point $\eta \in \partial X_{GM}$ as $n \to \infty$. Since $\ga_n^{-1} \xi_n \notin O_n^{GM}(\ga_n^{-1} x, x)$ for all $n \ge 1$, we have that 
\be \label{eqn.sep10}
\lim_{n \to \infty} \ga_n^{-1} \xi_n = \eta.
\ee
On the other hand, by Proposition \ref{prop.dynamicspreserve}, we have $\lim_{n \to \infty} \ga_n^{-1} = f_{\i}(\eta) \in \La_{\i(\theta)}$. Since $f(\ga^{-1}_n \xi_n) \in O_c^{\theta}(\ga_n^{-1} o,  o)$ for all $n \ge 1$ and $\lim_{n \to \infty} \ga_n^{-1} = f_{\i}(\eta)$, it follows from \eqref{eqn.sep10} and the continuity of higher-rank shadows on viewpoints \cite[Proposition 3.4]{KOW_ergodic} that $f(\eta) = \lim_{n \to \infty} f(\ga_n^{-1}\xi_n) \in \La_{\theta}$ is in general position with $f_{\i}(\eta)$. This yields contradiction.

We now prove the second inclusion.
Let $R_0>0$ be as given by Lemma \ref{lem.nonemptyshadow} and fix $R > R_0$.  Let $x \in X_{GM}$ and $o \in X$. Suppose on the contrary that there exists a sequence $\ga_n \in \Ga$ such that $$f(O_R^{GM}(x, \ga_n x)) \not\subset O_n^{\theta}(o, \ga_n o) \quad \text{for all } n \ge 1.$$
This means that there exists a sequence $\xi_n \in O_R^{GM}(x, \ga_n x)$ such that $f(\xi_n) \not\in O_n^{\theta}(o, \ga_n o)$ for all $n \ge 1$. After passing to a subsequence, we may assume that the sequence $\ga_n x$ converges to a point $\xi \in \partial X_{GM}$. By Proposition \ref{prop.dynamicspreserve}, we have
\be \label{eqn.shadowcompare1}
\ga_n \to f(\xi) \quad \text{as } n \to \infty.
\ee

In addition, it follows from Lemma \ref{lem.shadowgiveslimit} that $\xi_n \to \xi$ as $n \to \infty$. For each $n \ge 1$, we choose a point $\eta_n \in O_R^{GM}(\ga_n x, x)$ which is possible by Lemma \ref{lem.nonemptyshadow}. We may assume that the sequence $\eta_n$ converges to $\eta \in \partial X_{GM}$, after passing to a subsequence. Since $\ga_n x \to \xi$ as $n \to \infty$ and $\eta_n \in O_R^{GM}(\ga_n x, x)$ for all $n \ge 1$, we have $\xi \neq \eta$. Therefore, we have the following convergence of the sequence in $\F_{\theta}^{(2)}$:
\be \label{eqn.shadowcompare2}
(f(\xi_n), f_{\i}(\eta_n)) \to (f(\xi), f_{\i}(\eta)) \in \F_{\theta}^{(2)} \quad \text{as }n \to \infty.
\ee

On the other hand, we also have $\ga_n^{-1}\xi_n \in O_R^{GM}(\ga_n^{-1} x, x)$ and $\ga_n^{-1} \eta_n \in O_R^{GM}(x, \ga_n^{-1} x)$ for all $n \ge 1$. Together with the $\Ga$-equivariance of $f$ and $f_{\i}$, a similar argument as above implies that
\be \label{eqn.shadowcompare3}
\text{the sequence } \ga_n^{-1}(f(\xi_n), f_{\i}(\eta_n)) \text{ is precompact in } \F_{\theta}^{(2)}.
\ee
By \eqref{eqn.shadowcompare1}, \eqref{eqn.shadowcompare2}, and \eqref{eqn.shadowcompare3}, we apply Lemma \ref{lem.eventualshadow} and deduce that there exists $R' > 0$ so that $f(\xi_n) \in O_{R'}^{\theta}(o, \ga_n o)$ for all $n \ge 1$. This contradicts to the choice of the sequence $\xi_n$ that $f(\xi_n) \notin O_n^{\theta}(o, \ga_n o)$ for all $n \ge 1$. This completes the proof.
\qed

\begin{lemma} \label{lem.gromovproductbdd}
    Let $x \in X_{GM}$ and $R > 0$. Then there exists a compact subset $Q \subset \fa_{\theta}$ satisfying the following: if $\xi, \eta \in \partial X_{GM}$ are such that $d_{GM}(x, [\xi, \eta]) < R$ for some bi-infinite geodesic $[\xi, \eta]$, then $$\langle f(\xi), f_{\i}(\eta) \rangle \in Q$$
    where $\langle \cdot, \cdot \rangle$ is the Gromov product defined in \eqref{eqn.gromovproductdef}.
\end{lemma}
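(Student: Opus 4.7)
I will argue by contradiction via compactness and continuity. Suppose the conclusion fails, so there exist sequences $\xi_n, \eta_n \in \partial X_{GM}$ and bi-infinite geodesics $[\xi_n, \eta_n] \subset X_{GM}$ with $d_{GM}(x, [\xi_n, \eta_n]) < R$, yet with $\|\langle f(\xi_n), f_{\i}(\eta_n)\rangle\| \to \infty$. Since $X_{GM}$ is a proper Gromov hyperbolic space by Theorem \ref{thm.GM_relhyp}, its boundary $\partial X_{GM}$ is compact; passing to a subsequence, assume $\xi_n \to \xi$ and $\eta_n \to \eta$ in $\partial X_{GM}$.

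The decisive step is to verify that $\xi \neq \eta$. This is a standard consequence of $\delta$-hyperbolicity of $X_{GM}$: the boundary Gromov product $(\xi_n \,|\, \eta_n)_x$ differs from $d_{GM}(x,[\xi_n,\eta_n])$ only by an additive error of size $O(\delta)$, so it stays uniformly bounded. Lower semicontinuity of the boundary Gromov product along $\xi_n \to \xi$ and $\eta_n \to \eta$ then gives $(\xi \,|\, \eta)_x < \infty$, forcing $\xi \neq \eta$ since this quantity is infinite precisely on the diagonal of $\partial X_{GM}$.

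Since $f$ and $f_{\i}$ are $\Ga$-equivariant homeomorphisms sending distinct points of $\partial X_{GM}$ to pairs in general position, the convergence $(f(\xi_n), f_{\i}(\eta_n)) \to (f(\xi), f_{\i}(\eta))$ takes place inside $\F_\theta^{(2)}$. The $\fa_\theta$-valued Gromov product on $\F_\theta^{(2)}$ is continuous: locally on $\F_\theta^{(2)}$ one chooses a continuous section of the submersion $g \mapsto (gP_\theta, gw_0 P_{\i(\theta)})$ from $G$ onto $\F_\theta^{(2)}$, and then the two Busemann summands in \eqref{eqn.gromovproductdef} depend continuously on $(\xi', \eta')$ by joint continuity of the Busemann map. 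Therefore $\langle f(\xi_n), f_{\i}(\eta_n)\rangle \to \langle f(\xi), f_{\i}(\eta)\rangle \in \fa_\theta$, contradicting divergence. Defining $Q$ to be the image of $\{(\xi, \eta) \in \partial X_{GM}^2 : \xi \neq \eta,\ d_{GM}(x, [\xi, \eta]) \leq R\}$ under $(\xi, \eta) \mapsto \langle f(\xi), f_{\i}(\eta)\rangle$ then yields the lemma, with compactness of $Q$ furnished by the same argument. The main hurdle is the hyperbolic-geometry step that $\xi = \eta$ would force the geodesics $[\xi_n, \eta_n]$ eventually to leave every bounded neighborhood of $x$; this is classical for proper $\delta$-hyperbolic spaces and requires no input beyond Theorem \ref{thm.GM_relhyp}.
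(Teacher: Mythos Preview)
Your proof is correct and follows essentially the same approach as the paper: argue by contradiction, pass to convergent subsequences $\xi_n\to\xi$, $\eta_n\to\eta$ using compactness of $\partial X_{GM}$, use $\delta$-hyperbolicity and the uniform bound $d_{GM}(x,[\xi_n,\eta_n])<R$ to conclude $\xi\neq\eta$, and then invoke continuity of the $\fa_\theta$-valued Gromov product on $\F_\theta^{(2)}$ to reach a contradiction. Your write-up supplies a bit more detail on the step $\xi\neq\eta$ (via the comparison of $(\xi_n|\eta_n)_x$ with $d_{GM}(x,[\xi_n,\eta_n])$) and on why the Gromov product is continuous, but the argument is the same.
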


\begin{proof}
    Suppose not. Then there exists  a sequence of bi-infinite geodesics $[\xi_n, \eta_n]$ for some $\xi_n,\eta_n\in \partial X_{GM}$ such that we have
    $\sup_n d_{GM}(x, [\xi_n, \eta_n]) < R$ and  the Gromov products $\langle f(\xi_n), f_{\i}(\eta_n) \rangle $ escape every compact subset of $\fa_{\theta}$ as $n \to \infty$. After passing to a subsequence, we may assume that  $\xi_n\to \xi$ and $\eta_n\to \eta $  in $\partial X_{GM}$. The hypothesis $\sup_n d_{GM}(x, [\xi_n, \eta_n]) < R$  implies $\xi \neq \eta$, since $X_{GM}$ is Gromov hyperbolic (Theorem \ref{thm.GM_relhyp}).
    Therefore  $(f(\xi), f_{\i}(\eta))\in \La_\theta^{(2)}$  and hence
    $\langle f(\xi), f_{\i}(\eta) \rangle\in \fa_\theta$ is well-defined.
    On the other hand, by the continuity of the Gromov product,
    we have $\langle f(\xi_n), f_{\i}(\eta_n) \rangle \to \langle f(\xi), f_{\i}(\eta) \rangle \in \fa_{\theta}$ as $n \to \infty$. This yields a contradiction.
\end{proof}

\section{Reparameterization for relatively Anosov groups} \label{sec.reparconstruct}
Let $\Ga<G$ be a $\theta$-Anosov subgroup relative to $\cal P$ and $X_{GM} = X_{GM}(X, \cal P, S)$ the associated Groves-Manning cusp space for a fixed generating set $S$. 
Fix a $(\Ga,\theta)$-proper linear form
$\psi \in \fa_{\theta}^*$. 
Recall from section \ref{sec.bundle} the space $\tilde \Omega_{\psi} := \La_{\theta}^{(2)} \times \R$ equipped with the $\Ga$-action given by
$$
\ga (\xi, \eta, s) = (\ga \xi, \ga \eta, s + \psi(\beta_{\xi}^{\theta}(\ga^{-1}, e))).
$$
As stated in Theorem \ref{op2}, the space
$$\Omega_{\psi} := \Ga \ba \tilde \Omega_{\psi}$$
is a locally compact second countable Hausdorff space. The translation flow $\{\phi_t\}$ on the $\R$-component of $\tilde \Omega_{\psi}$ commutes with the $\Ga$-action, and hence it induces the translation flow on $\Omega_{\psi}$ which we also denote by 
     $\{\phi_t\}$.
We will relate $\tilde \Omega_{\psi}$ and  $\Omega_\psi$
with the Groves-Manning cusp space $X_{GM}$ in this section. More precisely, let $$\cal G := \{ \sigma : \R \to X_{GM}: \text{ bi-infinite geodesic}\}.$$
The space $\cal G$ admits the geodesic flow $\varphi_s : \cal G \to \cal G$ defined by $(\varphi_s \sigma)(\cdot) = \sigma( \cdot + s)$ for $s \in \R$, and the inversion $I : \cal G \to \cal G$ defined by $(I \sigma)(s) = \sigma(-s)$ for $s \in \R$.
The canonical isometric action of $\Ga$ on $\cal G$ commutes with the geodesic flow and $I$, and is properly discontinuous. Hence we can also consider the locally compact Hausdorff space $\Ga \ba \cal G$. This section is devoted to the proof of the following reparameterization theorem:

  Set
  \be\label{aa} a= \liminf_{\ga \in \Ga} \frac{\psi(\mu_{\theta}(\ga))}{d_{GM}(e, \ga)}\quad\text{and}\quad a' = 3 \limsup_{\ga \in \Ga} \frac{\psi(\mu_{\theta}(\ga))}{d_{GM}(e, \ga)} .\ee 
  By Theorem \ref{thm.czzcartan},
  we have $0<a\le a' <\infty$.
  
\begin{theorem}[Reparameterization, Theorem \ref{thm.repar0}(1)-(3)] \label{thm.repar} \label{thm.reparflow}
    There exists a continuous, surjective, proper $\Ga$-equivariant map
    $$\tilde \Psi : \cal G \to \tilde \Omega_{\psi}.$$
    Moreover,  we have a continuous cocycle $\tilde{\mathsf t} : \cal G\times \br\to \br$   such that  for all $\sigma\in \cal G$ and $ s\in \br$, 
    \begin{enumerate}
        \item 
   $\tilde \Psi(\varphi_s \sigma ) = \phi_{\tilde{\mathsf t}(\sigma, s)} \tilde \Psi(\sigma)$;
\item      $\tilde{\mathsf t}(\sigma, s)= - \tilde{\mathsf t}(\varphi_s \sigma, -s)$;
 \item  there exists an absolute constant  $B > 0$ such that  $$a|s| - B \le \tilde{\mathsf t} (\sigma, |s|)  \le a'|s| + B.$$
  
     \end{enumerate}
     
\end{theorem}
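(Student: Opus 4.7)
The plan is to define $\tilde\Psi: \cal G \to \tilde\Omega_\psi = \La_\theta^{(2)} \times \br$ by sending $\sigma$ to its endpoint pair $(f(\sigma(+\infty)), f_\i(\sigma(-\infty))) \in \La_\theta^{(2)}$ together with a real time coordinate $T(\sigma)$, and then to descend by $\Ga$-equivariance to $\Psi: \Ga \ba \cal G \to \Omega_\psi$. The endpoint component is continuous and $\Ga$-equivariant because $\Ga$ is $\theta$-Anosov relative to $\cal P$ and hence antipodal, so the content of the theorem is in constructing $T$.

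For the time coordinate, I would select in a $\Ga$-equivariant Borel manner an element $\gamma_\sigma \in \Ga$ that tracks $\sigma(0)$ in a horoball-adapted way, and set
$$T(\sigma) := \psi\!\bigl(\beta^\theta_{f(\sigma(+\infty))}(e, \gamma_\sigma)\bigr).$$
Using the cocycle identity and the $G$-equivariance of $\beta^\theta$ together with $\gamma_{\gamma\sigma} = \gamma\gamma_\sigma$, one computes
$$T(\gamma\sigma) = T(\sigma) + \psi\!\bigl(\beta^\theta_{f(\sigma(+\infty))}(\gamma^{-1}, e)\bigr),$$
which matches exactly the $\Ga$-action on the $\br$-factor of $\tilde\Omega_\psi$. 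Hence $\tilde\Psi(\sigma) := (f(\sigma(+\infty)), f_\i(\sigma(-\infty)), T(\sigma))$ is $\Ga$-equivariant and descends to $\Psi$; continuity is then arranged by mollifying the selection $\sigma \mapsto \gamma_\sigma$. Setting $\mathsf t(\sigma, s) := T(\varphi_s\sigma) - T(\sigma)$ yields properties (1) and (2) immediately, the latter by the antisymmetry $\mathsf t(\varphi_s\sigma, -s) = T(\sigma) - T(\varphi_s\sigma) = -\mathsf t(\sigma, s)$.

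For the bounds (3), note that $(\varphi_s\sigma)(+\infty) = \sigma(+\infty)$, so
$$\mathsf t(\sigma, s) = \psi\!\bigl(\beta^\theta_{f(\sigma(+\infty))}(\gamma_\sigma, \gamma_{\varphi_s\sigma})\bigr).$$
The geodesic structure of $\sigma$ places $\sigma(+\infty)$ in a uniformly bounded $X_{GM}$-shadow of $\gamma_{\varphi_s\sigma}$ viewed from $\gamma_\sigma$, which by Proposition \ref{prop.shadowcompare} transports to an $\F_\theta$-shadow of uniform radius $r$ containing $f(\sigma(+\infty))$. Lemma \ref{lem.buseandcartan} then yields $\bigl|\mathsf t(\sigma, s) - \psi(\mu_\theta(\gamma_\sigma^{-1}\gamma_{\varphi_s\sigma}))\bigr| \le \kappa r \|\psi\|$, and Theorem \ref{thm.czzcartan} bounds $\psi(\mu_\theta(\gamma_\sigma^{-1}\gamma_{\varphi_s\sigma}))$ linearly in $d_{GM}(\gamma_\sigma, \gamma_{\varphi_s\sigma})$ up to an additive constant. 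A case analysis of the position of $\sigma(0)$ and $\sigma(s)$ relative to combinatorial horoballs then gives $|s| - M \le d_{GM}(\gamma_\sigma, \gamma_{\varphi_s\sigma}) \le 3|s| + M$: the upper bound follows from the path $\gamma_\sigma \to \sigma(0) \to \sigma(s) \to \gamma_{\varphi_s\sigma}$, where each horoball detour contributes at most $|s|$, and the lower bound follows from the triangle inequality applied to $\sigma$ itself. Together with the $\liminf/\limsup$ definitions of $a$ and $a'$, this yields the claimed bounds $a|s| - B \le \mathsf t(\sigma, |s|) \le a'|s| + B$.

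The main obstacle is the delicate choice of $\gamma_\sigma$ and the accompanying horoball estimate on $d_{GM}(\gamma_\sigma, \gamma_{\varphi_s\sigma})$: inside a deep combinatorial horoball the Groves-Manning metric grows only logarithmically with Cayley-graph distance, so naive selections (nearest orbit point, or foot of horoball projection) fail either the upper bound (when the foot moves rapidly while $\sigma(0)$ moves slowly) or the lower bound (when $T$ remains constant while $\sigma(0)$ traverses a deep horoball). A horoball-aware, $\Ga$-equivariant, mollified construction is required, and the factor $3$ in $a'$ provides exactly the headroom for the worst-case ratio. Finally, surjectivity of $\Psi$ follows from the surjectivity of $(f, f_\i)$ onto $\La_\theta^{(2)}$ together with the $\varphi_s$-action sweeping out the $\br$-coordinate, and properness follows from the lower bound in (3) combined with the proper discontinuity of $\Ga$ on $\cal G$.
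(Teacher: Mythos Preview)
Your overall strategy matches the paper's: define a $\Ga$-equivariant time coordinate $T:\cal G\to\R$, set $\tilde\Psi(\sigma)=(\sigma^+,\sigma^-,T(\sigma))$, and derive the bounds on $\mathsf t(\sigma,s)=T(\varphi_s\sigma)-T(\sigma)$ from shadow comparison (Proposition~\ref{prop.shadowcompare}), Lemma~\ref{lem.buseandcartan}, and Theorem~\ref{thm.czzcartan}. Properties (1), (2), surjectivity, and properness then follow essentially as you indicate.

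The genuine gap is precisely what you yourself flag as the main obstacle: you do not actually construct $\gamma_\sigma$ or the mollification. A discrete $\Ga$-valued selection cannot be made continuous by smoothing, and your lower bound $|s|-M\le d_{GM}(\gamma_\sigma,\gamma_{\varphi_s\sigma})$ via the triangle inequality would require $d_{GM}(\gamma_\sigma,\sigma(0))$ to be uniformly bounded, which fails whenever $\sigma(0)$ is deep in a horoball. The technical heart of the proof is exactly this thin-part construction, and your proposal defers it.

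The paper's resolution bypasses any discrete selection. It writes $T(\sigma)=\log v_\sigma$ where $v_\sigma$ is the unit vector for a continuous $\Ga$-equivariant family of norms $\|\cdot\|_\sigma$ on $\R_+$. On $\cal G_{thick}$ such a family exists by cocompactness; for $\sigma=\varphi_t\sigma_0$ inside a horoball excursion of length $T$, the norm is given by an explicit three-piece formula: $e^{-ct}\|\cdot\|_{\sigma_0}$ on $(0,T/3]$, $e^{c(T-t)}\|\cdot\|_{\varphi_T\sigma_0}$ on $[2T/3,T)$, and a multiplicative interpolation on the middle third. The bounds (3) come from the contraction estimate $b^{-1}e^{-a't}\le \kappa_t(\sigma):=\|v_\sigma\|_{\varphi_t\sigma}\le b e^{-at}$ (Theorem~\ref{thm.expdecay}), proved by combining a recurrence-to-compact lemma (Lemma~\ref{lem.cptpart}, which is exactly your shadow/Busemann argument applied when both endpoints are in the thick part) with a direct thin-part computation (Lemma~\ref{lem.cusppart}) and gluing. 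The factor $3$ in $a'$ arises not from a path-length estimate as you suggest, but from the middle-third interpolation, where the effective exponent is $3c'-2c$.
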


In the above theorem, $\tilde{\mathsf t} : \cal G\times \br\to \br$ being a continuous cocycle means that it is continuous and for all $\sigma \in \cal G$ and $s_1, s_2 \in \R$,
$$
\tilde{\mathsf t}(\sigma, s_1 + s_2) = \tilde{\mathsf t} ( \sigma, s_1) + \tilde{\mathsf t}(\varphi_{s_1} \sigma, s_2).
$$
Since $\tilde \Psi : \cal G \to \tilde \Omega_{\psi}$ in Theorem \ref{thm.repar} is $\Ga$-equivariant, this descends to the map $\Psi : \Ga \ba \cal G \to \Omega_{\psi}$. The following is immediate from Theorem \ref{thm.repar}.

\begin{corollary}[Reparameterization] \label{cor.reparquotient}
    There exists a continuous, surjective, proper map $$\Psi : \Ga \ba \cal G \to \Omega_{\psi}.$$
    Moreover,  we have a continuous cocycle $\mathsf t:\Ga \ba \cal G\times
    \br\to \br$  such that  for all $\sigma\in \cal G$ and $ s\in \br$, 
    \begin{enumerate}
        \item 
   $\Psi([\varphi_s \sigma ]) = \phi_{\mathsf t(\sigma, s)} \Psi([\sigma])$;
\item      $\mathsf{t}(\sigma, s)= - \mathsf{t}(\varphi_s \sigma, -s)$;
 \item  there exists an absolute constant  $B > 0$ such that  $$a|s| - B \le \mathsf t (\sigma, |s|)  \le a'|s| + B.$$
  \end{enumerate}
\end{corollary}

\subsection*{Thick-thin decomposition of $\cal G$}
For $P \in \cal P$, let $\xi_P \in \partial X_{GM}$ be the bounded parabolic limit point fixed by $P$. We consider the open {\it horoball} $H_P \subset X_{GM}$ based at $\xi_P$ invariant under $P$, defined as follows: let $H_P' \subset X_{GM}$ be the subgraph induced by the vertices $\{(g, n) :  g \in P, n \ge 2\}$ and $\hat{H}_P \subset X_{GM}$ be the subgraph induced by the vertices $\{(g, 2) : g \in P\}$. We then set $$H_P := H_P' - \hat{H}_P.$$
For $\ga \in \Ga$, we also set $$H_{\ga P \ga^{-1}} := \ga H_P$$ which is the open horoball based at $\xi_{\ga P \ga^{-1}} := \ga \xi_P$ and invariant under $\ga P \ga^{-1} \in \cal P^{\Ga}$. The boundary $\partial H_{\ga P \ga^{-1}}$ consists of the vertices $\ga \{ (g, 2) : g \in P \}$. 
We then have the $\Ga$-invariant family $\{ H_P : P \in \cal P^{\Ga} \}$ of open horoballs with disjoint closures. 

We define the following subsets of $\cal G$: for $P \in \cal P^{\Ga}$, let
$$\begin{aligned}
    \cal G_P & := \{ \sigma \in \cal G : \sigma(0) \in H_P\}; \\
    \partial \cal G_P & := \{ \sigma \in \cal G : \sigma(0) \in \partial H_P \}.
    \end{aligned}$$
    We  have the {\it thick-thin decomposition} of $\cal G$: $$\cal G_{thin} := \bigcup_{P \in \cal P^{\Ga}} \cal G_P \quad \text{and} \quad \cal G_{thick} := \cal G - \cal G_{thin}.$$
    Since the Groves-Manning cusp space $X_{GM}$ is constructed by attaching combinatorial horoballs to the Cayley graph of $\Ga$, the $\Ga$-action on $X_{GM} - \bigcup_{P \in \cal P^{\Ga}} H_P$ is cocompact. Hence the $\Ga$-action on $\cal G_{thick}$ which consists of bi-infinite geodesics based at $X_{GM} - \bigcup_{P \in \cal P^{\Ga}} H_P$ is also cocompact.

We also introduce the following subsets of $\partial \cal G_P$ for each $P \in \cal P^{\Ga}$:
$$\begin{aligned}
    \partial^+ \cal G_P & := \{ \sigma \in \partial \cal G_P : \sigma(t) \in H_P \text{ for all sufficiently small } t > 0 \}; \\
    \partial^- \cal G_P & := \{ \sigma \in \partial \cal G_P : \sigma(-t) \in H_P \text{ for all sufficiently small } t > 0 \}.
\end{aligned}$$
Note that $\partial^+ \cal G_P \cap \partial^- \cal G_P = \emptyset$. For $\sigma \in \partial^+ \cal G_P$, we set $$T_{\sigma}^+ := \min \{ t \in (0, \infty] : \sigma(t) \notin H_P\},$$
and for $\sigma \in \partial^- \cal G_P$, we set  $$T_{\sigma}^- := \max \{ t \in [-\infty, 0) : \sigma(t) \notin H_P\},$$
which are the escaping times for the horoball $H_P$. We then have $$\cal G_P = \left( \bigcup_{\sigma \in \partial^+ \cal G_P} \bigcup_{t \in (0, T_{\sigma}^+)} \varphi_t\sigma \right) \cup \left( \bigcup_{\sigma \in \partial^- \cal G_P} \bigcup_{t \in (T_{\sigma}^-, 0)} \varphi_t\sigma \right).$$

\subsection*{Construction of the reparameterization}

To construct the reparameterization, we consider the trivial bundle $$\cal G \times \R_{+} \to \cal G.$$ Given $\sigma \in \cal G$, we denote by $\sigma^+ = \sigma(\infty) \in \partial X_{GM}$ and $\sigma^- = \sigma(-\infty) \in \partial X_{GM}$ the forward and backward endpoint of the bi-infinite geodesic $\sigma$. Noting that we have $\Ga$-equivariant homeomorphisms $f : \partial X_{GM} \to \La_{\theta}$ and $f_{\i} : \partial X_{GM} \to \La_{\i(\theta)}$, we identify $\partial X_{GM}$, $\La_{\theta}$, and $\La_{\i(\theta)}$ in this section via the homeomorphisms. We define the $\Ga$-action on $\cal G \times \R_{+}$ as follows: for $\ga \in \Ga$ and $(\sigma, v) \in \cal G \times \R_{+}$, $$\ga(\sigma, v) = \left(\ga \sigma, v e^{\psi\left(\beta_{\sigma^+}^{\theta}(\ga^{-1}, e)\right)}\right).$$ This action makes the following projection $\Ga$-equivariant: $$\begin{aligned}
    \Psi_0 : \cal G \times \R_{+} & \longrightarrow \qquad \tilde{\Omega}_{\psi} \\
     (\sigma, v) \quad & \longmapsto  (\sigma^+, \sigma^-, \log v).
\end{aligned}$$
We construct the reparameterization $\Psi : \Ga \ba \cal G \to \Omega_{\psi}$ in Theorem \ref{thm.repar} by constructing a nice $\Ga$-equivariant section $u : \cal G \to \cal G \times \R_{+}$ of the trivial bundle so that we obtain a $\Ga$-equivariant map $\tilde \Psi : \cal G \to \tilde{\Omega}_{\psi}$ as follows, with the desired properties:
$$\begin{tikzcd}
    \cal G \times \R_+ \arrow[rd, "\Psi_0"] \arrow[d] & \\
    \cal G \arrow[r, dashed, "\tilde{\Psi}"'] \arrow[u, bend left=50, dashed, "u"] & \tilde{\Omega}_{\psi}
\end{tikzcd}$$

\subsection*{Norms on fibers} To construct a section of the trivial bundle $\cal G \times \R_{+} \to \cal G$, we define a  continuous family of $\Ga$-equivariant norms on fibers. More precisely, we define a $\Ga$-invariant continuous function $$\| \cdot \| : \cal G \times \R_{+} \to \R_{+}$$ such that for each $\sigma \in \cal G$, $\| (\sigma, \cdot) \|$ is the restriction of a norm on $\br$ to $\R_{+}$. We simply write $$\| \cdot \|_{\sigma} := \| (\sigma, \cdot)\|\quad \text{ for each $\sigma \in \cal G$. }$$ Once we define the norm, we will define a section $u :  \cal G \to \cal G \times \R_{+}$ by $u(\sigma) = (\sigma, v_{\sigma})$ where $v_{\sigma} \in \R_{+}$ is the unique unit vector with respect to the norm $\| \cdot \|_{\sigma}$, i.e., $\|v_{\sigma} \|_{\sigma} = 1$. The $\Ga$-equivariance and the continuity of the norms imply that the section $u$ is also $\Ga$-equivariant and continuous. To make the reparameterization $\tilde{\Psi} = \Psi_0 \circ u$ satisfy the conditions in Theorem \ref{thm.repar}, our norms should have
a  property that the contraction rate along the geodesic flow is bounded from both {\it above} and {\it below} by uniform exponential functions.

Our construction of the family of norms is motivated by \cite{ZZ_relatively} which considered flat bundles for relatively Anosov subgroups of $\SL(n, \R)$ with respect to a maximal parabolic subgroup. Our proof of the contraction property is motivated by (\cite{CZZ_cusped},  \cite{ZZ_relatively}) where the upper bound of the contraction rate of norms on flat bundles for relatively Anosov subgroups of $\SL(n, \R)$ with respect to a maximal parabolic subgroup was proved. We also remark that
the contraction property was earlier studied in (\cite{Tsouvalas_Anosov}, \cite{CS_local}) for Anosov subgroups.

We now define a family of norms as follows (compare to a similar construction in \cite{ZZ_relatively}): first we fix a continuous family of $\Ga$-equivariant norms $\| \cdot \|_{\sigma}$ for $\sigma \in \cal G_{thick}$ such that $\| \cdot \|_{\sigma} = \| \cdot \|_{I\sigma}$ for all $\sigma \in \cal G_{thick}$. Let $\sigma \in \cal G_{thin}$. Then $\sigma \in \cal G_P$ for some $P \in \cal P^{\Ga}$. Let 
\be \label{eqn.defofc}
c > 0
\ee be the constant given by Theorem \ref{thm.czzcartan}. 
There are two cases indicated by the Figures \ref{fig.case1} and \ref{fig.case2}:

\medskip
{\noindent \bf Case 1.}   If $\sigma = \varphi_t\sigma_0$ for some $\sigma_0 \in \partial^+ \cal G_P$ and $t \in (0, T_{\sigma_0}^+)$, we write $T := T_{\sigma_0}^+$ and 
\begin{itemize}
    \item if $t \in \left(0, \tfrac{1}{3}T\right]$, we set $$\|\cdot \|_{\sigma} := e^{-c t} \| \cdot \|_{\sigma_0}.$$

    \item if $t \in \left[\tfrac{2}{3}T, T\right)$, we set $$\|\cdot \|_{\sigma} := e^{c(T - t)} \| \cdot \|_{\varphi_T\sigma_0}.$$

    \item if $t \in \left(\tfrac{1}{3}T, \tfrac{2}{3}T\right)$, we set $$\| \cdot \|_{\sigma} := \| \cdot \|_{\varphi_{T/3}\sigma_0}^{2 - \tfrac{3}{T} t} \| \cdot \|_{\varphi_{2T/3}\sigma_0}^{\tfrac{3}{T}t - 1}.$$
\end{itemize}
\begin{figure}[h]
\begin{tikzpicture}
    \draw (0, 0) circle(2);
    \fill[fill=gray!10] (0, 0.5) circle(1.5);

    \draw (1.2, 0) node {\tiny $H_P$};

    \filldraw (0, 2) circle(1pt);
    \draw (0, 2) node[above] {$\xi_P$};

    \draw (0, 2) arc(0:-30:7.45);

    \filldraw[blue] (-0.27, 0) circle(1pt);
    \draw[->, thick, blue] (-0.27, 0) -- (-0.15, 0.4);
    \draw[blue] (-0.15, 0.4) node[right] {$\sigma$};

    \filldraw[red] (-0.58, -0.88) circle(1pt);
    \draw[->, thick, red] (-0.58, -0.88) -- (-0.42, -0.52);
    \draw[red] (-0.42, -0.52) node[right] {$\sigma_0$};

    \draw[dashed] (-0.68, -0.78) .. controls (-0.78, -0.45) and (-0.57, 0) .. (-0.37, 0);
    \draw (-0.6, -0.2) node[left] {$t$};
    
\end{tikzpicture}
\quad 
\begin{tikzpicture}
    \draw (0, 0) circle(2);
    \fill[fill=gray!10] (0, 0.5) circle(1.5);
    
    \draw (1.2, 0) node {\tiny $H_P$};

    \filldraw (0, 2) circle(1pt);
    \draw (0, 2) node[above] {$\xi_P$};

    \draw (0, -2) arc(180:135:4.84);

    \filldraw[blue] (0.69, 0.5) circle(1pt);
    \draw[->, thick, blue] (0.69, 0.5) -- (0.95, 0.9);
    \draw[blue] (0.95, 0.9) node[left] {$\sigma$};

    \filldraw[red] (0.11, -1) circle(1pt);
    \draw[->, thick, red] (0.1, -1) -- (0.2, -0.5);
    \draw[red] (0.2, -0.5) node[left] {$\sigma_0$};

    \draw[dashed] (0.2, -1) .. controls (0.4, -1) and (0.85, 0.1) .. (0.75, 0.4);
    \draw (0.5, -0.5) node[right] {$t$};
    
\end{tikzpicture}
\caption{Two possible configurations of $\sigma \in \cal G_P$ in Case 1 depending on whether $T_{\sigma_0}^+ = \infty$ or not. Only the first item in Case 1 applies to the left figure.} \label{fig.case1}
\end{figure}

\medskip
{\noindent \bf Case 2.} If $\sigma = \varphi_s\tilde{\sigma}_0$ for some $\tilde{\sigma}_0 \in \partial^- \cal G_P$ and $s \in (T_{\tilde{\sigma}_0}^-, 0)$, we write $T := T_{\tilde{\sigma}_0}^-$ and
\begin{itemize}
    \item if $s \in \left[ \tfrac{1}{3}T, 0 \right)$, we set $$\| \cdot \|_{\sigma} := e^{-c s} \| \cdot \|_{\tilde{\sigma}_0}.$$

    \item if $s \in \left( T, \tfrac{2}{3}T \right]$, we set $$\| \cdot \|_{\sigma} := e^{c(T - s)} \| \cdot \|_{\varphi_T\tilde{\sigma}_0}.$$

    \item if $s \in \left( \tfrac{2}{3}T, \tfrac{1}{3}T \right)$, we set $$\| \cdot \|_{\sigma} := \| \cdot \|_{\varphi_{2T/3}\tilde{\sigma}_0}^{\tfrac{3}{T}s - 1} \| \cdot \|_{\varphi_{T/3}\tilde{\sigma}_0}^{2 - \tfrac{3}{T} s}.$$
\end{itemize}
\begin{figure}[h]
\begin{tikzpicture}
\begin{scope}[xscale=-1]
    \draw (0, 0) circle(2);
    \fill[fill=gray!10] (0, 0.5) circle(1.5);

    \draw (1.2, 0) node {\tiny $H_P$};

    \filldraw (0, 2) circle(1pt);
    \draw (0, 2) node[above] {$\xi_P$};

    \draw (0, 2) arc(0:-30:7.45);

    \filldraw[blue] (-0.27, 0) circle(1pt);
    \draw[->, thick, blue] (-0.27, 0) -- (-0.39, -0.4);
    \draw[blue] (-0.27, 0) node[left] {$\sigma$};

    \filldraw[red] (-0.58, -0.88) circle(1pt);
    \draw[->, thick, red] (-0.58, -0.88) -- (-0.74, -1.24);
    \draw[red] (-0.58, -0.88) node[left] {$\tilde \sigma_0$};

    \draw[dashed] (-0.68, -0.78) .. controls (-0.78, -0.45) and (-0.57, 0) .. (-0.37, 0);
    \draw (-0.6, -0.2) node[right] {$s$};
    \end{scope}
\end{tikzpicture}
\quad 
\begin{tikzpicture}
\begin{scope}[xscale=-1]
    \draw (0, 0) circle(2);
    \fill[fill=gray!10] (0, 0.5) circle(1.5);
    
    \draw (1.2, 0) node {\tiny $H_P$};

    \filldraw (0, 2) circle(1pt);
    \draw (0, 2) node[above] {$\xi_P$};

    \draw (0, -2) arc(180:135:4.84);

    \filldraw[blue] (0.69, 0.5) circle(1pt);
    \draw[->, thick, blue] (0.69, 0.5) -- (0.43, 0.1);
    \draw[blue] (0.69, 0.5) node[right] {$\sigma$};

    \filldraw[red] (0.11, -1) circle(1pt);
    \draw[->, thick, red] (0.1, -1) -- (0, -1.5);
    \draw[red] (0.1, -1)  node[right] {$\tilde \sigma_0$};

    \draw[dashed] (0.2, -1) .. controls (0.4, -1) and (0.85, 0.1) .. (0.75, 0.4);
    \draw (0.5, -0.5) node[left] {$s$};
    \end{scope}
\end{tikzpicture}
\caption{Two possible configurations of $\sigma \in \cal G_P$ in Case 2 depending on whether $T_{\tilde \sigma_0}^- = - \infty$ or not. Only the first item in Case 2 applies to the left figure.} \label{fig.case2}
\end{figure}

Note that both cases can happen at the same time, and in that case two definitions coincide. The resulting family of norms is continuous and $\Ga$-equivariant.

\subsection*{Contraction rate along geodesic flow}
For $\sigma \in \cal G$, there exists a unique $v_{\sigma} \in \R_{+}$ such that $\|v_{\sigma} \|_{\sigma} = 1$. For  $t \in \R$, we define 
\be\label{kappat} \kappa_{t}(\sigma) := \| v_{\sigma} \|_{\varphi_t\sigma};\ee 
this measures the contraction rates of  norms under the geodesic flow. It is easy to see that for $\sigma \in \cal G$ and $t, s \in \R$, we have 
\be \label{eqn.cocycle}
v_{\varphi_t \sigma} = \frac{v_{\sigma}}{\|v_{\sigma}\|_{\varphi_t\sigma}}  \quad \text{and} \quad \kappa_{t + s}(\sigma) = \kappa_s(\varphi_t\sigma) \kappa_t(\sigma).
\ee
Moreover, $\kappa_t(\cdot)$ is $\Ga$-invariant.
\begin{lemma} \label{lem.invariancecocycle}
    For $\sigma \in \cal G$, $t \in \R$, and $\ga \in \Ga$, we have $$\kappa_t(\ga \sigma) = \kappa_t(\sigma).$$
\end{lemma}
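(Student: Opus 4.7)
The plan is to unpack the $\Gamma$-invariance of the norm family, combine it with the uniqueness of unit vectors, and use the key observation that the relevant Busemann cocycle is constant along geodesic flow orbits.

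First I would record the cocycle $c(\gamma,\sigma) := e^{\psi(\beta_{\sigma^+}^{\theta}(\gamma^{-1},e))}$, so that the $\Gamma$-action on $\cal G\times \R_+$ reads $\gamma(\sigma,v) = (\gamma\sigma, c(\gamma,\sigma)\,v)$. The $\Gamma$-invariance of $\|\cdot\|$ then becomes
\[
\|c(\gamma,\sigma)\,v\|_{\gamma\sigma} \;=\; \|v\|_{\sigma} \qquad \text{for all }\gamma\in\Gamma,\ \sigma\in\cal G,\ v\in\R_+.
\]
Plugging in $v = v_\sigma$ with $\|v_\sigma\|_\sigma = 1$ and using uniqueness of the unit vector on the fiber over $\gamma\sigma$, this forces
\[
v_{\gamma\sigma} \;=\; c(\gamma,\sigma)\, v_\sigma.
\]

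The second ingredient is the observation that the forward endpoint map is invariant under the geodesic flow: $(\varphi_t\sigma)^+ = \sigma^+$. Consequently
\[
c(\gamma,\varphi_t\sigma) \;=\; e^{\psi(\beta_{(\varphi_t\sigma)^+}^{\theta}(\gamma^{-1},e))} \;=\; e^{\psi(\beta_{\sigma^+}^{\theta}(\gamma^{-1},e))} \;=\; c(\gamma,\sigma).
\]
Combined with the fact that $\varphi_t$ commutes with the $\Gamma$-action on $\cal G$, i.e.\ $\varphi_t(\gamma\sigma) = \gamma\varphi_t\sigma$, we now just compute:
\[
\kappa_t(\gamma\sigma) \;=\; \|v_{\gamma\sigma}\|_{\varphi_t(\gamma\sigma)} \;=\; \|c(\gamma,\sigma)\,v_\sigma\|_{\gamma\,\varphi_t\sigma} \;=\; \|c(\gamma,\varphi_t\sigma)\,v_\sigma\|_{\gamma\,\varphi_t\sigma} \;=\; \|v_\sigma\|_{\varphi_t\sigma} \;=\; \kappa_t(\sigma),
\]
where the second-to-last equality applies $\Gamma$-invariance at the point $\varphi_t\sigma$.

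There is no real obstacle here; this is a bookkeeping lemma. The only thing that requires a sentence of justification is the step $v_{\gamma\sigma} = c(\gamma,\sigma)\,v_\sigma$, which uses that $\|\cdot\|_{\gamma\sigma}$ is a genuine norm (so its restriction to $\R_+$ is injective, giving uniqueness of the unit vector). Everything else — in particular that $c(\gamma,\cdot)$ factors through the forward endpoint — is immediate from the definitions, and makes the $\Gamma$-invariance of $\kappa_t$ a one-line verification.
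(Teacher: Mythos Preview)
Your proof is correct and follows essentially the same approach as the paper's: both derive $v_{\ga\sigma}=c(\ga,\sigma)v_\sigma$ from $\Ga$-equivariance of the norms, then use commutation $\varphi_t(\ga\sigma)=\ga\varphi_t\sigma$ together with the fact that the Busemann cocycle depends only on $\sigma^+$ (which is flow-invariant) to cancel the factor. Your write-up makes the flow-invariance of $c(\ga,\cdot)$ explicit, whereas the paper leaves this implicit in the cancellation $\beta^\theta_{\ga\sigma^+}(\ga,e)=-\beta^\theta_{\sigma^+}(\ga^{-1},e)$, but the argument is the same.
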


\begin{proof}
    By the $\Ga$-equivariance of the norm, we have $$1 = \| v_{\sigma} \|_{\sigma} = \left\| v_{\sigma} e^{\psi(\beta_{\sigma^+}^{\theta}(\ga^{-1}, e))} \right\|_{\gamma \sigma}.$$
    This implies 
    \be \label{eqn.vgammasigma}
    v_{\ga \sigma} = v_{\sigma} e^{\psi(\beta_{\sigma^+}^{\theta}(\ga^{-1}, e))}.
    \ee
    Since $\varphi_t \ga \sigma = \ga\varphi_t\sigma$, we have $$\begin{aligned}
        \kappa_t(\ga \sigma) & = \| v_{\ga \sigma} \|_{\varphi_t\ga \sigma} = \|v_{\sigma}\|_{\ga \varphi_t\sigma} e^{\psi(\beta_{\sigma^+}^{\theta}(\ga^{-1}, e))}\\
        & = \left\| v_{\sigma} e^{\psi(\beta_{\ga \sigma^+}^{\theta}(\ga, e))} \right\|_{\varphi_t\sigma}e^{\psi(\beta_{\sigma^+}^{\theta}(\ga^{-1}, e))} \\
        & = \| v_{\sigma}\|_{\varphi_t\sigma} = \kappa_t(\sigma)
    \end{aligned}$$
    as desired.
\end{proof}

The following is the desired estimate on the contraction rate:

\begin{theorem} \label{thm.expdecay}
    There exists $ b>1 $ such that for all $\sigma \in \cal G$ and $t \ge 0$, we have $$ \frac{1}{b} e^{-a't} \le  \kappa_t(\sigma) \le b e^{-a t}$$
    where 
  $a= \liminf_{\ga \in \Ga} \frac{\psi(\mu_{\theta}(\ga))}{d_{GM}(e, \ga)}$ and $a' = 3 \limsup_{\ga \in \Ga} \frac{\psi(\mu_{\theta}(\ga))}{d_{GM}(e, \ga)} $.
\end{theorem}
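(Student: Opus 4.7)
The strategy is to convert the norm-contraction cocycle $\kappa_t(\sigma)$ into a Busemann expression which, via Lemma~\ref{lem.buseandcartan} and Proposition~\ref{prop.shadowcompare}, reduces to the Cartan projection data $\psi(\mu_\theta(\cdot))$ that Theorem~\ref{thm.czzcartan} controls. The key identity, which follows from the $\Ga$-equivariance computation already appearing in the proof of Lemma~\ref{lem.invariancecocycle}, is
$$\|v_\sigma\|_{\ga\sigma}=e^{-\psi(\beta^\theta_{\sigma^+}(\ga^{-1},e))},\qquad\ga\in\Ga.$$
Whenever $\ga$ is chosen so that the geodesic $\sigma$ in $X_{GM}$ passes within bounded $d_{GM}$-distance of $\ga x$, Proposition~\ref{prop.shadowcompare} places $\sigma^+$ in a bounded-radius symmetric-space shadow of $\ga o$, and Lemma~\ref{lem.buseandcartan} replaces the Busemann cocycle by $\mu_\theta(\ga)$ up to a bounded additive error, yielding the uniform estimate $\bigl|\log\|v_\sigma\|_{\ga\sigma}+\psi(\mu_\theta(\ga))\bigr|\le M$ for a constant $M$ depending only on the chosen shadow radius.

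The proof then proceeds by decomposing $\sigma([0,t])$ into thick transits and horoball excursions and treating the two cases separately before gluing via the cocycle \eqref{eqn.cocycle}. First, for a segment $\sigma([0,t])\subset\cal G_{thick}$, cocompactness of $\Ga\backslash\cal G_{thick}$ produces $\ga\in\Ga$ with $\ga\sigma$ at bounded $\cal G$-distance from $\varphi_t\sigma$ and $d_{GM}(e,\ga)=t+O(1)$; continuity of the thick-part norm and the core identity above give $\kappa_t(\sigma)=e^{-\psi(\mu_\theta(\ga))+O(1)}$, and the $\liminf/\limsup$ definitions of $a,a'/3$ convert this into $b^{-1}e^{-(a'/3)t}\le\kappa_t(\sigma)\le be^{-at}$ (which on the lower-bound side is already stronger than the claimed $e^{-a't}$). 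Finitely many short-distance violators of the asymptotic definitions of $a,a'$ are absorbed into the multiplicative constant.

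Second, for a single horoball excursion $\sigma_0\in\partial^+\cal G_P$ of length $T$, the piecewise definition makes $\kappa_t(\sigma_0)=e^{-ct}$ explicit on $[0,T/3]$ and $\kappa_t(\sigma_0)=e^{c(T-t)}\|v_{\sigma_0}\|_{\varphi_T\sigma_0}$ on $[2T/3,T]$, with log-linear interpolation between. Since $\sigma_0(0),\sigma_0(T)\in\partial H_P$ and $\stab_\Ga(H_P)$ acts cocompactly on $\partial H_P$, applying the thick-case argument to the boundary produces $p\in\stab_\Ga(H_P)$ with $\|v_{\sigma_0}\|_{\varphi_T\sigma_0}=e^{-\psi(\mu_\theta(p))+O(1)}$ and $d_{GM}(e,p)=T+O(1)$. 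A short calculation plugging this into the interpolation formula, combined with the liminf/limsup bounds on $\psi(\mu_\theta(p))/T$, gives $-a't-B\le\log\kappa_t(\sigma_0)\le-at+B$ throughout $[0,T]$. An arbitrary segment $\sigma([0,t])$ is then a concatenation of alternating horoball excursions and thick transits; \eqref{eqn.cocycle} multiplies the per-piece exponential bounds, and since the number of thick--thin transitions is $O(t)$, all additive errors absorb into a single constant $b$.

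The main obstacle is the middle third $[T/3,2T/3]$ of a horoball excursion. Here the log-linearly glued norm has slope $2c-3\psi(\mu_\theta(p))/T$, so its instantaneous contraction rate can reach as much as $c/3+\psi(\mu_\theta(p))/T$; via $\psi(\mu_\theta(p))/T\le a'/3$ this is bounded by $c/3+a'/3\le a'$, and the factor $3$ built into $a'=3\limsup\psi/d_{GM}$ is exactly what is needed to prevent a violation of the lower bound $\kappa_t\ge b^{-1}e^{-a't}$ in the interpolation region. Verifying the matching upper slope (where one must use $a\ge c$ from Theorem~\ref{thm.czzcartan} to ensure $-cT/3\le-aT/3+O(1)$ at the left endpoint of the middle third) is the delicate ingredient tying the intrinsic constants $c,c'$ of Theorem~\ref{thm.czzcartan} to the asymptotic constants $a,a'$ in which the theorem is phrased.
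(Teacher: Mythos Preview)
Your concatenation step contains a genuine gap. You write that an arbitrary segment $\sigma([0,t])$ is a concatenation of alternating horoball excursions and thick transits, that the cocycle \eqref{eqn.cocycle} multiplies the per-piece exponential bounds, and that ``since the number of thick--thin transitions is $O(t)$, all additive errors absorb into a single constant $b$.'' But this is exactly backwards: if there are $N$ pieces and each contributes an additive error $B$ to $\log\kappa$, the total additive error is $NB$, and $N=O(t)$ produces an error $e^{O(t)}$ in $\kappa_t(\sigma)$, which destroys the bound. A geodesic in $X_{GM}$ can genuinely enter and exit $\Theta(t)$ distinct horoballs over a segment of length $t$, so you cannot assume $N$ is bounded.

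The paper sidesteps this by splitting into only \emph{three} pieces, not $O(t)$. Setting $s_1=\min\{s\in[0,t]:\varphi_s\sigma\in\cal G_{thick}\}$ and $s_2=\max\{s\in[0,t]:\varphi_s\sigma\in\cal G_{thick}\}$, the two outer pieces $[0,s_1]$ and $[s_2,t]$ are entirely thin and your horoball-excursion analysis applies to each. The middle piece $[s_1,s_2]$ may pass through many horoballs, but your own core identity $\|v_\sigma\|_{\ga\sigma}=e^{-\psi(\beta^\theta_{\sigma^+}(\ga^{-1},e))}$, combined with the shadow comparison, needs only that the \emph{endpoints} $\varphi_{s_1}\sigma$ and $\varphi_{s_2}\sigma$ lie in a compact set (Lemma~\ref{lem.cptpart}); it does not require the whole segment to remain thick. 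You restricted your thick estimate to segments entirely contained in $\cal G_{thick}$, which is unnecessarily strong and forces the fatal $O(t)$-piece decomposition. Once you apply the endpoint-only estimate to the single piece $[s_1,s_2]$, the total number of pieces is three and the additive errors genuinely do absorb into a constant. Your treatment of the middle third of a horoball excursion and the role of the factor $3$ in $a'$ is otherwise along the right lines.
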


We begin by observing that the recurrence to a compact subset implies the exponential contraction:

\begin{lemma} \label{lem.cptpart}
    For any compact subset $Q \subset X_{GM}$, there exists $C_Q > 1$ such that if $\sigma \in \cal G$, $t \ge 0$, and $\ga \in \Ga$ satisfy $\sigma(0), \ga^{-1}\sigma(t) \in Q$, then $$\frac{1}{C_Q} e^{-\psi(\mu_{\theta}(\ga))} \le \kappa_t(\sigma) \le C_Q e^{-\psi(\mu_{\theta}(\ga))}.$$
\end{lemma}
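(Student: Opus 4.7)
The key quantity to analyze is the single-variable function $n(\sigma):=\|1\|_\sigma$, which is continuous and positive on $\cal G$; then $\kappa_t(\sigma)=n(\varphi_t\sigma)/n(\sigma)$ directly from the definition in \eqref{kappat}. The plan is to evaluate this ratio at two points whose basepoints lie in the compact set $Q$, and transfer the ``cost'' of passing from $\varphi_t\sigma$ to $\gamma^{-1}\varphi_t\sigma$ into the Busemann term using the $\Ga$-equivariance of the norms. Specifically, the equivariance
$\|v\|_\sigma=\|ve^{\psi(\beta_{\sigma^+}^\theta(\gamma^{-1},e))}\|_{\gamma\sigma}$ applied with $v=1$ gives
$n(\gamma\tau)=n(\tau)\,e^{-\psi(\beta_{\gamma\tau^+}^\theta(e,\gamma))}$, and setting $\tau:=\gamma^{-1}\varphi_t\sigma$ (so that $\tau^+=\gamma^{-1}\sigma^+$ and hence $\beta_{\gamma\tau^+}^\theta(e,\gamma)=\beta_{\sigma^+}^\theta(e,\gamma)$) yields
\[
\kappa_t(\sigma)=\frac{n(\gamma^{-1}\varphi_t\sigma)}{n(\sigma)}\,e^{-\psi(\beta_{\sigma^+}^\theta(e,\gamma))}.
\]

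The prefactor is immediately controlled: the set $K:=\{\tau\in\cal G:\tau(0)\in Q\}$ is compact, since $Q\subset X_{GM}$ is compact and $\cal G$ is a proper geodesic-flow space over $X_{GM}$. On $K$ the continuous function $n$ attains a positive maximum $n_+$ and a positive minimum $n_-$. Both $\sigma$ and $\gamma^{-1}\varphi_t\sigma$ belong to $K$ by hypothesis, so the prefactor lies in the bounded interval $[n_-/n_+,\,n_+/n_-]$ which depends only on $Q$.

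It remains to replace $\psi(\beta_{\sigma^+}^\theta(e,\gamma))$ by $\psi(\mu_\theta(\gamma))$ up to a constant depending only on $Q$. For this I aim to show that $\sigma^+\in O_R^{GM}(x,\gamma x)$ for some $R=R(Q)$ and a fixed basepoint $x\in X_{GM}$; then Proposition~\ref{prop.shadowcompare} gives $\sigma^+\in O_r^\theta(o,\gamma o)$ in $\La_\theta$ via the identification $f$, and Lemma~\ref{lem.buseandcartan} produces $|\psi(\beta_{\sigma^+}^\theta(e,\gamma))-\psi(\mu_\theta(\gamma))|\le\|\psi\|\kappa r$. To establish the shadow condition, let $D=\mathrm{diam}(Q\cup\{x\})$, so $d_{GM}(\sigma(0),x)\le D$ and $d_{GM}(\sigma(t),\gamma x)\le D$. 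If $d_{GM}(x,\gamma x)\le 4D$ the shadow condition holds trivially with $R=5D$; otherwise $t$ is forced to be large, and applying $\delta$-thinness of the generalized triangle with vertices $x,\sigma(0),\sigma^+$ (where two sides are geodesic rays and one is a segment of length $\le D$) shows that $\sigma(t)$, lying on the ray $[\sigma(0),\sigma^+)$ far from the finite side, is within $O(\delta)$ of the ray $[x,\sigma^+)$, so $\gamma x$ is within $D+O(\delta)$ of this ray.

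Combining the three bounds, $\kappa_t(\sigma)$ is bounded above and below by constant multiples of $e^{-\psi(\mu_\theta(\gamma))}$, with constants depending only on $Q$, giving the lemma. The main technical point I expect to require care is the uniform shadow estimate in the final paragraph: the small-$t$ regime must be peeled off separately (or absorbed into constants via proper discontinuity of the $\Ga$-action, which bounds $\psi\circ\mu_\theta$ on the finite set of $\gamma$ with $\gamma K\cap K\neq\emptyset$), and the large-$t$ case relies on the Gromov hyperbolicity of $X_{GM}$ from Theorem~\ref{thm.GM_relhyp} together with stability of geodesic rays sharing an ideal endpoint.
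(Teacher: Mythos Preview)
Your argument is correct and follows essentially the same route as the paper's proof: both hinge on the identity (coming from $\Gamma$-equivariance of the norm) $\kappa_t(\sigma)=e^{-\psi(\beta_{\sigma^+}^\theta(e,\gamma))}\cdot\|v_\sigma\|_{\gamma^{-1}\varphi_t\sigma}$, bound the second factor via compactness of $\{\tau\in\cal G:\tau(0)\in Q\}$, and convert the Busemann term to $\psi(\mu_\theta(\gamma))$ via the shadow inclusion $\sigma^+\in O_R^{GM}(q,\gamma q)$ together with Proposition~\ref{prop.shadowcompare} and Lemma~\ref{lem.buseandcartan}. The paper packages this as a proof by contradiction while you give the equivalent direct estimate; your discussion of the small-$t$ regime is unnecessary (the fellow-traveller bound for rays with common ideal endpoint already gives the shadow inclusion uniformly in $t\ge 0$), but harmless.
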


\begin{proof}
    Suppose not. Then there exist sequences $\sigma_n \in \cal G$, $t_n \ge 0$, and $\ga_n \in \Ga$ such that $\sigma_n(0), \ga_n^{-1} \sigma_n(t_n) \in Q$ for all $n \ge 1$ while the sequence 
    \be \label{eqn.cptpart1}
    \log \left( \kappa_{t_n}(\sigma_n) e^{\psi(\mu_{\theta}(\ga_n))} \right) = \psi(\mu_{\theta}(\ga_n)) + \log \kappa_{t_n}(\sigma_n) \quad \text{is unbounded.}
    \ee
    In particular, $\ga_n$ is an infinite sequence and $t_n \to \infty$ as $n \to \infty$.
    
    By the hypothesis that $\sigma_n(0), \ga_n^{-1} \sigma_n(t_n) \in Q$, there exist $q \in Q$ and $R > 0$ depending on $Q$ so that we have $\sigma_n^+ \in O_{R}^{GM}(q, \ga_n q)$ for all $n \ge 1$. It follows from Proposition \ref{prop.shadowcompare} that for some $r > 0$, we have $\sigma_n^+ \in O_r^{\theta}(o, \ga_n o)$ for all $n \ge 1$. By Lemma \ref{lem.buseandcartan}, we deduce from \eqref{eqn.cptpart1} that the sequence 
    \be \label{eqn.cptpart2}
    \psi \left(\beta_{\sigma_n^+}^{\theta}(e, \ga_n) \right) + \log \kappa_{t_n} (\sigma_n) \quad \text{is unbounded.}
    \ee

    On the other hand, by the $\Ga$-equivariance of the norms $\|\cdot \|$, we have 
    $$\begin{aligned}
        \kappa_{t_n}(\sigma_n) & = \| v_{\sigma_n} \|_{\varphi_{t_n}\sigma_n} = \left\| v_{\sigma_n} e^{\psi\left(\beta_{\sigma_n^+}^{\theta}(\ga_n, e)\right)} \right\|_{\ga_n^{-1} \varphi_{t_n}\sigma_n} \\
        &  = e^{\psi\left(\beta_{\sigma_n^+}^{\theta}(\ga_n, e)\right)} \left\| v_{\sigma_n}  \right\|_{\ga_n^{-1} \varphi_{t_n}\sigma_n}
    \end{aligned}$$
    and therefore 
    \be \label{eqn.cptpart3}
    \psi\left(\beta_{\sigma_n^+}^{\theta}(e, \ga_n)\right) + \log \kappa_{t_n}(\sigma_n) = 
 \log \left\| v_{\sigma_n}  \right\|_{\ga_n^{-1} \varphi_{t_n}\sigma_n}.
    \ee
    Since both $\sigma_n(0)$ and $ \ga_n^{-1}\sigma_n(t_n) = (\ga_n^{-1}\varphi_{t_n}\sigma_n)(0)$ belong to the compact subset $ Q$ for all $n \ge 1$, there exists a compact subset of $\cal G$ containing $\sigma_n$ and $\ga_n^{-1} \varphi_{t_n}\sigma_n$ for all $n \ge 1$. Therefore, the sequence \eqref{eqn.cptpart3} is uniformly bounded, which contradicts \eqref{eqn.cptpart2}. Hence the claim follows.
\end{proof}

We obtain the following estimate of the contraction rate between the entrance and  exit of a horoball.

\begin{corollary} \label{cor.entexit}
    There exists a constant $c_0 \ge 1$ such that if $\sigma \in \partial^+ \cal G_P$ for some $P \in \cal P^{\Ga}$ with $T_{\sigma}^+ < \infty$, then $$\frac{1}{c_0} e^{- c' T_{\sigma}^+} \le \kappa_{T_{\sigma}^+}(\sigma) \le c_0 e^{-c T_{\sigma}^+}$$ where $c$ and $c'$ are given by Theorem \ref{thm.czzcartan}.
\end{corollary}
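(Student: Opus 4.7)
The overall strategy is to deduce Corollary \ref{cor.entexit} directly from Lemma \ref{lem.cptpart} by exploiting the fact that $\Gamma$ acts cocompactly on the union $\bigcup_{P \in \cal P^{\Gamma}} \partial H_{P}$ of horoball boundaries. Concretely, given $\sigma \in \partial^+ \cal G_P$ with $T := T_\sigma^+ < \infty$, both points $\sigma(0)$ and $\sigma(T)$ lie on $\partial H_P$, and the plan is to translate each of them individually by an element of $\Gamma$ into a fixed compact set, then compare the resulting $\Gamma$-element to the horoball crossing time $T$.

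First, I would fix a compact set $Q \subset X_{GM}$ with $e \in Q$ such that $\Gamma \cdot Q \supset \bigcup_{P \in \cal P^{\Gamma}} \partial H_{P}$. This is possible because $\Gamma$ permutes the horoballs with finitely many orbits (indexed by $\cal P$), and for each representative $P \in \cal P$ the stabilizer of $H_P$ is $P$ itself, which acts cocompactly on its boundary $\partial H_P \cong \cal C(P, S \cap P)$. Given $\sigma$ as above, I would then choose $\gamma_0, \gamma_1 \in \Gamma$ with $\gamma_0^{-1}\sigma(0), \gamma_1^{-1}\sigma(T) \in Q$, set $\gamma := \gamma_0^{-1}\gamma_1$, and apply Lemma \ref{lem.cptpart} to the geodesic $\tilde{\sigma} := \gamma_0^{-1}\sigma \in \cal G$ (which still has $T_{\tilde\sigma}^+ = T$ since $\Gamma$ acts isometrically). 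Using the $\Gamma$-invariance of $\kappa_t$ (Lemma \ref{lem.invariancecocycle}) this yields
\[
\tfrac{1}{C_Q}\,e^{-\psi(\mu_\theta(\gamma))} \,\le\, \kappa_T(\sigma) \,\le\, C_Q\,e^{-\psi(\mu_\theta(\gamma))}.
\]

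Next I would estimate $d_{GM}(e, \gamma) = d_{GM}(\gamma_0, \gamma_1)$ in terms of $T$. Since $\sigma$ is a geodesic, $d_{GM}(\sigma(0), \sigma(T)) = T$. The triangle inequality, together with $d_{GM}(\gamma_i, \sigma(i T)) \le \operatorname{diam}(Q)$ for $i \in \{0,1\}$, gives
\[
|d_{GM}(e, \gamma) - T| \le 2\operatorname{diam}(Q).
\]
Feeding this into Theorem \ref{thm.czzcartan} produces constants $C_1, C_2$ with
\[
cT - C_1 \,\le\, \psi(\mu_\theta(\gamma)) \,\le\, c' T + C_2,
\]
and combining with the displayed bound on $\kappa_T(\sigma)$ yields the claimed inequality with $c_0 := C_Q \exp(2c'\operatorname{diam}(Q) + C)$ (or any common upper bound on the two additive constants produced).

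The only nonroutine point is the existence of the compact set $Q$ meeting every horoball boundary modulo $\Gamma$; everything else is the triangle inequality combined with Theorem \ref{thm.czzcartan} and Lemma \ref{lem.cptpart}. The cocompactness step, in turn, is a standard consequence of the Groves--Manning construction: the horoballs are in finitely many $\Gamma$-orbits and each $P \in \cal P$ acts cocompactly (indeed simply transitively on vertices) on $\partial H_P$. Once this is noted, the rest of the proof is a direct bookkeeping exercise.
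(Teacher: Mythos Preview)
Your proposal is correct and follows essentially the same approach as the paper's proof: reduce via $\Gamma$-invariance to a situation where both $\sigma(0)$ and a $\Gamma$-translate of $\sigma(T_\sigma^+)$ lie in a fixed compact set, apply Lemma \ref{lem.cptpart}, and then convert $\psi(\mu_\theta(\gamma))$ into $T_\sigma^+$ via Theorem \ref{thm.czzcartan} and the triangle inequality. The paper carries this out slightly more concretely (normalizing so that $P\in\cal P$ and $\sigma(0)=(e,2)$, taking $Q=\overline{B_{GM}(e,1)}$) rather than invoking cocompactness of $\Gamma$ on $\bigcup_P \partial H_P$ abstractly, but the substance is identical.
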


\begin{proof}
    Let $P \in \cal P^{\Ga}$ and $\sigma \in \partial^+ \cal G_P$ with $T_{\sigma}^+ < \infty$.
    By Lemma \ref{lem.invariancecocycle}, we may assume that $P \in \cal P$ and $\sigma(0) = (e, 2)$ in the combinatorial horoball attached to a Cayley graph of $P$. We then have $\sigma(T_{\sigma}^+) = (\ga, 2)$ for some $\ga \in P$. Setting $Q = \overline{B_{GM}(e, 1)}$ which is a compact subset of $X_{GM}$, we have $\sigma(0), \ga^{-1}\sigma(T_{\sigma}^+) \in Q$. Hence by Lemma \ref{lem.cptpart}, we have $$ \frac{1}{C_Q} e^{-\psi(\mu_{\theta}(\ga))} \le \kappa_{T_{\sigma}^+}(\sigma) \le C_Q e^{-\psi(\mu_{\theta}(\ga))}$$ where $C_Q$ is the constant therein.
    On the other hand, it follows from Theorem \ref{thm.czzcartan} that 
    $$\begin{aligned}
        \psi(\mu_{\theta}(\ga)) & \ge c d_{GM}(e, \ga) - C \\
        & \ge c (d_{GM}((e, 2), (\ga, 2)) - 2) - C \\
        & = c T_{\sigma}^+ - (2c + C)
    \end{aligned}$$ with the constants $c, C$ in Theorem \ref{thm.czzcartan}. Therefore, we have $$\kappa_{T_{\sigma}^+}(\sigma) \le C_Q e^{2c + C} e^{-c T_{\sigma}^+}.$$ 
    Similarly, we have $$\begin{aligned}
    \psi(\mu_{\theta}(\ga)) & \le c' d_{GM}(e, \ga) + C \\
    & \le c' ( d_{GM}((e, 2), (\ga, 2)) + 2) + C \\
    & = c' T_{\sigma}^+ + (2 c' + C)
    \end{aligned}$$
    where $c'$ is given in Theorem \ref{thm.czzcartan}.
    Therefore, we have $$\kappa_{T_{\sigma}^+}(\sigma) \ge \frac{1}{C_Q} e^{- (2 c' + C)} e^{- c' T_{\sigma}^+}.$$
   This finishes the proof.
\end{proof}

We now estimate the contraction rate in the thin part.

\begin{lemma} \label{lem.cusppart}
    There exists a constant $c_1 \ge 1$ with the following property: if $\sigma \in \cal G_{thin}$ is such that $\varphi_s \sigma \in \cal G_{thin}$ for all $0 \le s \le t$, then $$ c_1^{-1} e^{-(3 c' - 2c) t} \le \kappa_t(\sigma) \le c_1 e^{ - c t}$$
    where $c\le {c}'$ are given by Theorem \ref{thm.czzcartan}.
\end{lemma}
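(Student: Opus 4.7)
The plan is to reduce the analysis to a single horoball, exploit the piecewise linear structure of the logarithm of the norm along the geodesic flow, and apply Corollary \ref{cor.entexit} to bound the middle-interval slope.

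Since the family $\{H_P : P \in \cal P^{\Ga}\}$ consists of open horoballs with pairwise disjoint closures, the hypothesis that $\varphi_s\sigma \in \cal G_{thin}$ for all $s \in [0, t]$, together with continuity of $\sigma$, forces $\sigma(s)$ to remain inside a single horoball $H_P$ throughout $[0, t]$. I will first treat the generic case in which the geodesic both enters $H_P$ at some $s_- < 0$ and exits at some $s_+ \ge t$. Setting $\sigma_+ := \varphi_{s_-}\sigma \in \partial^+\cal G_P$, $T := s_+ - s_- = T_{\sigma_+}^+$, and $\tau := -s_-$, one has $\sigma = \varphi_\tau\sigma_+$ with $0 < \tau \le t + \tau \le T$. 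Identifying each fiber norm as $\|v\|_\sigma = \lambda_\sigma|v|$, the cocycle reads $\kappa_t(\sigma) = \lambda_{\varphi_t\sigma}/\lambda_\sigma$, and setting $\ell(s) := \log\lambda_{\varphi_s\sigma_+}$, the Case 1 piecewise definition of the norm makes $\ell$ continuous and piecewise affine, with slope $-c$ on $[0, T/3] \cup [2T/3, T]$ and constant slope $m = 3(\ell(T) - \ell(0))/T + 2c$ on the middle interval $[T/3, 2T/3]$.

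Corollary \ref{cor.entexit} then gives $\ell(T) - \ell(0) = \log \kappa_T(\sigma_+) \in [-\log c_0 - c'T,\ \log c_0 - cT]$, whence $m \in [2c - 3c' - 3\log c_0 /T,\ -c + 3\log c_0/T]$. Writing $\log\kappa_t(\sigma) = \ell(t+\tau) - \ell(\tau) = -c(t - L) + mL$ with $L := |[\tau, t+\tau] \cap [T/3, 2T/3]|$, and noting the elementary bound $L \le \min(t, T/3)$, the upper bound is immediate: $\log\kappa_t(\sigma) \le -ct + \max(m+c, 0)\cdot T/3 \le -ct + \log c_0$.

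The principal technical point is the lower bound, where one must ensure that the potentially large quantities $3\log c_0/T$ and $(c'-c)T$ do not leak into the final exponent. The case split is: if $m + c \ge 0$, then $\log\kappa_t(\sigma) \ge -ct$, already dominated by $-(3c'-2c)t$; if $m + c < 0$, split further on whether $L = t \le T/3$ (in which case $3\log c_0 \cdot t/T \le \log c_0$) or $L = T/3 \le t$ (in which case $(c'-c)T \le 3(c'-c)t$ via $T \le 3t$). In either subcase the bound $\log\kappa_t(\sigma) \ge -(3c'-2c)t - \log c_0$ follows. For the remaining degenerate cases where $\sigma^-$ or $\sigma^+$ equals $\xi_P$ (so only the extremal exponential piece of the norm definition applies, with $T$ effectively infinite on one side), the cocycle collapses to $\kappa_t(\sigma) = e^{-ct}$, which satisfies both required bounds. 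Taking $c_1 := c_0$ concludes the proof; the main obstacle is precisely this lower-bound case analysis, which is elementary but must be tracked carefully since $T$ is not a priori comparable to $t$.
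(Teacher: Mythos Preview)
Your argument is correct and follows essentially the same route as the paper: reduce to a single horoball, handle the degenerate cases $\sigma^{\pm}=\xi_P$ where $\kappa_t(\sigma)=e^{-ct}$ exactly, and in the generic case exploit the piecewise structure of the norm together with Corollary~\ref{cor.entexit}. Your packaging via the piecewise-affine function $\ell(s)=\log\lambda_{\varphi_s\sigma_+}$ and the single overlap parameter $L=|[\tau,\tau+t]\cap[T/3,2T/3]|$ is a bit more streamlined than the paper's explicit subcase enumeration, but the underlying computation (slope $-c$ on the outer thirds, slope $m=3(\ell(T)-\ell(0))/T+2c$ on the middle third, then bound $m$ via the corollary) is identical.

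One phrasing point: your lower-bound case split ``whether $L=t\le T/3$ or $L=T/3\le t$'' is slightly imprecise, since $L$ need not equal $\min(t,T/3)$ (e.g.\ when $[\tau,\tau+t]$ only partially overlaps the middle third). What you actually need, and what makes the argument go through, is the inequality $L\le\min(t,T/3)$ combined with $m+c<0$, so that $(m+c)L\ge(m+c)\min(t,T/3)$; the split should then be on whether $t\le T/3$ or $t\ge T/3$. With that adjustment the bounds $-(3c'-2c)t-\log c_0$ and $-ct+\log c_0$ follow exactly as you wrote, and $c_1=c_0$ works.
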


\begin{proof}
    We fix $\sigma \in \cal G_{thin}$ such that $\varphi_s\sigma \in \cal G_{thin}$ for all $0 \le s \le t$. Then there exists $P \in \cal P^{\Ga}$ so that $\varphi_s\sigma \in \cal G_P$ for all $0 \le s \le t$. There are three cases to consider:

    \medskip
    {\bf \noindent Case 1.} Suppose that $\sigma([0, \infty)) \subset \cal G_P$. Then $\sigma = \varphi_s\sigma_0$ for some $\sigma_0 \in \partial^+ \cal G_P$ and $s > 0$. In this case, by the definition of the norm, we have $$\|\cdot \|_{\varphi_t\sigma} = \| \cdot \|_{\varphi_{t + s}\sigma_0} = e^{-c(t + s)} \| \cdot \|_{\sigma_0} = e^{-ct} \| \cdot \|_{\sigma}.$$
    This implies $\kappa_t (\sigma) = e^{-ct}$.

    \medskip
    {\bf \noindent Case 2.} Suppose that $\sigma((-\infty, 0]) \subset \cal G_P$. Then $\sigma = \varphi_s\tilde{\sigma}_0$ for some $\tilde{\sigma}_0 \in \partial^- \cal G_P$ and $s < 0$. We then have $$\| \cdot \|_{\varphi_t\sigma} = e^{-c(s + t)} \| \cdot \|_{\tilde{\sigma}_0} = e^{-c t} \| \cdot \|_{\sigma},$$ and hence $\kappa_t (\sigma) = e^{-ct}$.

    \medskip
    {\bf \noindent Case 3.} Suppose that neither $\sigma([0, \infty)) \subset \cal G_P$ nor $\sigma((-\infty, 0]) \subset \cal G_P$ holds. In this case, we have $\sigma = \varphi_s\sigma_0$ for some $s > 0$ and $\sigma_0 \in \partial^+ \cal G_P$ such that $T_{\sigma_0}^+ < \infty$. We simply write $T := T_{\sigma_0}^+$ and $\sigma_1 = \varphi_T\sigma_0$. We first consider the following three subcases:
    \begin{itemize}
        \item if $s, s + t \in \left(0, \tfrac{1}{3}T \right]$, then $$\| \cdot \|_{\varphi_t\sigma} = \| \cdot \|_{\varphi_{s + t}\sigma_0} = e^{-c(s + t)} \| \cdot \|_{\sigma_0} = e^{-ct} \| \cdot \|_{\sigma},$$
        and hence $\kappa_t (\sigma) = e^{-ct}$.
        \medskip

        \item if $s, s + t \in \left[ \tfrac{2}{3}T, T \right)$, then $$\|\cdot\|_{\varphi_t\sigma} = e^{c(T - (t + s))} \| \cdot \|_{\sigma_1} = e^{-ct} \| \cdot \|_{\sigma},$$
        and hence $\kappa_t(\sigma) = e^{-ct}$.
        \medskip

        \item if $s, s + t \in \left[ \tfrac{1}{3}T, \tfrac{2}{3}T \right]$, then we first observe that $$\begin{aligned}
            \|\cdot \|_{\sigma} & = \| \cdot \|_{\varphi_{T/3}\sigma_0}^{2 - \tfrac{3}{T}s} \| \cdot \|_{\varphi_{2T/3}\sigma_0}^{\tfrac{3}{T} s - 1} \\
            & = \left( e^{-c \tfrac{T}{3}} \| \cdot \|_{\sigma_0} \right)^{2 - \tfrac{3}{T}s} \left( e^{c \tfrac{T}{3}} \| \cdot \|_{\sigma_1} \right)^{\tfrac{3}{T} s - 1} \\
            & = e^{c (2s - T)} \| \cdot \|_{\sigma_0}^{2 - \tfrac{3}{T}s} \| \cdot \|_{\sigma_1}^{\tfrac{3}{T} s - 1}
        \end{aligned}$$
        and similarly that $$\| \cdot \|_{\varphi_t\sigma} = e^{c (2(s + t) - T)} \| \cdot \|_{\sigma_0}^{2 - \tfrac{3}{T}(s+t)} \| \cdot \|_{\sigma_1}^{\tfrac{3}{T} (s + t) - 1}.$$
        Combining the above two computations, we obtain $$\| \cdot \|_{\varphi_t\sigma} = \| \cdot \|_{\sigma} e^{2 c t} \| \cdot \|_{\sigma_0}^{ - \tfrac{3}{T}t} \| \cdot \|_{\sigma_1}^{\tfrac{3}{T} t }.$$
        Evaluating at $v_{\sigma_0}$, the above becomes $$\kappa_{t + s}(\sigma_0) = \kappa_s(\sigma_0) e^{2ct} \kappa_T(\sigma_0)^{\tfrac{3}{T} t}.$$ Since $\kappa_{t + s}(\sigma_0) = \kappa_t(\sigma) \kappa_s(\sigma_0)$ by \eqref{eqn.cocycle}, it follows from Corollary \ref{cor.entexit} and $0 \le t \le \tfrac{T}{3}$ that  $$\begin{aligned}
            \kappa_t(\sigma)  &= e^{2ct} \kappa_T(\sigma_0)^{\tfrac{3}{T}t} \\
            & \le e^{2 c t}( c_0 e^{-cT})^{\tfrac{3}{T}t} =  e^{2ct} c_0^{\tfrac{3}{T}t} e^{-3ct} \\
            & \le \max(1, c_0) e^{-ct}.
            \end{aligned}$$
    \end{itemize}

    Similarly, we also obtain from Corollary \ref{cor.entexit} and $0 \le t \le \frac{T}{3}$ that
    $$\begin{aligned}
        \kappa_t(\sigma) & = e^{2ct} \kappa_T(\sigma_0)^{\frac{3}{T}t} \\
        & \ge e^{2ct} ( c_0^{-1} e^{-c' T})^{\frac{3}{T}t} = e^{2ct} c_0^{\frac{-3}{T} t} e^{-3 c' t} \\
        & \ge \min(1, c_0^{-1}) e^{-(3 c' - 2 c)t}.
    \end{aligned}$$

    We now set $c_1 := \max(1, c_0)$. Note also that $c' \ge c$ and hence $e^{-(3 c' - 2c)t} \le e^{-c t}$ for all $t \ge 0$. In general, we consider the following three consecutive subintervals $$[s, s+t] \cap \left(0, \tfrac{1}{3}T \right], \quad [s, s+t] \cap \left[\tfrac{1}{3}T, \tfrac{2}{3}T \right], \quad \text{and} \quad [s, s+t] \cap \left[\tfrac{2}{3}T, T \right], $$ and then apply the each of the above three subcases to each subintervals. Then by \eqref{eqn.cocycle}, we get $$ c_1^{-1} e^{- (3 c' - 2c)t} \le \kappa_t(\sigma) \le c_1 e^{-ct}$$ as desired.
\end{proof}

We now combine estimates on the thick and thin parts and prove Theorem \ref{thm.expdecay}. We give proofs of the lower bound and the upper bound separately:

\subsection*{Proof of the lower bound in Theorem \ref{thm.expdecay}}
Let $\sigma \in \cal G$ and $t \ge 0$. If $\varphi_s\sigma \in \cal G_{thin}$ for all $0 \le s \le t$, then by Lemma \ref{lem.cusppart}, we have 
\be \label{eqn.nottoomuch0}
\kappa_t(\sigma) \ge c_1^{-1} e^{-(3 c' - 2c) t}
\ee where constants $c_1, c', c$ are given in Lemma \ref{lem.cusppart}.
Now suppose that $\varphi_s\sigma \in \cal G_{thick}$ for some $s \in [0, t]$ and set 
$$
    \begin{aligned}
        s_1 & := \min \{ s \in [0, t] : \varphi_s\sigma \in \cal G_{thick} \}; \\
        s_2 & := \max \{ s \in [0, t] : \varphi_s\sigma \in \cal G_{thick} \}
    \end{aligned}
    $$
    which are well-defined.
    It follows from \eqref{eqn.cocycle} and Lemma \ref{lem.cusppart} that
    \be \label{eqn.nottoomuch1}
    \begin{aligned}
        \kappa_t(\sigma) & = \kappa_{t - s_2} (\varphi_{s_2}\sigma) \kappa_{s_2}(\sigma) \\
        & = \kappa_{t - s_2}(\varphi_{s_2}\sigma)\kappa_{s_2 - s_1}(\varphi_{s_1}\sigma)\kappa_{s_1}(\sigma)\\
        & \ge c_1^{-1} e^{-(3 c' - 2 c)(t - s_2)} \kappa_{s_2 - s_1}(\varphi_{s_1}\sigma)  c_1^{-1} e^{-(3 c' - 2c)s_1} \\
        & = c_1^{-2} e^{-(3 c' - 2c)t} e^{(3 c' - 2c)(s_2 - s_1)} \kappa_{s_2 - s_1} (\varphi_{s_1}\sigma).
    \end{aligned}
    \ee

    To estimate $\kappa_{s_2 - s_1}(\varphi_{s_1}\sigma)$, we fix a compact fundamental domain $Q \subset X_{GM} - \bigcup_{P \in \cal P^{\Ga}} H_P$ for the $\Ga$-action. We may assume that $e \in Q$. By the definition of $s_1$ and $s_2$, there exist $\ga_1, \ga_2 \in \Ga$ such that  $(\varphi_{s_1}\sigma)(0) \in \ga_1 Q$ and $(\varphi_{s_2}\sigma)(0) \in \ga_2 Q$. In other words, we have $(\ga_1^{-1} \varphi_{s_1}\sigma)(0) \in Q$ and $(\ga_1^{-1} \varphi_{s_2}\sigma)(0) \in \ga_1^{-1} \ga_2 Q$. Since $(\ga_1^{-1}\varphi_{s_1}\sigma)(0) = \ga_1^{-1} \sigma(s_1)$ and $(\ga_1^{-1} \varphi_{s_2}\sigma)(0) = \ga_1^{-1} \sigma(s_2)$,
    this implies that for some constant $q > 0$ depending on $Q$, we have
    $
    |d_{GM}(e, \ga_1^{-1} \ga_2) - (s_2 - s_1)| \le q.
    $
    Setting $\ga := \ga_1^{-1} \ga_2$, this is rephrased as 
    \be \label{eqn.nottoomuch2}
    |d_{GM}(e, \ga) - (s_2 - s_1)| \le q.
    \ee
    Moreover, noting that $(\varphi_{s_2}\sigma)(0) = (\varphi_{s_1}\sigma)(s_2 - s_1)$, we have $$(\ga_1^{-1} \varphi_{s_1}\sigma)(0), \ga^{-1}(\ga_1^{-1} \varphi_{s_1}\sigma)(s_2 - s_1) \in Q.$$
    Hence, by Lemma \ref{lem.invariancecocycle} and Lemma \ref{lem.cptpart}, we have
    $$\begin{aligned}
        \kappa_{s_2 - s_1}(\varphi_{s_1}\sigma) & = \kappa_{s_2 - s_1}(\ga_1^{-1} \varphi_{s_1}\sigma) \\
        & \ge \frac{1}{C_Q} e^{-\psi(\mu_{\theta}(\ga))}
    \end{aligned}$$ with the constant $C_Q$ given by Lemma \ref{lem.cptpart}.
    By Theorem \ref{thm.czzcartan} and \eqref{eqn.nottoomuch2}, we deduce
    $$
    \kappa_{s_2 - s_1}(\varphi_{s_1}\sigma) \ge \frac{1}{C_Q} e^{- c' d_{GM}(e, \ga) - C} \ge \frac{e^{- c' q - C}}{C_Q} e^{-c' (s_2 - s_1)}.
    $$
    Together with \eqref{eqn.nottoomuch1}, we have
    \be \label{eqn.nottoomuch3}
    \begin{aligned}
    \kappa_t(\sigma) & \ge c_1^{-2} e^{-(3c' - 2c)t} e^{(3 c' - 2c)(s_2 - s_1)} \frac{e^{- c' q - C}}{C_Q} e^{-c' (s_2 - s_1)}\\
    & =  \frac{c_1^{-2}  e^{- c' q - C}}{C_Q} e^{-(3 c' - 2c)t} e^{(2 c' - 2c)(s_2 - s_1)} \ge \frac{c_1^{-2}  e^{- c' q - C}}{C_Q} e^{-(3 c' - 2c)t} 
    \end{aligned}
    \ee
    where the last inequality is due to $c' \ge c$ and $s_2 \ge s_1$.

  Now note that $a' \ge  3 c' - 2c$ by Theorem \ref{thm.czzcartan} and  choose $b > 1$ such that $b^{-1} \le \min \left(  c_1^{-1}, \frac{ c_1^{-2}  e^{- c' q - C}}{C_Q} \right)$. Then it follows from \eqref{eqn.nottoomuch0} and \eqref{eqn.nottoomuch3} that $$\kappa_t(\sigma) \ge \frac{1}{b} e^{-a' t}$$
    as desired.
\qed

\subsection*{Proof of the upper bound in Theorem \ref{thm.expdecay}}

Let $\sigma \in \cal G$ and $t \ge 0$. If $\varphi_s\sigma \in \cal G_{thin}$ for all $0 \le s \le t$, then by Lemma \ref{lem.cusppart}, we have
\be \label{eqn.contractsimple0}
\kappa_t (\sigma) \le c_1 e^{-ct}
\ee
where $c_1$ and $c$ are constants given in Lemma \ref{lem.cusppart}. We now assume that $\varphi_s\sigma \in \cal G_{thick}$ for some $s \in [0, t]$. As in the proof of the lower bound, we set 
$$
    \begin{aligned}
        s_1 & := \min \{ s \in [0, t] : \varphi_s\sigma \in \cal G_{thick} \}; \\
        s_2 & := \max \{ s \in [0, t] : \varphi_s\sigma \in \cal G_{thick} \}
    \end{aligned}
    $$
We then have from \eqref{eqn.cocycle} and Lemma \ref{lem.cusppart} that
    \be \label{eqn.contractsimple1}
    \begin{aligned}
        \kappa_t(\sigma) & = \kappa_{t - s_2}(\varphi_{s_2}\sigma)\kappa_{s_2 - s_1}(\varphi_{s_1}\sigma)\kappa_{s_1}(\sigma)\\
        & \le c_1^2 e^{-ct} e^{c(s_2 - s_1)} \kappa_{s_2 - s_1}(\varphi_{s_1}\sigma).
    \end{aligned}
    \ee
    By the similar argument as in the proof of the lower bound, we have $$\kappa_{s_2 - s_1}(\varphi_{s_1}\sigma) \le C_Q e^{-\psi(\mu_{\theta}(\ga))}$$
    where $Q \subset X_{GM} - \bigcup_{P \in \cal P^{\Ga}} H_P$ is a compact fundamental domain for the $\Ga$-action, $C_Q$ is the constant given by Lemma \ref{lem.cptpart}, and $\ga \in \Ga$ is such that $|d_{GM}(e, \ga) - (s_2 - s_1) | \le q$ for some constant $q \ge 0$ depending only on $Q$. By Theorem \ref{thm.czzcartan}, this implies $$\kappa_{s_2 - s_1}(\varphi_{s_1}\sigma) \le C_Q e^{- c d_{GM}(e, \ga) + C} \le C_Q e^{cq + C} e^{-c(s_2 - s_1)}$$
    with the constant $C$ therein.
    Plugging this into \eqref{eqn.contractsimple1}, we have 
    \be \label{eqn.contractsimple2}
    \kappa_t(\sigma) \le c_1^2 C_Q e^{cq + C} e^{-ct}.
    \ee
    We then choose $b \ge \max\left( c_1, c_1^2 C_Q e^{cq + C} \right)$. By \eqref{eqn.contractsimple0} and \eqref{eqn.contractsimple2}, we finally obtain $$\kappa_t(\sigma) \le b e^{-ct}.$$
   Since $a=c$ by Theorem \ref{thm.czzcartan}, this
   completes the proof.
    \qed

\subsection*{Proof of Theorem \ref{thm.repar}}
As described above, we define the $\Ga$-equivariant continuous section $u : \cal G \to \cal G \times \R_{+}$ by setting $u(\sigma) = (\sigma, v_{\sigma})$, and set $\bar \Psi = \Psi_0 \circ u$ so that we have the following commutative diagram: $$\begin{tikzcd}
    \cal G \times \R_{+} \arrow[rd, "\Psi_0"] \arrow[d] & \\
    \cal G \arrow[r, dashed, "\tilde{\Psi}"'] \arrow[u, bend left=50, dashed, "u"] & \tilde{\Omega}_{\psi}
\end{tikzcd}$$
In other words, $\tilde{\Psi}(\sigma) = (\sigma^+, \sigma^-, \log v_{\sigma})$. 

We first prove that $\tilde \Psi$ is proper, from which the properness of $\Psi$ follows.
Suppose not. Then there exists a sequence $\sigma_n \in \cal G$ such that $\sigma_n$ escapes every compact subset of $\cal G$ as $n \to \infty$ while $\tilde{\Psi}(\sigma_n) = (\sigma_n^+, \sigma_n^-, \log v_{\sigma_n})$ converges in $\tilde{\Omega}_{\psi}$. Since the sequence $(\sigma_n^+, \sigma_n^-)$ converges in $\La_{\theta}^{(2)}$, two sequences $\sigma_n^+$ and $\sigma_n^-$ converge to two distinct points in $\partial X_{GM}$. This implies that there exist a sequence $t_n \in \R$ and a compact subset $Q \subset \cal G$ so that $\varphi_{t_n}\sigma_n \in Q$ for all $n \ge 1$. Moreover, since the sequence $\tilde{\Psi}(\sigma_n) = (\sigma_n^+, \sigma_n^-, \log v_{\sigma_n})$ converges in $\tilde{\Omega}_{\psi}$, the sequence $v_{\sigma_n}$ converges in $\R_{+}$. This implies that, after passing to a subsequence,
\be \label{eqn.proper1}
\text{the sequence } \|v_{\sigma_n} \|_{\varphi_{t_n}\sigma_n} \text{ converges to a positive number.}
\ee

On the other hand, since the sequence $\sigma_n$ escapes any compact subset of $\cal G$ as $n \to \infty$, we have either $t_n \to \infty$ or $t_n \to -\infty$ as $n \to \infty$, after passing to a subsequence. Suppose first that $t_n \to \infty$ as $n \to \infty$.  It follows from Theorem \ref{thm.expdecay} that for all sufficiently large $n\ge 1$, $$\|v_{\sigma_n}\|_{\varphi_{t_n}\sigma_n} = \kappa_{t_n}(\sigma_n) \le b e^{-at_n} \to 0 \quad \text{as } n \to \infty.$$
This contradicts \eqref{eqn.proper1}.
We now assume that $t_n \to -\infty$ as $n \to \infty$.
Then for all sufficiently large $n\ge 1$, we have $$\|v_{\sigma_n}\|_{\varphi_{t_n}\sigma_n} = \frac{1}{\|v_{\varphi_{t_n}\sigma_n}\|_{\sigma_n}} = \frac{1}{\kappa_{-t_n}(\varphi_{t_n}\sigma_n)} \ge b^{-1} e^{-a t_n}$$ by \eqref{eqn.cocycle} and Theorem \ref{thm.expdecay}. Therefore,  $\|v_{\sigma_n}\|_{\varphi_{t_n}\sigma_n} \to \infty$ as $n \to \infty$, contradicting \eqref{eqn.proper1}. This proves the properness.

We now prove items (1), (2), and (3).
Since the $\Ga$-action on $\cal G$ and $\tilde \Omega_{\psi}$ commute with flows on $\cal G$ and $\tilde \Omega_{\psi}$, it suffices to prove the statement for $\tilde \Psi : \cal G \to \tilde \Omega_{\psi}$.
For $(\sigma,s)\in \cal G\times \br\to \br$, define a continuous function
$$\tilde{\mathsf t}(\sigma, s) := \log v_{\varphi_s\sigma} -\log  v_{\sigma}.$$
By \eqref{eqn.cocycle}, we have $$v_{\varphi_s\sigma} = \frac{v_{\sigma}}{\| v_{\sigma} \|_{\varphi_s\sigma}} = \frac{v_{\sigma}}{\kappa_s(\sigma)}.$$
Therefore
\be \label{eqn.reparspeed}
\tilde{\mathsf{t}}(\sigma, s) = - \log \kappa_s (\sigma),
\ee
 is $\Ga$-invariant (Lemma \ref{lem.invariancecocycle}) and hence induces a continuous map $\mathsf{t} : \Ga \ba \cal G \times \R \to \R$. The cocycle property of $\tilde {\mathsf t}$ follows from \eqref{eqn.cocycle}. By the definition of $\tilde \Psi$,
we have $$
\tilde \Psi (\varphi_s \sigma) =  \phi_{\tilde{\mathsf t}(\sigma, s) }\Psi(\sigma),
$$ from which (1) follows. 
This also implies (2), noting that $$
\phi_{-\tilde{\mathsf t}(\sigma, s)} \tilde \Psi (\varphi_s \sigma) = \tilde \Psi(\sigma) = \tilde \Psi(\varphi_{-s} \varphi_{s} \sigma) = \phi_{\tilde{\mathsf t}(\varphi_s \sigma, -s)}(\varphi_s \sigma).
$$
Moreover, by Theorem \ref{thm.expdecay} and \eqref{eqn.reparspeed}, we have that for all $s \ge 0$, 
\be \label{eqn.ineqitem3}
as - \log b \le \tilde{\mathsf{t}}(\sigma, s) \le a's + \log b
\ee
where $a, a' > 0$ and $b \ge 1$ are given in Theorem \ref{thm.expdecay}. This shows (3).

To see the surjectivity of $\Psi$, note first that for each $(\xi, \eta, t_0) \in \tilde \Omega_{\psi}$, there exists $\sigma \in \cal G$ with $\sigma^+ = \xi$ and $\sigma^- = \eta$ as $X_{GM}$ is a proper geodesic Gromov hyperbolic space. For $s_0 \ge 0$, it follows from \eqref{eqn.ineqitem3} that 
$$
    \tilde{\mathsf{t}}(\sigma, s_0)  \ge as_0 - \log b \quad \text{and} \quad 
    \tilde{\mathsf{t}}(\varphi_{-s_0} \sigma, s_0) \ge as_0 - \log b.
$$
Since $\tilde{\mathsf{t}}(\varphi_{-s_0} \sigma, s_0) = - \tilde{\mathsf{t}}(\sigma, -s_0)$ due to the cocycle property \eqref{eqn.cocycle}, we have $$
    \tilde{\mathsf{t}}(\sigma, s_0)  \ge as_0 - \log b \quad \text{and} \quad 
    \tilde{\mathsf{t}}(\sigma, -s_0) \le - as_0 + \log b.
$$
Since $\tilde \Psi$ is continuous, this implies that the image of $\tilde \Psi$ restricted on $\{ \varphi_{s} \sigma : -s_0 \le s \le s_0\}$ contains $\{ \phi_t \tilde \Psi(\sigma) : - as_0 + \log b \le t \le as_0 - \log b \}$. 
Since $\sigma^+ = \xi$ and $\sigma^- = \eta$, $\tilde \Psi(\sigma) = (\xi, \eta, t_1)$ for some $t_1 \in \R$. We then take $s_0$ large enough so that $$
-a s_0 + \log b + t_1 \le t_0 \le a s_0 - \log b + t_1.
$$ Then $(\xi, \eta, t_0) \in \{ \phi_t \tilde \Psi(\sigma) : - as_0 + \log b \le t \le as_0 - \log b \}$, and hence $(\xi, \eta, t_0)$ belongs to the image of $\tilde \Psi$. Therefore, $\tilde \Psi$ is surjective. This completes the proof.
\qed

\section{Uniformity of fibers of reparameterization}

Recall the reparameterization $\tilde \Psi : \cal G \to \tilde \Omega_{\psi}$ constructed in section \ref{sec.reparconstruct}. The main goal of this section is to establish a uniform bound on the diameters of the fibers of $\tilde \Psi$:

\begin{theorem}[Theorem \ref{thm.repar0}(4)] \label{thm.uniformpreimage}
    The fibers of $\tilde \Psi$ have  uniformly bounded diameter. That is, there exists $C > 0$ such that for any $\sigma, \sigma' \in \cal G$,
    $$
    \tilde \Psi(\sigma) = \tilde \Psi(\sigma') \Longrightarrow d_{GM}(\sigma(0), \sigma'(0)) < C.
    $$
\end{theorem}

We prove this result by analyzing the explicit form of our reparameterization. For $\sigma \in \cal G$,
$$\tilde \Psi(\sigma) = (\sigma^+, \sigma^-, \log v_{\sigma} )$$
where $v_{\sigma} \in \R_{+}$ is the unit vector with respect to the norm $\| \cdot \|_{\sigma}$, as constructed in section \ref{sec.reparconstruct}. Thus, Theorem \ref{thm.uniformpreimage} follows from the next proposition:

\begin{proposition} \label{prop.nearpoint}
    There exists a constant $C_0 > 0$ such that the following holds: for any $\sigma, \sigma' \in \cal G$ with $\sigma^{\pm} = \sigma'^{\pm}$, there exists $s \in \R$ such that
    $$d_{GM}(\sigma(0), \sigma'(s)) < C_0 \quad \text{and} \quad | \log v_{\sigma} - \log v_{\varphi_{s} \sigma'} | < C_0.$$
    Moreover, the shift parameter $s$ satisfies:
    \begin{itemize}
    \item if $s \ge 0$, then
    $$
    \frac{ (\log v_{\sigma} - \log v_{\sigma'}) - C_0 - B}{a'} \le s \le \frac{ (\log v_{\sigma} - \log v_{\sigma'}) + C_0 + B}{a}.
    $$

    \item if $s < 0$, then
    $$
    \frac{ (\log v_{\sigma} - \log v_{\sigma'}) - C_0 - B}{a} \le s \le \frac{ (\log v_{\sigma} - \log v_{\sigma'}) + C_0 + B}{a'}.
$$
\end{itemize}
Here $0 < a < a'$ and $B > 0$ are the constants appearing in Theorem \ref{thm.repar}.
\end{proposition}

To prove Proposition \ref{prop.nearpoint}, we require several preparatory lemmas. We begin by recalling  the definition of the {\it Gromov product} on $X_{GM} \cup \partial X_{GM}$. For $x, y, z \in X_{GM}$, define
$$
(y | z )_x := \frac{1}{2} ( d_{GM}(x, y) + d_{GM}(x, z) - d_{GM}(y, z)).
$$
For $y, z \in X_{GM} \cup \partial X_{GM}$, define $$
( y | z )_x := \sup \liminf_{i, j \to \infty} ( y_i | z_j )_x
$$
where the supremum is taken over all sequences $y_i, z_j \in X_{GM}$ converging to $y, z$, respectively. By the Gromov hyperbolicity of $X_{GM}$ (Theorem \ref{thm.GM_relhyp}), the Gromov product $( y | z )_x$ estimates the distance from $x$ to a geodesic $[y, z]$, up to a uniformly bounded additive error. 

\begin{lemma}\label{nogeo}
Let $\sigma_n \in \cal G$ be a sequence such that
$\{\sigma_n(0) \in X_{GM} :n\ge 1 \} $ is uniformly bounded.
    Then there do not exist sequences $T_n, S_n > 0$ tending to $\infty$ such that
   both $\sigma_n(T_n)$ and $ \sigma_n(-S_n) $ lie in the same horoball
    $ \overline{H_P}$ for some $P\in \cal P$.
\end{lemma}
\begin{proof}
Suppose such sequences exist. Then since $\sigma_n^{\pm}$ belong to the shadows $  O_{1}^{GM}(\sigma_n(0), \sigma_n(T_n))$ and $\sigma_n^- \in O_{1}^{GM}(\sigma_n(0), \sigma_n(-S_n))$, and  $\{ \sigma_n(0) : n \ge 1\}$ is bounded,
we must have  $\lim_{n \to \infty} \sigma_n^{\pm}=\xi_P$.  On the other hand,
the boundedness of $\sigma_n(0)$ implies that $\{\sigma_n\}$ is relatively compact, yielding a contradiction.
\end{proof}

 It is a standard fact in the Gromov hyperbolic geometry (cf. \cite[Theorem III.H.1.7]{Bridson1999metric}) that there exists a constant $c_0>0$ such that any two geodesics with same endpoints have Hausdorff distance at most $c_0$.

\begin{lemma} \label{lem.deepsegment}
    There exists $T_{\mathsf h}' > 0$ such that for each $P\in \cal P^\Ga$ and $\sigma \in \partial^+ \cal G_P$  with $T_{\sigma}^+ > 3 T_{\sf h}'$,  the $c_0$-neighborhood of the segment $ \sigma([T_{\sf h}', T_{\sigma}^+ - T_{\sf h}'])$ is entirely contained in $H_P$.
\end{lemma}

\begin{proof} 
    Suppose not. Since $\cal P$ is finite,  there exist $P \in \cal P$ and sequences $\sigma_n \in \partial^+ \cal G_P$ with $T_{\sigma_n}^+ > 3 n$ and $t_n \in [n, T_{\sigma_n}^+ - n]$ such that $\sigma_n(t_n)$ is not contained in the $c_0$-neighborhood of $H_P$. Hence there exists $p_n \in P$ such that $d_{GM}(\sigma_n(t_n), (p_n, 2)) < c_0$. Replacing $\sigma_n$ with $p_n^{-1} \sigma_n$, we may assume that $p_n = e$, so $\sigma_n(t_n)$ lies in a fixed bounded neighborhood of $(e, 2)$.
Applying Lemma \ref{nogeo} to $\varphi_{t_n} \sigma_n$  with $T_n = T_{\sigma_n}^+ - t_n$ and $S_n = t_n$ yields a contradiction.
\end{proof}

\begin{lemma} \label{lem.forever}
    There exists $\tilde T > 0$ such that for any $P \in \cal P^{\Ga}$ and $\sigma \in \partial^+ \cal G_P$ with $\sigma^+ = \xi_P$, we have $ \sigma(t) \in H_P$ for all $t > \tilde T$.
\end{lemma}

\begin{proof}
    Suppose not. As in the proof of Lemma \ref{lem.deepsegment}, for some $P \in \cal P$, there exist $\sigma_n \in \partial^+ \cal G_P$ with  $\sigma_n^+ = \xi_P$ and $t_n > n$ such that $\sigma_n(t_n) = (e, 2)$.
    Since $\sigma_n^+=\xi_P$, there exist $T_n>n+t_n$ such that $\sigma_n(T_n)\in H_P$ and $\sigma_n(0)\in \partial H_P$. Applying Lemma \ref{nogeo} to $\varphi_{t_n}\sigma_n$  gives a contradiction.
\end{proof}

Let $T_{\sf h}', \tilde T > 0$ be constants given in Lemma \ref{lem.deepsegment} and Lemma \ref{lem.forever} respectively.
\begin{lemma} \label{lem.horoballtraveling}
    There exists $T_{\sf h} > T_{\sf h}' + \tilde T + c_0 + 2$ with the following property: let $P \in \cal P^{\Ga}$, $\sigma \in \partial^+ \cal G_P$ with $T_{\sigma}^+ > 5 T_{\sf h}$, and $t \in [2T_{\sf h}, T_{\sigma}^+ - 2T_{\sf h}]$. Suppose $\sigma' \in \partial^+ \cal G_P$ satisfies
    $\sigma'^{\pm} = \sigma^{\pm}$ and 
    $d_{GM}(\sigma'([0, T_{\sigma'}^+]), \sigma(t)) < c_0$. Then 
    \begin{enumerate}
        \item $d_{GM}(\sigma(0), \sigma'(0)) < T_{\sf h};$
\item $T_{\sigma}^+ < \infty$ if and only if $T_{\sigma'}^+ < \infty$, and in this case, $$
    d_{GM}(\sigma(T_{\sigma}^+), \sigma'(T_{\sigma'}^+)) < T_{\sf h}.
    $$ \end{enumerate}
\end{lemma}

\begin{proof} Suppose that there exist $P \in \cal P$, $\sigma_n, \sigma_n' \in \partial^+ \cal G_P$ with $T_{\sigma_n}^+ > 5n$ and $\sigma_n(0) = (e, 2)$, $\sigma_n^{\pm} = \sigma_n'^{\pm}$, and $t_n \in [2n, T_{\sigma_n}^+ - 2n]$, $s_n \in [0, T_{\sigma_n'}^+]$ such that $$d_{GM}(\sigma_n(t_n), \sigma_n'(s_n)) < c_0 \quad \text{and} \quad d_{GM}(\sigma_n(0), \sigma_n'(0)) > n.$$ 

Since $\sigma_n(t_n)\in H_P$, $\sigma_n(0) = (e, 2)$, and $d_{GM}(\sigma_n(t_n), \sigma_n(0)) = t_n \to \infty$,
we have $\sigma_n(t_n) \to \xi_P$ as $n \to \infty$.  Write $\sigma_n'(0) = (p_n, 2)$ with $p_n \in P$. We claim that 
\be \label{eqn.claim1205}
d_{GM}(\sigma_n(t_n), \sigma_n'(0)) \to \infty;
\ee
if not, the sequence $p_n^{-1}\sigma_n(t_n)$ is contained in a fixed compact subset. Since $p_n^{-1} \sigma_n(T_{\sigma_n}^+), p_n^{-1} \sigma_n(0) \in \partial H_P$ and $T_{\sigma_n}^+ - t_n, t_n \to + \infty$, this contradicts Lemma \ref{nogeo}.
    
    Let $s_n' \in \R$ be such that $d_{GM}(\sigma_n(0), \sigma_n'(s_n')) < c_0$, which exists by the Gromov hyperbolicity. 
  
   We now divide the argument into two cases:

   \smallskip
   
\noindent{\bf Case 1: $s_n' \ge  0$ for infinitely many $n$.}
    Then the Gromov product $( \sigma_n'(0) | \sigma_n'^+ )_{\sigma_n(0)}$ is uniformly bounded, passing to a subsequence. Since $\sigma_n(0) = (e, 2)$, it follows that after passing to a subsequence, $\sigma_n'(0)\to \xi$ and $\sigma_n'^+\to \xi'$ with $\xi\ne \xi'$. But $\sigma_n'(0) = (p_n, 2)$ with $p_n\in P$, and since $d_{GM}(\sigma_n(0), \sigma_n'(0)) > n$, we conclude that
     $p_n\to \infty$ in $P$, hence $\sigma_n'(0) \to \xi_P$. On the other hand, since $\sigma_n'^+ = \sigma_n^+ \in O_1^{GM}((e, 2), \sigma_n(T_{\sigma_n}^+))$ and $\sigma_n(T_{\sigma_n}^+) = (q_n, 2)$ with $q_n \to \infty$ in $P$, it follows from Lemma \ref{lem.shadowgiveslimit} that $\sigma_n'^+ \to \xi_P$, This contradicts the distinctness $\xi\ne \xi'$.

     \smallskip

   \noindent{\bf Case 2:  $s_n' < 0$ for all but finitely many $n \ge 1$.}
   In this case, two geodesic segments $\sigma_n([0, t_n])$ and $\sigma_n'([s_n', s_n])$ have $c_0$-close endpoints. Hence, by Gromov hyperbolicity, there exists $t_n' \in [0, t_n]$ such that $\sigma_n(t_n')$ is uniformly close to $\sigma_n'(0)$. This implies that
     the Gromov product $( \sigma_n(0) | \sigma_n(t_n) )_{\sigma_n'(0)} = ( p_n^{-1} \sigma_n(0) | p_n^{-1}\sigma_n (t_n) )_{p_n^{-1}\sigma_n'(0)}$ is uniformly bounded. It follows from $p_n\to \infty$
    that  $p_n^{-1} \sigma_n(0) = (p_n^{-1}, 2)$ converges to $\xi_P$, after passing to a subsequence. 
     Since $p_n^{-1}\sigma_n'(0) = (e, 2)$, $p_n^{-1} \sigma_n(t_n)$ must converge to a point distinct from $\xi_P$. On the other hand, we have $p_n^{-1}\sigma_n(t_n) \in H_P$, and from \eqref{eqn.claim1205}, we know it diverges from $(e,2)$, thus converging to  $\xi_P$ again, which is a contradiction.

    \smallskip
    
    Now let $T_{\sf h} > 0$ be the constant obtained from the first part. Let $P \in \cal P$,  $\sigma \in \partial^+ \cal G_P$ with $T_{\sigma}^+ > 5 T_{\sf h}$, and $t \in [2T_{\sf h}, T_{\sigma}^+ - 2T_{\sf h}]$. Let $\sigma' \in \partial^+ \cal G_P$ satisfy $\sigma'^{\pm} = \sigma^{\pm}$, and suppose that there exists  $s \in [0, T_{\sigma'}^+]$ such that $d_{GM}(\sigma'(s), \sigma(t)) < c_0$. If $\sigma^+ = \sigma'^+ \neq \xi_P$, then both $T_{\sigma}^+$ and $T_{\sigma'}^+$ are finite. So it suffices to consider the case where $\sigma^+ = \sigma'^+ = \xi_P$. Since $T_{\sigma}^+ > 5 T_{\sf h} > \tilde T$, Lemma \ref{lem.forever} implies $T_{\sigma}^+ = \infty$.
    By the first part, we have $d_{GM}(\sigma(0), \sigma'(0)) < T_{\sf h}$, and  since $t > 2 T_{\sf h}$, we have $T_{\sigma'}^+ \ge s > t - T_{\sf h} - c_0 >  T_{\sf h} - c_0 > \tilde T,$  so Lemma \ref{lem.forever} again implies
    $T_{\sigma'}^+ = \infty$.  Finally, when $T_{\sigma}^+ < \infty$, and hence $T_{\sigma'}^+ < \infty$, we can apply the same argument to the time-reversed geodesics of $\varphi_{T_{\sigma}^+}\sigma$ and $\varphi_{T_{\sigma'}^+} \sigma'$, completing the proof. 
\end{proof}

\subsection*{Proof of Proposition \ref{prop.nearpoint}}
    Fix two geodesics $\sigma, \sigma' \in \cal G$ with the same endpoints $\sigma^{\pm} = \sigma'^{\pm}$. Since the norm $\| \cdot \|_{\sigma}$ used to define $v_\sigma$ depends on the position of $\sigma(0)$, we divide the proof into cases based on the geometry of $\sigma(0)$.

    \medskip

    {\bf \noindent Case 1.} Suppose that $\sigma(0)$ lies within $5T_{\sf h}$-neighborhood of the Cayley graph of $\Ga$ in $X_{GM}$. That is, $d_{GM}(\Ga, \sigma(0)) < 5 T_{\sf h}$. By the definition of $c_0 > 0$, 
    we can find $s \in \R$ so that 
    $$
    d_{GM}(\sigma(0), \sigma'(s)) < c_0.
    $$

   Let $\ga \in \Ga$ be such that $d_{GM}(\ga \sigma(0), e) < 5 T_{\sf h}$.
   Then  both $\ga \sigma(0)$ and $\ga \sigma'(s)$ lie in the $(5T_{\sf h} + c_0)$-neighborhood of the identity. Hence the shifted geodesics $\ga \sigma$ and $\ga \varphi_s\sigma' = \varphi_s \ga \sigma'$ lie in a uniformly compact subset of $\cal G$. Therefore, there exists a uniform constant $C_1 > 0$ such that
    $$
    | \log v_{\ga \sigma} - \log v_{\ga \varphi_s \sigma'} | < C_1.
    $$
   By the equivariance formula for $v_{\ga \sigma}$ (see \eqref{eqn.vgammasigma}),
   we have 
    $$
    \begin{aligned}
        \log v_{\ga \sigma} & = \log v_{\sigma} + \psi(\beta_{\sigma^+}^{\theta}(\ga^{-1}, e)) \\
        \log v_{\ga \varphi_s \sigma'} & = \log v_{\varphi_s \sigma'} + \psi(\beta_{\sigma'^+}^{\theta}(\ga^{-1}, e)).
    \end{aligned}
    $$
    Since $\sigma^+ = \sigma'^+$, the Busemann maps in both expressions coincide and we conclude
    $$
    | \log v_{\sigma} - \log v_{\varphi_s \sigma'} | < C_1.
    $$ Choosing $C_0 > \max ( c_0, C_1)$ completes the proof in this case.

    \medskip
    {\bf \noindent Case 2.} Suppose that $d_{GM}(\Ga, \sigma(0)) > 5 T_{\sf h}$, $\sigma(0) \in H_P$, and $\sigma^+ = \xi_P$ for some $P \in \cal P^{\Ga}$. 
    In this case, we can write $\sigma = \varphi_t \sigma_0$ for some $\sigma_0 \in \partial^+ \cal G_P$ and $t > 0$. By hypothesis, $t > 5 T_{\sf h} > \tilde T$, and hence $T_{\sigma_0}^+ = \infty$ by Lemma \ref{lem.forever}. Then
    \be \label{eqn.case2_norm}
    \| \cdot \|_{\sigma} = e^{-ct} \| \cdot \|_{\sigma_0}
    \ee
    where $c > 0$ is the constant defined in \eqref{eqn.defofc}.
    
    By the definition of $c_0 > 0$,
    there exists $s \in \R$ such that $d_{GM}(\sigma'(s), \sigma(0)) < c_0$. Since $t > 5 T_{\sf h} > T_{\sf h}'$ and $T_{\sigma_0}^+ = \infty$,  Lemma \ref{lem.deepsegment} implies $\sigma'(s) \in H_P$.  So we may write  $\varphi_s \sigma' = \varphi_{t'} \sigma_0'$ for some $\sigma_0' \in \partial^+ \cal G_P$ and $t' > 0$.
    Applying Lemma \ref{lem.horoballtraveling} to $\sigma_0$ and $\sigma_0'$, we obtain 
    \be \label{eqn.closestart1}
    d_{GM}(\sigma_0(0), \sigma_0'(0)) < T_{\sf h} \quad \text{and} \quad T_{\sigma_0'}^+ = \infty.
    \ee This gives 
    \be \label{eqn.case2_norm2}
    \| \cdot \|_{\varphi_s \sigma'} = e^{-c t'} \| \cdot \|_{\sigma_0'}.
    \ee
    Combining \eqref{eqn.case2_norm} and \eqref{eqn.case2_norm2}, we compute:
    $$
    \begin{aligned}
       \log v_{\sigma} & = ct + \log v_{\sigma_0} \\
       \log v_{\varphi_s \sigma'} & = c t' + \log v_{\sigma_0'}.
    \end{aligned}
    $$
    Hence it suffices to bound $|t - t'|$ and $|\log v_{\sigma_0} - \log v_{\sigma_0'}|$. First, 
    $$
    \begin{aligned}
        t & = d_{GM}(\sigma_0(0), \sigma(0)) \\
        & \le d_{GM}(\sigma_0(0), \sigma_0'(0)) + d_{GM}(\sigma_0'(0), \sigma'(s)) + d_{GM}(\sigma'(s), \sigma(0))  < T_{\sf h} + t' + c_0.
    \end{aligned}$$
    Similarly, $t' < T_{\sf h} + t + c_0$, and hence $$|t - t'| < T_{\sf h} + c_0.$$

    Since $\sigma_0, \sigma_0' \in \partial^+ \cal G_P$ and their basepoints $\sigma_0(0)$ and $\sigma_0'(0)$ lie in the $2$-neighborhood of the Cayley graph of $\Ga$, with distance less than  $T_{\sf h}$  by \eqref{eqn.closestart1},
    we may apply Case 1 to $\sigma_0, \sigma_0'$ to obtain 
$$
    | \log v_{\sigma_0} - \log v_{\sigma_0'} | < C_2
    $$
    for some uniform constant $C_2 > 0$.
    Therefore,
    $$
    |\log v_{\sigma} - \log v_{\varphi_s \sigma'}| <  c(T_{\sf h} + c_0) + C_2.
    $$
    Taking $C_0 > \max( c_0, c(T_{\sf h} + c_0) + C_2)$ verifies the claim in this case.

    \medskip
    {\bf \noindent Case 3.} Suppose $d_{GM}(\Ga, \sigma(0)) > 5 T_{\sf h}$, $\sigma(0) \in H_P$ and $\sigma^- = \xi_P$ for some $P \in \cal P^{\Ga}$. In this case, we apply Lemma \ref{lem.forever} to the time reversal of $\sigma$, obtaining 
    $\sigma = \varphi_t \tilde \sigma_0$ for some $\tilde \sigma_0 \in \partial^- \cal G_P$ with $T_{\tilde \sigma_0}^- = - \infty$ and $t < 0$. The norm $\| \cdot \|_{\sigma}$ is given by
    $$\| \cdot \|_{\sigma} = e^{-c t} \| \cdot \|_{\tilde \sigma_0}.$$
   This case is symmetric to  Case 2 and follows by the same argument, which we omit.

    \medskip
    {\bf \noindent Case 4.} Suppose that none of Cases 1-3 applies. Then for some $P \in \cal P^{\Ga}$, $\sigma_0 \in \partial^+ \cal G_P$ with finite $T := T_{\sigma_0}^+ < \infty$, and some $t \in [5 T_{\sf h}, T - 5 T_{\sf h}]$, we have $\sigma = \varphi_t \sigma_0$. In particular, $T > 5 T_{\sf h}$ and $t \in [2 T_{\sf h}, T - 2 T_{\sf h}]$. We may assume that $P \in \cal P$ and $\sigma_0(0) = (e, 2)$.

    By definition of $c_0 > 0$, 
    there exists $s'' \in \R$ such that 
     \be \label{eqn.case4_travel}
     d_{GM}(\sigma(0), \sigma'(s'')) < c_0.
    \ee
    By Lemma \ref{lem.deepsegment}, $\sigma'(s'') \in H_P$, and hence
    \be \label{eqn.case4_proj}
    \varphi_{s''}\sigma' = \varphi_{t''} \sigma_0'\quad\text{ for some $t'' > 0$ and $\sigma_0' \in \partial^+ \cal G_P$}.
    \ee
    
    By Lemma \ref{lem.horoballtraveling}, we have $T' := T_{\sigma_0'}^+ < \infty$ and 
    \be \label{eqn.case4_proj2}
    \quad d_{GM}(\sigma_0(0), \sigma_0'(0)) < T_{\sf h} \quad \text{and} \quad d_{GM}(\sigma_0(T), \sigma_0'(T')) < T_{\sf h}.
    \ee
    In particular,
    \be \label{eqn.case4_time}
    |T - T'| < 2 T_{\sf h}.
    \ee
  Since all points $\sigma_0(0)$, $\sigma_0'(0)$, $\sigma_0(T)$, and $\sigma_0'(T')$ lie  in the $2$-neighborhood of the Cayley graph of $\Ga$, we may apply the argument of Case 1 to $\sigma_0$ and $\sigma_0'$ to obtain a uniform constant $C_3 > 0$ such that
    \be \label{eqn.case4_norm}
    | \log v_{\sigma_0} - \log v_{\sigma'_0} | < C_3 \quad \text{and} \quad | \log v_{\varphi_T \sigma_0} - \log v_{\varphi_{T'} \sigma'_0} | < C_3
    \ee

    As the norm $\| \cdot \|_{\sigma}$ is defined according to the time parameter $t$, we now proceed to subcases depending on how $t$ compares the ends of the segment $[0, T]$. 

    \medskip
    {\bf \noindent Case 4-1.} Suppose that $0 < t \le T/3$. By \eqref{eqn.case4_travel}, \eqref{eqn.case4_proj}, \eqref{eqn.case4_proj2}, and \eqref{eqn.case4_time}, we have
    $$-  (T_{\sf h} + c_0) < t - (T_{\sf h} + c_0) \le t'' \le t + (T_{\sf h} + c_0) \le \frac{T}{3} +  (T_{\sf h} + c_0) < \frac{T'}{3} +  (2T_{\sf h} + c_0).$$
    Hence, we can take $t' \in (t'' - (2 T_{\sf h} + c_0) , t'' + (T_{\sf h} + c_0))$ so that
    $$
    0 < t' < \frac{T'}{3}.
    $$
    This implies
    \be \label{eqn.case41_time}
    |t - t'| < 3 T_{\sf h} + 2 c_0 \quad \text{and}
    \ee
    \be \label{eqn.case41_dist}
    d_{GM}(\sigma(0), \sigma'_0(t')) \le d_{GM}(\sigma(0), \sigma'(s'')) + d_{GM}(\sigma_0'(t''), \sigma_0'(t')) < 2(T_{\sf h} + c_0)
    \ee
    where the last inequality follows from \eqref{eqn.case4_travel} and $|t' - t''| < 2T_{\sf h} + c_0$.
   From the construction, we have
    $$
    \| \cdot \|_{\sigma} = e^{-ct} \| \cdot \|_{\sigma_0} \quad \text{and} \quad \| \cdot \|_{\varphi_{t'}\sigma_0'} = e^{-c t'} \| \cdot \|_{\sigma_0'}.
    $$
and hence
    $$
        \log v_{\sigma} = ct + \log v_{\sigma_0} \quad \text{and} \quad  \log v_{\varphi_{t'} \sigma_0'} = ct' + \log v_{\sigma_0'}. 
    $$
   Hence, using \eqref{eqn.case41_time} and \eqref{eqn.case4_norm}, we deduce
    $$
    | \log v_{\sigma} - \log v_{\varphi_{t'}\sigma_0'} | < c(3 T_{\sf h} + 2c_0) + C_3.
    $$
  Since $\varphi_s \sigma' = \varphi_{t'} \sigma_0'$ for some $s\in \br$,
we conclude the claim in this case hold with
 $C_0 > \max ( 2(T_{\sf h} + c_0), c(3 T_{\sf h} + 2 c_0) + C_3)$.

    \medskip
    {\bf \noindent Case 4-2.} Suppose that $2T/3 \le t < T$. In this case, the norm $\| \cdot \|_{\sigma}$ is given by
    $$
    \| \cdot \|_{\sigma} = e^{c(T - t)} \| \cdot \|_{\varphi_T \sigma_0}.
    $$
This case is symmetric to Case 4-1  and follows by the same argument
using $T-t$ in place of $t$, together with \eqref{eqn.case4_time}.
We omit the details.

    \medskip
    {\bf \noindent Case 4-3.} Suppose that $T/3 < t < 2T/3$. Then from the same bounds \eqref{eqn.case4_travel}, \eqref{eqn.case4_proj}, \eqref{eqn.case4_proj2}, and \eqref{eqn.case4_time}, 
    $$
\begin{aligned}
    \frac{T'}{3} & - (2T_{\sf h} + c_0) < \frac{T}{3} - (T_{\sf h} + c_0)< t - (T_{\sf h} + c_0) \le t'' \\
    & \le t + (T_{\sf h} + c_0) \le \frac{2T}{3} + (T_{\sf h} + c_0) < \frac{2T'}{3} + (3T_{\sf h} + c_0).
    \end{aligned}
    $$
    Hence we can find $t' \in (t'' - (3 T_{\sf h} + c_0), t'' + (2 T_{\sf h} + c_0))$ so that
    $$
    \frac{T'}{3} < t' < \frac{2 T'}{3}.
    $$
  This gives
    \be \label{eqn.case43_time}
    |t - t'| < 4 T_{\sf h} + 2 c_0 \quad \text{and}
    \ee
    \be \label{eqn.case43_dist}
    d_{GM}(\sigma(0), \sigma'_0(t')) \le d_{GM}(\sigma(0), \sigma'(s'')) + d_{GM}(\sigma_0'(t''), \sigma_0'(t')) < 3 T_{\sf h} + 2 c_0
    \ee
   using again \eqref{eqn.case4_travel}  and $|t' - t''| < 3 T_{\sf h} + c_0$.

Now using the interpolation formula for the norm, we get
    $$
    \begin{aligned}
        \| \cdot \|_{\sigma} & = \| \cdot \|_{\varphi_{T/3} \sigma_0}^{2 - \tfrac{3}{T} t} \| \cdot \|_{\varphi_{2T/3} \sigma_0}^{\tfrac{3}{T} t - 1} = e^{c (2t - T)} \| \cdot \|_{\sigma_0}^{2 - \tfrac{3}{T} t} \| \cdot \|_{\varphi_T \sigma_0}^{\tfrac{3}{T} t - 1} \\
        \| \cdot \|_{\varphi_{t'} \sigma_0'} & = \| \cdot \|_{\varphi_{T'/3} \sigma_0'}^{2 - \tfrac{3}{T'} t'} \| \cdot \|_{\varphi_{2T'/3} \sigma_0'}^{\tfrac{3}{T'} t' - 1} = e^{c (2t' - T')} \| \cdot \|_{\sigma_0'}^{2 - \tfrac{3}{T'} t'} \| \cdot \|_{\varphi_{T'} \sigma_0'}^{\tfrac{3}{T'} t' - 1}.
    \end{aligned}
    $$
    Therefore,
    $$
    \begin{aligned}
    \log v_{\sigma} & = cT - 2ct + \left( 2 - \frac{3}{T} t \right) \log v_{\sigma_0} + \left( \frac{3}{T} t - 1 \right) \log v_{\varphi_T \sigma_0} \\
    & = cT - 2ct + 2 \log v_{\sigma_0} - \log v_{\varphi_T \sigma_0} + \frac{3t}{T} \left( \log v_{\varphi_T \sigma_0} - \log v_{\sigma_0}\right) \\
    \log v_{\varphi_{t'}\sigma_0'} & =  cT' - 2ct' + 2 \log v_{\sigma_0'} - \log v_{\varphi_{T'} \sigma_0'} + \frac{3t'}{T'} \left( \log v_{\varphi_{T'} \sigma_0'} - \log v_{\sigma_0'}\right).
    \end{aligned}
    $$
    Now using the triangle inequality, \eqref{eqn.case4_time}, \eqref{eqn.case43_time}, \eqref{eqn.case4_norm}, and the fact that $t' < 2T'/3$, we estimate
    $$
    \begin{aligned}
       & | \log v_{\sigma} - \log v_{\varphi_{t'} \sigma_0'} | \\
       & \le 2 c T_{\sf h} + 2c(4T_{\sf h} + 2c_0) + 2 C_3 + C_3 +\left| \frac{3t}{T} - \frac{3t'}{T'} \right| | \log v_{\varphi_T \sigma_0} - \log v_{\sigma_0} | \\  & \quad + \frac{3t'}{T'}| \log v_{\varphi_{T'} \sigma_0'} - \log v_{\varphi_T \sigma_0}| +  \frac{3t'}{T'}| \log v_{\sigma_0'} - \log v_{\sigma_0}| \\
        & \le 2c(5T_{\sf h} + 2c_0) + 3 C_3 + \left| \frac{3t}{T} - \frac{3t'}{T'} \right| | \log v_{\varphi_T \sigma_0} - \log v_{\sigma_0} |  + 4 C_3.
    \end{aligned}
    $$

    Now recall that $\sigma_0(0) = (e, 2)$ as noted earlier, and denote  $\sigma_0(T) = (\ga, 2)$ for some $\ga \in P$. Let $Q \subset X_{GM}$ denote the closed $2$-ball centered at $e$. Then  $\sigma_0(0) \in Q$ and $\sigma_0(T) \in \ga Q$. From \eqref{kappat} and \eqref{eqn.cocycle}, we have $v_{\varphi_T \sigma_0} = \frac{v_{\sigma_0}}{\kappa_T (\sigma_0)}$. In particular, 
    $$\log v_{\varphi_T \sigma_0} - \log v_{\sigma_0} = - \log \kappa_T (\sigma_0).$$
   By Lemma \ref{lem.cptpart}, there exists  $c_Q > 0$ depending only on $Q$, such that  $$
    |\log v_{\varphi_T \sigma_0} - \log v_{\sigma_0} - \psi(\mu_{\theta}(\ga)) | < c_Q.
    $$
    Therefore, 
    $$
    \begin{aligned}
    & | \log v_{\sigma} - \log v_{\varphi_{t'} \sigma_0'} | \\
    & \le 2c(5T_{\sf h} + 2c_0) + 7 C_3 + \left| \frac{3t}{T} - \frac{3t'}{T'} \right|  c_Q + \left| \frac{3t}{T} - \frac{3t'}{T'} \right| | \psi(\mu_{\theta}(\ga)) | \\
    &  \le 2c(5T_{\sf h} + 2c_0) + 7 C_3 + c_Q + \left| \frac{3t}{T} - \frac{3t'}{T'} \right| | \psi(\mu_{\theta}(\ga)) |
    \end{aligned}
    $$
    where the last inequality is from $\frac{T}{3} < t < \frac{2T}{3}$ and $\frac{T'}{3} < t' < \frac{2T'}{3}$.
 Estimate the final term:
    $$
    \begin{aligned}
    \left| \frac{3t}{T} - \frac{3t'}{T'} \right| | \psi(\mu_{\theta}(\ga)) | &\le \frac{3|t - t'|}{T} |\psi(\mu_{\theta}(\ga))| + 3 t' \left| \frac{1}{T} - \frac{1}{T'} \right| | \psi(\mu_{\theta}(\ga))| \\
    & \le \frac{3|t - t'|}{T} |\psi(\mu_{\theta}(\ga))| + 3 t' \frac{|T - T'|}{T'T} | \psi(\mu_{\theta}(\ga))| \\
    & \le (16 T_{\sf h} + 6c_0) \left| \frac{\psi(\mu_{\theta}(\ga))}{T} \right|,
    \end{aligned}
    $$
    using \eqref{eqn.case43_time}, \eqref{eqn.case4_time}, and $t' < 2T'/3$.
    It follows from $T = d_{GM}(\sigma_0(0), \sigma_0(T)) = d_{GM}((e, 2), (\ga, 2))$ that
    $$|T - d_{GM}(e, \ga)| \le 2.$$
Then, by Theorem \ref{thm.czzcartan}, there exist uniform  constants
$c_1, c_2 > 1$ such that
    $$
    c_1^{-1} T - c_2 \le \psi(\mu_{\theta}(\ga)) \le c_1 T + c_2.
    $$
    Since $T > T_{\sf h}$, we conclude:
    $$\left| \frac{\psi(\mu_{\theta}(\ga))}{T} \right| \le c_1 + \frac{c_2}{T_{\sf h}}.$$
    Combining all altogether,
    $$
    | \log v_{\sigma} - \log v_{\varphi_{t'} \sigma_0'} | \le 2c(5 T_{\sf h} + 2 c_0) + 7 C_3 + c_Q + (16 T_{\sf h} + 6 c_0) ( c_1 + c_2/T_{\sf h}).
    $$
  Since
    $\varphi_s \sigma' = \varphi_{t'} \sigma_0'$ for some $s\in \br$, and using
    \eqref{eqn.case43_dist}, the claim follows by setting $$C_0 > 2c(5 T_{\sf h} + 2 c_0) + 7 C_3 + c_Q + (16 T_{\sf h} + 6 c_0) ( c_1 + c_2/T_{\sf h}) > 3 T_{\sf h} + 2c_0.$$ This completes the proof of the first part of Proposition \ref{prop.nearpoint}.

    \medskip

    We now prove the second assertion. Let $C_0 > 0$ be the constant from the first part and let $\sigma, \sigma' \in \cal G$ be such that $\sigma^{\pm} = \sigma'^{\pm}$. Then for some $s \in \R$, we have
    $$d_{GM}(\sigma(0), \sigma'(s)) < C_0 \quad \text{and} \quad | \log v_{\sigma} - \log v_{\varphi_s \sigma'} | < C_0.$$
    Therefore,
    $$
    \log v_{\sigma} - \log v_{\sigma'} - C_0 < \log v_{\varphi_s \sigma'} - \log v_{\sigma'} < \log v_{\sigma} - \log v_{\sigma'} + C_0.
    $$
  Now,  from Theorem \ref{thm.repar}, we have
$$
\tilde \Psi(\varphi_s \sigma') = \phi_t \tilde \Psi(\sigma')
$$
for some $t$ with $a s - B \le t \le a' s + B$ if $s \ge 0$ and $a' s - B \le t \le a s + B$ if $s < 0$, where $0 < a < a'$ and $B > 0$ are constants in the theorem. Since
$$
\log v_{\varphi_s \sigma'} = t + \log v_{\sigma'},
$$
we deduce the bounds on $s$ as follows
\begin{itemize}
    \item if $s \ge 0$,
    $$
    \frac{ \log v_{\varphi_s \sigma'} - \log v_{\sigma'} - B}{a'} \le s \le \frac{ \log v_{\varphi_s \sigma'} - \log v_{\sigma'} + B}{a}.
    $$
    Therefore,
    $$
    \frac{ (\log v_{\sigma} - \log v_{\sigma'}) - C_0 - B}{a'} \le s \le \frac{ (\log v_{\sigma} - \log v_{\sigma'}) + C_0 + B}{a}.
    $$

    \item if $s < 0$,
    $$
    \frac{ \log v_{\varphi_s \sigma'} - \log v_{\sigma'} - B}{a} \le s \le \frac{ \log v_{\varphi_s \sigma'} - \log v_{\sigma'} + B}{a'}.
    $$
    Therefore,
    $$
    \frac{ (\log v_{\sigma} - \log v_{\sigma'}) - C_0 - B}{a} \le s \le \frac{ (\log v_{\sigma} - \log v_{\sigma'}) + C_0 + B}{a'}.
$$
\end{itemize}
This completes the proof.
\qed

\subsection*{Proof of Theorem \ref{thm.uniformpreimage}} Let $\sigma, \sigma' \in \cal G$ be such that $$\tilde \Psi(\sigma) = \tilde \Psi(\sigma').$$
This implies that $\sigma^{\pm} = \sigma'^{\pm}$ and $\log v_{\sigma} - \log v_{\sigma'} = 0$. By Proposition \ref{prop.nearpoint}, there exist uniform constants $a, B, C_0 > 0$ so that
$$
d_{GM}(\sigma(0), \sigma'(s)) < C_0 \text{ for some } s \in \left[ - \frac{C_0 + B}{a} , \frac{C_0 + B}{a} \right].
$$
Therefore,
$$d_{GM}(\sigma(0), \sigma'(0)) \le d_{GM}(\sigma(0), \sigma'(s)) + d_{GM}(\sigma'(s), \sigma'(0)) < C_0 + \frac{C_0 + B}{a}. $$
This finishes the proof.
\qed

\subsection*{Disjointness of $\tilde \Psi$-images of horoballs}
We deduce from Theorem \ref{thm.uniformpreimage} that $\tilde \Psi$-images of deep horoballs are disjoint. This implies that the reparameterization $\tilde \Psi : \cal G \to \tilde \Omega_{\psi}$ and $\Psi : \Ga \ba \cal G \to \Omega_{\psi}$ respectively give genuine decompositions of $\tilde \Omega_{\psi}$ and $\Omega_{\psi}$ into the non-cuspidal part and disjoint cuspidal components. 

To be precise, for each $n \ge 2$, we define the {\it depth-$n$ horoballs}, similar to the definition of open horoballs $H_P$, as follows: for $P \in \cal P$, let $H_P'(n) \subset X_{GM}$ be the subgraph induced by the vertices $\{(g, k) : g \in P, k \ge n \}$ and $\hat H_P(n) \subset X_{GM}$ be the subgraph induced by the vertices $\{(g, n) : g \in P\}$. We then set
$$H_P(n) := H_P' - \hat H_P.$$
For $\ga \in \Ga$, we set
$$H_{\ga P \ga^{-1}}(n) := \ga H_P(n).$$
This results in the collection of depth-$n$ open horoballs $\{ H_P(n) : P \in \cal P^{\Ga} \}$. Note that $H_P = H_P(2)$ for $P \in \cal P^{\Ga}$. For $P \in \cal P^{\Ga}$, we consider the set
$$\cal G_P (n) := \{ \sigma \in \cal G : \sigma(0) \in H_P(n) \}$$
which consists of bi-infinite geodesics based at $H_P(n)$. We now obtain the following disjointness:
    \begin{corollary} \label{cor.disjointimagehoroballs}
        There exists $n_0 \ge 2$ such that for $P, P' \in \cal P^{\Ga}$,
        $$
        P \neq P' \Longrightarrow \tilde \Psi(\cal G_P(n_0)) \cap \tilde \Psi(\cal G_{P'}(n_0)) = \emptyset.
        $$
    \end{corollary}

    \begin{proof}
        Let $C > 0$ be the constant given by Theorem \ref{thm.uniformpreimage}. We fix $n_0 > \frac{C}{2} + 1$ and show that the desired disjointness holds. Suppose on the contrary that for some distinct $P, P' \in \cal P^{\Ga}$, there exist $\sigma \in \cal G_P (n_0)$ and $\cal G_{P'} (n_0)$ such that $\tilde \Psi (\sigma) = \tilde \Psi(\sigma')$. Since $\sigma(0) \in H_P(n_0)$, the distance from $\sigma(0)$ to the Cayley graph of $\Ga$ is at least $n_0 - 1$. Similarly, the distance from $\sigma'(0)$ to the Cayley graph of $\Ga$ is at least $n_0 - 1$. Since two basepoints $\sigma(0)$ and $\sigma'(0)$ are contained in distinct horoballs, a geodesic segment between them must pass through the Cayley graph. Therefore, we have
        $$d_{GM}(\sigma(0), \sigma'(0)) \ge 2n_0 - 2 > C.$$
        On the other hand, since $\tilde \Psi(\sigma) = \tilde \Psi(\sigma')$, we have $d_{GM}(\sigma(0), \sigma'(0)) < C$ by Theorem \ref{thm.uniformpreimage}, which is a contradiction. This shows the claim.
    \end{proof}

\begin{remark} By the above corollary,
the reparameterization given in Corollary \ref{cor.reparquotient} gives us a thick-thin decomposition of $\Omega_\psi$ 
where the thin part is the disjoint union of $\Psi$-images of bi-infinite geodesics based at the  horoballs in $\Ga \ba X_{GM}$ corresponding to elements of $\cal P$.
\end{remark}

\section{Exponential expansion on unstable foliations}
Let $\Ga<G$ be a $\theta$-Anosov subgroup relative to $\cal P$.
Fix a $(\Ga,\theta)$-proper linear form
$\psi \in \fa_{\theta}^*$.  Recall the space $\tilde \Omega_{\psi} = \La_{\theta}^{(2)} \times \R$ equipped with the $\Ga$-action given by
$$\ga (\xi, \eta, s) = (\ga \xi, \ga \eta, s + \psi(\beta_{\xi}^{\theta}(\ga^{-1}, e)))$$
for $\ga\in \Ga$ and $(\xi, \eta, s)\in \La_{\theta}^{(2)}\times \br$, and
  $\Omega_\psi=\Ga\ba \tilde \Omega_{\psi}$ as defined in section \ref{sec.bundle}.
Recall from \eqref{ww} and \eqref{ww2} the unstable and stable foliations $W^{\pm}$ on $\Omega_{\psi}$ and their lifts $\tilde W^{\pm}$ on $\tilde{\Omega}_{\psi}$.
The goal of this section is to establish the following exponential expansion (resp. contraction) property of the flow $\{\phi_t\}$ on unstable (resp. stable) foliations.
\begin{theorem} \label{thm.nicemetrics}
We have the following:
\begin{enumerate}
    \item 
There exist a $\Ga$-invariant non-negative symmetric function $d^+:\tilde\Omega_{\psi} \times \tilde \Omega_{\psi}\to \br $ and  constants $\alpha, \alpha' > 0$ and $b \ge 1$ 
such that for $z \in \tilde \Omega_{\psi}$,
the restriction of $d^{+}$ defines a semi-metric\footnote{A semi-metric on $\cal X$ is a non-negative symmetric function $\cal X \times \cal X \to \R$ that vanishes precisely on the diagonal.} on $\tilde W^{+}(z)$ and
   for any $w_1, w_2 \in \tilde W^{+}(z)$ and $t \ge 0$,
    $$\begin{aligned}
    \frac{1}{b} e^{\alpha t} d^+(w_1, w_2) & \le d^+(\phi_t w_1, \phi_t w_2) \le b e^{\alpha' t} d^+(w_1, w_2).\\ 
    \end{aligned}
    $$

   \item  Similarly, there exists a $\Ga$-invariant non-negative symmetric function $d^-:\tilde\Omega_{\psi} \times \tilde \Omega_{\psi}\to \br$ such that for $z \in \tilde \Omega_{\psi}$,
the restriction of $d^{-}$ defines a semi-metric on $\tilde W^{-}(z)$ and
   for any $w_1, w_2 \in \tilde W^{-}(z)$ and $t \ge 0$,
   $$
    \frac{1}{b} e^{-\alpha' t} d^-(w_1, w_2) \le d^-(\phi_t w_1, \phi_t w_1) \le b e^{-\alpha t} d^-(w_1, w_2).
   $$
\item For any small enough $\varepsilon > 0$, there exists a non-negative symmetric function $d_{\varepsilon}^+:\tilde\Omega_{\psi} \times \tilde \Omega_{\psi}\to \br$ such that for $z \in \tilde \Omega_{\psi}$, the restriction of $d_{\varepsilon}^+$ defines a metric on $\tilde W^+(z)$. Moreover, for any compact subset $Q \subset \tilde{\Omega}_{\psi}$, there exists a constant $c_Q \ge 1$ such that for any $w_1, w_2 \in Q$, $$\frac{1}{c_Q} d^+(w_1, w_2)^{\varepsilon} \le d_{\varepsilon}^+(w_1, w_2) \le c_Q d^+(w_1, w_2)^{\varepsilon}.$$

   \end{enumerate}

\end{theorem}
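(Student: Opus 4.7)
The plan is to construct $d^\pm$ by pulling back a natural divergence function from the Groves--Manning cusp space $X_{GM}$ via the reparameterization $\tilde\Psi:\cal G\to\tilde\Omega_\psi$ of Theorem \ref{thm.reparflow}. The exponents $\alpha,\alpha'$ will match (up to relabeling) the constants $a,a'$ of Theorem \ref{thm.reparflow}(3), whose values come from the norm-contraction estimate of Theorem \ref{thm.expdecay}; the metric $d_\varepsilon^+$ of item (3) will then be produced from $d^+$ by Frink's metrization lemma.

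For the construction of $d^+$, take $w_1,w_2\in\tilde W^+(z)$ with $w_i=(\xi_i,\eta,s_i)$ sharing $\eta$. Using the $\Gamma$-equivariant identifications $\partial X_{GM}\leftrightarrow\La_{\theta\cup\i(\theta)}$, pick bi-infinite geodesics $\sigma_1,\sigma_2\in\cal G$ with $\sigma_i^+$ corresponding to $\xi_i$ and common $\sigma_i^-$ corresponding to $\eta$, normalized so that $\tilde\Psi(\sigma_i)=w_i$. Fix a constant $C_0$ larger than the hyperbolicity constant of $X_{GM}$ and set
$$
\tau_{\max}(w_1,w_2):=\sup\{\tau\in\R:d_{GM}(\sigma_1(\tau),\sigma_2(\tau))\le C_0\},\qquad d^+(w_1,w_2):=e^{-\tau_{\max}(w_1,w_2)},
$$
with $d^+(w,w):=0$. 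Since $\sigma_1,\sigma_2$ are backward asymptotic and, when $w_1\ne w_2$, diverge forward, $\tau_{\max}$ is finite and well-defined up to an $O(1)$ error controlled by Gromov hyperbolicity. The $\Gamma$-invariance and symmetry of $d^+$ are immediate from the $\Gamma$-equivariance of $\tilde\Psi$ and of $f,f_\i$. The construction of $d^-$ is strictly parallel, with forward and backward endpoints swapped.

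For the expansion estimate, Theorem \ref{thm.reparflow}(1)--(3) gives $\phi_t\tilde\Psi(\sigma)=\tilde\Psi(\varphi_{s(\sigma,t)}\sigma)$, where $s(\sigma,t)$ is the unique solution of $\mathsf t(\sigma,s)=t$ and satisfies $t/a'-B'\le s(\sigma,t)\le t/a+B'$ for a constant $B'$ independent of $\sigma, t$. Since $\varphi_{s(\sigma_i,t)}\sigma_i(\tau)=\sigma_i(\tau+s(\sigma_i,t))$, the divergence time shifts by
$$
\tau_{\max}(\phi_t w_1,\phi_t w_2)=\tau_{\max}(w_1,w_2)-\min\bigl(s(\sigma_1,t),s(\sigma_2,t)\bigr)+O(1),
$$
so that $d^+(\phi_t w_1,\phi_t w_2)\asymp e^{\min(s(\sigma_1,t),s(\sigma_2,t))}d^+(w_1,w_2)$. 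Plugging in the cocycle bounds yields $b^{-1}e^{\alpha t}d^+(w_1,w_2)\le d^+(\phi_t w_1,\phi_t w_2)\le be^{\alpha' t}d^+(w_1,w_2)$ with $\alpha:=1/a'$, $\alpha':=1/a$, and a suitable $b\ge 1$ absorbing all $O(1)$ errors; the bound for $d^-$ follows by symmetry.

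For item (3), a standard Gromov-hyperbolic argument gives a quasi-ultrametric inequality $d^+(x,z)\le K\max\{d^+(x,y),d^+(y,z)\}$ on each leaf, with $K$ depending only on $C_0$ and the hyperbolicity constant; Frink's metrization lemma then supplies $\varepsilon_0>0$ such that for every $\varepsilon\in(0,\varepsilon_0)$, the function $(d^+)^\varepsilon$ is bi-Lipschitz to a genuine metric $d_\varepsilon^+$ on each leaf, and the comparison constants become uniform on any compact $Q\subset\tilde\Omega_\psi$ by continuity of $d^+$ and compactness of $Q$. The principal technical obstacle is to make the $O(1)$ errors in the displayed shift formula for $\tau_{\max}$ genuinely uniform in $(w_1,w_2,t)$ and in the choice of lifts $\sigma_i$: this amounts to combining the stability of geodesics in Gromov hyperbolic spaces with the continuity and $\Gamma$-equivariance of the cocycle $\mathsf t$, after which items (1)--(3) follow formally.
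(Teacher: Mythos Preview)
Your overall strategy---pull back a hyperbolic divergence quantity from $X_{GM}$ via $\tilde\Psi$ and transport the reparameterization bounds of Theorem~\ref{thm.reparflow}---is the same as the paper's. The implementation, however, differs in two places, and the first of these contains a genuine gap.

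The paper defines on $\cal G$ the Hamenst\"adt--type quantity
\[
d^+(\sigma_1,\sigma_2)=\limsup_{t\to\infty}e^{\,d_{GM}(\sigma_1(t),\sigma_2(t))-2t},
\]
which by a Busemann/Gromov-product computation (see \eqref{eqn.anotherform}--\eqref{eqn.buseandgrom}) transforms under \emph{independent} time shifts as $d^+(\varphi_{s_1}\sigma_1,\varphi_{s_2}\sigma_2)\asymp e^{s_1+s_2}d^+(\sigma_1,\sigma_2)$ with multiplicative error $e^{\pm\delta}$, for \emph{all} $s_1,s_2\in\R$ and with no synchronization hypothesis on $\sigma_1,\sigma_2$. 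It then pushes forward to $\tilde\Omega_\psi$ by taking the supremum over all preimages $\sigma_i\in\tilde\Psi^{-1}(w_i)$, which removes any dependence on a choice of lift.

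Your $\tau_{\max}$-construction, by contrast, only makes sense when the chosen lifts $\sigma_1,\sigma_2$ are synchronized to within $C_0$ along the common backward direction: if the Busemann offset between them exceeds $C_0$, then $d_{GM}(\sigma_1(\tau),\sigma_2(\tau))>C_0$ for every $\tau$ and $\tau_{\max}=-\infty$. Nothing in the normalization $\tilde\Psi(\sigma_i)=w_i$ forces such synchronization (the $\R$-coordinates $s_i$ of leafmates can differ arbitrarily, cf.\ Lemma~\ref{lem.transbyhoro}), and the situation is worse after flowing: the reparameterization bounds only give $s(\sigma_i,t)\in[(t-B)/a',(t+B)/a]$, so $|s(\sigma_1,t)-s(\sigma_2,t)|$ can be of order $t(1/a-1/a')$, which is unbounded in $t$ whenever $a\ne a'$. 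Thus the displayed ``shift formula'' for $\tau_{\max}$ is not an $O(1)$ perturbation at all---the supremum defining $\tau_{\max}(\phi_tw_1,\phi_tw_2)$ can be empty. This is a structural defect of the divergence-time definition, not a uniformity detail; the paper's Hamenst\"adt distance together with the sup-over-preimages push-forward is what makes the argument go through cleanly. (Incidentally, even in the synchronized regime your formula should involve $\max$ rather than $\min$ of the two shifts.)

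For item~(3), your Frink-metrization route is a reasonable alternative, but the paper takes a more direct path: it pulls back the standard visual metric $d_\varepsilon$ on $\partial X_{GM}$ via $\sigma\mapsto\sigma^+$, which is already a genuine metric, and the compact-set comparison with $(d^+)^\varepsilon$ then follows immediately from \eqref{eqn.buseandgrom} and the properness of $\tilde\Psi$ (Lemma~\ref{lem.compactvisual0}, Lemma~\ref{lem.compactvisual}).
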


\begin{remark}
    Even though Theorem \ref{thm.nicemetrics} states the exponential expansion and contraction for $t \ge 0$, replacing $w_1$ and $w_2$ with $\phi_{-t}w_1$ and $\phi_{-t} w_2$ implies the corresponding estimates for negative-time flow.
\end{remark}

The proof of Theorem \ref{thm.nicemetrics} is based on
our coarse reparameterization (Theorem \ref{thm.repar}) and the coarse geometry of the Groves-Manning cusp space as a Gromov hyperbolic space.

\subsection*{Groves-Manning cusp space as a Gromov hyperbolic space} 
Let $X_{GM}$ be the associated Groves-Manning cusp space of $(\Ga, \cal P)$, which is a proper geodesic Gromov hyperbolic space (\cite[Theorem 3.25]{GM_relhyp}, Theorem \ref{thm.GM_relhyp}). We refer to \cite[Chapter III.H]{Bridson1999metric}
for general facts about Gromov hyperbolic spaces.

Recall that $\cal G$ is the space of all parameterized bi-infinite geodesics in $X_{GM}$.
 We define $d^{\pm}:\cal G\times \cal G\to [0, \infty)$ as follows: for $\sigma_1, \sigma_2 \in \cal G$, 
    \be \label{eqn.defmetricsgm}
    \begin{aligned}
        d^+(\sigma_1, \sigma_2) & := \limsup_{t \to \infty} e^{d_{GM}(\sigma_1(t), \sigma_2(t)) - 2t}; \\
        d^-(\sigma_1, \sigma_2) & := \limsup_{t \to \infty} e^{d_{GM}(\sigma_1(-t), \sigma_2(-t)) - 2t}.
    \end{aligned}
    \ee
    Their well-definedness follows once we explain another formula for $d^{\pm}$ using Gromov products and Busemann functions on $X_{GM}$.
    We recall that 
for $ x, p, q \in X_{GM}$, the Gromov product of $p, q$ with respect to $x$ is  $$( p | q)_x := \frac{1}{2} (d_{GM}(x, p) + d_{GM}(x, q) - d_{GM}(p, q))\ge 0,$$ and this extends to $\partial X_{GM}$ as follows: for $\xi, \eta \in \partial X_{GM}$, we set $$(\xi | \eta)_x := \sup \liminf_{i, j \to \infty} ( p_i | q_j )_x$$
where the supremum is taken over all sequences $p_i, q_j \in X_{GM}$ such that $p_i \to \xi$ and $q_j \to \eta$ as $i, j \to \infty$. Since $X_{GM}$ is Gromov hyperbolic, there exists a uniform constant $\delta > 0$ such that for any $x\in X_{GM}$,  $\xi, \eta \in \partial X_{GM}$, and sequences $p_i, q_j \in X_{GM}$ with $\xi = \lim_{i \to \infty} p_i$ and $\eta = \lim_{j \to \infty} q_j$, we have 
\be \label{eqn.gromovineq}
(\xi | \eta)_x - \frac{\delta}{2} \le \liminf_{i, j \to \infty} (p_i | q_j)_x \le (\xi | \eta)_x.
\ee
For $\sigma \in \cal G$ and $p, q \in X_{GM}$, the following {\it Busemann function } is well-defined: $$
    \beta_{\sigma^+}(p, q)  := \lim_{t \to \infty} d_{GM}(p, \sigma(t)) - d_{GM}(q, \sigma(t)).
    $$ 
    We note that the Busemann function is defined for each geodesic $\sigma \in \cal G$, not for a point in $\partial X_{GM}$. The notation $+$ in $\beta_{\sigma^+}(p, q)$ is to indicate that the limit is taken along $t \to \infty$. Indeed, this makes the above limit well-defined since the function $f_p : \R \to \R$ defined as 
    $$
    f_p(t) = d_{GM}(p, \sigma(t)) - d_{GM}(\sigma(0), \sigma(t))
    $$
    is non-increasing and bounded from above by $d_{GM}(p, \sigma(0))$, and we have $d_{GM}(p, \sigma(t)) - d_{GM}(q, \sigma(t)) = f_p(t) - f_q(t)$.

   We have  for any $x \in X_{GM}$ that
      \be \label{eqn.anotherform}
      d^+(\sigma_1, \sigma_2)  = e^{\beta_{\sigma_1^+}(x, \sigma_1(0)) + \beta_{\sigma_2^+}(x, \sigma_2(0))}\limsup_{t \to \infty} e^{- 2 (\sigma_1(t) | \sigma_2(t))_x}.
      \ee
Since $(\sigma_1(t) | \sigma_2(t))_x\ge 0$ for all $t$, it follows that $d^+(\sigma_1, \sigma_2)<\infty$. Since 
    \be \label{eqn.minusisplusandinv}
    d^-(\sigma_1, \sigma_2) = d^+(I \sigma_1, I \sigma_2),
    \ee
  $d^-$ is well-defined as well. The definition of $d^{\pm}$ is motivated by the 
   Hamenst\"adt distance  in a negatively curved compact manifold \cite{Ham_bms}.   
    
    Since $\Ga$ acts on $X_{GM}$ by isometries, both $d^+$ and $d^-$ are $\Ga$-invariant.
The geodesic flow on $\cal G$ exponentially expand and contract $d^{+}$ and $d^-$ respectively:
    \begin{lemma} \label{lem.expcont1}
        Let $\sigma_1, \sigma_2 \in \cal G$ and $s_1, s_2 \in \R$. Then we have
        $$\begin{aligned}
        e^{-\delta} e^{s_1 + s_2} d^+(\sigma_1, \sigma_2) &\le d^+(\varphi_{s_1}\sigma_1, \varphi_{s_2}\sigma_2) \le e^{\delta} e^{s_1 + s_2} d^+(\sigma_1, \sigma_2);\\
        e^{-\delta} e^{ -(s_1 + s_2)} d^-(\sigma_1, \sigma_2) &\le d^-(\varphi_{s_1}\sigma_1, \varphi_{s_2}\sigma_2) \le e^{\delta} e^{-(s_1 + s_2)} d^-(\sigma_1, \sigma_2).
        \end{aligned}$$
    \end{lemma}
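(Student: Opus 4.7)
The plan is to start from the alternative formula \eqref{eqn.anotherform} and exploit the fact that $\varphi_s\sigma$ and $\sigma$ share the same endpoints: $(\varphi_s\sigma)^{\pm} = \sigma^{\pm}$. Hence only two things change when I replace $\sigma_i$ by $\varphi_{s_i}\sigma_i$ in \eqref{eqn.anotherform}: the Busemann term at the basepoint and the Gromov-product $\limsup$.

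First I would handle the Busemann factor. Using the cocycle identity
$$\beta_{\sigma_1^+}(x,(\varphi_{s_1}\sigma_1)(0))=\beta_{\sigma_1^+}(x,\sigma_1(0))+\beta_{\sigma_1^+}(\sigma_1(0),\sigma_1(s_1))$$
together with the fact that $\sigma_1$ is a geodesic whose forward endpoint is $\sigma_1^+$, so $\beta_{\sigma_1^+}(\sigma_1(0),\sigma_1(s_1))=s_1$ (computing directly from the definition of $\beta$ via $d_{GM}(\sigma_1(0),\sigma_1(t))-d_{GM}(\sigma_1(s_1),\sigma_1(t))\to s_1$), I pick up exactly the desired factor $e^{s_1+s_2}$, and similarly for $\sigma_2$.

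Next I would show that the $\limsup$ factor changes by at most $e^{\pm\delta}$. Since $e^{-2(\cdot)}$ is decreasing, $\limsup_t e^{-2(p_t|q_t)_x}=e^{-2\liminf_t(p_t|q_t)_x}$, and by the Gromov hyperbolicity inequality \eqref{eqn.gromovineq}, both $\sigma_1(t),\sigma_2(t)\to\sigma_1^+,\sigma_2^+$ and $\sigma_1(t+s_1),\sigma_2(t+s_2)\to\sigma_1^+,\sigma_2^+$ give
$$\liminf_{t\to\infty}(\sigma_1(t)|\sigma_2(t))_x,\ \liminf_{t\to\infty}(\sigma_1(t+s_1)|\sigma_2(t+s_2))_x\in\bigl[(\sigma_1^+|\sigma_2^+)_x-\tfrac{\delta}{2},\,(\sigma_1^+|\sigma_2^+)_x\bigr].$$
Hence both $\limsup$'s lie in $[e^{-2(\sigma_1^+|\sigma_2^+)_x},\,e^{\delta}e^{-2(\sigma_1^+|\sigma_2^+)_x}]$, so their ratio is between $e^{-\delta}$ and $e^{\delta}$. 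Combining with the Busemann computation yields the two-sided inequality for $d^+$.

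The bound for $d^-$ then follows formally from \eqref{eqn.minusisplusandinv} and the identity $I\circ\varphi_s=\varphi_{-s}\circ I$ (immediate from the definitions $(I\sigma)(t)=\sigma(-t)$ and $(\varphi_s\sigma)(t)=\sigma(t+s)$), which converts the $d^-$ estimate into the already-established $d^+$ estimate with $s_1,s_2$ replaced by $-s_1,-s_2$. No part of this argument is delicate: the only genuine content is the $\delta$-slack in the Gromov product inequality \eqref{eqn.gromovineq}, which is precisely where the factor $e^{\pm\delta}$ comes from, so I do not expect a serious obstacle.
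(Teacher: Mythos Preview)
Your proposal is correct and follows essentially the same route as the paper: the paper also combines \eqref{eqn.anotherform} with \eqref{eqn.gromovineq} to trap $d^+(\sigma_1,\sigma_2)$ between $e^{\beta_{\sigma_1^+}(x,\sigma_1(0))+\beta_{\sigma_2^+}(x,\sigma_2(0))}e^{-2(\sigma_1^+|\sigma_2^+)_x}$ and $e^{\delta}$ times that quantity, then uses the Busemann cocycle identity $\beta_{\sigma_i^+}(x,(\varphi_{s_i}\sigma_i)(0))=\beta_{\sigma_i^+}(x,\sigma_i(0))+s_i$ together with the flow-invariance of the endpoints to pick up exactly $e^{s_1+s_2}$, and deduces the $d^-$ estimate from \eqref{eqn.minusisplusandinv}. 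Your phrasing in terms of the ratio of the two $\limsup$ factors is a cosmetic variant of the same computation.
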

    
    \begin{proof}
    Fix $x \in X_{GM}$.
    By \eqref{eqn.anotherform} and \eqref{eqn.gromovineq}, we have
    \be \label{eqn.buseandgrom}
    \begin{aligned}
        d^+(\sigma_1, \sigma_2) & \ge e^{\beta_{\sigma_1^+}(x, \sigma_1(0)) + \beta_{\sigma_2^+}(x, \sigma_2(0))} e^{- 2 (\sigma_1^+ | \sigma_2^+)_x}; \\
        d^+(\sigma_1, \sigma_2)
        & \le e^{\delta} e^{\beta_{\sigma_1^+}(x, \sigma_1(0)) + \beta_{\sigma_2^+}(x, \sigma_2(0))} e^{- 2 (\sigma_1^+ | \sigma_2^+)_x}.
    \end{aligned}
    \ee
    By the definition of $\beta$, we have 
    \be \label{eqn.buseco1}
    \begin{aligned}
    \beta_{\sigma_1^+}(x, (\varphi_{s_1}\sigma_1)(0)) & = \beta_{\sigma_1^+}(x, \sigma_1(0)) + \beta_{\sigma_1^+}(\sigma_1(0), \sigma_1(s_1)) \\
    & = \beta_{\sigma_1^+}(x, \sigma_1(0)) + s_1,
    \end{aligned}
    \ee
    and similarly 
    \be \label{eqn.buseco2}
    \beta_{\sigma_2^+}(x, (\varphi_{s_2}\sigma_2)(0))  = \beta_{\sigma_2^+}(x, \sigma_2(0)) + s_2.
    \ee
    Since $\varphi_{s_1}\sigma_1^+ = \sigma_1^+$ and $\varphi_{s_2}\sigma_2^+ = \sigma_2^+$, it follows from \eqref{eqn.buseandgrom}, \eqref{eqn.buseco1}, and \eqref{eqn.buseco2} that
    $$e^{-\delta} e^{s_1 + s_2} d^+(\sigma_1, \sigma_2) \le d^+(\varphi_{s_1}\sigma_1, \varphi_{s_2}\sigma_2) \le e^{\delta} e^{s_1 + s_2} d^+(\sigma_1, \sigma_2).$$
    The exponential contraction of $d^-$ follows from the exponential expansion of $d^+$ shown above and \eqref{eqn.minusisplusandinv}.
    \end{proof}

    We fix a basepoint $x \in X_{GM}$. It is a standard fact about Gromov hyperbolic spaces that for $\varepsilon > 0$ small enough, there exists $0 <c_\e < 1$ and a metric $d_{\varepsilon}$ on $\partial X_{GM}$ such that 
    \be \label{eqn.visualmetric}
    c_\e e^{-2\varepsilon(\xi | \eta)_x} \le d_{\varepsilon}(\xi, \eta) \le e^{- 2 \varepsilon (\xi | \eta)_x}
    \ee
for all $\xi, \eta \in \partial X_{GM}$, with the convention that $e^{-\infty} = 0$ \cite[Proposition 3.21]{Bridson1999metric}. We fix one such $\varepsilon > 0$ and a metric $d_{\varepsilon}$ as above.

\begin{lemma} \label{lem.compactvisual0}
    For any compact subset $Q \subset \cal G$, there exists a constant $b_Q \ge 1$ such that for any $\sigma_1, \sigma_2 \in Q$, we have $$\frac{1}{b_Q} d^+(\sigma_1, \sigma_2)^{\varepsilon} \le d_{\varepsilon}(\sigma_1^+, \sigma_2^+) \le b_Q d^+(\sigma_1, \sigma_2)^{\varepsilon}.$$
\end{lemma}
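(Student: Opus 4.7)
The plan is to combine the alternative formula \eqref{eqn.anotherform} for $d^+$ with the visual-metric estimate \eqref{eqn.visualmetric}, using compactness of $Q$ to absorb Busemann terms into a multiplicative constant. Fix the basepoint $x \in X_{GM}$ used to define $d_\varepsilon$ and the Gromov products. By \eqref{eqn.anotherform}, for $\sigma_1,\sigma_2 \in \cal G$,
$$ d^+(\sigma_1,\sigma_2) = e^{\beta_{\sigma_1^+}(x,\sigma_1(0)) + \beta_{\sigma_2^+}(x,\sigma_2(0))} \cdot \limsup_{t\to\infty} e^{-2(\sigma_1(t)|\sigma_2(t))_x},$$
so after taking logarithms, proving the lemma reduces to showing that on $Q$ the two Busemann terms are uniformly bounded and that the $\limsup$ is comparable (up to a bounded multiplicative constant) to $e^{-2(\sigma_1^+|\sigma_2^+)_x}$.

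First I would use compactness: since the maps $\sigma \mapsto \sigma(0)$ and $\sigma \mapsto \sigma^+$ are continuous, the set $\{(\sigma(0),\sigma^+): \sigma \in Q\}$ has compact closure in $X_{GM} \times \partial X_{GM}$; continuity of the Busemann cocycle $(p,\xi) \mapsto \beta_\xi(x,p)$ on $X_{GM} \times \partial X_{GM}$ then yields a constant $M=M_Q < \infty$ with $|\beta_{\sigma^+}(x,\sigma(0))| \le M$ for all $\sigma \in Q$. Next, applying \eqref{eqn.gromovineq} to the defining sequences $p_i = \sigma_1(i),\, q_j = \sigma_2(j)$ (which converge to $\sigma_1^+,\sigma_2^+$) gives
$$ e^{-2(\sigma_1^+|\sigma_2^+)_x} \le \limsup_{t\to\infty} e^{-2(\sigma_1(t)|\sigma_2(t))_x} \le e^{\delta} e^{-2(\sigma_1^+|\sigma_2^+)_x}.$$
Inserting both estimates into \eqref{eqn.anotherform} gives
$$ e^{-2M}\, e^{-2(\sigma_1^+|\sigma_2^+)_x} \;\le\; d^+(\sigma_1,\sigma_2) \;\le\; e^{2M+\delta}\, e^{-2(\sigma_1^+|\sigma_2^+)_x}.$$

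Raising to the $\varepsilon$-th power and comparing with \eqref{eqn.visualmetric}, which says $c_\varepsilon e^{-2\varepsilon(\sigma_1^+|\sigma_2^+)_x} \le d_\varepsilon(\sigma_1^+,\sigma_2^+) \le e^{-2\varepsilon(\sigma_1^+|\sigma_2^+)_x}$, eliminates the Gromov product and yields the stated two-sided bound with $b_Q := c_\varepsilon^{-1} e^{\varepsilon(2M+\delta)}$. There is no substantive obstacle here; the only care required is checking the $\limsup$-vs-$\liminf$ direction in \eqref{eqn.gromovineq} and verifying continuity of the Busemann cocycle on the relevant compact set, both of which are standard for proper geodesic Gromov hyperbolic spaces.
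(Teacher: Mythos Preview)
Your proof is correct and follows essentially the same route as the paper: both combine \eqref{eqn.anotherform} with \eqref{eqn.gromovineq} to get the two-sided bound $e^{-2(\sigma_1^+|\sigma_2^+)_x} \le d^+(\sigma_1,\sigma_2)e^{-\beta_{\sigma_1^+}(x,\sigma_1(0))-\beta_{\sigma_2^+}(x,\sigma_2(0))} \le e^{\delta} e^{-2(\sigma_1^+|\sigma_2^+)_x}$ (this is exactly \eqref{eqn.buseandgrom}), then invoke \eqref{eqn.visualmetric} and bound the Busemann terms on $Q$. The only difference is that the paper bounds the Busemann terms via the elementary inequality $|\beta_{\sigma^+}(x,\sigma(0))| \le d_{GM}(x,\sigma(0)) \le \sup_{\sigma\in Q} d_{GM}(x,\sigma(0))$, which avoids the (slightly delicate) continuity claim for Busemann cocycles on $\partial X_{GM}$ that you appeal to.
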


\begin{proof}
    First note that for any $\sigma \in \cal G$, $$|\beta_{\sigma^+}(x, \sigma(0))| \le d_{GM}(x, \sigma(0)).$$
    Given a compact subset $Q \subset \cal G$, we set $$b' := \sup_{\sigma \in Q} d_{GM}(x, \sigma(0))<\infty .$$ Then it follows from \eqref{eqn.visualmetric} and \eqref{eqn.buseandgrom} that
    $$\begin{aligned}
        d_{\varepsilon}(\sigma_1^+, \sigma_2^+) & \le e^{-\varepsilon \left( \beta_{\sigma_1^+}(x, \sigma_1(0)) + \beta_{\sigma_2^+}(x, \sigma_2(0)) \right) } d^{+}(\sigma_1, \sigma_2)^{\varepsilon} \\
        & \le e^{2 \varepsilon b'} d^+(\sigma_1, \sigma_2)^{\varepsilon}.
    \end{aligned}$$
    Similarly, we also have $$d_{\varepsilon}(\sigma_1^+, \sigma_2^+) \ge c_\varepsilon e^{-\varepsilon(\delta + 2 b')} d^+(\sigma_1, \sigma_2)^{\varepsilon}$$
    where $0 < c_\varepsilon < 1$ is given in \eqref{eqn.visualmetric}.
    Setting $b_Q := e^{\varepsilon(\delta + 2 b')} / c_\varepsilon$ completes the proof.
\end{proof}

\subsection*{Reparameterization revisited}
Recall the reparameterization $\Psi : \Ga \ba \cal G \to \Omega_{\psi}$ in Theorem \ref{thm.repar}, which is induced from the $\Ga$-equivariant map $\tilde \Psi : \cal G \to \tilde \Omega_{\psi}$. Since $\tilde{\Psi}$ is proper and surjective, for $w_1, w_2  \in \tilde{\Omega}_{\psi}$, we define 
\be \label{eqn.defofnicemetrics}
\begin{aligned}
d^+(w_1, w_2) & := \sup_{\sigma_1 \in \tilde{\Psi}^{-1}(w_1), \ \sigma_2 \in \tilde{\Psi}^{-1}(w_2)} d^+(\sigma_1, \sigma_2); \\
d^-(w_1, w_2) & := \sup_{\sigma_1 \in \tilde{\Psi}^{-1}(w_1), \ \sigma_2 \in \tilde{\Psi}^{-1}(w_2)} d^-(\sigma_1, \sigma_2).
\end{aligned}
\ee
Since $\tilde{\Psi}$ is $\Ga$-equivariant, if $\sigma_1 \in \tilde{\Psi}^{-1}(w_1)$ and $\sigma_2 \in \tilde{\Psi}^{-1}(w_2)$, then $\ga \sigma_1 \in \tilde{\Psi}^{-1}(\ga w_1)$ and $\ga \sigma_2 \in \tilde{\Psi}^{-1}(\ga w_2)$ for all $\ga \in \Ga$. Since $d^{\pm}(\ga \sigma_1, \ga \sigma_2) = d^{\pm}(\sigma_1, \sigma_2)$ as well, we have 
\be \label{eqn.Gainvofmetric}
d^{\pm}(\ga w_1, \ga w_2) = d^{\pm}(w_1, w_2) \quad \text{for all } \ga \in \Ga.
\ee
We also have the following expansion and contraction of $d^+$ and $d^-$ via the flow $\{\phi_t\}$ respectively: 
\begin{lemma} \label{lem.expcontmain}
There exist $\alpha, \alpha' > 0$ and $b \ge 1$ such that for any $w_1, w_2 \in \tilde{\Omega}_{\psi}$ and $t \ge 0$, we have
\be \label{eqn.expconc1}
    \begin{aligned}
    \frac{1}{b} e^{\alpha t} d^+(w_1, w_2) & \le d^+(\phi_t w_1, \phi_t w_2) \le b e^{\alpha' t} d^+(w_1, w_2);\\ 
    \frac{1}{b} e^{-\alpha' t} d^-(w_1, w_2) &\le d^-(\phi_t w_1, \phi_t w_2) \le b e^{-\alpha t} d^-(w_1, w_2).
    \end{aligned}
    \ee
\end{lemma}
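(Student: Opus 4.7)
The plan is to push the exponential expansion/contraction of $d^{\pm}$ on $\cal G$ under the geodesic flow (Lemma \ref{lem.expcont1}) through the coarse reparameterization $\tilde\Psi : \cal G \to \tilde\Omega_\psi$ of Theorem \ref{thm.repar}. The key observation is that flowing by $\phi_t$ in $\tilde\Omega_\psi$ corresponds, on each preimage geodesic, to flowing by some $\varphi_s$ with $s$ comparable to $t$. Precisely, by \eqref{eqn.reparspeed} and Theorem \ref{thm.repar}(3), the cocycle $\tilde{\mathsf t}(\sigma, \cdot) : \R \to \R$ is continuous and tends to $\pm\infty$ as $s \to \pm\infty$; hence it is surjective, and for every $t \ge 0$ there exists $s \ge 0$ with $\tilde{\mathsf t}(\sigma, s) = t$ satisfying
\[
(t - B)/a' \;\le\; s \;\le\; (t + B)/a.
\]
By Theorem \ref{thm.repar}(1), this is equivalent to $\varphi_s \sigma \in \tilde\Psi^{-1}(\phi_t \tilde\Psi(\sigma))$. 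Symmetrically, for any $\sigma' \in \tilde\Psi^{-1}(\phi_t w)$ one finds $s \ge 0$ in the same range with $\varphi_{-s}\sigma' \in \tilde\Psi^{-1}(w)$.

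With this calibration, for the upper bound on $d^+$, given any $\sigma'_i \in \tilde\Psi^{-1}(\phi_t w_i)$, set $\sigma_i := \varphi_{-s_i}\sigma'_i \in \tilde\Psi^{-1}(w_i)$ with $s_i$ as above, so Lemma \ref{lem.expcont1} gives
\[
d^+(\sigma'_1, \sigma'_2) \;=\; d^+(\varphi_{s_1}\sigma_1, \varphi_{s_2}\sigma_2) \;\le\; e^{\delta} e^{s_1 + s_2} d^+(\sigma_1, \sigma_2) \;\le\; e^{\delta + 2B/a} e^{2t/a} d^+(w_1, w_2);
\]
taking the supremum over $(\sigma'_1, \sigma'_2)$ in \eqref{eqn.defofnicemetrics} yields the upper inequality with $\alpha' = 2/a$. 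For the lower bound on $d^+$, reverse roles: for any $\sigma_i \in \tilde\Psi^{-1}(w_i)$ pick $s_i$ with $\varphi_{s_i}\sigma_i \in \tilde\Psi^{-1}(\phi_t w_i)$, so $s_1 + s_2 \ge 2(t - B)/a'$, and Lemma \ref{lem.expcont1} gives $d^+(\varphi_{s_1}\sigma_1,\varphi_{s_2}\sigma_2) \ge e^{-\delta - 2B/a'} e^{2t/a'} d^+(\sigma_1,\sigma_2)$. Taking the supremum over $(\sigma_1, \sigma_2)$ produces the lower inequality with $\alpha = 2/a'$.

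The $d^-$ estimates are obtained by the same argument applied to the contraction half of Lemma \ref{lem.expcont1}, which now produces factors $e^{\pm\delta} e^{-(s_1 + s_2)}$; the roles of upper and lower bounds are thereby swapped, giving $d^-(\phi_t w_1, \phi_t w_2) \le b e^{-\alpha t} d^-(w_1, w_2)$ and $\ge b^{-1} e^{-\alpha' t} d^-(w_1, w_2)$ with the same $\alpha = 2/a'$ and $\alpha' = 2/a$. Alternatively one may deduce them from the $d^+$ estimates via the identity $d^-(\sigma_1, \sigma_2) = d^+(I\sigma_1, I\sigma_2)$ from \eqref{eqn.minusisplusandinv}, noting that $I$ intertwines $\varphi_s$ with $\varphi_{-s}$. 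Setting $b := e^{\delta}\max(e^{2B/a}, e^{2B/a'})$, the four estimates combine to the statement, and $\Ga$-invariance was already recorded in \eqref{eqn.Gainvofmetric}.

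The main subtlety is the careful handling of the supremum in \eqref{eqn.defofnicemetrics}: for the upper bound one must produce, for \emph{every} preimage of $\phi_t w_i$, a matching preimage of $w_i$ obtained by flowing backward by a shift $s_i$ whose size is controlled in terms of $t$; for the lower bound one instead flows forward from every preimage of $w_i$. That both lifts are uniformly $t$-comparable is precisely what Theorem \ref{thm.repar}(1) and (3) deliver. The only minor point to verify is that the bound $(t - B)/a' \le s \le (t + B)/a$ is used correctly for small $t \ge 0$ (where $t - B$ may be negative, in which case the lower constraint on $s$ is vacuous since we can always take $s \ge 0$); any resulting bounded slack is absorbed into the constant $b$.
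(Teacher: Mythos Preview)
Your proof is correct and follows essentially the same approach as the paper: push the exponential bounds of Lemma \ref{lem.expcont1} through the reparameterization $\tilde\Psi$ using the two-sided linear control on $\tilde{\mathsf t}(\sigma,s)$ from Theorem \ref{thm.repar}(3), and handle the supremum in \eqref{eqn.defofnicemetrics} by flowing forward for the lower bound and backward for the upper bound. The only stylistic difference is that you invoke continuity and surjectivity of $s\mapsto \tilde{\mathsf t}(\sigma,s)$ to arrange $s_i\ge 0$ from the outset, whereas the paper allows $s_i\in\R$ and then treats the case where some $s_i<0$ (which forces $0\le t\le B$) separately; your way avoids that case split and is slightly cleaner.
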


\begin{proof}
    Let $w_1, w_2 \in \tilde{\Omega}_{\psi}$ and $t \ge 0$. Let $\sigma_1 \in \tilde{\Psi}^{-1}(w_1)$ and $\sigma_2 \in \tilde{\Psi}^{-1}(w_2)$. By Theorem \ref{thm.repar}, there exist $s_1, s_2 \in \R$ such that $$\varphi_{s_1}\sigma_1 \in \tilde{\Psi}^{-1}(\phi_t w_1) \quad \text{and} \quad \varphi_{s_2}\sigma_2 \in \tilde{\Psi}^{-1}(\phi_t w_2),$$ and moreover, for constants $a, a' , B > 0$ in  Theorem \ref{thm.repar}, we have:
    \begin{enumerate}
        \item if $s_1 \ge 0$, then $$as_1 - B \le t \le a's_1 + B$$
        (resp. if $s_2 \ge 0$, then $as_2 - B \le t \le a's_2 + B$).
        \item if $s_1 \le 0$, then $$a's_1 - B \le t \le as_1 + B$$
        (resp. if $s_2 \le 0$, then $a's_2 - B \le t \le as_2 + B$).
    \end{enumerate}
    By Lemma \ref{lem.expcont1}, we have 
    \be \label{eqn.appexpcont1}
    e^{-\delta} e^{s_1 + s_2} d^+(\sigma_1, \sigma_2) \le d^+(\varphi_{s_1}\sigma_1, \varphi_{s_2}\sigma_2) \le e^{\delta} e^{s_1 + s_2} d^+(\sigma_1, \sigma_2).
    \ee
    
    Suppose first that $s_1, s_2 \ge 0$. Then by (1) above, we deduce from \eqref{eqn.appexpcont1} that
    $$
    d^+(\varphi_{s_1}\sigma_1, \varphi_{s_2}\sigma_2) \le e^{\delta} e^{\frac{2B}{a}} e^{\frac{2t}{a}} d^+(\sigma_1, \sigma_2) \le e^{\delta} e^{\frac{2B}{a}} e^{\frac{2t}{a}} d^+(w_1, w_2).
    $$
    Since $\sigma_1 \in \tilde{\Psi}^{-1}(w_1)$ and $\sigma_2 \in \tilde{\Psi}^{-1}(w_2)$ are arbitrary, $\varphi_{s_1}\sigma_1$ and $\varphi_{s_2}\sigma_2$ are arbitrary elements of $\tilde{\Psi}^{-1}(\phi_t w_1)$ and $\tilde{\Psi}^{-1}(\phi_t w_2)$ respectively. Hence we have
    \be \label{eqn.expineqlarge}
    d^+(\phi_t w_1, \phi_t w_2) \le e^{\delta} e^{\frac{2B}{a}} e^{\frac{2t}{a}} d^+(w_1, w_2).
    \ee
    Similarly, we deduce from (1) and \eqref{eqn.appexpcont1} that
    $$d^+(\phi_t w_1, \phi_t w_2) \ge d^+(\varphi_{s_1} \sigma_1, \varphi_{s_2} \sigma_2) \ge e^{-\delta} e^{-\frac{2B}{a'}} e^{\frac{2t}{a'}} d^+(\sigma_1, \sigma_2).$$
    Since $\sigma_1 \in \tilde{\Psi}^{-1}(w_1)$ and $\sigma_2 \in \tilde{\Psi}^{-1}(w_2)$ are arbitrary, we have 
    \be \label{eqn.expineqlarge2}
    d^+(\phi_t w_1, \phi_t w_2) \ge e^{-\delta} e^{-\frac{2B}{a'}} e^{\frac{2t}{a'}} d^+(w_1, w_2).
    \ee

    Now consider the case when at least one of $s_1$ and $s_2$ is negative. Then by (2), we must have $0 \le t \le B$, and hence we deduce from (1) and (2) that $s_1, s_2 \in [-B/a, 2B/a]$.
    It then follows from \eqref{eqn.appexpcont1} that $$d^+(\varphi_{s_1} \sigma_1, \varphi_{s_2} \sigma_2) \le e^{\delta} e^{\frac{4B}{a}} d^+(\sigma_1, \sigma_2) \le e^{\delta} e^{\frac{4B}{a}} d^+(w_1, w_2) $$
    and that
    $$
    d^+(\phi_t w_1, \phi_t w_2) \ge d^+(\varphi_{s_1}\sigma_1, \varphi_{s_2} \sigma_2) \ge e^{-\delta} e^{-\frac{2B}{a}} d^+(\sigma_1, \sigma_2).
    $$
    Again, since $\sigma_1 \in \tilde{\Psi}^{-1}(w_1)$ and $\sigma_2 \in \tilde{\Psi}^{-1}(w_2)$ are arbitrary, these imply $$
    e^{-\delta} e^{-\frac{2B}{a}} d^+(w_1, w_2) \le d^+(\phi_t w_1, \phi_t w_2) \le e^{\delta} e^{\frac{4B}{a}} d^+(w_1, w_2).
    $$
    Since $0 \le t \le B$, we in particular have
    \be \label{eqn.expineqsmall}
    e^{-\delta} e^{-\frac{2B}{a} - \frac{2B}{a'}} e^{\frac{2t}{a'}} d^+(w_1, w_2) \le d^+(\phi_t w_1, \phi_t w_2) \le  e^{\delta} e^{\frac{4B}{a}} e^{\frac{2t}{a}} d^+(w_1, w_2).
    \ee
    Combining \eqref{eqn.expineqlarge}, \eqref{eqn.expineqlarge2}, and \eqref{eqn.expineqsmall}, the inequalities for $d^+$ in \eqref{eqn.expconc1} follows. The inequalities for $d^-$ in \eqref{eqn.expconc1}  can be shown by a similar argument.
\end{proof}

For $w_1, w_2 \in \tilde{\Omega}_{\psi}$, we also define 
\be \label{eqn.pushvisual}
d_{\varepsilon}^+(w_1, w_2) := d_{\varepsilon}(\sigma_1^+, \sigma_2^+)
\ee
where $\sigma_1 \in \tilde{\Psi}^{-1}(w_1)$ and $\sigma_2 \in \tilde{\Psi}^{-1}(w_2)$. Since every elements of $\tilde{\Psi}^{-1}(w)$ has the common forward endpoint for each $w \in \tilde \Omega_{\psi}$, this is well-defined.

\begin{lemma} \label{lem.compactvisual}
    For any compact subset $Q \subset \tilde{\Omega}_{\psi}$, there exists a constant $c_Q \ge 1$ such that for any $w_1, w_2 \in Q$, we have $$\frac{1}{c_Q} d^+(w_1, w_2)^{\varepsilon} \le d_{\varepsilon}^+(w_1, w_2) \le c_Q d^+(w_1, w_2)^{\varepsilon}.$$
\end{lemma}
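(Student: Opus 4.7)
The plan is to transport Lemma \ref{lem.compactvisual0} from the space of bi-infinite geodesics $\cal G$ to $\tilde\Omega_\psi$ via the reparameterization $\tilde \Psi$. Let $Q\subset \tilde\Omega_\psi$ be compact. First I would invoke the properness of $\tilde\Psi : \cal G \to \tilde\Omega_\psi$ (established in the proof of Theorem \ref{thm.repar}) to conclude that the preimage $Q' := \tilde\Psi^{-1}(Q) \subset \cal G$ is compact. Lemma \ref{lem.compactvisual0} then supplies a constant $b_{Q'}\ge 1$ such that
$$\tfrac{1}{b_{Q'}} d^+(\sigma_1, \sigma_2)^\varepsilon \le d_\varepsilon(\sigma_1^+, \sigma_2^+) \le b_{Q'}\, d^+(\sigma_1, \sigma_2)^\varepsilon$$
whenever $\sigma_1, \sigma_2 \in Q'$.

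Now fix $w_1, w_2 \in Q$ and take arbitrary $\sigma_1 \in \tilde\Psi^{-1}(w_1)$ and $\sigma_2 \in \tilde\Psi^{-1}(w_2)$, both of which lie in $Q'$. Two observations are central: any two elements of $\tilde\Psi^{-1}(w_i)$ share the same forward endpoint, so by \eqref{eqn.pushvisual} we have $d_\varepsilon(\sigma_1^+, \sigma_2^+) = d_\varepsilon^+(w_1, w_2)$ independently of the chosen preimages; and by \eqref{eqn.defofnicemetrics}, $d^+(w_1, w_2)$ is the supremum of $d^+(\sigma_1, \sigma_2)$ over all such pairs, and the increasing map $x\mapsto x^\varepsilon$ commutes with this supremum.

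Taking the supremum of the first inequality over $(\sigma_1, \sigma_2) \in \tilde\Psi^{-1}(w_1)\times \tilde\Psi^{-1}(w_2)$ yields
$$\tfrac{1}{b_{Q'}} \, d^+(w_1, w_2)^\varepsilon \le d_\varepsilon^+(w_1, w_2),$$
while the second inequality, combined with the pointwise bound $d^+(\sigma_1, \sigma_2) \le d^+(w_1, w_2)$, yields
$$d_\varepsilon^+(w_1, w_2) \le b_{Q'}\, d^+(w_1, w_2)^\varepsilon.$$
Setting $c_Q := b_{Q'}$ then concludes the argument. I do not expect a substantive obstacle here: the geometric content was already packaged into Lemma \ref{lem.compactvisual0} (which rests on the Gromov-hyperbolic visual metric estimate \eqref{eqn.visualmetric} and the Busemann-function formula \eqref{eqn.anotherform}) and into the properness of $\tilde\Psi$. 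The only subtlety to double-check is that $d_\varepsilon^+$ is genuinely well-defined on $\tilde\Omega_\psi \times \tilde\Omega_\psi$, which follows because $\tilde\Psi$ respects forward endpoints by construction (every $\sigma \in \tilde\Psi^{-1}(w)$ has $\sigma^+$ equal to the first coordinate of $w$).
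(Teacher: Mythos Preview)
Your proposal is correct and follows essentially the same approach as the paper's proof: use properness of $\tilde\Psi$ to pull back $Q$ to a compact $Q'\subset\cal G$, apply Lemma \ref{lem.compactvisual0} there, and then pass to $d^+(w_1,w_2)$ and $d_\varepsilon^+(w_1,w_2)$ via the definitions \eqref{eqn.defofnicemetrics} and \eqref{eqn.pushvisual}. The paper's proof is terser but the logic is identical.
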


\begin{proof}
    Let $Q \subset \tilde{\Omega}_{\psi}$ be a compact subset. Since $\tilde{\Psi}$ is proper, it follows from Lemma \ref{lem.compactvisual0} that there exists a uniform constant $c_Q \ge 1$ such that     
    if $w_1, w_2 \in Q$ and $\sigma_1 \in \tilde{\Psi}^{-1}(w_1)$ and $\sigma_2 \in \tilde{\Psi}^{-1}(w_2)$, then $$\frac{1}{c_Q} d^+(\sigma_1, \sigma_2)^{\varepsilon} \le d_{\varepsilon}^+(w_1, w_2) \le c_Q d^+(\sigma_1, \sigma_2)^{\varepsilon} \le c_Q d^+(w_1, w_2)^{\varepsilon}.$$
    Since $\sigma_1 \in \tilde{\Psi}^{-1}(w_1)$ and $\sigma_2 \in \tilde{\Psi}^{-1}(w_2)$ are arbitrary, the claim follows.
\end{proof}

\subsection*{Proof of Theorem \ref{thm.nicemetrics}}
Let $d^{\pm} : \tilde \Omega_{\psi} \times \tilde \Omega_{\psi} \to \R$ be functions defined in \eqref{eqn.defofnicemetrics}. From the definition, $d^{\pm}$ are non-negative and symmetric. Moreover, they are $\Ga$-invariant by \eqref{eqn.Gainvofmetric}. 

Let $z \in \tilde \Omega_{\psi}$. We show that the restriction on $d^{+}$ defines a semi-metric on $\tilde W^{+}(z)$; the corresponding statement for $d^{-}$ can be shown by the same argument. It suffices to show that for $w_1, w_2 \in \tilde W^{+}(z)$, $d^{+}(w_1, w_2) = 0$ if and only if $w_1 = w_2$. Suppose first that $w_1 = w_2$. Then for any $\sigma_1, \sigma_2 \in \tilde \Psi^{-1}(w_1) =\tilde \Psi^{-1}(w_2) $, we have $\sigma_1^+ = \sigma_2^+$. This implies $(\sigma_1 | \sigma_2)_x = \infty$. Hence, by \eqref{eqn.buseandgrom}, we have $d^+(\sigma_1, \sigma_2) = 0$. Since $\sigma_1, \sigma_2 \in \tilde \Psi^{-1}(w_1) =\tilde \Psi^{-1}(w_2) $ are arbitrary, we have $d^+(w_1, w_2) = 0$. Conversely, suppose that $d^+(w_1, w_2) = 0$. Let $\sigma_1 \in \tilde \Psi^{-1}(w_1)$ and $\sigma_2 \in \tilde \Psi^{-1}(w_2)$. We then have $d^+(\sigma_1, \sigma_2) = 0$, and hence $(\sigma_1^+ | \sigma_2^+)_x = \infty$ by \eqref{eqn.buseandgrom}, from which we deduce $\sigma_1^+ = \sigma_2^+$. Since $\tilde\Psi(\sigma_1) = w_1$ and $\tilde \Psi(\sigma_2) = w_2$, it follows from $w_1, w_2 \in \tilde W^+(z)$ and Lemma \ref{lem.transbyhoro} that $w_1 = w_2$, showing the claim.

The inequalities in (1) and (2) follow from Lemma \ref{lem.expcontmain}, finishing the proofs of (1) and (2).

We now show (3). For small enough $\varepsilon > 0$, we consider the function $d_{\varepsilon}^+ : \tilde \Omega_{\psi} \times \tilde \Omega_{\psi} \to \R$ defined in \eqref{eqn.pushvisual}, that is, for $w_1, w_2 \in \tilde \Omega_{\psi}$, $$d_{\varepsilon}^+(w_1, w_2) = d_{\varepsilon}(\sigma_1^+, \sigma_2^+)$$ where $\sigma_1 \in \tilde \Psi^{-1}(w_1)$ and $\sigma_2 \in \tilde \Psi^{-1}(w_2)$, and $d_{\varepsilon}$ is the visual metric on $\partial X_{GM}$ given in \eqref{eqn.visualmetric}. Since $d_{\varepsilon}$ is a metric, $d_{\varepsilon}^+$ is symmetric and satisfies the triangle inequality. Let $z \in \tilde \Omega_{\psi}$ and $w_1, w_2 \in \tilde W^+(z)$. As discussed above, for $\sigma_1 \in \tilde \Psi^{-1}(w_1)$ and $\sigma_2 \in \tilde \Psi^{-1}(w_2)$, we have $w_1 = w_2 \Leftrightarrow \sigma_1^+ = \sigma_2^+$ since $w_1, w_2 \in \tilde W^+(z)$. Hence $d_{\varepsilon}^+(w_1, w_2) = 0$ if and only if $w_1 = w_2$, and therefore the restriction of $d_{\varepsilon}^+$ defines a metric on $\tilde W^+(z)$.
The inequality stated in (3) is proved in Lemma \ref{lem.compactvisual}. This completes the proof.
\qed

\section{Finiteness of Bowen-Margulis-Sullivan measures}
Let $\Ga<G$ be a $\theta$-Anosov subgroup relative to $\cal P$ and $X_{GM}$ the associated Groves-Manning cusp space. Let $\psi \in \fa_{\theta}^*$ be a $(\Ga, \theta)$-proper linear form tangent to the $\theta$-growth indicator $\psi_{\Ga}^{\theta}$. 
 By \cite{CZZ_relative}, there exists a unique $(\Ga, \psi)$-Patterson-Sullivan measure $\nu_{\psi}$ on $\La_{\theta}$ and a unique $(\Ga, \psi \circ \i)$-Patterson-Sullivan measure $\nu_{\psi \circ \i}$ on $\La_{\i(\theta)}$. Let $m_{\psi}$ be the Bowen-Margulis-Sullivan measure on $\Omega_{\psi}$ associated with the pair $(\nu, \nu_{\i})$ defined in \eqref{mp}.

The relatively Anosov subgroups are regarded as the higher-rank generalization of geometrically finite subgroups. Indeed, same as geometrically finite subgroups, relatively Anosov subgroups have finite Bowen-Margulis-Sullivan measures:

\begin{theorem} \label{thm.finitebms} 
    We have $$|m_\psi| :=m_{\psi}(\Omega_{\psi}) < \infty.$$
\end{theorem}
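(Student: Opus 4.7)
My plan is to combine the coarse reparameterization $\Psi\colon \Gamma\backslash\mathcal{G}\to\Omega_\psi$ from Theorem~\ref{thm.reparflow} with the thick-thin decomposition $\mathcal{G} = \mathcal{G}_{\text{thick}}\sqcup\bigcup_{P\in\mathcal{P}^\Gamma}\mathcal{G}_P$ to reduce the finiteness to bounding the $m_\psi$-mass of one compact thick piece and finitely many cusp pieces of $\Omega_\psi$ (one per $\Gamma$-conjugacy class $P\in\mathcal{P}$). For the thick piece, the $\Gamma$-action on $X_{GM}\setminus\bigcup_{P}H_P$ is cocompact by the construction of the Groves-Manning cusp space, so $\Gamma\backslash\mathcal{G}_{\text{thick}}$ is compact; continuity of $\Psi$ sends it to a compact subset of $\Omega_\psi$, which has finite $m_\psi$-mass because $m_\psi$ is Radon.

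For each cusp piece $\Psi(\Gamma\backslash\mathcal{G}_P)$, pairwise disjointness of the horoballs $\{H_{P'}\}_{P'\in\mathcal{P}^\Gamma}$ and $P=\mathrm{Stab}_\Gamma(H_P)$ let me identify $\Gamma\backslash(\Gamma\cdot\mathcal{G}_P)$ with $P\backslash\mathcal{G}_P$. Using bounded parabolicity of $\xi_P\in\partial(\Gamma,\mathcal{P})\cong\Lambda_\theta$, I would take a compact fundamental domain $F_P\subset\Lambda_\theta\setminus\{\xi_P\}$ for the $P$-action on $\Lambda_\theta\setminus\{\xi_P\}$ and parameterize $\mathcal{G}_P/P$ by endpoint pairs $(\sigma^+,\sigma^-)\in(P\cdot F_P)\times F_P$ (mod $P$) together with the finite entry-exit window $[T_\sigma^-,T_\sigma^+]$ inside the horoball; the measure-zero family with $\sigma^\pm=\xi_P$ is dropped since the Patterson-Sullivan measure is atomless at parabolic points. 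Theorem~\ref{thm.reparflow}(3) bounds the pushforward $\mathbb{R}$-length by $a'(T_\sigma^+-T_\sigma^-)+2B$, the coarse geometry of the combinatorial horoball gives $T_\sigma^+-T_\sigma^-\lesssim d_{GM}(e,p)+O(1)$ where $p\in P$ is the element aligning exit with entry, and the shadow compatibility Proposition~\ref{prop.shadowcompare} together with the Cartan-Busemann estimate Lemma~\ref{lem.buseandcartan} yields $e^{\psi(\langle p\xi,\eta\rangle)}\lesssim e^{-\psi(\mu_\theta(p))}$ uniformly on the relevant shadows. Summing produces
\[
m_\psi\bigl(\Psi(\Gamma\backslash\mathcal{G}_P)\bigr)\;\lesssim\;\nu(F_P)\,\nu_{\mathrm{i}}(F_P)\sum_{p\in P}\bigl(d_{GM}(e,p)+1\bigr)\,e^{-\psi(\mu_\theta(p))},
\]
and this converges because the peripheral critical exponent satisfies $\delta_\psi(P)<1$ for relatively Anosov groups by Canary-Zhang-Zimmer \cite{CZZ_relative} (the $d_{GM}$-factor being absorbed via Theorem~\ref{thm.czzcartan}).

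The main obstacle will be turning the heuristic $P$-unfolding over $\mathcal{G}_P$ into a rigorous upper bound on the integral of $e^{\psi(\langle\cdot,\cdot\rangle)}\,d\nu\,d\nu_{\mathrm{i}}\,ds$ over a fundamental domain, while simultaneously controlling the Patterson-Sullivan masses of shadows of $P$-translates and the Jacobian of $\Psi$ along the cuspidal part of the flow. This is precisely where Proposition~\ref{prop.shadowcompare}, Theorem~\ref{thm.reparflow}(3), and the peripheral critical exponent bound of \cite{CZZ_relative} must be combined, and it is the analogue of Sullivan's classical cusp estimate in the higher rank setting.
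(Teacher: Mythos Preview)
Your overall strategy is exactly the one the paper uses: thick-thin decomposition via $\Psi$, compactness of the thick part, and for each $P\in\cal P$ an unfolding over $P$ using a compact fundamental domain $F_P\subset\partial X_{GM}-\{\xi_P\}$, followed by the peripheral critical exponent gap $\delta_\psi(P)<1$. The paper even bounds the $\R$-length of the cuspidal window by $\psi(\mu_\theta(p))+O(1)$ via the explicit section $\sigma\mapsto\log v_\sigma$ rather than Theorem~\ref{thm.reparflow}(3), but this is a cosmetic difference since $\psi(\mu_\theta(p))\asymp d_{GM}(e,p)$ by Theorem~\ref{thm.czzcartan}.

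There is, however, one concrete error in your cusp estimate. You claim that Proposition~\ref{prop.shadowcompare} together with Lemma~\ref{lem.buseandcartan} gives $e^{\psi(\langle p\xi,\eta\rangle)}\lesssim e^{-\psi(\mu_\theta(p))}$. This is false: for $(\xi,\eta)\in F_P\times F_P$ the geodesic from $\eta$ to $p\xi$ in $X_{GM}$ enters $\partial H_P$ within a bounded distance of a fixed basepoint, so Lemma~\ref{lem.gromovproductbdd} (which is what the paper uses here) forces $\psi(\langle p\xi,\eta\rangle)$ to be \emph{uniformly bounded}, not decaying in $p$. Consequently your displayed bound with $\nu(F_P)\,\nu_{\i}(F_P)$ pulled outside the sum cannot be right: the exponential decay $e^{-\psi(\mu_\theta(p))}$ has to come from the Patterson-Sullivan mass $\nu(pF_P)$, not from the Gromov-product density. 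The correct assembly is
\[
\int_{pF_P\times F_P} e^{\psi(\langle\xi,\eta\rangle)}\,d\nu(\xi)\,d\nu_{\i}(\eta)
\;\lesssim\; \nu(pF_P)\,\nu_{\i}(F_P)
\;\lesssim\; e^{-\psi(\mu_\theta(p))},
\]
where the first inequality uses Lemma~\ref{lem.gromovproductbdd} and the second uses Proposition~\ref{prop.shadowcompare} to place $pF_P$ inside a $\theta$-shadow $O_r^\theta(o,po)$, followed by the shadow lemma (Lemma~\ref{lem.shadow}). You allude to ``controlling the Patterson-Sullivan masses of shadows of $P$-translates'' in your last paragraph, so you seem to have the right tool in mind; you just need to route the decay through $\nu(pF_P)$ rather than through the density. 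With that correction your argument coincides with the paper's.
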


We prove this finiteness of the Bowen-Margulis-Sullivan measure as a consequence of our reparameterization theorem (Theorem \ref{thm.repar}).

\subsection*{Thick-thin decomposition of $\Omega_{\psi}$} Let $\Psi : \Ga \ba \cal G \to \Omega_{\psi}$ be the reparameterization given in Theorem \ref{thm.repar}. Via $\Psi$, the decomposition $\cal G = \cal G_{thick} \cup \cal G_{thin}$ gives the thick-thin decomposition $$\Omega_{\psi} = \Psi(\Ga \ba \cal G_{thick}) \cup \Psi(\Ga \ba \cal G_{thin})$$ into the compact thick part $\Psi(\Ga \ba \cal G_{thick})$ and the thin part $\Psi(\Ga \ba \cal G_{thin})$.

The followings are extra ingredients in the proof:
    \begin{lemma}[Shadow lemma] \cite[Lemma 7.2]{KOW_indicators} \label{lem.shadow}  For all large enough $R > 0$, there exists $c_0 = c_0 (\psi, R) \ge 1$ such that for all $\ga \in \Ga$, 
    $$
   c_0^{-1} e^{-\psi(\mu_{\theta}(\ga))}\le  \nu_\psi (O_R^{\theta}(o, \ga o)) \le c_0  e^{-\psi(\mu_{\theta}(\ga))}.$$
\end{lemma}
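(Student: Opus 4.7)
The strategy is classical: apply the $(\Ga,\psi)$-conformality of $\nu_\psi$ to rewrite $\nu_\psi(O_R^\theta(o,\ga o))$ as an integral of the Busemann cocycle $e^{\psi(\beta_\xi^\theta(e,\ga))}$ over the ``inverted'' shadow $O_R^\theta(\ga^{-1}o,o)$, and then replace the Busemann cocycle by the Cartan projection $\psi(\mu_\theta(\ga))$ via Lemma~\ref{lem.buseandcartan}. The upper bound will then follow from the trivial estimate $\nu_\psi \le 1$, while the lower bound reduces to a uniform positivity statement for the inverted shadow masses $\nu_\psi(O_R^\theta(\ga^{-1}o,o))$, which is the main difficulty.

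Concretely, \eqref{eqn.psmeas} is equivalent to $\nu_\psi(\ga^{-1} A) = \int_A e^{\psi(\beta_\xi^\theta(e,\ga))}\,d\nu_\psi(\xi)$ for all Borel $A \subset \F_\theta$ and $\ga \in \Ga$. Combining this with the $G$-equivariance $\ga^{-1}O_R^\theta(o,\ga o) = O_R^\theta(\ga^{-1}o,o)$ and the Busemann-Cartan comparison $|\psi(\beta_\xi^\theta(e,\ga)) - \psi(\mu_\theta(\ga))| \le \|\psi\|\,\kappa R$ on the shadow, which is immediate from Lemma~\ref{lem.buseandcartan}, gives
$$e^{-\|\psi\|\kappa R}\, e^{\psi(\mu_\theta(\ga))}\, \nu_\psi(O_R^\theta(o,\ga o)) \;\le\; \nu_\psi(O_R^\theta(\ga^{-1}o,o)) \;\le\; e^{\|\psi\|\kappa R}\, e^{\psi(\mu_\theta(\ga))}\, \nu_\psi(O_R^\theta(o,\ga o)).$$
Using $\nu_\psi(O_R^\theta(\ga^{-1}o,o)) \le 1$ in the right inequality yields the upper half of the lemma with $c_0 = e^{\|\psi\|\kappa R}$.

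The hard part is to establish $\inf_{\ga\in\Ga}\nu_\psi(O_R^\theta(\ga^{-1}o,o)) > 0$ for $R$ large. I argue by contradiction: if $\nu_\psi(O_R^\theta(\ga_n^{-1}o,o)) \to 0$ for some sequence $\ga_n \in \Ga$, then by discreteness $\ga_n^{-1}o$ must escape every compact subset of $X$ (otherwise only finitely many shadows arise, each of positive mass). By $\theta$-regularity one extracts a subsequence with $\ga_n \to \xi^+ \in \La_\theta$ and $\ga_n^{-1} \to \xi^- \in \La_{\i(\theta)}$ in the sense of Definition~\ref{fc}. Antipodality of $\La_{\theta\cup\i(\theta)}$ ensures that every $\xi_0 \in \La_\theta - \{\xi^+\}$ is in general position with $\xi^-$, and the proximality of $\ga_n$ on $\F_\theta$ implies $\ga_n\xi_0 \to \xi^+$; Lemma~\ref{lem.eventualshadow}, applied with $g_n = \ga_n$, $\xi_n = \ga_n\xi_0$, and a constant $\eta_n = \eta$ chosen in general position with $\xi^+$ (so that $\ga_n^{-1}\eta \to \xi^-$ and $g_n^{-1}(\xi_n,\eta_n) = (\xi_0,\ga_n^{-1}\eta)$ is precompact in $\F_\theta^{(2)}$), produces $R_0(\xi_0) > 0$ with $\ga_n\xi_0 \in O_{R_0}^\theta(o,\ga_n o)$, equivalently $\xi_0 \in O_{R_0}^\theta(\ga_n^{-1}o,o)$, for all large $n$. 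A compactness argument on $\La_\theta$ away from any fixed neighborhood of $\xi^+$ renders $R_0$ uniform in $\xi_0$. Fatou's lemma then gives, for $R$ large,
$$\liminf_{n\to\infty} \nu_\psi(O_R^\theta(\ga_n^{-1}o,o)) \;\ge\; 1 - \nu_\psi(\{\xi^+\}) \;\ge\; 1 - \sup_{\eta \in \F_\theta}\nu_\psi(\{\eta\}),$$
and the right-hand side is strictly positive since a probability measure with countably many atoms achieves $\sup_\eta \nu_\psi(\{\eta\}) = 1$ only if it is a single Dirac mass, which $\Ga$-quasi-invariance and $\#\La_\theta \ge 3$ exclude. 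This contradicts $\nu_\psi(O_R^\theta(\ga_n^{-1}o,o)) \to 0$ and completes the proof.
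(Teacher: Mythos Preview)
The paper does not prove this lemma; it is quoted verbatim from \cite[Lemma 7.2]{KOW_indicators}. Your argument is the classical Sullivan--Quint approach (conformality plus Lemma~\ref{lem.buseandcartan} for the upper bound, and a convergence--compactness argument for the uniform lower bound on $\nu_\psi(O_R^\theta(\ga^{-1}o,o))$), and it is essentially correct and presumably close to what appears in the cited reference.

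Two small corrections. First, the single point you must exclude from $\La_\theta$ is not $\xi^+$ but rather $\pi_\theta\bigl(\pi_{\i(\theta)}^{-1}(\xi^-)\bigr)$: antipodality says that $\xi_0\in\La_\theta$ fails to be in general position with $\xi^-\in\La_{\i(\theta)}$ exactly when their common lift to $\La_{\theta\cup\i(\theta)}$ coincide, and there is no reason this lift should project to $\xi^+$. This does not affect the argument, since you only need to throw away one point of $\nu_\psi$-mass at most $\sup_\eta\nu_\psi(\{\eta\})<1$. Second, your ``compactness argument'' making $R_0$ uniform over $\xi_0$ in a compact $K\subset\La_\theta$ deserves one more sentence: if uniformity failed, a diagonal extraction would produce $\xi_0^{(m)}\in K$ and $n_m\to\infty$ with $\xi_0^{(m)}\notin O_m^\theta(\ga_{n_m}^{-1}o,o)$; passing to a convergent subsequence $\xi_0^{(m)}\to\xi_0^*\in K$ and applying Lemma~\ref{lem.eventualshadow} to $g_m=\ga_{n_m}$, $\xi_m=\ga_{n_m}\xi_0^{(m)}$, $\eta_m=\eta$ (the hypotheses hold since the convergence $\ga_n z\to\xi^+$ is uniform on compacta of $\{z:(z,\xi^-)\in\F_\theta^{(2)}\}$) yields a fixed $R'$ with $\xi_0^{(m)}\in O_{R'}^\theta(\ga_{n_m}^{-1}o,o)$ for all $m$, contradicting the choice once $m>R'$.
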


We denote by $0\le \delta_{\psi}(\Ga)\le\infty $ the abscissa of convergence of the Poincar\'e series $s \mapsto \sum_{\ga \in \Ga} e^{-s \psi(\mu_{\theta}(\ga))}$; this is well-defined by the $(\Ga, \theta)$-properness hypothesis on $\psi$. Indeed, the $(\Ga, \theta)$-properness implies $\delta_{\psi}(\Ga)< \infty$ as shown in \cite[Theorem 1.3]{CZZ_relative}. Since $\psi$ is tangent to $\psi_{\Ga}^{\theta}$, we furthermore have $$\delta_{\psi}(\Ga) = 1$$ \cite[Theorem 4.5]{KOW_indicators}. On the other hand, we have the following:

\begin{theorem}[Canary-Zhang-Zimmer, {\cite[Lemma 8.2, Corollary 7.2]{CZZ_relative}}] \label{thm.entdrop}
If $\psi \in \fa_{\theta}^*$ is $(\Ga, \theta)$-proper and tangent to $\psi_{\Ga}^{\theta}$, then
the Patterson-Sullivan measure $\nu_{\psi}$ is atomless and for each $P \in \cal P$, we have  $$\delta_{\psi}(P) < 1.$$
\end{theorem}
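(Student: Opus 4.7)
The plan is to prove the two assertions in turn: first the atomlessness of $\nu_\psi$ via a divergence-type argument, then the non-strict inequality $\delta_\psi(P) \le 1$ via the shadow lemma, and finally the strict inequality via the combinatorial-horoball structure at the parabolic fixed points.

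For atomlessness, I would argue by contradiction. Suppose $\nu_\psi(\{\xi\}) = \epsilon_0 > 0$ for some $\xi \in \La_\theta$. Under the $\Ga$-equivariant identification $\La_\theta \simeq \partial(\Ga, \cal P)$, the point $\xi$ is either conical or a bounded parabolic point $\xi_P$. If $\xi$ is conical, one chooses $\ga_n \in \Ga$ with $\ga_n o$ approaching $\xi$ radially, so that $\xi \in O_R^\theta(o, \ga_n o)$ for some fixed $R > 0$ and all large $n$. Lemma \ref{lem.buseandcartan} then yields $\psi(\beta_\xi^\theta(e, \ga_n)) \ge \psi(\mu_\theta(\ga_n)) - \|\psi\|\kappa R$, so by the conformality \eqref{eqn.psmeas},
\[
\nu_\psi(\{\ga_n^{-1}\xi\}) = e^{\psi(\beta_\xi^\theta(e, \ga_n))} \epsilon_0 \ge e^{\psi(\mu_\theta(\ga_n)) - \|\psi\|\kappa R} \epsilon_0,
\]
which exceeds $1 = \nu_\psi(\La_\theta)$ once $\psi(\mu_\theta(\ga_n))$ is large enough, a contradiction. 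If $\xi = \xi_P$, I would instead invoke the divergence-type property of $\Ga$ at tangent $\psi$, namely $\sum_{\ga \in \Ga} e^{-\psi(\mu_\theta(\ga))} = \infty$, which is part of \cite[Theorem 10.1]{CZZ_relative}. Combined with Lemma \ref{lem.shadow}, this gives $\sum_{\ga} \nu_\psi(O_R^\theta(o, \ga o)) = \infty$, and a Borel--Cantelli argument then forces $\nu_\psi$ to be concentrated on the set of conical limit points; since $\xi_P$ is bounded parabolic, $\nu_\psi(\{\xi_P\}) = 0$.

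For $\delta_\psi(P) \le 1$, I would apply Lemma \ref{lem.shadow} to $P$-orbit elements: $\nu_\psi(O_R^\theta(o, po)) \asymp e^{-\psi(\mu_\theta(p))}$. Since $p\cdot o \to \xi_P$ along the horoball direction, the shadows $O_R^\theta(o, po)$ concentrate in shrinking neighborhoods of $\xi_P$, and using the bounded-parabolic property together with the regularity of the Cartan projection on $P$ one shows that they have uniformly bounded overlap multiplicity. Hence
\[
\sum_{\substack{p \in P\\ \psi(\mu_\theta(p)) \ge T}} e^{-\psi(\mu_\theta(p))} \lesssim \nu_\psi(U_T)
\]
for a nested family $U_T$ of neighborhoods of $\xi_P$ with $\bigcap_T U_T = \{\xi_P\}$. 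By the atomlessness just established, $\nu_\psi(U_T) \to 0$ as $T \to \infty$, so the Poincar\'e series for $P$ converges at $s = 1$ and $\delta_\psi(P) \le 1$.

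The main obstacle is upgrading to the strict inequality $\delta_\psi(P) < 1$. For this I would exploit Theorem \ref{thm.czzcartan}, which gives $\psi(\mu_\theta(p)) \asymp d_{GM}(e, p)$ on $P$, together with the combinatorial-horoball estimate $d_{GM}(e, p) \asymp 2 \log_2 d_P(e, p)$ for $p \in P$; this converts the Poincar\'e series for $P$ into one of the form $\sum_p d_P(e, p)^{-\sigma(s)}$ where $\sigma(s)$ is an affine function of $s$. A quantitative refinement of the shadow-concentration estimate above (using the explicit geometry of combinatorial horoballs together with the conformal relation \eqref{eqn.psmeas}) should produce a decay of $\nu_\psi(U_T)$ strictly faster than $e^{-T}$, yielding convergence of $\sum_p e^{-s\psi(\mu_\theta(p))}$ for some $s < 1$. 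The delicate point is extracting this uniform exponent gap purely from the relative $\theta$-Anosov hypothesis, without a priori control on the growth type of $P$, and this is where I would expect the bulk of the technical work to lie.
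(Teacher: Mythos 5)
This statement is not proved in the paper at all: it is imported verbatim from Canary--Zhang--Zimmer \cite[Corollary 7.2]{CZZ_relative}, so there is no internal proof to compare against and your attempt has to stand on its own. Your atomlessness outline is reasonable in spirit: conical atoms are excluded by the shadow/conformality blow-up (via Lemma \ref{lem.buseandcartan}), and parabolic atoms can be excluded once one knows $\nu_\psi$ is carried by the conical limit set. But note two caveats: the step ``divergence $\Rightarrow$ full measure on conical points'' is not a plain Borel--Cantelli argument, it is the hard half of the Hopf--Tsuji--Sullivan dichotomy (available here as \cite[Theorem 10.2]{KOW_indicators}); and quoting the divergence of $\Gamma$ at a tangent form from \cite{CZZ_relative} is delicate, since in that paper divergence is established together with (indeed using) the peripheral entropy gap you are trying to prove, so this route risks circularity. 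Also, your second paragraph is superfluous: since $P\subset\Gamma$, the $P$-Poincar\'e series is dominated termwise by the $\Gamma$-series, so $\delta_\psi(P)\le\delta_\psi(\Gamma)=1$ with no shadow or bounded-overlap argument needed.

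The genuine gap is the strict inequality $\delta_\psi(P)<1$, which is the entire content of the theorem, and your sketch for it does not work as stated. Theorem \ref{thm.czzcartan} gives only two-sided linear bounds $c\,d_{GM}(e,\ga)-C\le\psi(\mu_\theta(\ga))\le c'\,d_{GM}(e,\ga)+C$ with $c\neq c'$ in general, and the combinatorial-horoball estimate $d_{GM}(e,p)\asymp 2\log_2 d_P(e,p)$ is again only a coarse comparison; composing them determines $\psi(\mu_\theta(p))$ only up to the multiplicative ambiguity $c'/c$, which cannot locate the abscissa of $\sum_{p\in P}e^{-s\psi(\mu_\theta(p))}$ relative to the normalized value $1$ --- for that one needs the actual growth type of $P$ measured against the true asymptotics of $\psi(\mu_\theta(p))$, neither of which you control from the relatively Anosov hypothesis alone by these coarse bounds. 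Similarly, the hoped-for ``decay of $\nu_\psi(U_T)$ strictly faster than $e^{-T}$'' is, via the shadow lemma (Lemma \ref{lem.shadow}), essentially a restatement of the conclusion $\delta_\psi(P)<1$ rather than a mechanism for proving it. As you yourself note, this is where the bulk of the work lies; as written the proposal yields at most the non-strict inequality, so the theorem is not established.
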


\subsection*{Proof of Theorem \ref{thm.finitebms}}
As before, we identify $\La_{\theta}$ and $\La_{\i(\theta)}$ with $\partial X_{GM}$ through the boundary maps.
Recall the norm $\| \cdot \|_{\sigma}$ on $\R_{+}$ for each $\sigma \in \cal G$ and the $\Ga$-equivariant surjective proper map $\tilde{\Psi} : \cal G \to \tilde{\Omega}_{\psi}$, $\sigma \mapsto (\sigma^+, \sigma^-, \log v_{\sigma})$, defined in the proof of Theorem \ref{thm.repar} where $v_{\sigma} \in \R_{+}$ is the unique vector such that $\|v_{\sigma}\|_{\sigma} = 1$. We then have $$\tilde{\Omega}_{\psi} = \tilde{\Psi}(\cal G_{thick}) \cup \tilde{\Psi}(\cal G_{thin}).$$ We will use this specific decomposition to show the finiteness of $m_{\psi}$. Since $\Ga$ acts cocompactly on $\tilde{\Psi}(\cal G_{thick})$, it suffices to show that the measure of thin part $m_{\psi}(\Ga \ba \tilde{\Psi}(\cal G_{thin}))$ is finite. Moreover, since $\cal G_{thin} = \Ga \cdot \bigcup_{P \in \cal P} \cal G_P$ and $\cal P$ is a finite collection, it suffices to show $m_{\psi}(P \ba \tilde{\Psi}(\cal G_{P})) < \infty$ for each $P \in \cal P$.

Let us fix $P \in \cal P$ and denote by $\xi_P \in \partial X_{GM}$ the parabolic limit point fixed by $P$. Since $\xi_P$ is bounded parabolic, we have a compact fundamental domain for the $P$-action on $\partial X_{GM} - \{\xi_P\}$, which we denote by $D$. Since $\nu_{\psi}$ and $\nu_{\psi \circ \i}$ are atomless by Theorem \ref{thm.entdrop}, we have 
\be \label{eqn.bmsestimate}
m_{\psi}(P \ba \tilde{\Psi}(\cal G_P)) = \sum_{\ga \in P} \int_{(\ga D \times D \times \R) \cap \tilde{\Psi}(\cal G_P)} e^{\psi(\langle \xi, \eta \rangle)} d \nu_{\psi}(\xi) d \nu_{\psi \circ \i}(\eta) dt.
\ee

We first estimate the integration with respect to $dt$. We claim that
there exists $C > 0$ such that for any $\ga \in P$ and $\sigma \in \cal G_P$ such that $\sigma^- \in D$ and $\sigma^+ \in \ga D$, we have 
\be \label{eqn.claimfinite}
-C \le \log v_{\sigma} \le C + \psi(\mu_{\theta}(\ga)).
\ee
Let us fix $\ga \in P$ and let $\sigma \in \cal G_P$ be such that $\sigma^+ \in \ga D$ and $\sigma^- \in D$.
Recalling that $H_P$ denotes the open horoball in $X_{GM}$ associated to $P$, this implies that the following two constants are well-defined:
$$\begin{aligned}
    s_0 & := \min \{ s < 0 : \sigma(s) \in \partial H_P \} \\
    s_1 & := \max \{ s > 0 : \sigma(s) \in \partial H_P \}.
\end{aligned}$$
In other words, $s_0$ is the first time that $\sigma$ enters into $\partial H_P$ and $s_1$ is the last time that $\sigma$ exits $\partial H_P$. We then have from \eqref{eqn.cocycle} and Theorem \ref{thm.expdecay} that $$\begin{aligned}
    v_{\varphi_{s_0}\sigma} &= \| v_{\varphi_{s_0}\sigma} \|_{\sigma} v_{\sigma} = \kappa_{-s_0}(\varphi_{s_0}\sigma) v_{\sigma} \\
    &\le b e^{as_0} v_{\sigma} \le b v_{\sigma}; \\
    v_{\varphi_{s_1}\sigma} & = \frac{1}{\|v_{\sigma} \|_{\varphi_{s_1}\sigma}} v_{\sigma} = \frac{1}{\kappa_{s_1}(\sigma)} v_{\sigma} \\
    &\ge b^{-1} e^{as_1} v_{\sigma} \ge b^{-1} v_{\sigma}. 
\end{aligned}$$
Therefore, we have 
\be \label{eqn.finite0}
- \log b + \log v_{\varphi_{s_0}\sigma} \le \log v_{\sigma} \le \log b + \log v_{\varphi_{s_1}\sigma}.
\ee

Now fix $x \in \partial H_P$. Then there exists $R > 0$ with the following property: for any $\sigma_0  \in \cal G_P$ such that $\sigma_0^- \in D$, the entering point of $\sigma_0$ into $\partial H_P$, i.e. $\sigma_0(s) \in \partial H_P$ with minimal $s$, must be contained in the $R$-ball $B_{GM}(x, R)$. Indeed, if not, then there exists a sequence $\sigma_n \in \cal G_P$ such that $\sigma_n^- \in D$ and the entering point of $\sigma_n$ into $\partial H_P$ is not contained in $B_{GM}(x, n)$ for all $n \ge 1$. However, since $\sigma_n \in \cal G_P$ and $\sigma_n^- \in D$ for all $n \ge 1$, two sequences $\sigma_n^+$ and $\sigma_n^-$ converge to two distinct points in $\partial X_{GM}$ as $n \to \infty$, after passing to a subsequence. Hence the images of the bi-infinite geodesics $\sigma_n$ intersect a single ball centered at $x$, which contradicts the choice of the sequence $\sigma_n$.

Hence we have $(\varphi_{s_0}\sigma)(0) = \sigma(s_0) \in B_{GM}(x, R)$. Since $I(\ga^{-1}\sigma) \in \cal G_P$ also satisfies that $I(\ga^{-1}\sigma)^- = \ga^{-1} \sigma^+ \in D$ and its entering point into $\partial H_P$ is given by $I(\ga^{-1}\sigma)(-s_1) = \ga^{-1}\sigma(s_1)$, we also have $\ga^{-1}\sigma(s_1) \in B_{GM}(x, R)$. In other words, we have $(\ga^{-1}\varphi_{s_0}\sigma)(s_1 - s_0) \in B_{GM}(x, R)$. Hence we can apply Lemma \ref{lem.cptpart} to $\varphi_{s_0}\sigma$ by setting $Q = \overline{B_{GM}(x, R)}$ and obtain 
\be \label{eqn.finite1}
\frac{1}{C_Q} e^{-\psi(\mu_{\theta}(\ga))} \le \kappa_{s_1 - s_0} (\varphi_{s_0}\sigma) \le C_Q e^{-\psi(\mu_{\theta}(\ga))}.
\ee
Since $$v_{\varphi_{s_1}\sigma} = \frac{1}{\| v_{\varphi_{s_0}\sigma} \|_{\varphi_{s_1}\sigma}} v_{\varphi_{s_0}\sigma} = \frac{1}{\kappa_{s_1 - s_0}(\varphi_{s_0}\sigma)} v_{\varphi_{s_0}\sigma}$$ by \eqref{eqn.cocycle}, it follows from \eqref{eqn.finite1} that $$\log v_{\varphi_{s_1}\sigma} \le \log C_Q + \log v_{\varphi_{s_0}\sigma} + \psi(\mu_{\theta}(\ga)).$$
Hence we deduce from \eqref{eqn.finite0} that 
$$
- \log b + \log v_{\varphi_{s_0}\sigma} \le \log v_\sigma \le \log (b C_Q) + \log v_{\varphi_{s_0}\sigma} + \psi(\mu_{\theta}(\ga)).
$$
Since $(\varphi_{s_0}\sigma)(0) \in B_{GM}(x, R)$ where $x$ is fixed and $R$ is determined by $x$ and $P$, the constant $\log v_{\varphi_{s_0}\sigma}$ is also uniformly bounded.
Therefore, the claim \eqref{eqn.claimfinite} follows.

\medskip

By the claim \eqref{eqn.claimfinite}, we deduce from \eqref{eqn.bmsestimate} that 
$$\begin{aligned}
m_{\psi}(P & \ba \tilde{\Psi}(\cal G_P)) \\
& \le \sum_{\ga \in P} (2C + \psi(\mu_{\theta}(\ga))) \int_{(\ga D \times D) \cap \{(\sigma^+, \sigma^-) : \sigma \in \cal G_P\}} e^{\psi(\langle \xi, \eta \rangle)} d \nu_{\psi}(\xi) d \nu_{\psi \circ \i}(\eta).
\end{aligned}$$
As we already observed, for $x \in \partial H_P$ and $R > 0$ above, we have that if $\sigma \in \cal G_P$ is such that $\sigma^- \in D$ and $\sigma^+ \in \ga D$, then the image of the bi-infinite geodesic $\sigma$ must intersect $B_{GM}(x, R)$ and $B_{GM}(\ga x, R)$. Hence it follows from Lemma \ref{lem.gromovproductbdd} that 
\be \label{eqn.finite2}
\psi(\langle \sigma^+, \sigma^- \rangle) \quad \text{is uniformly bounded.}
\ee 
Moreover, we also have that $\sigma^+ \in O_{R'}^{GM}(x, \ga x)$ for some $R' > 0$ depending on $x$ and $R$. By Proposition \ref{prop.shadowcompare}, we then have for some uniform $r > 0$ that
\be \label{eqn.finite3}
\sigma^+ \in O_r^{\theta}(o, \ga o).
\ee
By \eqref{eqn.finite2} and \eqref{eqn.finite3}, we now have 
$$m_{\psi}(P \ba \tilde{\Psi}(\cal G_P)) \ll\footnote{ The notation $f \ll g $ means that there is a constant $c>0$ such that
$f \le c g$} \sum_{\ga \in P} (2C + \psi(\mu_{\theta}(\ga))) \nu_{\psi}(O_r^{\theta}(o, \ga o)).$$
Applying the shadow lemma (Lemma \ref{lem.shadow}), we finally obtain $$m_{\psi}(P \ba \tilde{\Psi}(\cal G_P)) \ll \sum_{\ga \in P} (2C + \psi(\mu_{\theta}(\ga))) e^{-\psi(\mu_{\theta}(\ga))}.$$

Let $0 < \varepsilon < 1$.
Since $\psi$ is $(\Ga, \theta)$-proper, $\liminf_{\ga \in P} \psi(\mu_{\theta}(\ga)) = \infty$, and hence $\psi (\mu_{\theta}(\ga)) \ll e^{\varepsilon \psi(\mu_{\theta}(\ga))}$.
Hence $$m_{\psi}(P \ba \tilde{\Psi}(\cal G_P)) \ll \sum_{\ga \in P} (2C + \psi(\mu_{\theta}(\ga))) e^{-\psi(\mu_{\theta}(\ga))} \ll \sum_{\ga \in P} e^{-(1 - \varepsilon)\psi(\mu_{\theta}(\ga))}.$$ By Theorem \ref{thm.entdrop}, for $\varepsilon > 0$ sufficiently small, we have $$m_{\psi}(P \ba \tilde{\Psi}(\cal G_P)) \ll \sum_{\ga \in P} e^{-(1 - \varepsilon)\psi(\mu_{\theta}(\ga))} < \infty.$$
This completes the proof of Theorem \ref{thm.finitebms}.
\qed

\section{Unique measure of maximal entropy}

Let $\Ga$ be a relatively $\theta$-Anosov subgroup and $\psi \in \fa_{\theta}^*$ a $(\Ga, \theta)$-proper form tangent to $\psi_{\Ga}^{\theta}$. Let $m_{\psi}$ be the Bowen-Margulis-Sullivan measure on $\Omega_{\psi}$. This section is devoted to the proof of the following: by Theorem \ref{thm.finitebms}, $m_\psi$ is of finite measure.

\begin{theorem} \label{thm.maxentropy} \label{thm.mme}
    Let $m$ be a probability $\{\phi_t\}$-invariant measure on $\Omega_{\psi}$. Then the metric entropy  $h_m(\{\phi_t\}) $ is at most  $\delta_{\psi} = 1,$ and $h_m(\{\phi_t\}) =1$ if and only if $m = m_{\psi}/| m_{\psi} |$, the normalized probability measure of $m_{\psi}$.
\end{theorem}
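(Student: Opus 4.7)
The plan is to follow a Ledrappier--Young/Bowen style argument, adapted to the non-compact setting of relatively Anosov groups, with the reparameterization (Theorem~\ref{thm.repar0}) and the exponentially expanding metric $d^+$ on unstable leaves (Theorem~\ref{thm.nicemetrics}) as the two essential inputs. First, I would construct a measurable partition $\mathcal{P}$ of $\Omega_\psi$ subordinate to the unstable foliation $W^+$, satisfying: each atom is relatively open in its unstable leaf, $\phi_1\mathcal{P}$ refines $\mathcal{P}$, and $\bigvee_{n \ge 0} \phi_{-n}\mathcal{P}$ separates points on each leaf modulo $m$. This is the content of Proposition~\ref{prop.prop1}; the construction uses the thick--thin decomposition from the reparameterization together with the exponential expansion from Theorem~\ref{thm.nicemetrics} to arrange that forward iterates of atoms shrink at a definite rate within each leaf, while the finiteness of $m_\psi$ (Theorem~\ref{thm.finitebms}) keeps the entropy contribution from the cusps under control.

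Next, I would combine the Rokhlin entropy formula $h_m(\phi_1) = H_m(\mathcal{P} \mid \phi_1\mathcal{P})$ with the Shannon--McMillan--Breiman theorem to express
$$h_m(\phi_1) = -\lim_{n \to \infty} \frac{1}{n}\log m^{W^+}_x\bigl(B^+(x, e^{-n})\bigr) \quad \text{for $m$-a.e. } x,$$
up to bounded distortion coming from the two expansion rates $\alpha \le \alpha'$ of Theorem~\ref{thm.nicemetrics}, where $m^{W^+}_x$ is the conditional measure of $m$ along the unstable leaf through $x$ and $B^+(x, r)$ denotes a $d^+$-ball. On the other hand, the unique $(\Ga, \psi)$-Patterson--Sullivan measure $\nu_\psi$ disintegrates, through the transverse product structure on $\tilde\Omega_\psi$, into leafwise measures $\mu^+_x$ along unstables, and the shadow lemma (Lemma~\ref{lem.shadow}) together with the reparameterization estimates of Theorem~\ref{thm.reparflow0} yields the scaling $\mu^+_x\bigl(B^+(x, e^{-n})\bigr) \asymp e^{-\delta_\psi n} = e^{-n}$. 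Writing $m^{W^+}_x = \rho_x \cdot \mu^+_x$ via a Radon--Nikodym density along the leaf and applying Jensen's inequality to the limit formula above then produces the upper bound $h_m(\phi_1) \le \delta_\psi = 1$.

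Equality forces $\rho_x$ to be essentially constant on $d^+$-balls, so that $m^{W^+}_x$ agrees with $\mu^+_x$ up to normalization; the analogous argument on stable leaves (using $d^-$ and $\nu_{\psi \circ \i}$ via the inversion $I$) then forces $m$ to be locally equivalent to the product $e^{\psi(\langle\xi,\eta\rangle)}\, d\nu_\psi(\xi)\, d\nu_{\psi \circ \i}(\eta)\, ds$ on $\tilde\Omega_\psi$. By the uniqueness of the Patterson--Sullivan measures on $\La_\theta$ and $\La_{\i(\theta)}$ for relatively Anosov groups (Theorem~\ref{thm.entdrop} and \cite{CZZ_relative}), this identifies $m$ with the normalized BMS measure $m_\psi/|m_\psi|$. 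The principal obstacle I expect is the construction of the measurable partition in the first step: because $\Omega_\psi$ is non-compact and the expansion rate in Theorem~\ref{thm.nicemetrics} is only pinched between $\alpha$ and $\alpha'$ rather than constant, the standard Sinai-style construction does not apply directly, and one must carefully exploit the thick--thin decomposition from Theorem~\ref{thm.repar0} to guarantee both subordinacy to $W^+$ and finite conditional entropy.
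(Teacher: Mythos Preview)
Your outline correctly identifies Proposition~\ref{prop.prop1} as the structural input and rightly flags the pinched expansion as the delicate point in its proof; the partition construction and the closing Jensen step are the right framework. The middle step, however, has a genuine gap.

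You propose to express $h_m(\phi_1)$ via the decay of $m^{W^+}_x\bigl(B^+(x,e^{-n})\bigr)$ and compare it with a scaling $\mu^+_x\bigl(B^+(x,e^{-n})\bigr)\asymp e^{-n}$ coming from the shadow lemma. But the metric $d^+$ of Theorem~\ref{thm.nicemetrics} expands under $\phi_t$ at a rate only pinched between $\alpha$ and $\alpha'$, and neither of these equals $1$: they arise from the reparameterization constants $a,a'$ of Theorem~\ref{thm.reparflow0}, which in turn come from the two slopes $c,c'$ in Theorem~\ref{thm.czzcartan}. Consequently the atom $(\phi_1^{-n}\zeta)(x)$ is \emph{not} comparable to a $d^+$-ball of radius $e^{-n}$, and chasing the shadow lemma through Proposition~\ref{prop.shadowcompare} and Theorem~\ref{thm.czzcartan} one finds that $\mu^+_x\bigl(B^+(x,e^{-n})\bigr)$ is only trapped between $e^{-c_1 n}$ and $e^{-c_2 n}$ with $c_1\ne c_2$ and neither equal to $1$. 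The ``bounded distortion'' you allow for does not disappear in the limit, so this route yields at best an inequality of the form $h_m(\phi_1)\le \alpha'/\alpha$, not $\le 1$. In Ledrappier--Young language: different invariant measures $m$ can have different unstable Lyapunov exponents with respect to $d^+$, so a dimension comparison on leaves does not transfer to an entropy comparison.

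The paper bypasses the metric $d^+$ entirely at this stage. The leafwise Patterson--Sullivan measure satisfies the \emph{exact} transformation rule $(\phi_\tau)_*\mu_{\tilde W^+(\tilde x)}=e^{\tau}\mu_{\tilde W^+(\phi_\tau\tilde x)}$, which is an immediate consequence of conformality \eqref{eqn.psmeas}; this is \eqref{eqn.leafrn}. Setting $G(x)=-\log\mu_{W^+(x)}(\zeta(x))$ one obtains the exact identity $-\log m^{pr}_{\zeta(x)}\bigl((\phi_\tau^{-1}\zeta)(x)\bigr)=\tau+G(\phi_\tau x)-G(x)$, and the coboundary integrates to zero against $m$ by a lemma of Otal--Peign\'e, giving \eqref{eqn.keyidentity}. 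Jensen applied to the ratio $F(x)=m^{pr}_{\zeta(x)}((\phi_\tau^{-1}\zeta)(x))/m_{\zeta(x)}((\phi_\tau^{-1}\zeta)(x))$ then yields $h_m(\phi_\tau)\le\tau$ with equality iff the unstable conditionals of $m$ agree with those of $m^{pr}$. The metric $d^+$ is used \emph{only} to build $\zeta$ in Proposition~\ref{prop.prop4}, never in the entropy estimate.

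A minor point on the equality case: once the unstable conditionals of $m$ coincide with those of $m^{pr}$, a Hopf argument (Birkhoff for $m^{pr}$ together with Proposition~\ref{prop.admissiblemetric}) already forces $m=m^{pr}$; your separate stable-leaf computation is unnecessary.
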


We recall some basic notions about entropy; we refer to (\cite{KH_book}, \cite{EL_book}) for details.

\subsection*{Measurable partitions and entropy}
Let $(\cal X, \cal M, m)$ be a probability space, where $\cal M$ is a $\sigma$-algebra and $m$ is a probability measure.
By a partition $\zeta$ of $\cal X$, we mean a collection of disjoint non-empty measurable subsets of $\cal X$ whose union is $\cal X$. 
For a partition $\zeta$ of $\cal X$ and $x \in  \cal X$, we denote by $\zeta(x)$ the element of $\zeta$ containing $x$, called the {\it atom} at $x$. Let $\cal M_\zeta \subset \cal M$ be the sub $\sigma$-algebra generated by the atoms of $\zeta$. 
A partition $\zeta$ of $\cal X$ is called {\it $m$-measurable} if it admits a separation by countably many elements in $\cal M_{\zeta}$. More precisely, $\zeta$ is $m$-measurable if
there exist a $m$-conull subset $\cal Y \subset \cal X$ and a sequence $\{Y_i \in \cal M_{\zeta}:i\in \N \}$ such that for any distinct atoms $z, z' $ of $ \zeta$, there exists $i \in \N$ such that either $z \cap \cal Y \subset Y_i$ and $z' \cap \cal Y \subset \cal X - Y_i$, or $z \cap \cal Y \subset \cal X - Y_i$ and $z' \cap \cal Y \subset Y_i$.

For an $m$-measurable partition $\zeta$ and $m$-a.e. $x \in \cal X$, we denote by $m_{\zeta(x)}$ the {\it conditional measure}  on the atom $\zeta(x)$ so that the following holds \cite[Theorem 5.9]{EL_book}: for any measurable $Y \subset \cal X$, we have
\begin{itemize}
    \item $x \mapsto m_{\zeta(x)}(Y \cap \zeta(x))$ is measurable;
    \item $m(Y) = \int_{\cal X} m_{\zeta(x)}(Y \cap \zeta(x)) \ dm(x)$.
\end{itemize}

For two $m$-measurable partitions $\zeta, \zeta'$, we say that $\zeta$ is {\it finer} than $\zeta'$ and write $\zeta \succ \zeta'$ if for  $m$-a.e. $x \in \cal X$, $\zeta(x) \subset \zeta'(x)$. For a sequence of $m$-measurable partitions $\zeta_i$, we denote by $\bigvee_{i } \zeta_i$ the smallest $m$-measurable partition finer than all $\zeta_i$.

Given an $m$-measurable partition $\zeta$ and an $m$-measurable map $\varphi : \cal X\to \cal X$, the pull-back $\varphi^{-1}\zeta$ is an $m$-measurable partition with atoms $(\varphi^{-1}\zeta)(x) = \varphi^{-1}(\zeta(\varphi(x)))$. We say that $\zeta$ is {\it $\varphi$-decreasing} if $\varphi^{-1}\zeta \succ \zeta$ and  {\it $\varphi$-generating} if $\bigvee_{i \in \N} \varphi^{-i}\zeta$ is $m$-equivalent to the partition consisting of points.

Let $\varphi : \cal X \to \cal X$ be an $m$-measure-preserving transformation. For a countable partition $\zeta$, the {\it entropy of $\zeta$ relative to $m$} is $$H_m(\zeta) := \int_{\cal X} - \log m(\zeta(x)) \ dm(x)$$
with the convention that $\infty \cdot 0 = 0$. The average entropy of $\zeta$ is defined as $$H_m(\varphi, \zeta) := \lim_{n \to \infty} \frac{1}{n} H_m\left( \bigvee_{i = 0}^{n-1} \varphi^{-i}\zeta \right).$$
The {\it metric entropy} of $\varphi$ with respect to $m$ is defined as $$h_m(\varphi) := \sup H_m(\varphi, \zeta)$$
where the supremum is taken over all countable partitions $\zeta$ with $H_m(\zeta) < \infty$.
For a flow $\{\phi_t\}_{t \in \R}$ on $\cal X$, we have $h_m(\phi_t) = |t| h_m(\phi_1)$ for all $t \neq 0$. The metric entropy of the flow $\{\phi_t\}$ with respect to $m$ is defined as $$h_m(\{\phi_t\}) := h_m(\phi_1) .$$

For a $\varphi$-decreasing $m$-measurable partition $\zeta$, we also define
$$h_m(\varphi, \zeta) := \int_{\cal X} - \log m_{\zeta(x)}((\varphi^{-1}\zeta)(x)) \ dm(x) .$$ 

\subsection*{Partition realizing the entropy}
Recall the foliations $\tilde W^{\pm}$ of $\tilde \Omega_{\psi}$ and $W^{\pm}$ of $\Omega_{\psi}$ from \eqref{ww} and \eqref{ww2}. Let $m$ be a probability measure on $\Omega_{\psi}$ and $\tilde m$ the $\Ga$-invariant lift of $m$ to $\tilde \Omega_{\psi}$.
A $\Ga$-invariant partition $\tilde \zeta$ of $\tilde \Omega_{\psi}$ is called {\it $\tilde m$-measurable} if the induced partition $\zeta$ on $\Omega_{\psi}$ is $m$-measurable.
We say that an $\tilde m$-measurable partition $\tilde \zeta$ is {\it subordinated to} $\tilde W^{+}$ if for $\tilde m$-a.e. $\tilde x \in \tilde \Omega_{\psi}$, there exist precompact open neighborhoods $\tilde{\cal U}_1$ and $\tilde{\cal U}_2$ of $\tilde x$ in $\tilde W^+(\tilde x)$
such that $$\tilde{ \cal U}_1\subset \tilde \zeta(\tilde x)\subset \tilde{\cal U}_2$$

\begin{proposition} \label{prop.prop1} \label{prop.prop4}
    Let $\tau > 0$. Let $m$ be a probability measure on $\Omega_{\psi}$ which is invariant and ergodic under $\phi_\tau$ and $\tilde m$ its lift to $\tilde \Omega_{\psi}$. Then there exists a $\Ga$-invariant $\tilde m$-measurable partition $\tilde \zeta$ of $\tilde \Omega_{\psi}$ subordinated to $\tilde W^+$ such that its projection $\zeta$ is an $m$-measurable $\phi_\tau$-decreasing and generating partition of $\Omega_{\psi}$ which satisfies   $$h_m(\phi_\tau) = h_m(\phi_\tau, \zeta) < \infty.$$
\end{proposition}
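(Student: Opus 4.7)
The plan is to adapt the classical Sinai--Ledrappier construction of a measurable partition subordinated to the unstable foliation, using the quantitative expansion of Theorem~\ref{thm.nicemetrics} as the main input. The first step is to fix a lift $\tilde x_0 \in \tilde \Omega_\psi$ of an $m$-density point and use the local product structure $\tilde \Omega_\psi = \La_\theta^{(2)} \times \br \simeq \tilde W^+(\tilde x_0) \times \tilde W^-(\tilde x_0) \times \br_{\mathrm{flow}}$ to define a flow box
\[
\tilde B := \tilde{\cal U}^+ \times \tilde{\cal U}^- \times (-\epsilon, \epsilon),
\]
where $\tilde{\cal U}^\pm \subset \tilde W^\pm(\tilde x_0)$ is a precompact open $d^\pm$-ball of radius $r$. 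A generic choice of $r,\epsilon$ ensures $\tilde m(\partial \tilde B) = 0$, and after further shrinking the properness of the $\Ga$-action makes the translates $\{\ga \tilde B : \ga \in \Ga\}$ pairwise disjoint. Let $\tilde \zeta_0$ be the $\Ga$-invariant partition of $\tilde \Omega_\psi$ whose atoms inside $\ga \tilde B$ are the local $\tilde W^+$-plaques $\ga(\tilde{\cal U}^+ \times \{(u^-, s)\})$, together with the single atom $\tilde \Omega_\psi \setminus \Ga \tilde B$. Finally, set
\[
\tilde \zeta := \bigvee_{n \geq 0} \phi_\tau^n \tilde \zeta_0.
\]
Several properties then follow immediately: $\tilde \zeta$ is $\Ga$-invariant; each atom lies inside the single $\tilde W^+$-plaque $\tilde \zeta_0(\tilde x)$, giving the outer inclusion $\tilde \zeta(\tilde x) \subset \tilde{\cal U}_2$ of the subordination condition; the projection $\zeta$ to $\Omega_\psi$ is $m$-measurable because $\tilde m(\partial \tilde B) = 0$; and $\zeta$ is $\phi_\tau$-decreasing since $\phi_\tau^{-1}\tilde \zeta = \phi_\tau^{-1}\tilde \zeta_0 \vee \tilde \zeta$.

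The main technical obstacle is the inner inclusion $\tilde{\cal U}_1 \subset \tilde \zeta(\tilde x)$ for $\tilde m$-a.e.\ $\tilde x$, since the non-compactness of $\Omega_\psi$ and the presence of a thin part mean one has to control how the iterated flow boxes meet each other. Writing
\[
\tilde \zeta(\tilde x) = \bigcap_{n \geq 0} \phi_\tau^n\bigl(\tilde \zeta_0(\phi_\tau^{-n}\tilde x)\bigr),
\]
I would analyse the two cases for each $n$. If $\phi_\tau^{-n}\tilde x \in \Ga \tilde B$, the lower expansion bound of Theorem~\ref{thm.nicemetrics}(1) gives that $\phi_\tau^n$ maps the $d^+$-ball of radius $r$ around $\phi_\tau^{-n}\tilde x$ to a set containing the $d^+$-ball of radius $\tfrac{r}{b}e^{\alpha n \tau}$ around $\tilde x$ inside the factor. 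If $\phi_\tau^{-n}\tilde x \notin \Ga \tilde B$, the factor is the open set $\tilde \Omega_\psi \setminus \Ga \phi_\tau^n \tilde B$; although the $d^+$-extent of each translate $\ga \phi_\tau^n \tilde B$ grows exponentially, its $d^-$- and flow-thickness stay uniformly bounded, so the union $\{\ga \phi_\tau^n \tilde B\}_{\ga,n}$ meets the $\tilde W^+$-leaf through $\tilde x$ in a locally finite family of plaques. Hence for $\tilde m$-a.e.\ $\tilde x$, every factor contains a common $d^+$-ball $\tilde{\cal U}_1$ of positive (random) radius, which lies in $\tilde \zeta(\tilde x)$.

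For the generating property, one rewrites $\bigvee_{i \geq 0} \phi_\tau^{-i}\zeta = \bigvee_{m \in \Z} \phi_\tau^m \zeta_0$: two distinct points on a common $\tilde W^+$-leaf are separated by $\phi_\tau^m \tilde \zeta_0$ for large $m > 0$ by the $d^+$-expansion of Theorem~\ref{thm.nicemetrics}(1) (once their $d^+$-distance exceeds $2r$), those on a common $\tilde W^-$-leaf by large negative $m$ via Theorem~\ref{thm.nicemetrics}(2), and those on a common flow orbit by choosing $m$ so that one point enters $\tilde B$ while the other is forced outside $(-\epsilon,\epsilon)$ in the flow coordinate. The identity $h_m(\phi_\tau, \zeta) = h_m(\phi_\tau)$ is then Rokhlin's theorem for $\phi_\tau$-decreasing generating measurable partitions (see e.g.\ \cite[\S~2.5]{EL_book}). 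Finally, $h_m(\phi_\tau, \zeta) < \infty$ follows from the conditional-entropy formula combined with the upper expansion bound of Theorem~\ref{thm.nicemetrics}, which yields a uniform positive lower bound on $m_{\zeta(x)}(\phi_\tau^{-1}\zeta(x))$ and hence bounds the integrand uniformly.
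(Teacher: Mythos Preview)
Your overall architecture matches the paper's proof: define a flow box, take the partition $\tilde\zeta_0$ into $\tilde W^+$-plaques inside $\Ga$-translates of the box plus the complement, and set $\tilde\zeta=\bigvee_{n\ge 0}\phi_\tau^n\tilde\zeta_0$. The $\phi_\tau$-decreasing property, measurability, and the generating property are indeed routine once the construction is in place, and the entropy identity follows from the Otal--Peign\'e computation as you indicate.

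The genuine gap is in your argument for the inner inclusion $\tilde{\cal U}_1\subset\tilde\zeta(\tilde x)$. In your Case 1, the plaque $\tilde\zeta_0(\phi_\tau^{-n}\tilde x)$ is a $d^+$-ball of radius $r$ centred at a point of $\ga\tilde B$, \emph{not} at $\phi_\tau^{-n}\tilde x$; if $\phi_\tau^{-n}\tilde x$ happens to lie within $d^+$-distance $\delta_n$ of the boundary of that plaque, then any $\tilde y$ with $d^+(\phi_\tau^{-n}\tilde x,\phi_\tau^{-n}\tilde y)>\delta_n$ may land in a different atom, and nothing prevents $\delta_n$ from being arbitrarily small along the orbit. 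Your Case 2 has the same defect: local finiteness of the plaque family (which itself is not justified) does not preclude a translate $\ga\phi_\tau^n\tilde B$ from cutting the leaf $\tilde W^+(\tilde x)$ arbitrarily close to $\tilde x$. What is missing is precisely a quantitative control comparing the rate at which $\phi_\tau^{-k}\tilde x$ approaches the boundary of the box to the exponential contraction rate of $\phi_\tau^{-k}$ on $\tilde W^+$. The paper supplies this via the Ledrappier--Strelcyn lemma \cite{LS_entropy}: one chooses $r$ so that $\sum_k m(\{x:|\ell(\phi_\tau^{-k}x)-r|<\varepsilon_0^k\})<\infty$ for some $\varepsilon_0>e^{-\varepsilon\alpha\tau}$, and Borel--Cantelli then ensures that for $m$-a.e.\ $x$ and all large $k$ the orbit point stays at $d_\varepsilon^+$-distance at least $\varepsilon_0^k$ from the boundary, which beats the contraction bound from Theorem~\ref{thm.nicemetrics}. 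Note also that the paper measures the unstable extent of the box with the genuine metric $d_\varepsilon^+$ of Theorem~\ref{thm.nicemetrics}(3), not the semi-metric $d^+$; this matters both for the triangle inequality and because the $\varepsilon$-power comparison in part~(3) is what forces the condition $\varepsilon_0>e^{-\varepsilon\alpha\tau}$. Finally, your claim that the upper expansion bound yields a uniform lower bound on $m_{\zeta(x)}(\phi_\tau^{-1}\zeta(x))$ is not correct for an arbitrary invariant $m$; the finiteness of $h_m(\phi_\tau,\zeta)$ requires the argument of \cite[Proposition~4]{OP_variational}.
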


The most delicate part of the proof of this proposition lies in the
 construction of the partition which is subordinated to the unstable foliation $\tilde W^+$. The exponential expansion property of the flow $\{\phi_t\}$ on $\Omega_\psi$ (Theorem \ref{thm.nicemetrics}) was obtained precisely for this purpose. Other parts of Proposition \ref{prop.prop4} can be obtained by similar argument in \cite{OP_variational}.

\subsection*{Proof of Proposition \ref{prop.prop1}}
Let $d^{\pm}$ and $d_{\varepsilon}^+$ be functions on $\tilde \Omega_{\psi} \times \tilde \Omega_{\psi}$ given in Theorem \ref{thm.nicemetrics} for some fixed $\e>0$.
Fix $u \in \tilde \Omega_{\psi}$.
For  $r > 0$, we set
$$\tilde C(u, r)=\left\{ v\in \tilde \Omega_{\psi}:    
\begin{matrix}
    \exists s \in (-r, r), w \in \tilde W^-(\phi_s u) \text{ with } d^-(\phi_s u, w) < r \\
    \text{s.t. }v \in \tilde W^+(w) \text{ and } d_{\varepsilon}^+(w, v) < r
\end{matrix}
\right\}. $$

Fix $\rho>0$ small enough so that the projection $\tilde \Omega_{\psi} \to \Omega_{\psi}$ is injective on $\tilde C (u, 4 \rho)$. 
For $0 < r < 4\rho$, we denote by $C(u, r)$ the image of $\tilde C(u, r)$ under the projection
$\tilde \Omega_{\psi} \to \Omega_{\psi}$.

We define a function $\ell : \Omega_{\psi} \to \R$ as follows:  for each $x \in C(u, \rho)$, let $\tilde x \in \tilde C(u, \rho)$ be the unique lift of $x$.
It follows from the description of $\tilde W^{\pm}$
in Lemma \ref{lem.transbyhoro} that
 there exist unique $s \in (-\rho, \rho)$ and $\tilde y\in \tilde W^-(\phi_s u)$ such that $\tilde x \in \tilde W^+(\tilde y)$, $d^-(\phi_s u, \tilde y) < \rho$ and  $d_{\varepsilon}^+(\tilde y, \tilde x) < \rho$.
We set $$\ell(x) := \max(s, d^-(\phi_s u, \tilde y), d_{\varepsilon}^+(\tilde y, \tilde x)).$$
For  $x \in \Omega_{\psi}- C(u, \rho)$,  we then set $\ell(x) := \rho$. 

For each $0 < r < \rho$, let $\tilde \zeta'_r$ be the partition of $\tilde \Omega_{\psi}$ with atoms 
$\gamma \tilde C(u, r) \cap \tilde W^+(\tilde x)$ for $\tilde x \in \tilde \Omega_{\psi}$, $\ga\in \Ga$ and $\tilde \Omega_{\psi} - \Ga \tilde C(u, r)$.  
  We then define $$\tilde \zeta_r:=\bigvee_{i = 0}^{\infty} \phi_\tau^i \tilde \zeta'_r.$$
  Let 
 $\zeta'_r$ and $\zeta_r$ be the partitions obtained by projecting $\tilde \zeta'_r$ and $\tilde \zeta_r$ to $\Omega_\psi$ respectively.
  Then $\zeta_r = \bigvee_{i = 0}^{\infty} \phi_\tau^i \zeta'_r$ since the $\Ga$-action commutes with the flow $\{\phi_t\}$.
It is clear that $\zeta_r$ is $\phi_\tau$-decreasing. In view of the construction of $\tilde \zeta$ which uses atoms $\ga \tilde C(u, r)\cap W^+(\tilde x)$,
we can verity that $\zeta_r$ is $m$-measurable by a same argument as in \cite[Proposition 1]{OP_variational}.
Denote by $\tilde m$ is the lift of $m$ to $\tilde \Omega_\psi$.
Let $\d$ be the metric on $\Omega_{\psi}$ considered in Proposition \ref{prop.admissiblemetric}. By the ergodicity of $m$, we have that for $m$-a.e. $x \in \Omega_{\psi}$, $\phi_{\tau}^k x \in C(u, r)$ for infinitely many $k \in \N$, and hence $\zeta_r'(\phi_{\tau}^k x)$ is contained in a uniformly bounded set $C(u, r) \cap W^+(\phi_{\tau}^k x)$ with respect to $\d$. Since $(\phi_{\tau}^{-k} \zeta_r)(x) \subset \phi_{\tau}^{-k}(\zeta_r'(\phi_{\tau}^k x))$, it follows from Proposition \ref{prop.admissiblemetric} that $\zeta_r$ is $\phi_{\tau}$-generating. Similarly, for $\tilde m$-a.e. $\tilde x \in \tilde \Omega_{\psi}$, we have $\phi_{\tau}^{-k} \tilde x \in \ga \tilde C(u, r)$ for some $k \in \N$ and $\ga \in \Ga$. Hence we have $\tilde \zeta_r (\tilde x) \subset \phi_{\tau}^k (\tilde \zeta_r'(\phi_{\tau}^{-k} \tilde x)) \subset \phi_{\tau}^k \ga \tilde C(u, r) \cap \tilde W^+(\tilde x)$, and therefore $\tilde \zeta_r(\tilde x)$ is a precompact subset of $\tilde W^+(\tilde x)$.

We now show the most delicate part
of the proof that we can take $r > 0$ so that $\tilde \zeta_r(\tilde x)$ contains an open neighborhood of $\tilde x$ in $W^+(\tilde x)$ for $\tilde m$-a.e. $\tilde x \in \tilde \Omega_{\psi}$. We use
Theorem \ref{thm.nicemetrics} in a crucial way.

Consider the push-forward $\ell_*m$ of the measure $m$ by $\ell$, which is a probability measure on  $[0, \rho] \subset \R$.
For any $\varepsilon_0 \in (0, 1)$, we have
that $$\Leb\left( \left\{ r \in (0, \rho) : \sum_{k = 0}^{\infty} (\ell_*m)([r - \varepsilon_0^k, r + \varepsilon_0^k]) < \infty \right\} \right) = \rho$$
by \cite[Proposition 3.2]{LS_entropy}. Since $m$ is $\phi_\tau$-invariant, this is same to say that $$\Leb\left( \left\{ r \in (0, \rho) : \sum_{k = 0}^{\infty} m( \{x : |\ell(\phi_\tau^{-k}x) - r| <  \varepsilon_0^k \}) < \infty \right\} \right) = \rho.$$
We fix a constant $ e^{-\varepsilon \alpha \tau} <\varepsilon_0 <1$ where $\alpha > 0$ is a constant given in Theorem \ref{thm.nicemetrics}. 
We can therefore choose $0 < r < \rho/2$ so that $m(\partial C(u, r)) = 0$ and  that $$\sum_{k = 0}^{\infty} m( \{x : |\ell(\phi_\tau^{-k}x) - r| <  \varepsilon_0^k \}) < \infty.$$

 Let $\Omega_\psi'$ be the set of all $x\in  \Omega_\psi -
 \bigcup_{k = 0}^{\infty} \phi^k  \partial C(u, r) $ satisfying 
 that for some $N_0=N_0(x) > 0$,  we have \be\label{ll} \ell(\phi_\tau^{-k}x) < r - \varepsilon_0^k \quad \text{or} \quad \ell(\phi_\tau^{-k}x) > r + \varepsilon_0^k\ee 
  for all $k \ge N_0$.
Since $m(\partial C(u, r)) = 0$, it follows from the classical Borel-Cantelli lemma that $m(\Omega_\psi ')=1$.
Let $x \in \Omega_\psi' $  be an arbitrary point and corresponding $N_0=N_0(x)$.  We fix a lift $\tilde x \in \tilde \Omega_{\psi}$ of $x$.

For  $\tilde y \in \tilde \Omega_\psi$, we write $y$ for its projection to $\Omega_\psi$. 
Fix a compact subset $Q \subset \tilde \Omega_{\psi}$ containing $$\bigcup_{v_0 \in \tilde C (u, \rho)} \{v \in \tilde W^+(v_0) : d^+(v, v_0) \le b \}$$
where $b \ge 1$ is the constant given in Theorem \ref{thm.nicemetrics}.

We set
$$r_1 := \min \left(\frac{1}{2},   \frac{1}{b(2 c)^{1/\varepsilon}} \right) >0 $$
where  $c=c_Q \ge 1$ is as given in Theorem \ref{thm.nicemetrics}(3).
Let $$\tilde {\cal U}=  \{ \tilde{y} \in \tilde W^+(\tilde x): d^+(\tilde{x}, \tilde{y}) < r_1\};$$ this is a precompact neighborhood of $\tilde x$ in $\tilde W^+(\tilde x)$. Let $\cal U$ be the image of $\tilde{\cal U}$ in $\Omega_\psi$.
We claim that  for each $k \ge N_0$, either
\be\label{c1}  
\phi_{\tau}^{-k} (\tilde{\cal U}) \subset \ga^{-1} \tilde C(u, r) \text{ for some } \gamma \in \Ga \quad \text{or} \quad \phi_{\tau}^{-k} (\tilde{\cal U}) \cap \Ga \tilde C(u, r) = \emptyset.
\ee 

Fix $k\ge N_0$. Recall that $x$ satisfies either $\ell(\phi_\tau^{-k}x) < r - \varepsilon_0^k$
or $\ell(\phi_\tau^{-k}x) > r + \varepsilon_0^k$.
 Consider the first case.  This implies that there exists $\ga \in \Ga$ such that $\ga \phi_{\tau}^{-k} \tilde x \in \tilde C(u, r - \varepsilon_0^k)$. We then have
$$d^+(\ga \phi_{\tau}^{-k} \tilde x, \ga \phi_{\tau}^{-k} \tilde y) = d^+(\phi_{\tau}^{-k} \tilde x, \phi_{\tau}^{-k} \tilde y) \le b e^{-\alpha \tau k }d^+(\tilde x, \tilde y).$$
by \eqref{eqn.Gainvofmetric} and Theorem \ref{thm.nicemetrics}(1).
In particular, we have $\ga \phi_{\tau}^{-k} \tilde y \in Q$ and hence 
\be \label{eqn.apr081153}
d_{\varepsilon}^+(\ga \phi_{\tau}^{-k} \tilde x, \ga \phi_{\tau}^{-k} \tilde y) \le c d^+(\ga \phi_{\tau}^{-k} \tilde x, \ga \phi_{\tau}^{-k} \tilde y)^{\varepsilon} \le c b^{\varepsilon} e^{-\varepsilon \alpha \tau k} d^+(\tilde x, \tilde y)^{\varepsilon}
\ee
by Theorem \ref{thm.nicemetrics}(3). Let $\tilde{y} \in \tilde{\cal U}$, and hence  $d^+(\tilde{x}, \tilde{y}) < r_1$. Since $ e^{-\varepsilon \alpha \tau} < \varepsilon_0$, we then have $$d_{\varepsilon}^+(\ga \phi_{\tau}^{-k} \tilde x, \ga \phi_{\tau}^{-k} \tilde y) < \varepsilon_0^k$$
by \eqref{eqn.apr081153}, and therefore $\ga \phi_{\tau}^{-k} \tilde y \in \tilde C(u, r)$. Hence $$\phi_{\tau}^{-k} (\tilde{\cal U}) \subset \ga^{-1} \tilde C(u, r),$$
proving \eqref{c1} in this case.

Now consider the case when $\ell(\phi_{\tau}^{-k}x) > r + \varepsilon_0^k$. 
In this case, we claim that 
$\phi_{\tau}^{-k} (\tilde{\cal U})  \cap \Ga \tilde C(u, r) =\emptyset.$ Suppose not. Then
there exists $\ga\in \Ga$ and some
$\tilde{y} \in \tilde W^+(\tilde x)$ such that
$ d^+(\tilde{x}, \tilde{y}) < r_1$
and
$\ga \phi_{\tau}^{-k} \tilde y \in \tilde C(u, r)$. By the same argument as above, $\ga \phi_{\tau}^{-k} \tilde x \in Q$ and hence $$d_{\varepsilon}^+(\ga \phi_{\tau}^{-k} \tilde x, \ga \phi_{\tau}^{-k} \tilde y) \le c b^{\varepsilon} e^{-\varepsilon \alpha \tau k} d^+(\tilde x, \tilde y)^{\varepsilon}.$$
Since $d^+(\tilde x, \tilde y) < r_1$, we have  $\ga \phi_{\tau}^{-k}\tilde x \in \tilde C(u, r + \varepsilon_0^k)$. This is a contradiction since $\ell(\phi_{\tau}^{-1}x) > r + \varepsilon_0^k$, proving the claim.

The claim \eqref{c1} implies that  $\phi_{\tau}^{-k} (\tilde{\cal U})$ lies in a single atom of $\tilde \zeta'_r$ for each $k \ge N_0$. 

 Since $\phi_\tau^{-k}\tilde x \notin \partial \gamma^{-1} \tilde C(u, r)$ for all $k\in \N$ and $\gamma\in \Ga$,  we can find a small neighborhood $\tilde{\cal U}'\subset \tilde{\cal U}$ of $\tilde x$ in $\tilde W^+(\tilde x)$ such that  $\phi_\tau^{-k}(\tilde{\cal U}')$ is entirely contained in some $\gamma^{-1}\tilde C(u, r)$, $\gamma\in \Ga$ or disjoint from $ \Gamma\overline{C(u, r)}$ for each $0\le k\le N_0$.
 Therefore $\phi_\tau^{-k} (\tilde{\cal U}')$ is contained in a single atom of $\tilde \zeta'_r$ for all $k\in \N$. 
 This proves that the atom of $\tilde \zeta_r$ containing $\tilde x$ also contains $\tilde{\cal U}'$. Since $x \in \Omega_\psi'$ is arbitrary,  $\tilde \zeta_r$ is subordinated to $\tilde W^+$.

The rest of the argument is a similar entropy computation as in the deduction of \cite[Proposition 4]{OP_variational} from \cite[Proposition 1]{OP_variational}.
\qed

\subsection*{Proof of Theorem \ref{thm.maxentropy}}
The deduction of Theorem \ref{thm.maxentropy} from Proposition \ref{prop.prop4} can be done similarly to \cite{OP_variational}. 

First, note that $\delta_\psi = 1$ since $\psi$ is tangent to $\psi_{\Ga}^{\theta}$ (\cite[Theorem 10.1]{CZZ_relative}, \cite[Theorem 4.5]{KOW_indicators}). For $g \in G$ such that $[g] \in \tilde{\Omega}_{\psi}$, we consider the measure $\mu_{\tilde W^+([g])}$ on $\tilde{W}^+([g])$ given by $$d \mu_{\tilde W^+([g])}([gn]) = e^{\psi(\beta_{(gn)^+}^{\theta}(e, gn))} d\nu ((gn)^+)$$ for $n \in N_{\theta}^+$. 
It follows from  the definition that for all $a \in A_{\theta}$, we have 
\be \label{eqn.leafrn}
\frac{d a_* \mu_{\tilde W^+([g])}}{ d \mu_{ \tilde W^+([ga])}}(x) = e^{-\psi(\log a)}.
\ee

We write $m^{pr}$ for the normalized probability measure $m_{\psi} / |m_{\psi}|$. Denote by $\tilde m^{pr}$ its lift to $\tilde \Omega_\psi$.
The following can be obtained by directly checking the condition for conditional measures:

\begin{lemma} \label{lem.bmscond}
    Let $\tilde \zeta$ be an $\tilde m^{pr}$-measurable partition of $\tilde \Omega_{\psi}$ subordinated to $\tilde W^+$. Then the family of conditional measures of $\tilde m^{pr}$ with respect to $\tilde \zeta$ is given by $$d \tilde m^{pr}_{\tilde \zeta(\tilde x)}(w) := \frac{ \mathbbm{1}_{\tilde \zeta(\tilde x)}(w)}{\mu_{\tilde W^+(\tilde x)}(\tilde \zeta(\tilde x))} d \mu_{\tilde W^+(\tilde x)}(w)  \quad\text{for $\tilde x \in \tilde \Omega_{\psi}$.} $$
\end{lemma}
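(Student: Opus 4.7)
The plan is to reduce the lemma to a local disintegration of the BMS measure $\tilde m_\psi$ along the unstable foliation $\tilde W^+$, and then to identify the resulting leaf-conditional measures with $\mu_{\tilde W^+}$. On a local product chart of $\tilde{\Omega}_\psi \subset \La_\theta^{(2)}\times\R$, I would first introduce the change of variables $(\xi, \eta, s)\mapsto(\xi, \eta, t)$ with $t := s - \psi(\langle \xi, \eta\rangle)$. Using the description of $\tilde W^\pm$ in Lemma \ref{lem.transbyhoro}, one checks that $t$ is constant along each $\tilde W^+$-leaf, so the unstable leaves become horizontal slices $\{(\xi, \eta_0, t_0) : (\xi, \eta_0)\in\La_\theta^{(2)}\}$ and $(\eta, t)$ provide a transverse parameterization of $\tilde W^+$. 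Since $t - s$ depends only on $(\xi,\eta)$, the Jacobian in the $s$-direction is one, so
$$ d\tilde m_\psi = e^{\psi(\langle \xi, \eta\rangle)}\, d\nu(\xi)\, d\nu_{\i}(\eta)\, dt $$
in the new coordinates, which is a product form whose disintegration along the horizontal slices produces leaf-conditional measures $e^{\psi(\langle \xi, \eta_0\rangle)}\,d\nu(\xi)$.

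Next, I would recognize $\mu_{\tilde W^+(\tilde x)}$ within this product form. Writing a point $[gn]\in\tilde W^+([g])$ as $(\xi, \eta_0, s)$, the third coordinate is $s = \psi(\beta_\xi^\theta(e, gn)) = t_0 + \psi(\langle \xi, \eta_0\rangle)$ since $t$ is leaf-constant, so the definition of the leaf measure yields
$$ d\mu_{\tilde W^+([g])}([gn]) = e^{\psi(\beta_\xi^\theta(e, gn))}\, d\nu(\xi) = e^{t_0}\, e^{\psi(\langle \xi, \eta_0\rangle)}\, d\nu(\xi).$$
Thus $\mu_{\tilde W^+}$ agrees, up to a leaf-constant multiplicative factor $e^{t_0}$, with the leaf-conditional measure extracted from the product form.

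With these two identifications in hand, the rest is routine. The product form immediately disintegrates $\tilde m_\psi$ locally along $\tilde W^+$ into measures proportional to $\mu_{\tilde W^+}$ on each leaf. Since $\tilde \zeta$ is subordinated to $\tilde W^+$, each atom $\tilde \zeta(\tilde x)$ is a measurable subset of a single leaf, and the essential uniqueness of disintegration forces $\tilde m^{pr}_{\tilde \zeta(\tilde x)}$ to be the restriction of $\mu_{\tilde W^+(\tilde x)}$ to $\tilde \zeta(\tilde x)$, normalized; the constant factor $e^{t_0}$ cancels in the normalization, yielding exactly the stated formula. The only real bookkeeping is patching this local picture into a $\Ga$-equivariant global statement by means of a countable cover by product charts and verifying that the disintegration is compatible across overlaps; this is standard and should not present a substantive obstacle.
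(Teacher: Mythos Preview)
Your proposal is correct and is exactly the kind of direct verification the paper has in mind; the paper itself omits the proof, merely stating that the lemma ``can be obtained by directly checking the condition for conditional measures.'' Your change of variables $t=s-\psi(\langle\xi,\eta\rangle)$, the identification of $\tilde W^+$-leaves with level sets of $(\eta,t)$ via Lemma~\ref{lem.transbyhoro}, and the computation $d\mu_{\tilde W^+([g])}=e^{t_0}e^{\psi(\langle\xi,\eta_0\rangle)}\,d\nu(\xi)$ together give precisely the product disintegration needed, so the normalized restriction formula follows by uniqueness of conditional measures.
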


By Theorem \ref{thm.finitebms}, $m_{\psi}$ is finite, and hence it follows from Theorem \ref{thm.ergodic} that $m^{pr}$ is $\{\phi_t\}$-ergodic. It is a general fact that $m^{pr}$ is ergodic for the transformation $\phi_t$ for uncountably many $t$ \cite[Lemma 7]{OP_variational}. 
Fix $\tau > 0$ so that $m^{pr}$ is $\phi_{\tau}$-ergodic.
Now let $m$ be a probability $\{\phi_t\}$-invariant measure on $\Omega_{\psi}$. Considering the ergodic decomposition of $m$, we may assume that $m$ is $\phi_{\tau}$-ergodic without loss of generality \cite[(3.5a)]{EL_book}.

We now consider the partition $\tilde \zeta$ given by Proposition \ref{prop.prop4} for the measure $m$, its lift $\tilde m$, and the transformation $\phi_{\tau}$. Since $\tilde \zeta$ is subordinated to $\tilde W^+$, the measure 
$$d \tilde m_{\tilde \zeta(\tilde x)}^{pr}(w) := \frac{ \mathbbm{1}_{\tilde \zeta(\tilde x)}(w)}{\mu_{\tilde W^+(\tilde x)}(\tilde \zeta(\tilde x))} d \mu_{\tilde W^+(\tilde x)}(w)$$ and the function $$\tilde G( \tilde x) := - \log \mu_{\tilde W^+(\tilde x)}(\tilde \zeta(\tilde x))$$ are well-defined for $\tilde m$-a.e. $\tilde x \in \tilde \Omega_{\psi}$. Note that since $\tilde \zeta$ is a partition for the measure $\tilde m$, it may not be $\tilde m^{pr}$-measurable and hence  Lemma \ref{lem.bmscond} does not apply to $\tilde \zeta$.
 It follows from \eqref{eqn.leafrn} that for $\tilde m$-a.e. $\tilde x \in \tilde \Omega_{\psi}$, we have 
\be \label{eqn.taucompute}
- \log \tilde m_{\tilde \zeta(\tilde x)}^{pr} ( (\phi_\tau^{-1} \tilde \zeta)(\tilde x)) = \tau + (\tilde G \circ \phi_\tau)(\tilde x) - \tilde G(\tilde x).
\ee
This implies 
$$\tilde G \circ \phi_\tau - \tilde G \ge - \tau$$ $\tilde m$-a.e. Since $\tilde G$ is $\Ga$-invariant, it induces the function $G : \Omega_{\psi} \to \R$. By \cite[Lemme 8]{OP_variational}, we have $ \int G \circ \phi_\tau - G \ dm = 0$ and therefore 
\be \label{eqn.keyidentity}
\int - \log m_{\zeta(x)}^{pr} ((\phi_\tau^{-1}\zeta)(x))\ dm(x) = \tau.
\ee
where $m_{\zeta(x)}^{pr}$ is the measure on $\zeta(x)$ induced by $\tilde m_{\tilde \zeta(\tilde x)}^{pr}$.

We can now show $h_{m^{pr}}(\{\phi_t\}) = 1$. Indeed, if we consider the special case that $m = m^{pr}$, then the partition $\zeta$ becomes an $m^{pr}$-measurable partition given by Proposition \ref{prop.prop4}. Hence by Lemma \ref{lem.bmscond}, the measure $m_{\zeta(x)}^{pr}$ forms the family of conditional measure for $m^{pr}$. Therefore the above identity \eqref{eqn.keyidentity}
yields $$h_{m^{pr}}(\phi_\tau) = h_{m^{pr}}(\phi_\tau, \zeta) = \int - \log m_{\zeta(x)}^{pr} ((\phi_\tau^{-1} \zeta)(x)) \ dm(x) = \tau.$$
Hence
$$h_{m^{pr}}(\{\phi_t\}) = h_{m^{pr}}(\phi_{\tau}) / \tau = 1.$$

It remains to show that for a general $m$, $h_{m}(\{\phi_t\}) \le 1$ and that $h_m(\{\phi_t\}) = 1$ implies $m = m^{pr}$.
We define the following function: for $m$-a.e. $x \in \Omega_{\psi}$, $$
F(x)  := \frac{ m_{\zeta(x)}^{pr}((\phi_\tau^{-1} \zeta)(x))}{m_{\zeta(x)} ((\phi_\tau^{-1}\zeta)(x))} \quad  \text{if} \quad m_{\zeta(x)} ((\phi_\tau^{-1}\zeta)(x)) > 0,$$
and $F(x)  := 0$ otherwise.
By \cite[Fait 9]{OP_variational}, both functions $F$ and $\log F$ are $m$-integrable and $\int F \ dm \le 1$. Since $$\int \log F \ dm = - \tau + h_m(\phi_\tau, \zeta) = - \tau + h_m(\phi_{\tau}) = - \tau + \tau h_m(\{\phi_t\})$$
by \eqref{eqn.keyidentity} and the choice of $\zeta$, we apply Jensen's inequality and obtain $$-\tau + \tau h_m(\{\phi_t\}) \le \log \left( \int F \ dm \right) \le 0.$$
This proves $$h_m(\{\phi_t\}) \le 1.$$

Now suppose that $h_m(\{\phi_t\}) = 1$.
This implies that the equality holds in Jensen's inequality, that is,
$ \log \left( \int F \ dm \right) = 0$,
 which means that $F = 1$ $m$-a.e. It follows that the two conditional measures $m_{\zeta(x)}^{pr}$ and $m_{\zeta(x)}$ coincide on the $\sigma$-algebra generated by $(\phi_\tau^{-1}\zeta)(x)$ for $m$-a.e. $x$. Since this holds after replacing $\phi_\tau$ with $\phi_\tau^k$ for any $k \in \N$ and the partition $\zeta$ is $\phi_\tau$-generating, we have
 $$m_{\zeta(x)}^{pr} = m_{\zeta(x)}\quad\text{ for $m$-a.e. $x \in \Omega_{\psi}$.}$$

Then the equality between measures $m = m^{pr}$ follows from the Hopf argument. Indeed, let $f : \Omega_{\psi} \to \R$ be a compactly supported continuous function. By the Birkhoff ergodic theorem, the set $$\cal Z := \left\{x \in \Omega_{\psi} : \lim_{t \to \infty} \frac{1}{t} \int_0^t f( \phi_s x) ds = m^{pr}(f) \right\}$$ has a full $m^{pr}$-measure. Then $\cal Z$ is invariant under the flow $\{\phi_t\}$ and moreover, since $f$ is uniformly continuous, $x \in \cal Z$ implies $W^-(x) \subset \cal Z$ by Proposition \ref{prop.admissiblemetric}. By the quasi-product structure of the BMS measure $m^{pr}$, this implies that for all $x\in \Omega_\psi$,  $\cal Z \cap W^+(x)$ has full $\mu_{W^+(x)}$-measure.
Hence $\cal Z \cap \zeta(x)$ has full $m_{\zeta(x)}^{pr}$-measure for $m$-a.e. $x \in \Omega_{\psi}$ by the definition of $m_{\zeta(x)}^{pr}$. Hence $\cal Z \cap \zeta(x)$ has full $m_{\zeta(x)}$-measure for $m$-a.e. $x \in \Omega_{\psi}$. Since $m_{\zeta(x)}$ is a conditional measure for $m$, this implies $m(\cal Z) = 1$, and therefore $m(f) = m^{pr}(f)$ by applying the Birkhoff ergodic theorem again to $m$. This finishes the proof.
\qed

\end{document}